\DeclareFontFamily{U}{rsfs}{} \DeclareFontShape{U}{rsfs}{n}{it}{<->
rsfs10}{} \DeclareSymbolFont{mscr}{U}{rsfs}{n}{it}
\DeclareSymbolFontAlphabet{\scr}{mscr}
\def\mathscr{\scr}
\begin{document}
\def\e#1\e{\begin{equation}#1\end{equation}}
\def\ea#1\ea{\begin{align}#1\end{align}}
\def\eq#1{{\rm(\ref{#1})}}
\theoremstyle{plain}
\newtheorem{thm}{Theorem}[section]
\newtheorem{prop}[thm]{Proposition}
\newtheorem{lem}[thm]{Lemma}
\newtheorem{cor}[thm]{Corollary}
\newtheorem{quest}[thm]{Question}
\newtheorem{claim}[thm]{Claim}
\theoremstyle{definition}
\newtheorem{dfn}[thm]{Definition}
\newtheorem{ex}[thm]{Example}
\newtheorem{rem}[thm]{Remark}
\newtheorem{hyp}[thm]{Hypothesis}
\numberwithin{equation}{section}
\numberwithin{figure}{section}
\def\dim{\mathop{\rm dim}\nolimits}
\def\vdim{\mathop{\rm vdim}\nolimits}
\def\rank{\mathop{\rm rank}\nolimits}
\def\id{\mathop{\rm id}\nolimits}
\def\Im{\mathop{\rm Im}\nolimits}
\def\Re{\mathop{\rm Re}\nolimits}
\def\SU{\mathop{\rm SU}}
\def\Ker{\mathop{\rm Ker}}
\def\Spin{\mathop{\rm Spin}}
\def\Coker{\mathop{\rm Coker}}
\def\End{\mathop{\rm End}}
\def\an{{\rm an}}
\def\bo{{\rm bo}}
\def\dm{{\rm dm}}
\def\alg{{\rm alg}}
\def\dbo{{\rm dbo}}
\def\hom{{\rm hom}}
\def\ch{{\rm ch}}
\def\virt{{\rm virt}}
\def\Hom{\mathop{\rm Hom}\nolimits}
\def\coh{\mathop{\rm coh}\nolimits}
\def\Ho{\mathop{\rm Ho}\nolimits}
\def\Sch{\mathop{\bf Sch}\nolimits}
\def\St{\mathop{\bf St}\nolimits}
\def\dSt{\mathop{\bf dSt}\nolimits}
\def\dArt{\mathop{\bf dArt}\nolimits}
\def\dSch{\mathop{\bf dSch}\nolimits}
\def\dAff{\mathop{\bf dAff}\nolimits}
\def\cdga{\mathop{\bf cdga}\nolimits}
\def\dMan{{\mathop{\bf dMan}\nolimits}}
\def\dOrb{\mathop{\bf dOrb}\nolimits}
\def\dManb{\mathop{\bf dMan^b}\nolimits}
\def\dOrbb{\mathop{\bf dOrb^b}\nolimits}
\def\dManc{\mathop{\bf dMan^c}\nolimits}
\def\dOrbc{\mathop{\bf dOrb^c}\nolimits}
\def\DerManSp{\mathop{\bf DerMan_{Spi}}\nolimits}
\def\DerManBN{\mathop{\bf DerMan_{BoNo}}\nolimits}
\def\mKur{\mathop{\bf mKur}\nolimits}
\def\MKur{\mathop{\bf MKur}\nolimits}
\def\Kur{\mathop{\bf Kur}\nolimits}
\def\KurtrG{\mathop{\bf Kur_{trG}}\nolimits}
\def\MKurb{\mathop{\bf MKur^b}\nolimits}
\def\Kurb{\mathop{\bf Kur^b}\nolimits}
\def\MKurc{\mathop{\bf MKur^c}\nolimits}
\def\Kurc{\mathop{\bf Kur^c}\nolimits}
\def\cl{{\rm cl}}
\def\Spec{\mathop{\rm Spec}\nolimits}
\def\bSpec{\mathop{\bs{\rm Spec}}\nolimits}
\def\qcoh{{\mathop{\rm qcoh}}}
\def\bs{\boldsymbol}
\def\ge{\geqslant}
\def\le{\leqslant\nobreak}
\def\pr{\prec}
\def\bA{{\mathbin{\mathbb A}}}
\def\bL{{\mathbin{\mathbb L}}}
\def\bT{{\mathbin{\mathbb T}}}
\def\bR{{\bs R}}
\def\bS{{\bs S}}
\def\bU{{\bs U}}
\def\bV{{\bs V}}
\def\bW{{\bs W}}
\def\bX{{\bs X}}
\def\bY{{\bs Y}}
\def\bZ{{\bs Z}}
\def\cA{{\mathbin{\mathcal A}}}
\def\cB{{\mathbin{\mathcal B}}}
\def\cC{{\mathbin{\mathcal C}}}
\def\cD{{\mathbin{\mathcal D}}}
\def\cE{{\mathbin{\mathcal E}}}
\def\cI{{\mathbin{\mathcal I}}}
\def\cK{{\mathbin{\mathcal K}}}
\def\cL{{\mathbin{\mathcal L}}}
\def\cM{{\mathbin{\mathcal M}}}
\def\oM{{\mathbin{\smash{\,\,\ov{\!\!\mathcal M\!}\,}}}}
\def\O{{\mathbin{\mathcal O}}}
\def\fC{{\mathbin{\mathfrak C}}}
\def\fm{{\mathbin{\mathfrak m}}}
\def\C{{\mathbin{\mathbb C}}}
\def\N{{\mathbin{\mathbb N}}}
\def\Q{{\mathbin{\mathbb Q}}}
\def\R{{\mathbin{\mathbb R}}}
\def\Z{{\mathbin{\mathbb Z}}}
\def\uX{{{\underline{X\!}\,}}{}}
\def\al{\alpha}
\def\be{\beta}
\def\ga{\gamma}
\def\de{\delta}
\def\io{\iota}
\def\ep{\epsilon}
\def\la{\lambda}
\def\ka{\kappa}
\def\th{\theta}
\def\ze{\zeta}
\def\up{\upsilon}
\def\vp{\varphi}
\def\si{\sigma}
\def\om{\omega}
\def\De{\Delta}
\def\La{\Lambda}
\def\Th{\Theta}
\def\Om{\Omega}
\def\Ga{\Gamma}
\def\Si{\Sigma}
\def\Up{\Upsilon}
\def\pd{\partial}
\def\db{\bar\partial}
\def\ts{\textstyle}
\def\st{\scriptstyle}
\def\sst{\scriptscriptstyle}
\def\sm{\setminus}
\def\bu{\bullet}
\def\op{\oplus}
\def\ot{\otimes}
\def\boxt{\boxtimes}
\def\ov{\overline}
\def\ul{\underline}
\def\bigop{\bigoplus}
\def\iy{\infty}
\def\es{\emptyset}
\def\ra{\rightarrow}
\def\ab{\allowbreak}
\def\longra{\longrightarrow}
\def\hookra{\hookrightarrow}
\def\Ra{\Rightarrow}
\def\bs{\boldsymbol}
\def\t{\times}
\def\ci{\circ}
\def\ti{\tilde}
\def\d{{\rm d}}
\def\od{\odot}
\def\bigot{\bigotimes}
\def\bd{\boxdot}
\def\sh{\sharp}
\def\w{\wedge}
\def\ha{{\ts\frac{1}{2}}}
\def\md#1{\vert #1 \vert}
\def\bmd#1{\big\vert #1 \big\vert}
\def\dd{{\rm d}_{\rm dR}}
\title{Virtual fundamental classes for moduli spaces of sheaves on Calabi--Yau four-folds}
\author{Dennis Borisov and Dominic Joyce}
\date{}
\maketitle
\begin{abstract}
Let $(\bX,\om_\bX^*)$ be a separated, $-2$-shifted symplectic derived $\C$-scheme, in the sense of Pantev, To\"en, Vezzosi and Vaqui\'e \cite{PTVV}, of complex virtual dimension $\vdim_\C\bX=n\in\Z$, and $X_\an$ the underlying complex analytic topological space. We prove that $X_\an$ can be given the structure of a derived smooth manifold $\bX_\dm$, of real virtual dimension $\vdim_\R\bX_\dm=n$. This $\bX_\dm$ is not canonical, but is independent of choices up to bordisms fixing the underlying topological space $X_\an$. There is a 1-1 correspondence between orientations on $(\bX,\om_\bX^*)$ and orientations on $\bX_\dm$.

Because compact, oriented derived manifolds have virtual classes, this means that proper, oriented $-2$-shifted symplectic derived $\C$-schemes have virtual classes, in either homology or bordism. This is surprising, as conventional algebro-geometric virtual cycle methods fail in this case. Our virtual classes have half the expected dimension.

Now derived moduli schemes of coherent sheaves on a Calabi--Yau 4-fold are expected to be $-2$-shifted symplectic (this holds for stacks). We propose to use our virtual classes to define new Donaldson--Thomas style invariants `counting' (semi)stable coherent sheaves on Calabi--Yau 4-folds $Y$ over $\C$, which should be unchanged under deformations of~$Y$.
\end{abstract}

\setcounter{tocdepth}{2}
\tableofcontents

\section{Introduction}
\label{vf1}

This paper will relate two apparently rather different classes of `derived' geometric spaces. The first class is {\it derived\/ $\C$-schemes\/} $\bX$, in the Derived Algebraic Geometry of To\"en and Vezzosi \cite{Toen1,ToVe}, equipped with a $-2$-{\it shifted symplectic structure\/} $\om_\bX^*$ in the sense of Pantev, To\"en, Vaqui\'e and Vezzosi \cite{PTVV}. Such $(\bX,\om_\bX^*)$ are the expected structure on 4-Calabi--Yau derived moduli $\C$-schemes.

The second class is {\it derived smooth manifolds\/} $\bX_\dm$, in Derived Differential Geometry. There are several different models available: the {\it derived manifolds\/} of Spivak \cite{Spiv} and Borisov--No\"el \cite{Bori,BoNo} (which form $\iy$-categories $\DerManSp,\ab\DerManBN$), and the second author's {\it d-manifolds} \cite{Joyc2,Joyc3,Joyc4} (a strict 2-category $\dMan$), and {\it m-Kuranishi spaces\/} \cite[\S 4.7]{Joyc6} (a weak 2-category $\mKur$). 

As it is known that equivalence classes of objects in all these higher categories are in natural bijection, these four models are interchangeable for our purposes. But we use theorems proved for d-manifolds or (m-)Kuranishi spaces.

Here is a summary of our main results, taken from Theorems \ref{vf3thm3}, \ref{vf3thm4} and \ref{vf3thm8} and Propositions \ref{vf3prop3} and \ref{vf3prop4} below.

\begin{thm} Let\/ $(\bX,\om_\bX^*)$ be a $-2$-shifted symplectic derived\/ $\C$-scheme, in the sense of Pantev et al.\ {\rm\cite{PTVV},} with complex virtual dimension\/ $\vdim_\C\bX=n$ in $\Z,$ and write\/ $X_\an$ for the set of\/ $\C$-points of\/ $X=t_0(\bX),$ with the complex analytic topology. Suppose that\/ $X$ is separated, and\/ $X_\an$ is second countable. Then we can make the topological space $X_\an$ into  a derived manifold\/ $\bX_\dm$ of real virtual dimension\/ $\vdim_\R\bX_\dm=n,$ in the sense of any of\/~{\rm\cite{Bori,BoNo,Joyc2,Joyc3,Joyc4,Joyc6,Spiv}}.

There is a natural\/ $1$-$1$ correspondence between orientations on $(\bX,\om_\bX^*),$ in the sense of\/ {\rm\S\ref{vf24},} and orientations on\/ $\bX_\dm,$ in the sense of\/~{\rm\S\ref{vf26}.}

The (oriented) derived manifold\/ $\bX_\dm$ above depends on arbitrary choices made in its construction. However, $\bX_\dm$ is independent of choices up to (oriented) bordisms of derived manifolds which fix the underlying topological space.

All the above extends to (oriented) $-2$-shifted symplectic derived schemes $(\bs\pi:\bX\ra Z,\om_{\bX/Z}^*)$ over a base $Z$ which is a smooth affine $\C$-scheme of pure dimension, yielding an (oriented) derived manifold\/ $\bs\pi_\dm:\bX_\dm\ra Z_\an$ over the complex manifold\/ $Z_\an$ associated to $Z,$ regarded as an (oriented) real manifold.

\label{vf1thm1}
\end{thm}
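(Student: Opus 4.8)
The plan is to build $\bX_\dm$ locally from the structure of $(\bX,\om_\bX^*)$ near each point, then glue. By the Darboux-type theorem for $-2$-shifted symplectic derived schemes (Bouwknegt--Brav--Bussi--Joyce style results, which I would cite from \S\ref{vf2}), near each $\C$-point $x\in X_\an$ the derived scheme $\bX$ is Zariski-locally equivalent to a standard form $\bSpec$ of a cdga with a single nontrivial cotangent degree, so that the virtual cotangent complex $\bL_\bX$ is represented by a self-dual complex $E^{-1}\to E^0\to E^1$ with $E^1\cong(E^{-1})^\vee$ and $E^0\cong(E^0)^\vee$ carrying a nondegenerate quadratic form (this is where $-2$-shifted symplectic enters: the symplectic form gives the self-duality, and on $E^0$ an $\O_U$-valued symmetric nondegenerate pairing). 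The key analytic input is that, passing to the complex-analytic topology, such a local model looks like the zero locus of a section $s$ of a complex vector bundle $E\to U$ over a complex manifold $U$, where $s$ satisfies a ``half-dimensional'' or isotropy condition coming from $\om_\bX^*$: $E$ carries a nondegenerate complex quadratic form $q$, and $s$ is ``$q$-isotropic'' in the sense that the induced function $q(s,s)$ vanishes to second order, equivalently $\bX$ locally is the derived critical locus data packaged so that $E$ splits (after choosing a compatible metric and a maximal positive subbundle) as $E\cong F\oplus\bar F$ with $s=(t,\bar t)$. The real derived manifold is then $\bX_\dm$ locally $= \{t=0\}\subset U$ as a d-manifold (derived zero locus of the real section $t$ of the real bundle underlying $F$), which has real virtual dimension $\dim_\R U - \rank_\R F = 2\dim_\C U - 2\rank_\C F/? $ — one checks this equals $n=\vdim_\C\bX=\dim_\C U-\rank_\C E + \rank_\C E^0/\cdots$, i.e. the bookkeeping gives exactly $\vdim_\R\bX_\dm=n$, which is half the naive real expected dimension $2n$.

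The steps, in order: (1) establish the local analytic model and the isotropic splitting $E\cong F\oplus\bar F$, $s\leftrightarrow(t,\bar t)$, depending on a choice of compatible Hermitian metric / maximal positive subspace; (2) show the resulting local d-manifold (or m-Kuranishi neighbourhood) $\{t=0\}$ has the stated virtual dimension and that its underlying topological space is canonically $X_\an$ near $x$; (3) prove that two different choices (metrics, subspaces, and the arbitrary choices in the Darboux presentation) yield local d-manifolds related by an equivalence, or at least by a bordism rel underlying space — here one uses that the space of compatible metrics is contractible and that positive maximal isotropic/orthogonal subspaces form a contractible space, so one interpolates; (4) glue the local models: since on overlaps the two constructions agree up to the controlled ambiguity of (3), and since d-manifolds / m-Kuranishi spaces glue along such data (using the second-countability and separatedness hypotheses to get a locally finite cover and partitions of unity), one obtains a global $\bX_\dm$ on $X_\an$, canonical up to bordism rel $X_\an$; (5) orientations: trace through that an orientation on $(\bX,\om_\bX^*)$ — a square root of $\det\bL_\bX$ compatible with $\om$ — is exactly the data needed to orient the real bundle $F$ and hence the d-manifold $\bX_\dm$ (the canonical bundle of $\{t=0\}$ is $K_U\ot\det F^\vee$ or similar, and the $-2$-shifted orientation trivializes the relevant square), giving the claimed $1$-$1$ correspondence, again well-defined up to oriented bordism; (6) relative version over $Z$: everything is done fibrewise/relative, replacing $U$ by $U\ra Z_\an$, $E$ by a relative bundle, using that $Z$ smooth affine of pure dimension makes $Z_\an$ a complex manifold and the relative cotangent complex well-behaved — the arguments of (1)--(5) go through verbatim in families.

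The main obstacle I expect is step (3)--(4), the \emph{globalization and independence-of-choices}: the Darboux presentation is only Zariski-local, the metric is only a local choice, and the positive-subbundle splitting $E\cong F\oplus\bar F$ is genuinely non-canonical, so on double and triple overlaps one does not get strict agreement but only agreement up to homotopy/equivalence of derived manifolds. Assembling these into a single derived manifold requires either a careful ``gluing'' theorem in the $\iy$-categorical models (Spivak / Borisov--No\"el) or, in the 2-categorical d-manifold / m-Kuranishi setting, a partition-of-unity argument that produces the global object together with a proof that different gluings are bordant rel $X_\an$. Making the orientation-correspondence survive this gluing — i.e. checking that the sign ambiguities in the local orientations are governed by a consistent $\Z/2$-cocycle matching the one defining orientations on $(\bX,\om_\bX^*)$ — is the subtle sub-point, and is presumably where \S\ref{vf24} and \S\ref{vf26} and the cited d-manifold machinery of \cite{Joyc4,Joyc6} do the real work; the rest (local model, dimension count, relative case) I expect to be essentially bookkeeping given the Darboux theorem.
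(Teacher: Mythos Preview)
Your outline has the right large-scale shape and you correctly identify the main obstacle as globalization/gluing, but there is a genuine gap precisely there, and your local model is imprecise in a way that matters.

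\medskip

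\textbf{Local model.} The splitting you describe, $E\cong F\oplus\bar F$ with $s=(t,\bar t)$, is not quite what happens. The paper does not split $E$ into complex-conjugate halves. Instead, one chooses a \emph{real} subbundle $E^-\subset E$ on which the real part $\Re Q$ of the quadratic form is negative definite (more precisely, satisfying a condition $(*)$ involving $\Im(\d s)$ and $\Ker t$ at non-minimal points), sets $E^+=E/E^-$, and takes the Kuranishi section to be the image $s^+$ of $s$ in $E^+$. The crucial nontrivial fact is that $(s^+)^{-1}(0)=s^{-1}(0)$ near $s^{-1}(0)$; at a minimal Darboux point this follows from $Q(s,s)=0$ and a sign argument (your ``isotropy'' intuition), but extending it to general points requires the $-2$-Darboux theorem and a pushforward/pullback argument along quasi-free morphisms (Theorem~\ref{vf3thm2}(c)). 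Your description would only handle minimal points.

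\medskip

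\textbf{The missing idea for gluing.} You propose to glue local d-manifolds using that the space of choices is contractible and invoking $\iy$-categorical or 2-categorical gluing theorems. This is exactly the kind of homotopy-coherence problem that is notoriously hard to strictify, and the paper does \emph{not} attempt it that way. The key device you are missing is Theorem~\ref{vf3thm1}: starting from an open cover $\{\bSpec A_i^\bu\hookrightarrow\bX\}_{i\in I}$, one constructs for every finite $\es\ne J\subseteq I$ a standard form cdga $A_J^\bu$ (essentially a tensor product of the $A_i^\bu$ over $B$ in degree $0$, freely adjoining higher generators) together with quasi-free morphisms $\Phi_{JK}:A_K^\bu\ra A_J^\bu$ for $K\subseteq J$ that compose \emph{strictly}: $\Phi_{JL}=\Phi_{JK}\ci\Phi_{KL}$. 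Translating to complex geometry gives submersions $\phi_{JK}:V_J\ra V_K$ and surjective bundle maps $\chi_{JK}$ with strict cocycle conditions. One then chooses the real subbundles $E_J^-$ compatibly along this poset (Proposition~\ref{vf3prop2}), and the resulting Kuranishi neighbourhoods $(U_J,E_J^+,s_J^+,\psi_J^+)$ with coordinate changes $(U_{JK},\phi_{JK},\chi_{JK}^+)$ form a Kuranishi atlas in the strict sense of Definition~\ref{vf2def12} --- the associativity condition (f) there holds on the nose because the $\Phi_{JK}$ do. The passage from a Kuranishi atlas to a derived manifold is then a black-box citation (Theorem~\ref{vf2thm4}). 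Without this strict atlas, your ``gluing up to equivalence on overlaps'' would require checking higher coherences you have not addressed.

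\medskip

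\textbf{Bordism and orientations.} Independence up to bordism is not obtained by interpolating metrics directly; the paper instead runs the entire relative construction over $Z=\bA^1$ with the constant family $(\bX\times\bA^1,\om_\bX^*)$, arranging that the restriction to $\{0\}$ uses one set of choices and to $\{1\}$ the other (\S\ref{vf64}). This uses the relative machinery of \S\ref{vf37} that you correctly note goes through verbatim. The orientation correspondence is as you describe at the level of a single point, but it is checked pointwise via \eq{vf2eq7}, \eq{vf2eq9} and the explicit isomorphism $O_x\bX_\dm\cong H^1(\bT_\bX\vert_x)/\Im\Pi_v$, then shown to vary continuously; no cocycle argument is needed once the strict atlas is in place.
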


In \S\ref{vf25} we give a short definition of {\it Kuranishi atlases\/} $\cK$ on a topological space $X$. These are families of `Kuranishi neighbourhoods' $(V,E,s,\psi)$ on $X$ and `coordinate changes' between them, based on work of Fukaya, Oh, Ohta and Ono \cite{FuOn,FOOO} in symplectic geometry. The hard work in proving Theorem \ref{vf1thm1} is using $(\bX,\om_\bX^*)$ to construct a Kuranishi atlas $\cK$ on $X_\an$. Then we use results from \cite{Bori,BoNo,Joyc2,Joyc3,Joyc4,Joyc6} to convert $(X_\an,\cK)$ into a derived manifold $\bX_\dm$. 

Readers of this papers do not need to understand derived manifolds, if they do not want to. They can just think in terms of Kuranishi atlases, as is common in symplectic geometry, without passing to derived manifolds.

We prove Theorem \ref{vf1thm1} using a `Darboux Theorem' for $k$-shifted symplectic derived schemes by Bussi, Brav and the second author \cite{BBJ}. This paper is related to the series \cite{BBBJ,BBDJS,BBJ,BJM,Joyc5}, mostly concerning the $-1$-shifted (3-Calabi--Yau) case. 

An important motivation for proving Theorem \ref{vf1thm1} is that {\it compact, oriented derived manifolds have virtual classes}, in both bordism and homology. As in \S\ref{vf36}--\S\ref{vf37}, from Theorem \ref{vf1thm1} we may deduce:

\begin{cor} Let\/ $(\bX,\om_\bX^*)$ be a proper, oriented\/ $-2$-shifted symplectic derived\/ $\C$-scheme, with\/ $\vdim_\C\bX=n$. Theorem\/ {\rm\ref{vf1thm1}} gives a compact, oriented derived manifold\/ $\bX_\dm$ with\/ $\vdim_\R\bX_\dm=n$. We may define a \begin{bfseries}d-bordism class\end{bfseries} $[\bX_\dm]_\dbo$ in the bordism group $B_n(*),$ and a \begin{bfseries}virtual class\end{bfseries} $[\bX_\dm]_\virt$ in the hom\-ology group $H_n(X_\an;\Z),$ depending only on $(\bX,\om_\bX^*)$ and its orientation.

Let\/ $\bX$ be a derived\/ $\C$-scheme, $Z$ a connected\/ $\C$-scheme, $\bs\pi:\bX\ra Z$ be proper, and\/ $[\om_{\bX/Z}]$ a family of oriented\/ $-2$-shifted symplectic structures on $\bX/Z,$ with\/ $\vdim_\C\bX/Z=n$. For each\/ $z\in Z_\an$ we have a proper, oriented\/ $-2$-shifted symplectic $\C$-scheme $(\bX^z,\om_{\bX^z}^*)$ with $\vdim\bX^z=n$. Then $[\bX_\dm^{z_1}]_\dbo\ab=\ab[\bX_\dm^{z_2}]_\dbo$ and\/ $\imath^{z_1}_*([\bX_\dm^{z_1}]_\virt)=\imath^{z_2}_*([\bX_\dm^{z_2}]_\virt)$ for all\/ $z_1,z_2\in Z_\an,$ with\/ $\imath^z_*([\bX_\dm^z]_\virt)\!\in\! H_n(X_\an;\Z)$ the pushforward under the inclusion\/~$\imath^z:X_\an^z\hookra X_\an$. 

\label{vf1cor1}
\end{cor}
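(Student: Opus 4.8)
The plan is to deduce Corollary \ref{vf1cor1} from Theorem \ref{vf1thm1} by invoking the general machinery of virtual classes for compact oriented derived manifolds, together with a bordism-invariance argument for the family case. First I would recall that by Theorem \ref{vf1thm1}, applied to the proper (hence, with $X$ separated, yielding $X_\an$ compact and second countable) oriented $-2$-shifted symplectic derived $\C$-scheme $(\bX,\om_\bX^*)$, we obtain a compact oriented derived manifold $\bX_\dm$ on the topological space $X_\an$ with $\vdim_\R\bX_\dm=n$. The existence of the d-bordism class $[\bX_\dm]_\dbo\in B_n(*)$ and the virtual class $[\bX_\dm]_\virt\in H_n(X_\an;\Z)$ is then a general fact about compact oriented derived manifolds; I would cite the construction of virtual classes for d-manifolds or (m-)Kuranishi spaces from \cite{Joyc4,Joyc6} (and the bordism groups $B_*(Y)$ of derived manifolds over a space $Y$), so the only point needing argument is \emph{well-definedness}, i.e.\ independence of the auxiliary choices in the construction of $\bX_\dm$.

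For well-definedness I would use the last part of Theorem \ref{vf1thm1}: any two choices $\bX_\dm,\bX_\dm'$ are related by an oriented bordism of derived manifolds fixing $X_\an$. Since $[\,\cdot\,]_\dbo$ is by definition the class in $B_n(*)$ represented by a compact oriented derived manifold, and bordant derived manifolds represent the same class, we get $[\bX_\dm]_\dbo=[\bX_\dm']_\dbo$. Likewise, the virtual class in homology factors through d-bordism over $X_\an$ — there is a projection $B_n(X_\an)\to H_n(X_\an;\Z)$ sending the d-bordism class of $(\bX_\dm\hookra X_\an)$ to $[\bX_\dm]_\virt$ — and a bordism fixing $X_\an$ is precisely a bordism in $B_n(X_\an)$, so $[\bX_\dm]_\virt=[\bX_\dm']_\virt$ as well. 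This establishes the first paragraph of the corollary.

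For the family statement, the key observation is that the relative version of Theorem \ref{vf1thm1} (last paragraph) gives, for a proper $\bs\pi:\bX\ra Z$ with a family of oriented $-2$-shifted symplectic structures, an oriented derived manifold $\bs\pi_\dm:\bX_\dm\ra Z_\an$ over the complex manifold $Z_\an$, with relative virtual dimension $n$. Now $Z$ connected implies $Z_\an$ is path-connected, so any two points $z_1,z_2\in Z_\an$ can be joined by a path $\ga:[0,1]\ra Z_\an$; pulling $\bs\pi_\dm$ back along $\ga$ (using that derived manifolds pull back along smooth maps of manifolds, and $[0,1]$ is a manifold with boundary — here I would appeal to the fact that $\bs\pi_\dm$ is a submersion-type family, or more simply restrict to a smooth arc and use that $\bX_\dm$ is fibred over $Z_\an$ so the restriction to $\ga([0,1])$ is a compact oriented derived manifold with boundary $\bX_\dm^{z_1}\sqcup(-\bX_\dm^{z_2})$) exhibits an explicit oriented bordism between the fibres $\bX_\dm^{z_1}$ and $\bX_\dm^{z_2}$. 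Hence $[\bX_\dm^{z_1}]_\dbo=[\bX_\dm^{z_2}]_\dbo$ in $B_n(*)$. Composing the bordism with the map to $X_\an$ (which, on each fibre over $z$, is the inclusion $\imath^z:X_\an^z\hookra X_\an$) gives a bordism in $B_n(X_\an)$, and applying $B_n(X_\an)\to H_n(X_\an;\Z)$ yields $\imath^{z_1}_*([\bX_\dm^{z_1}]_\virt)=\imath^{z_2}_*([\bX_\dm^{z_2}]_\virt)$.

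The main obstacle I anticipate is the family/bordism step: one must check that $\bX_\dm\ra Z_\an$ is well-behaved enough as a family of derived manifolds that restriction to an arc $\ga([0,1])\subset Z_\an$ produces an honest compact oriented derived \emph{manifold with boundary} whose boundary is the disjoint union of the two fibres with opposite orientations — this requires knowing the fibres $\bX_\dm^z$ are compact (which follows from properness of $\bs\pi$, giving proper fibres, hence compact $X_\an^z$) and that the orientation on the relative $\bX_\dm$ restricts compatibly, matching the orientations coming from the fibrewise $-2$-shifted symplectic structures under the $1$-$1$ correspondence of Theorem \ref{vf1thm1}. Modulo standard facts about fibre products and boundaries in the $2$-category of derived manifolds with corners \cite{Joyc4,Joyc6}, this is routine, but it is where the real content of the deformation-invariance lies.
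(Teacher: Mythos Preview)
Your proposal follows essentially the same route as the paper. The first paragraph is Corollary~\ref{vf3cor3}, deduced exactly as you say from Proposition~\ref{vf3prop4} (the bordism-fixing-$X_\an$ statement); the family case with $Z$ smooth affine is Theorem~\ref{vf3thm9}, proved by forming the fibre product $\bW_\dm=\bX_\dm\t_{\bs\pi_\dm,Z_\an,\ga}[0,1]$ in $\dManc$ along a smooth path $\ga$, just as you outline, with $\pd\bW_\dm\simeq -\bX_\dm^{z_1}\amalg\bX_\dm^{z_2}$.

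The one genuine gap is that you invoke the relative Theorem~\ref{vf1thm1} to get $\bs\pi_\dm:\bX_\dm\ra Z_\an$ globally, but that theorem (equivalently Theorem~\ref{vf3thm8}) only applies when $Z$ is a \emph{smooth affine} $\C$-scheme of pure dimension, whereas the corollary allows $Z$ to be an arbitrary connected $\C$-scheme. So you cannot assume a global derived-manifold family over $Z_\an$ exists. The paper handles this in Remark~\ref{vf3rem2}: given $z_1,z_2\in Z_\an$, choose a chain $z_1=z^0,z^1,\ldots,z^N=z_2$ and smooth connected affine curves $C^1,\ldots,C^N$ with morphisms $\pi^i:C^i\ra Z$ whose images contain $z^{i-1},z^i$; pull the family back to each $\bX^i=\bX\t^h_{\bs\pi,Z,\pi^i}C^i\ra C^i$ and apply the smooth-affine case there to get $[\bX_\dm^{z^{i-1}}]_\dbo=[\bX_\dm^{z^i}]_\dbo$ for each $i$, then chain these equalities. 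Your path-pullback idea is the right one, but it needs this reduction-to-curves step to actually go through for general~$Z$.
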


So, {\it proper, oriented $-2$-shifted symplectic derived\/ $\C$-schemes $(\bX,\om_\bX^*)$ have virtual classes}. This is not obvious, in fact it is rather surprising. Firstly, if $(\bX,\om_\bX^*)$ is $-2$-shifted symplectic then $X=t_0(\bX)$ has a natural obstruction theory $\bL_\bX\vert_X\ra\bL_X$ in the sense of Behrend and Fantechi \cite{BeFa}, which is perfect in the interval $[-2,0]$. But the Behrend--Fantechi construction of virtual cycles \cite{BeFa} works only for obstruction theories perfect in $[-1,0]$, and does not apply here.

Secondly, our virtual cycle has real dimension $\vdim_\C\bX=\ha\vdim_\R\bX$, which is half what we might have expected. A heuristic explanation is that one should be able to make $\bX$ into a `derived $C^\iy$-scheme' $\bX^{C^\iy}$ (not a derived manifold), in some sense similar to Lurie \cite[\S 4.5]{Luri} or Spivak \cite{Spiv}, and $(\bX^{C^\iy},\Im\om_\bX^*)$ should be a `real $-2$-shifted symplectic derived $C^\iy$-scheme', with $\Im\om_\bX^*$ the imaginary part of $\om_\bX^*$. There should be a morphism $\bX^{C^\iy}\ra\bX_\dm$ which is a `Lagrangian fibration' of $(\bX^{C^\iy},\Im\om_\bX^*)$. So $\vdim_\R\bX_\dm=\ha\vdim_\R\bX^{C^\iy}=\ha\vdim_\R\bX$, as for Lagrangian fibrations $\pi:(S,\om)\ra B$ we have~$\dim B=\ha\dim S$.

The main application we intend for these results, motivated by Donaldson and Thomas \cite{DoTh} and explained in \S\ref{vf38}--\S\ref{vf39}, is to define new invariants `counting' (semi)stable coherent sheaves on Calabi--Yau 4-folds $Y$ over $\C$, which should be unchanged under deformations of $Y$. These are similar to Donaldson--Thomas invariants \cite{JoSo,KoSo,Thom} `counting' (semi)stable coherent sheaves of Calabi--Yau 3-folds, and could be called `holomorphic Donaldson invariants', as they are complex analogues of Donaldson invariants of 4-manifolds~\cite{DoKr}.

Pantev, To\"en, Vaqui\'e and Vezzosi \cite[\S 2.1]{PTVV} show that any derived moduli stack $\bs\cM$ of coherent sheaves (or complexes of coherent sheaves) on a Calabi--Yau $m$-fold has a $(2-m)$-shifted symplectic structure $\om_{\sst\bs\cM}^*$, so in particular 4-Calabi--Yau moduli stacks are $-2$-shifted symplectic. Given an analogue of this for derived moduli schemes, and a way to define orientations upon them, Corollary \ref{vf1cor1} would give virtual classes for moduli schemes of (semi)stable coherent sheaves on Calabi--Yau 4-folds, and so enable us to define invariants.

It is well known that there is a great deal of interesting and special geometry, related to String Theory, concerning Calabi--Yau 3-folds and 3-Calabi--Yau categories --- Mirror Symmetry, Donaldson--Thomas theory, and so on. One message of this paper is that there should also be special geometry concerning Calabi--Yau 4-folds and 4-Calabi--Yau categories, which is not yet understood.

During the writing of this paper, Cao and Leung \cite{Cao,CaLe1,CaLe2} also proposed a theory of invariants counting coherent sheaves on Calabi--Yau 4-folds, based on gauge theory rather than derived geometry. We discuss their work in~\S\ref{vf39}.

Section \ref{vf2} provides background material on derived schemes, shifted symplectic structures upon them, Kuranishi atlases, and derived manifolds. The heart of the paper is \S\ref{vf3}, with the definitions, main results, shorter proofs, and discussion. Longer proofs of results in \S\ref{vf3} are deferred to sections \ref{vf4}--\ref{vf6}.
\medskip

\noindent{\bf Acknowledgements.} We would like to thank Yalong Cao, Conan Leung, Bertrand To\"en, Gabriele Vezzosi, and a referee for helpful conversations. This research was supported by EPSRC Programme Grant~EP/I033343/1.

\section{Background material}
\label{vf2}

We begin with some background material and notation needed later. Some references are To\"en and Vezzosi \cite{Toen1,ToVe} for \S\ref{vf21}--\S\ref{vf22}, Pantev, To\"en, Vezzosi and Vaqui\'e \cite{PTVV} and Brav, Bussi and Joyce \cite{BBJ} for \S\ref{vf23}, and Spivak \cite{Spiv}, Borisov and No\"el \cite{Bori,BoNo} and Joyce \cite{Joyc2,Joyc3,Joyc4,Joyc6} for~\S\ref{vf26}.

\subsection{Commutative differential graded algebras}
\label{vf21}

\begin{dfn} Write $\cdga_\C$ for the category of commutative differential graded $\C$-algebras in nonpositive degrees, and $\cdga_\C^{\bf op}$ for its opposite category. In fact $\cdga_\C$ has the additional structure of a model category (a kind of $\iy$-category), but we only use this in the proof of Theorem \ref{vf3thm1} in \S\ref{vf4}. In the rest of the paper we treat $\cdga_\C,\cdga_\C^{\bf op}$ just as ordinary categories.

Objects of $\cdga_\C$ are of the form $\cdots \ra A^{-2}\,{\buildrel\d\over\longra}\,A^{-1}\,{\buildrel\d\over\longra}\,A^0$. Here $A^k$ for $k=0,-1,-2,\ldots$ is the $\C$-vector space of degree $k$ elements of $A$, and we have a $\C$-bilinear, associative, supercommutative multiplication $\cdot:A^k\t A^l\ra A^{k+l}$ for $k,l\le 0$, an identity $1\in A^0$, and differentials $\d:A^k\ra A^{k+1}$ for $k<0$ satisfying
\begin{equation*}
\d(a\cdot b)=(\d a)\cdot b+(-1)^ka\cdot(\d b)
\end{equation*}
for all $a\in A^k$, $b\in A^l$. We write such objects as $A^\bu$ or $(A^*,\d)$. 

Here and throughout we will use the superscript `$\,{}^*\,$' to denote {\it graded\/} objects (e.g.\ graded algebras or vector spaces), where $*$ stands for an index in $\Z$, so that $A^*$ means $(A^k$, $k\in\Z)$. We will use the superscript `$\,{}^\bu\,$' to denote {\it differential graded\/} objects (e.g.\ differential graded algebras or complexes), so that $A^\bu$ means $(A^*,\d)$, the graded object $A^*$ together with the differential $\d$. 

{\it Morphisms\/} $\al:A^\bu\ra B^\bu$ in $\cdga_\C$ are $\C$-linear maps $\al^k:A^k\ra B^k$ for all $k\le 0$ commuting with all the structures on~$A^\bu,B^\bu$.

A morphism $\al:A^\bu\ra B^\bu$ is a {\it quasi-isomorphism\/} if $H^k(\al):H^k(A^\bu)\ra H^k(B^\bu)$ is an isomorphism on cohomology groups for all $k\le 0$. A fundamental principle of derived algebraic geometry is that $\cdga_\C$ is not really the right category to work in, but instead one wants to define a new category (or better, $\iy$-category) by inverting (localizing) quasi-isomorphisms in $\cdga_\C$.

We will call $A^\bu\in\cdga_\C$ of {\it standard form\/} if $A^0$ is a smooth finitely generated $\C$-algebra of pure dimension, and the graded $\C$-algebra $A^*$ is freely generated over $A^0$ by finitely many generators in each degree $i=-1,-2,\ldots.$ Here we require $A^0$ to be smooth {\it of pure dimension\/} so that $(\Spec A^0)_\an$ is a complex manifold, rather than a disjoint union of complex manifolds of different dimensions. This is not crucial, but will be convenient in~\S\ref{vf3}.
\label{vf2def1}
\end{dfn}

\begin{rem} Brav, Bussi and Joyce \cite[Def.~2.9]{BBJ} work with a stronger notion of standard form cdgas than us, as they require $A^*$ to be freely generated over $A^0$ by finitely many generators, all in negative degrees. In contrast, we allow infinitely many generators, but only finitely many in each degree $i=-1,-2,\ldots.$

The important thing for us is that since standard form cdgas in the sense of \cite{BBJ} are also standard form in the (slightly weaker) sense of this paper, we can apply some of their results \cite[Th.s 4.1, 4.2, 5.18]{BBJ} on the existence and properties of nice standard form cdga local models for derived schemes.
\label{vf2rem1}
\end{rem}

\begin{dfn} Let $A^\bu\in\cdga_\C$, and write $D(\mathop{\rm mod} A)$ for the derived category of dg-modules over $A^\bu$. Define a {\it derivation of degree $k$} from $A^\bu$ to an $A^\bu$-module $M^\bu$ to be a $\C$-linear map $\de:A^\bu\ra M^\bu$ that is homogeneous of degree $k$ with
\begin{equation*}
\de(fg)=\de(f)g+(-1)^{k|f|}f\de(g).
\end{equation*}
Just as for ordinary commutative algebras, there is a universal
derivation into an $A^\bu$-module of {\it K\"ahler differentials\/}
$\Om^1_{A^\bu}$, which can be constructed as $I/I^2$ for $I=\Ker(m:A^\bu\ot A^\bu
\ra A^\bu)$. The universal derivation $\de:A^\bu\ra \Om^1_{A^\bu}$ is $\de(a)=a\ot 1-1\ot a \in I/I^2$. One checks that $\de$ is a universal degree $0$ derivation, so that ${}\ci
\de:\Hom^\bu_{A^\bu}(\Om^1_{A^\bu},M^\bu) \ra \mathop{\rm Der}^\bu(A,M^\bu)$ is an
isomorphism of dg-modules. 

Note that $\Om^1_{A^\bu}=((\Om^1_{A^\bu})^*,\d)$ is canonical up to strict isomorphism, not just up to quasi-isomorphism of complexes, or up to equivalence in $D(\mathop{\rm mod} A)$. Also, the underlying graded vector space $(\Om^1_{A^\bu})^*$, as a module over the graded algebra $A^*$, depends only on $A^*$ and not on the differential $\d$ in $A^\bu=(A^*,\d)$.

Similarly, given a morphism of cdgas $\Phi:A^\bu\ra B^\bu$, we can define the {\it relative K\"ahler differentials\/} $\Om^1_{B^\bu/A^\bu}$.

The {\it cotangent complex\/} $\bL_{A^\bu}$ of $A^\bu$ is related to the K\"ahler differentials $\Om^1_{A^\bu}$, but is not quite the same. If $\Phi:A^\bu\ra B^\bu$ is a quasi-isomorphism of cdgas over $\C$, then $\Phi_*:\Om^1_{A^\bu}\ot_{A^\bu}B^\bu\ra \Om^1_{B^\bu}$ may not be a quasi-isomorphism of $B^\bu$-modules. So K\"ahler differentials are not well-behaved under localizing quasi-isomorphisms of cdgas, which is bad for doing derived algebraic geometry. 

The cotangent complex $\bL_{A^\bu}$ is a substitute for $\Om^1_{A^\bu}$ which is well-behaved under localizing quasi-isomorphisms. It is an object in $D(\mathop{\rm mod} A)$, canonical up to equivalence. We can define it by replacing $A^\bu$ by a quasi-isomorphic, cofibrant (in the sense of model categories) cdga $B^\bu$, and then setting $\bL_{A^\bu}=(\Om^1_{B^\bu})\ot_{B^\bu}A^\bu$. We will be interested in the $p^{\rm th}$ exterior power $\La^p\bL_{A^\bu}$, and the dual $(\bL_{A^\bu})^\vee$, which is called the {\it tangent complex}, and written $\bT_{A^\bu}=(\bL_{A^\bu})^\vee$. 

There is a {\it de Rham differential\/} $\dd:\La^p\bL_{A^\bu}\ra\La^{p+1}\bL_{A^\bu}$, a morphism of complexes, with $\dd^2=0:\La^p\bL_{A^\bu}\ra\La^{p+2}\bL_{A^\bu}$. Note that each $\La^p\bL_{A^\bu}$ is also a complex with its own internal differential $\d:(\La^p\bL_{A^\bu})^k\ra(\La^p\bL_{A^\bu})^{k+1}$, and $\dd$ being a morphism of complexes means that $\d\ci\dd=\dd\ci\d$.

Similarly, given a morphism of cdgas $\Phi:A^\bu\ra B^\bu$, we can define the {\it relative cotangent complex\/} $\bL_{B^\bu/A^\bu}$.

As in \cite[\S 2.3]{BBJ}, an important property of our standard form cdgas $A^\bu$ in Definition \ref{vf2def1} is that they are sufficiently cofibrant that the K\"ahler differentials $\Om^1_{A^\bu}$ provide a model for the cotangent complex $\bL_{A^\bu}$, so we can take $\Om^1_{A^\bu}=\bL_{A^\bu}$, without having to replace $A^\bu$ by an unknown cdga $B^\bu$. Thus standard form cdgas are convenient for doing explicit computations with cotangent complexes.

A morphism $\Phi:A^\bu\ra B^\bu$ of cdgas will be called {\it quasi-free\/} if $\Phi^0:A^0\ra B^0$ is a smooth morphism of $\C$-algebras of pure relative dimension, and as a graded $A^*\ot_{A^0}B^0$-algebra $B^*$ is free and finitely generated in each degree. Here if $A^\bu$ is of standard form and $\Phi$ is quasi-free then $B^\bu$ is of standard form, and a cdga $A^\bu$ is of standard form if and only if the unique morphism $\C\ra A^\bu$ is quasi-free. We will only consider quasi-free morphisms when $A^\bu,B^\bu$ are of standard form.

If $\Phi:A^\bu\ra B^\bu$ is a quasi-free morphism then the relative K\"ahler differentials $\Om^1_{B^\bu/A^\bu}$ are a model for the relative cotangent complex $\bL_{B^\bu/A^\bu}$, so we can take $\Om^1_{B^\bu/A^\bu}=\bL_{B^\bu/A^\bu}$. Thus quasi-free morphisms are a convenient class of morphisms for doing explicit computations with cotangent complexes.
\label{vf2def2}
\end{dfn}

\subsection{Derived algebraic geometry and derived schemes}
\label{vf22}

\begin{dfn} Write $\dSt_\C$ for the $\iy$-category of {\it derived\/ $\C$-stacks} (or $D^-$-{\it stacks\/}) defined by To\"en and Vezzosi \cite[Def.~2.2.2.14]{ToVe}, \cite[Def.~4.2]{Toen1}. Objects $\bX$ in $\dSt_\C$ are $\iy$-functors
\begin{equation*}
\smash{\bX:\{\text{simplicial commutative $\C$-algebras}\}\longra
\{\text{simplicial sets}\}}
\end{equation*}
satisfying sheaf-type conditions. There is a {\it spectrum functor\/}
\begin{equation*}
\smash{\bSpec:\cdga^{\bf op}_\C \longra\dSt_\C.}
\end{equation*}
A derived $\C$-stack $\bX$ is called an {\it affine derived\/ $\C$-scheme\/} if $\bX$ is equivalent in $\dSt_\C$ to $\bSpec A^\bu$ for some cdga $A^\bu$ over $\C$. As in \cite[\S 4.2]{Toen1}, a derived $\C$-stack $\bX$ is called a {\it derived\/ $\C$-scheme\/} if it may be covered by Zariski open $\bY\subseteq\bX$ with $\bY$ an affine derived $\C$-scheme. Write $\dSch_\C$ for the full $\iy$-subcategory of derived $\C$-schemes in $\dSt_\C$, and $\dSch_\C^{\bf aff}\subset\dSch_\C$ for the full $\iy$-subcategory of affine derived $\C$-schemes. See also To\"en \cite{Toen2} for a different but equivalent way to define derived $\C$-schemes, as an $\iy$-category of derived ringed spaces.

We shall assume throughout this paper that all derived $\C$-schemes $\bX$ are {\it locally finitely presented\/} in the sense of To\"en and Vezzosi \cite[Def.~1.3.6.4]{ToVe}. Note that this is a strong condition, for instance it implies that the cotangent complex $\bL_\bX$ is perfect \cite[Prop.~2.2.2.4]{ToVe}. A locally finitely presented classical $\C$-scheme $X$ need not be locally finitely presented as a derived $\C$-scheme. A local normal form for locally finitely presented derived $\C$-schemes is given in~\cite[Th.~4.1]{BBJ}.

There is a {\it classical truncation functor\/} $t_0:\dSch_\C\ra\Sch_\C$ taking a derived $\C$-scheme $\bX$ to the underlying classical $\C$-scheme $X=t_0(\bX)$. On affine derived schemes $\dSch_\C^{\bf aff}$ this maps~$t_0:\bSpec A^\bu\ra\Spec H^0(A^\bu)=\Spec (A^0/\d(A^{-1}))$.

To\"en and Vezzosi show that a derived $\C$-scheme $\bX$ has a {\it cotangent complex\/} $\bL_\bX$ \cite[\S 1.4]{ToVe}, \cite[\S 4.2.4--\S 4.2.5]{Toen1} in a stable $\iy$-category $L_\qcoh(\bX)$ defined in \cite[\S 3.1.7, \S 4.2.4]{Toen1}. We will be interested in the $p^{\rm th}$ exterior power $\La^p\bL_\bX$, and the dual $(\bL_\bX)^\vee$, which is called the {\it tangent complex\/} $\bT_\bX$. There is a {\it de Rham differential\/}~$\dd:\La^p\bL_\bX\ra\La^{p+1}\bL_\bX$.

Restricted to the classical scheme $X=t_0(\bX)$, the cotangent complex $\bL_\bX\vert_X$ may Zariski locally be modelled as a finite complex of vector bundles $[F^{-m}\ra F^{1-m}\ra\cdots\ra F^0]$ on $X$ in degrees $[-m,0]$ for some $m\ge 0$. The ({\it complex\/}) {\it virtual dimension\/} $\vdim_\C\bX$ is $\vdim_\C\bX=\sum_{i=0}^m(-1)^i\rank F^{-i}$. It is a locally constant function $\vdim_\C\bX:X\ra\Z$, so is constant on each connected component of $X$. We say that $\bX$ {\it has\/} ({\it complex\/}) {\it virtual dimension\/} $n\in\Z$ if~$\vdim_\C\bX=n$.

When $\bX=X$ is a classical scheme, the homotopy category of $L_\qcoh(\bX)$ is the triangulated category $D_{\qcoh}(X)$ of complexes of quasicoherent sheaves. These $\bL_\bX,\bT_\bX$ have the usual properties of (co)tangent complexes. For instance, if $\bs f:\bX\ra\bY$ is a morphism in $\dSch_\C$ there is a distinguished triangle
\begin{equation*}
\smash{\xymatrix@C=30pt{ \bs f^*(\bL_\bY) \ar[r]^(0.55){\bL_{\bs f}} &
\bL_\bX \ar[r] & \bL_{\bX/\bY} \ar[r] & \bs f^*(\bL_\bY)[1], }}
\end{equation*}
where $\bL_{\bX/\bY}$ is the {\it relative cotangent complex\/} of~$\bs f$. 

Now suppose $A^\bu$ is a cdga over $\C$, and $\bX$ a derived $\C$-scheme with $\bX\simeq\bSpec A^\bu$ in $\dSch_\C$. Then we have an equivalence of triangulated categories $L_\qcoh(\bX)\simeq \ab D(\mathop{\rm mod}A)$, which identifies cotangent complexes $\bL_\bX\simeq\bL_{A^\bu}$. If also $A^\bu$ is of standard form then $\bL_{A^\bu}\simeq\Om^1_{A^\bu}$, so~$\bL_\bX\simeq\Om^1_{A^\bu}$.
\label{vf2def3}
\end{dfn}

Bussi, Brav and Joyce \cite[Th.~4.1]{BBJ} prove:

\begin{thm} Suppose $\bX$ is a derived\/ $\C$-scheme (as always, assumed locally finitely presented), and\/ $x\in\bX$. Then there exists a standard form cdga $A^\bu$ over $\C$ and a Zariski open inclusion $\bs\al:\bSpec A^\bu\hookra\bX$ with\/ $x\in\Im\bs\al$.
\label{vf2thm1}
\end{thm}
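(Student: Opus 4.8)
The statement to prove is Theorem~\ref{vf2thm1}: every locally finitely presented derived $\C$-scheme $\bX$ admits, near any point $x$, a Zariski open inclusion $\bs\al\colon\bSpec A^\bu\hookra\bX$ with $A^\bu$ of standard form and $x\in\Im\bs\al$. (This is quoted from \cite[Th.~4.1]{BBJ}; here is how I would prove it.)

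\medskip

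\noindent\textbf{Proof strategy.}
The plan is to produce the standard form model in two stages: first get \emph{some} affine derived chart around $x$, then improve the chart so that its $0$th-degree piece is smooth and the graded algebra is freely (and degreewise finitely) generated. First I would use the definition of a derived $\C$-scheme: $\bX$ is covered by Zariski open affine derived subschemes, so there is a Zariski open $\bY\subseteq\bX$ with $x\in\bY$ and an equivalence $\bY\simeq\bSpec B^\bu$ for some cdga $B^\bu$ over $\C$. Since $\bX$ is locally finitely presented, after shrinking $\bY$ we may take $B^\bu$ to be finitely presented: $B^0$ a finitely generated $\C$-algebra and $B^*$ generated over $B^0$ by finitely many generators, with only finitely many in each negative degree (this is where local finite presentation is genuinely used — cf.\ \cite[Prop.~2.2.2.4]{ToVe}). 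So the real content is the passage from an arbitrary finitely presented affine derived chart to a \emph{standard form} one.

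\medskip

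\noindent\textbf{Key steps.}
Step 1: shrink to a finitely presented affine chart $\bY\simeq\bSpec B^\bu\ni x$ as above. Step 2: replace $B^0$ by a smooth $\C$-algebra. The classical scheme $\Spec B^0$ need not be smooth, but by the usual argument (present $B^0$ as a quotient of a polynomial ring $\C[y_1,\dots,y_N]$ by finitely many relations) we can, Zariski-locally near the image of $x$, embed $\Spec B^0$ into affine space and then take a smooth $\C$-algebra $A^0$ (of pure dimension, after further localisation to pick out one component of the relevant dimension) mapping to $B^0$; concretely $A^0$ is a localisation of a polynomial ring. Step 3: build $A^\bu$ over this $A^0$ degree by degree, freely adjoining generators in degrees $-1,-2,\dots$ to kill, successively, the generators of $B^0$ as an $A^0$-module (in degree $-1$), then the relations among relations (in degree $-2$), and so on, choosing at each stage finitely many new generators — possible because $B^\bu$ is finitely presented — together with a quasi-isomorphism $A^\bu\to B^\bu$ lifting the chosen data. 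Step 4: since quasi-isomorphic cdgas have equivalent $\bSpec$'s, $\bSpec A^\bu\simeq\bSpec B^\bu\simeq\bY\subseteq\bX$ is the desired Zariski open inclusion, and by construction $A^\bu$ is of standard form (note $A^0$ smooth of pure dimension $\Rightarrow$ the weaker standard-form condition of Definition~\ref{vf2def1} holds; in fact one gets the stronger \cite{BBJ} notion, with only finitely many generators total). Finally one checks $x\in\Im\bs\al$, which is automatic from tracking $x$ through the equivalences.

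\medskip

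\noindent\textbf{Main obstacle.}
The technical heart is Step~3: the inductive construction of a cofibrant (quasi-free over $A^0$) replacement $A^\bu\xrightarrow{\sim}B^\bu$ that is simultaneously (a) degreewise finitely generated and (b) compatible with the chosen smooth $A^0\twoheadrightarrow B^0$. One must argue that finite presentation of $B^\bu$ propagates through the induction, i.e.\ that at each stage only finitely many new generators are needed to make $H^{-k}$ of the partial model match $H^{-k}(B^\bu)$ — this uses that the relevant modules of cycles/boundaries are finitely generated over the (Noetherian) smooth algebra $A^0$, which is why smoothness and pure dimension of $A^0$ are imposed rather than working with $B^0$ directly. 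Controlling the interaction between the model-category replacement and the Zariski-local choice of $A^0$ (ensuring the whole construction can be done after a single localisation containing $x$) is the point requiring care; everything else is standard homological bookkeeping, and we refer to \cite[Th.~4.1]{BBJ} for the full details.
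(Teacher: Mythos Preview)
The paper does not prove Theorem~\ref{vf2thm1}; it simply quotes it from \cite[Th.~4.1]{BBJ}, as you yourself note. So there is no ``paper's own proof'' to compare against, and your sketch is really a sketch of the \cite{BBJ} argument. That said, your outline is essentially the correct one and matches the standard construction: pass to an affine chart, choose a smooth $A^0$ surjecting onto $H^0$ of the chart, then inductively adjoin finitely many free generators in each negative degree to build a quasi-isomorphism, using local finite presentation (Noetherianity of $A^0$ and perfectness of $\bL$) to guarantee finiteness at each stage.

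One small imprecision: in Step~3 you say the degree $-1$ generators ``kill the generators of $B^0$ as an $A^0$-module''. What you actually want is to adjoin degree $-1$ generators whose differentials generate the kernel of $A^0\twoheadrightarrow H^0(B^\bu)$, and simultaneously adjoin generators hitting $H^{-1}(B^\bu)$; then in degree $-2$ kill the excess in $H^{-1}$ of the partial model and hit $H^{-2}(B^\bu)$; and so on. This two-part r\^ole at each stage (surject on $H^n$, inject on $H^{n+1}$) is exactly the pattern the present paper uses in Hypothesis~\ref{vf4hyp2} when proving Theorem~\ref{vf3thm1}, and it is the same mechanism in \cite{BBJ}. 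With that correction your sketch is sound; as you say, the only genuine bookkeeping is checking that finitely many generators suffice at each step and that all the required Zariski localisations can be made compatibly around $x$.
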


See Remark \ref{vf2rem1} on the difference in definitions of `standard form'. Bussi et al.\ also explain \cite[Th.~4.2]{BBJ} how to compare two such standard form charts $\bSpec A^\bu\hookra\bX$, $\bSpec B^\bu\hookra\bX$ on their overlap in $\bX$, using a third chart. We will need the following conditions on derived $\C$-schemes and their morphisms.

\begin{dfn} A derived $\C$-scheme $\bX$ is called {\it separated}, or {\it proper}, or {\it quasicompact}, if the classical $\C$-scheme $X=t_0(\bX)$ is separated, or proper, or quasicompact, respectively, in the classical sense, as in Hartshorne \cite[pp.~80, 96, 100]{Hart}. Proper implies sperated. A morphism of derived schemes $\bs f:\bX\ra\bY$ is {\it proper\/} if $t_0(\bs f):t_0(\bX)\ra t_0(\bY)$ is proper in the classical sense~\cite[p.~100]{Hart}.

We will need the following nontrivial fact about the relation between classical and derived $\C$-schemes. As in To\"en \cite[\S 2.2, p.~186]{Toen2}, a derived $\C$-scheme $\bX$ is affine if and only if the classical $\C$-scheme $X=t_0(\bX)$ is affine.

Recall that a morphism $\al:X\ra Y$ in $\Sch_\C$ (or $\bs\al:\bX\ra\bY$ in $\dSch_\C$) is {\it affine\/} if whenever $\be:U\ra Y$  is a Zariski open inclusion with $U$ affine (or $\bs\be:\bU\ra\bY$ is Zariski open with $\bU$ affine), the fibre product $X\t_{\al,Y,\be}U$ in $\Sch_\C$ (or homotopy fibre product $\bX\t_{\bs\al,\bY,\bs\be}^h\bU$ in $\dSch_\C$) is also affine. Since $\bX$ is affine if and only if $X=t_0(\bX)$ is affine, we see that a morphism $\bs\al:\bX\ra\bY$ in $\dSch_\C$ is affine if and only if $t_0(\bs\al):t_0(\bX)\ra t_0(\bY)$ is affine.

Now let $\bX$ be a separated derived $\C$-scheme. Then $X=t_0(\bX)$ is a separated classical $\C$-scheme, so \cite[p.~96]{Hart} the diagonal morphism $\De_X:X\ra X\t X$ is a closed immersion. But closed immersions are affine, and $\De_X=t_0(\De_\bX)$ for $\De_\bX:\bX\ra\bX\t\bX$ the derived diagonal morphism, so $\De_\bX$ is also affine. That is, $\bX$ has {\it affine diagonal}. Therefore if $\bU_1,\bU_2\hookra\bX$ are Zariski open inclusions with $\bU_1,\bU_2$ affine, then $\bU_1\t_\bX^h\bU_2\hookra\bX$ is also Zariski open with $\bU_1\t_\bX^h\bU_2$ affine. Thus, {\it finite intersections of open affine derived\/ $\C$-subschemes in a separated derived\/ $\C$-scheme\/ $\bX$ are affine}.
\label{vf2def4}
\end{dfn}

\subsection{\texorpdfstring{PTVV's shifted symplectic geometry}{PTVV\textquoteright s shifted symplectic geometry}}
\label{vf23}

Next we summarize parts of the theory of shifted symplectic geometry, as developed by Pantev, To\"{e}n, Vaqui\'{e}, and Vezzosi in \cite{PTVV}. We explain them for derived $\C$-schemes $\bX$, although Pantev et al.\ work more generally with derived stacks.

Given a (locally finitely presented) derived $\C$-scheme $\bX$ and $p\ge 0$, $k\in\Z$, Pantev et al.\ \cite{PTVV} define complexes of $k$-{\it shifted\/ $p$-forms\/} $\cA^p_\C(\bX,k)$ and $k$-{\it shifted closed\/ $p$-forms\/} $\cA^{p,\cl}_\C(\bX,k)$. These are defined first for affine derived $\C$-schemes $\bY=\bSpec A^\bu$ for $A^\bu$ a cdga over $\C$, and shown to satisfy \'etale descent. Then for general $\bX$, $k$-shifted (closed) $p$-forms are defined as a mapping stack; basically, a $k$-shifted (closed) $p$-form $\om$ on $\bX$ is the functorial choice for all $\bY,\bs f$ of a $k$-shifted (closed) $p$-form $\bs f^*(\om)$ on $\bY$ whenever $\bY=\bSpec A^\bu$ is affine and $\bs f:\bY\ra\bX$ is a morphism.

\begin{dfn} Let $\bY\simeq\bSpec A^\bu$ be an affine derived $\C$-scheme, for $A^\bu$ a cdga over $\C$. A $k$-{\it shifted\/ $p$-form\/} on $\bY$ for $k\in\Z$ is an element $\om_{A^\bu}\in(\La^p\bL_{A^\bu})^k$ with $\d\om_{A^\bu}=0$ in $(\La^p\bL_{A^\bu})^{k+1}$, so that $\om_{A^\bu}$ defines a cohomology class $[\om_{A^\bu}]\in H^k(\La^p\bL_{A^\bu})$.
When $p=2$, we call $\om_{A^\bu}$ {\it nondegenerate}, or a $k$-{\it shifted presymplectic form}, if the induced morphism $\om_{A^\bu}\cdot:\bT_{A^\bu}\ra\bL_{A^\bu}[k]$ is a quasi-isomorphism.

A {\it $k$-shifted closed\/ $p$-form\/} on $\bY$ is a sequence $\om_{A^\bu}^*=(\om^0_{A^\bu},\om^1_{A^\bu},\om^2_{A^\bu},\ldots)$ such that $\om^m_{A^\bu}\in(\La^{p+m}\bL_{A^\bu})^{k-m}$ for $m\ge 0$, with $\d\om_{A^\bu}^0=0$ and $\d\om_{A^\bu}^{1+m}+\dd\om_{A^\bu}^{m}=0$ in $(\La^{p+m+1}\bL_{A^\bu})^{k-m}$ for all $m\ge 0$. Note that if $\om_{A^\bu}^*=(\om^0_{A^\bu},\om^1_{A^\bu},\ldots)$ is a $k$-shifted closed $p$-form then $\om^0_{A^\bu}$ is a $k$-shifted $p$-form. 

When $p=2$, we call a $k$-shifted closed 2-form $\om_{A^\bu}^*$ a $k$-{\it shifted symplectic form\/} if the associated 2-form $\om^0_{A^\bu}$ is nondegenerate (presymplectic).

If $\bX$ is a general derived $\C$-scheme, then Pantev et al. \cite[\S 1.2]{PTVV} define $k$-{\it shifted\/ $2$-forms\/} $\om_\bX$, which may be {\it nondegenerate\/} ({\it presymplectic\/}), and $k$-{\it shifted closed\/ $2$-forms\/} $\om^*_\bX$, which have an associated $k$-shifted 2-form $\om_\bX^0$, and where $\om^*_\bX$ is called a $k$-{\it shifted symplectic form\/} if $\om^0_\bX$ is nondegenerate (presymplectic). We will not go into the details of this definition for general~$\bX$. 

The important thing for us is this: if $\bY\subseteq\bX$ is a Zariski open affine derived $\C$-subscheme with $\bY\simeq\bSpec A^\bu$ then a $k$-shifted 2-form $\om_\bX$ (or a $k$-shifted closed 2-form $\om^*_\bX$) on $\bX$ induces a $k$-shifted 2-form $\om_{A^\bu}$ (or a $k$-shifted closed 2-form $\om_{A^\bu}^*$) on $\bY$ in the sense above, where $\om_{A^\bu}$ is unique up to cohomology in the complex $((\La^2\bL_{A^\bu})^*,\d)$ (or $\om_{A^\bu}^*$ is unique up to cohomology in the complex $(\prod_{m\ge 0}(\La^{2+m}\bL_{A^\bu})^{*-m},\d+\dd)$), and $\om_\bX$ nondegenerate/presymplectic (or $\om_\bX^*$ symplectic) implies $\om_{A^\bu}$ nondegenerate/presymplectic (or $\om_{A^\bu}^*$ symplectic). 

It is easy to show that if $\bX$ is a derived $\C$-scheme with a $k$-shifted symplectic or presymplectic form, then $k\le 0$, and the complex virtual dimension $\vdim_\C\bX$ satisfies $\vdim_\C\bX=0$ if $k$ is odd, and $\vdim_\C\bX$ is even if $k\equiv 0\mod 4$ (which includes classical complex symplectic schemes when $k=0$), and $\vdim_\C\bX\in\Z$ if $k\equiv 2\mod 4$. In particular, in the case $k=-2$ of interest in this paper, $\vdim_\C\bX$ can take any value in~$\Z$.
\label{vf2def5}
\end{dfn}

The main examples we have in mind come from Pantev et al.~\cite[\S 2.1]{PTVV}:

\begin{thm} Suppose $Y$ is a Calabi--Yau $m$-fold over $\C,$ and\/ $\bs\cM$ a derived moduli stack of coherent sheaves (or complexes of coherent sheaves) on $Y$. Then $\bs\cM$ has a natural\/ $(2-m)$-shifted symplectic form $\om_{\bs\cM}$.
\label{vf2thm2}
\end{thm}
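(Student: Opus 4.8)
The plan is to follow the construction of Pantev--To\"en--Vaqui\'e--Vezzosi \cite[\S 2]{PTVV}, which realises $\bs\cM$ as an open derived substack of a mapping stack and transgresses a universal symplectic form. First I would identify $\bs\cM$ with an open derived substack of $\mathrm{Map}(Y,\mathbf{Perf})$, the derived mapping stack into the derived stack $\mathbf{Perf}$ of perfect complexes of $\C$-modules: a $T$-point of $\mathrm{Map}(Y,\mathbf{Perf})$ is a perfect complex on $Y\t T$, and $\bs\cM$ is the open locus where this complex is a flat family of coherent sheaves (or a family of complexes satisfying the chosen amplitude/support conditions). Since a $k$-shifted symplectic form restricts along any open (indeed \'etale) inclusion to a $k$-shifted symplectic form, it then suffices to construct a $(2-m)$-shifted symplectic form on $\mathrm{Map}(Y,\mathbf{Perf})$ and pull it back.

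Second, I would extract the input from the Calabi--Yau hypothesis. Because $Y$ is smooth and proper of dimension $m$, it is $\O$-\emph{compact}: $R\Gamma(Y,\cE)$ is a perfect $\C$-module for every perfect $\cE$ on $Y$. The trivialisation of the canonical bundle $\om_Y\simeq\O_Y$ together with the trace $H^m(Y,\om_Y)\ra\C$ produces a map $R\Gamma(Y,\O_Y)\ra\C[-m]$, and Serre duality shows the induced pairing $R\Gamma(Y,\cE)\ot_\C R\Gamma(Y,\cE^\vee)\ra R\Gamma(Y,\O_Y)\ra\C[-m]$ is perfect for every perfect $\cE$. That is, the Calabi--Yau structure on $Y$ is exactly an $\O$-\emph{orientation of dimension $m$} in the sense of~\cite[\S 2]{PTVV}.

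Third --- and this is the step I expect to be the main obstacle --- I would construct the canonical $2$-shifted symplectic form $\om_{\mathbf{Perf}}$ on $\mathbf{Perf}$. Nondegeneracy is the easy part: at a point corresponding to a perfect complex $E$ one has $\bT_{\mathbf{Perf}}\vert_E\simeq\mathrm{REnd}(E)[1]$, and the composition-then-trace pairing $\mathrm{REnd}(E)\ot_\C\mathrm{REnd}(E)\ra\C$ is a graded-symmetric perfect pairing, hence induces a quasi-isomorphism $\bT_{\mathbf{Perf}}\ra\bL_{\mathbf{Perf}}[2]$. The real work is to promote this degree-$2$ $2$-form to a genuine \emph{closed} $2$-form, i.e.\ to produce the whole sequence $(\om^0_{\mathbf{Perf}},\om^1_{\mathbf{Perf}},\ldots)$ with $\om^0_{\mathbf{Perf}}$ the trace pairing; following \cite[\S 2.3]{PTVV} I would obtain this via noncommutative Hodge theory, taking the Chern character of the universal perfect complex on $\mathbf{Perf}$ in negative cyclic homology (equivalently an $S^1$-equivariant lift of its Hochschild class) and transporting it through the identification of the complex of closed $2$-forms of degree $2$ with the relevant piece of negative cyclic homology. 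This is where a direct cocycle computation does not suffice and the cyclic-homology machinery is genuinely needed.

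Finally, I would invoke the PTVV \emph{mapping stack theorem} \cite[Thm.~2.5]{PTVV}: if $X$ is $\O$-compact and $\O$-oriented of dimension $d$ and $(F,\om_F)$ is $n$-shifted symplectic, then $\mathrm{Map}(X,F)$ carries a canonical $(n-d)$-shifted symplectic form $\int_X\mathrm{ev}^*(\om_F)$, where $\mathrm{ev}:X\t\mathrm{Map}(X,F)\ra F$ is the evaluation map, $\mathrm{ev}^*$ is pullback on closed $2$-forms, and $\int_X$ is the integration map on closed forms supplied by the orientation; nondegeneracy is checked fibrewise from perfectness of the orientation pairing and the base-change identification $\bT_{\mathrm{Map}(X,F)}\vert_{[f]}\simeq R\Gamma(X,f^*\bT_F)$. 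Taking $X=Y$, $d=m$, $F=\mathbf{Perf}$, $n=2$ yields a $(2-m)$-shifted symplectic form on $\mathrm{Map}(Y,\mathbf{Perf})$, and restricting along the open inclusion $\bs\cM\hookra\mathrm{Map}(Y,\mathbf{Perf})$ gives the desired $\om_{\bs\cM}$. Naturality of $\om_{\bs\cM}$ follows from the functoriality of all of the above in $Y$ and in the moduli problem.
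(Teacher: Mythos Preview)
Your proposal is correct and follows exactly the approach of Pantev--To\"en--Vaqui\'e--Vezzosi \cite[\S 2]{PTVV}. Note that the paper itself does not prove this theorem: it is stated as background and attributed directly to \cite[\S 2.1]{PTVV}, so your outline of the PTVV argument (mapping-stack realisation, $\O$-orientation from the Calabi--Yau structure, the $2$-shifted symplectic form on $\mathbf{Perf}$ via the Chern character in negative cyclic homology, and transgression via \cite[Thm.~2.5]{PTVV}) is precisely the proof being cited.
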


In particular, derived moduli schemes and stacks on a Calabi--Yau 4-fold $Y$ are $-2$-shifted symplectic. 

Bussi, Brav and Joyce \cite{BBJ} prove `Darboux Theorems' for $k$-shifted symplectic derived $\C$-schemes $(\bX,\om_\bX)$ for $k<0$, which give explicit Zariski local models for $(\bX,\om_\bX)$. We will explain their main result for $k=-2$. The next definition is taken from \cite[Ex.~5.16]{BBJ} (with notation changed, $2q_js_j$ in place of~$s_j$). 

\begin{dfn} A pair $(A^\bu,\om_{A^\bu})$ is called in {\it $-2$-Darboux form\/} if $A^\bu$ is a standard form cdga over $\C$, and $\om_{A^\bu}\in(\La^2\bL_{A^\bu})^{-2}=(\La^2\Om^1_{A^\bu})^{-2}$ with $\d\om_{A^\bu}=0$ in $(\La^2\bL_{A^\bu})^{-1}$ and $\dd\om_{A^\bu}=0$ in $(\La^3\bL_{A^\bu})^{-2}$, so that $\om_{A^\bu}^*:=(\om_{A^\bu},0,0,\ldots)$ is a $-2$-shifted closed 2-form on $A^\bu$,  such that:
\begin{itemize}
\setlength{\itemsep}{0pt}
\setlength{\parsep}{0pt}
\item[(i)] $A^0$ is a smooth $\C$-algebra of dimension $m$, and there exist $x_1,\ldots,x_m$ in $A^0$ forming an \'etale coordinate system on $V=\Spec A^0$.
\item[(ii)] The commutative graded algebra $A^*$ is freely generated over $A^0$ by elements $y_1,\ldots,y_n$ of degree $-1$ and $z_1,\ldots,z_m$ of degree $-2$.
\item[(iii)] There are invertible elements $q_1,\ldots,q_n$ in $A^0$ such that
\e
\begin{split}
\om_{A^\bu}&=\dd z_1\,\dd x_1+\cdots+\dd z_m\,\dd
x_m\\
&\qquad+\dd\bigl(q_1y_1\bigr)\,\dd y_1+\cdots+
\dd\bigl(q_ny_n\bigr)\,\dd y_n.
\end{split}
\label{vf2eq1}
\e
\item[(iv)] There are elements $s_1,\ldots,s_n\in A^0$ satisfying
\e
q_1(s_1)^2+\cdots+q_n(s_n)^2=0\quad\text{in
$A^0$,}
\label{vf2eq2}
\e
such that the differential $\d$ on $A^\bu=(A^*,\d)$ is given by
\e
\d x_i=0,\quad \d y_j=s_j, \quad \d z_i=
\sum_{j=1}^ny_j\biggl(2q_j\frac{\pd s_j}{\pd x_i}
+s_j\,\frac{\pd q_j}{\pd x_i}\biggr).
\label{vf2eq3}
\e

\end{itemize}

Here the only assumptions are that $A^0,x_1,\ldots,x_m$ are as in (i) and we are given $q_1,\ldots,q_n,\ab s_1,\ab\ldots,\ab s_n$ in $A^0$ satisfying \eq{vf2eq2}, and everything else follows from these. Defining $A^*$ as in (ii) and $\d$ as in \eq{vf2eq3}, then $A^\bu=(A^*,\d)$ is a standard form cdga over $\C$, where to show that $\d\ci\d z_i=0$ we apply $\frac{\pd}{\pd x_i}$ to \eq{vf2eq2}. Clearly $\dd\om_{A^\bu}=0$, as $\dd\ci\dd=0$. We have
\begin{align*}
&\d\om_{A^\bu}=\ts\sum\limits_{i=1}^m(\d\ci\dd z_i)\dd x_i+\sum\limits_{j=1}^n(\d\ci\dd(q_jy_j))\dd y_j+(\d\ci\dd y_j)\dd(q_jy_j)\\
&=-\dd\ts\sum\limits_{i=1}^m\d z_i\dd x_i-\dd\sum\limits_{j=1}^n\bigl[\d(q_jy_j)\dd y_j+\d y_j\dd(q_jy_j)\bigr]\\
&=-\dd\ts\sum\limits_{i=1}^m\sum\limits_{j=1}^ny_j\bigl(2q_j\frac{\pd s_j}{\pd x_i}
+s_j\,\frac{\pd q_j}{\pd x_i}\bigr)\dd x_i
-\dd\sum\limits_{j=1}^n\bigl[q_js_j\dd y_j+s_j\dd(q_jy_j)\bigr]\\
&=-\dd\ci\dd\ts\sum\limits_{j=1}^n\bigl[(q_js_j)y_j+s_j(q_jy_j)\bigr]=0,
\end{align*}
using \eq{vf2eq1} and $\d\ci\dd x_i=0$ for degree reasons in the first step, $\d\ci\dd=-\dd\ci\d$ and $\dd\ci\dd=0$ in the second, \eq{vf2eq3} in the third, $\d s_j=\sum_{i=1}^n\frac{\pd s_j}{\pd x_i}\dd x_i$ and similarly for $q_j$ in the fourth, and $\dd\ci\dd=0$ in the fifth. Hence $\om_{A^\bu}^*$ is a $-2$-shifted closed 2-form on $A^\bu$.

The action $\om_{A^\bu}\cdot:\bT_{A^\bu}\ra\bL_{A^\bu}[-2]$ is given by
\begin{gather*}
\om_{A^\bu}\cdot \frac{\pd}{\pd x_i}=-\dd z_i+\sum_{j=1}^n\frac{\pd q_j}{\pd x_i}y_j\,\dd y_j,
\\
\om_{A^\bu}\cdot \frac{\pd}{\pd y_j}=2q_j\dd y_j-\sum_{i=1}^my_j\frac{\pd q_j}{\pd x_i}\dd x_i,\qquad \om_{A^\bu}\cdot \frac{\pd}{\pd z_i}=\dd x_i.
\end{gather*}
By writing this as an upper triangular matrix with invertible diagonal (since the $q_j$ are invertible), we see that $\om_{A^\bu}\cdot$ is actually an isomorphism of complexes, so a quasi-isomorphism, and $\om_{A^\bu}^*$ is a $-2$-shifted symplectic form on $A^\bu$.
\label{vf2def6}
\end{dfn}

The main result of Bussi, Brav and Joyce \cite[Th.~5.18]{BBJ} when $k=-2$ yields:

\begin{thm} Suppose\/ $(\bX,\om_\bX^*)$ is a $-2$-shifted symplectic derived\/ $\C$-scheme. Then for each\/ $x\in X=t_0(\bX)$ there exists a pair $(A^\bu,\om_{A^\bu})$ in $-2$-Darboux form and a Zariski open inclusion $\bs\al:\bSpec A^\bu\hookra\bX$ such that\/ $x\in\Im\bs\al$ and\/ $\bs\al^*(\om_\bX^*)\!\simeq\! \om_{A^\bu}$ in $\cA^{2,\cl}_\C(\bSpec A^\bu,-2)$. Furthermore, we can choose $A^\bu$ \begin{bfseries}minimal\end{bfseries} at\/ $x,$ in the sense that\/ $m\!=\!\dim H^0(\bT_\bX\vert_x),$ $n\!=\!\dim H^1(\bT_\bX\vert_x)$ in Definition\/~{\rm\ref{vf2def6}}.

\label{vf2thm3}
\end{thm}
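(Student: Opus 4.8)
The plan is to deduce Theorem \ref{vf2thm3} from the general $k<0$ Darboux theorem of Brav, Bussi and Joyce \cite[Th.~5.18]{BBJ} by specializing to $k=-2$ and then normalizing the resulting local model into the explicit shape of Definition \ref{vf2def6}. First I would invoke \cite[Th.~5.18]{BBJ}: given $x\in X=t_0(\bX)$, there is a standard form cdga $A^\bu$ (in the stronger sense of \cite{BBJ}, hence also in the weaker sense of Definition \ref{vf2def1}) and a Zariski open inclusion $\bs\al:\bSpec A^\bu\hookra\bX$ with $x\in\Im\bs\al$ and $\bs\al^*(\om_\bX^*)\simeq\om_{A^\bu}^*$ a $-2$-shifted symplectic form on $A^\bu$ in the Darboux normal form that \cite{BBJ} produces. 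Their normal form for general negative $k$ already gives that $A^*$ is freely generated over a smooth $A^0$ by generators in degrees $-1,\dots,k$, i.e.\ for $k=-2$ by degree $-1$ generators $y_j$ and degree $-2$ generators $z_i$, and the symplectic form has a Liouville-type expression $\om_{A^\bu}=\sum_i \dd z_i\,\dd x_i + (\text{degree }-1\text{ terms})$.

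The substance of the proof is then a bookkeeping argument showing the $-2$ case collapses to \eqref{vf2eq1}--\eqref{vf2eq3}. I would proceed as follows. In the $k=-2$ grading, the pairing on $\bT_{A^\bu}$ pairs degree $0$ tangent directions $\pd/\pd x_i$ with degree $-2$ directions $\pd/\pd z_i$, and pairs the degree $-1$ directions $\pd/\pd y_j$ among themselves; nondegeneracy forces the number of $x$'s to equal the number of $z$'s (both $=m$) and forces the self-pairing on the $y$'s to be a nondegenerate \emph{symmetric} form over $A^0$. After an $A^0$-linear change of the $y_j$ one diagonalizes this symmetric form, producing the invertible coefficients $q_1,\dots,q_n\in A^0$ and the term $\sum_j\dd(q_jy_j)\,\dd y_j$ in \eqref{vf2eq1}. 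Next, writing $\d y_j=s_j\in A^0$ and $\d z_i\in A^{-1}=\bigop_j A^0\cdot y_j$, the conditions $\d\om_{A^\bu}=0$ and $\d^2=0$ — which are exactly the ``classical master equation'' part of the BBJ Darboux theorem — translate, via the explicit computation of $\d\om_{A^\bu}$ already carried out in Definition \ref{vf2def6}, into \eqref{vf2eq2} $\sum_j q_j s_j^2=0$ and the formula \eqref{vf2eq3} for $\d z_i$. (One gets $\d z_i=\sum_j y_j(2q_j\,\pd s_j/\pd x_i + s_j\,\pd q_j/\pd x_i)$ as the unique degree $-1$ element with $\d\ci\d z_i=0$ compatible with $\d\om_{A^\bu}=0$; this is the content of the third-step calculation in Definition \ref{vf2def6} run in reverse.) This establishes (i)--(iv), so $(A^\bu,\om_{A^\bu})$ is in $-2$-Darboux form.

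Finally, for the minimality statement: \cite[Th.~5.18]{BBJ} allows one to choose the standard form cdga so that the generators in each negative degree are ``minimal'' at $x$, meaning the differentials $\d$ on generators vanish at $x$, so that the fibres $\bL_{A^\bu}|_x$, $\bT_{A^\bu}|_x$ have their differentials vanishing and are computed directly from the generators. In the $-2$-Darboux model the degree $-1$ generators $y_j$ contribute the degree $1$ part of $\bT_{A^\bu}|_x\simeq\bT_\bX|_x$ and the degree $-2$ generators $z_i$ contribute (Serre-dually to the $x_i$) to the degree $0$ part; minimality gives $m=\dim H^0(\bT_\bX|_x)$ and $n=\dim H^1(\bT_\bX|_x)$. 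I expect the main obstacle to be this normalization step — verifying cleanly that the general-$k$ normal form of \cite{BBJ}, specialized to $k=-2$, can be brought by an $A^0$-linear change of the degree $-1$ generators into the diagonal form \eqref{vf2eq1} with invertible $q_j$, and that the resulting constraints match \eqref{vf2eq2}--\eqref{vf2eq3} exactly; everything else is either a direct quotation of \cite{BBJ} or the reverse of a computation already displayed in Definition \ref{vf2def6}.
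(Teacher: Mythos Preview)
Your approach is correct in outline, and in fact matches the paper exactly: the paper does not give an independent proof of this theorem at all, but simply states that it is the case $k=-2$ of \cite[Th.~5.18]{BBJ}. Note in particular that Definition~\ref{vf2def6} is, as the paper says, taken verbatim from \cite[Ex.~5.16]{BBJ} (up to the cosmetic substitution $s_j\mapsto 2q_js_j$), so the ``$-2$-Darboux form'' here is literally one of the explicit local models that \cite[Th.~5.18]{BBJ} already produces. Hence the normalization step you flag as the main obstacle --- diagonalizing the degree $-1$ self-pairing and checking that the constraints collapse to \eqref{vf2eq2}--\eqref{vf2eq3} --- is not additional work you need to do; it is already carried out inside \cite{BBJ}, and the present paper simply quotes the outcome. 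The minimality clause likewise is part of the statement of \cite[Th.~5.18]{BBJ}.
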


\subsection{\texorpdfstring{Orientations on $k$-shifted symplectic derived schemes}{Orientations on k-shifted symplectic derived schemes}}
\label{vf24}

If $\bX$ is a derived $\C$-scheme (always assumed locally finitely presented), with classical $\C$-scheme $X=t_0(\bX)$, the cotangent complex $\bL_\bX\vert_X$ restricted to $X$ is a perfect complex, so it has a determinant line bundle $\det(\bL_\bX\vert_X)$ on~$X$.

The following notion is important for $-1$-shifted symplectic derived schemes, 3-Calabi--Yau moduli spaces, and generalizations of Donaldson--Thomas theory:

\begin{dfn} Let $(\bX,\om_\bX^*)$ be a $-1$-shifted symplectic derived $\C$-scheme (or more generally $k$-shifted symplectic, for $k<0$ odd). An {\it orientation\/} for $(\bX,\om_\bX^*)$ is a choice of square root line bundle $\det(\bL_\bX\vert_X)^{1/2}$ for~$\det(\bL_\bX\vert_X)$.

Writing $X_\an$ for the complex analytic topological space of $X$, the obstruction to existence of orientations for $(\bX,\om_\bX^*)$ lies in $H^2(X_\an;\Z_2)$, and if the obstruction vanishes, the set of orientations is a torsor for~$H^1(X_\an;\Z_2)$.
\label{vf2def7}
\end{dfn}

This notion of orientation, and its analogue for `d-critical loci', are used by Ben-Bassat, Brav, Bussi, Dupont, Joyce, Meinhardt, and Szendr\H oi in a series of papers \cite{BBBJ,BBDJS,BBJ,BJM,Joyc5}. They use orientations on $(\bX,\om_\bX^*)$ to define natural perverse sheaves, $\cD$-modules, mixed Hodge modules, and motives on $X$. A similar idea first appeared in Kontsevich and Soibelman \cite[\S 5]{KoSo} as `orientation data' needed to define motivic Donaldson--Thomas invariants of Calabi--Yau 3-folds.

This paper concerns $-2$-shifted symplectic derived schemes, and 4-Calabi--Yau moduli spaces. It turns out that there is a parallel notion of orientation in the $-2$-shifted case, needed to construct virtual cycles. 

To define this, note that determinant line bundles $\det(E^\bu)$ of perfect complexes $\cE^\bu$ satisfy $\det[(E^\bu)^\vee]\cong [\det(E^\bu)]^{-1}$, and $\det(E^\bu[k])\cong [\det(E^\bu)]^{(-1)^k}$. If $(\bX,\om_\bX^*)$ is a $k$-shifted symplectic derived $\C$-scheme, then $\bT_\bX\simeq\bL_\bX[k]$, where $\bT_\bX\simeq(\bL_\bX)^\vee$. Restricting to $X$ and taking determinant line bundles gives $\det(\bL_\bX\vert_X)^{-1}\cong \det(\bL_\bX\vert_X)^{(-1)^k}$. If $k$ is odd this is trivial, but for $k$ even, this gives a canonical isomorphism of line bundles on $X$:
\e
\smash{\io_{\bX,\om_\bX^*}:\bigl[\det(\bL_\bX\vert_X)\bigr]{}^{\ot^2}\longra \O_X\cong\O_X^{\ot^2}.}
\label{vf2eq4}
\e

The next definition is new, so far as the authors know.

\begin{dfn} Let $(\bX,\om_\bX^*)$ be a $-2$-shifted symplectic derived $\C$-scheme (or more generally $k$-shifted symplectic, for $k<0$ with $k\equiv 2\mod 4$). An {\it orientation\/} for $(\bX,\om_\bX^*)$ is a choice of isomorphism $o:\det(\bL_\bX\vert_X)\ra\O_X$ such that $o\ot o=\io_{\bX,\om_\bX^*}$, for $\io_{\bX,\om_\bX^*}$ as in~\eq{vf2eq4}.

Writing $X_\an$ for the complex analytic topological space of $X$, the obstruction to existence of orientations for $(\bX,\om_\bX^*)$ lies in $H^1(X_\an;\Z_2)$, and if the obstruction vanishes, the set of orientations is a torsor for~$H^0(X_\an;\Z_2)$.
\label{vf2def8}
\end{dfn}

This definition makes sense for $k$-shifted symplectic derived $\C$-schemes with $k$ even, but when $k\equiv 0\mod 4$ (including the classical symplectic case $k=0$) there is a natural choice of orientation $o$, so we restrict to~$k\equiv 2\mod 4$.

At a point $x\in X_\an$, we have a canonical isomorphism
\begin{equation*}
\smash{\det(\bL_\bX\vert_x)\cong \La^{\rm top} H^0(\bL_\bX\vert_x)\ot [\La^{\rm top} H^{-1}(\bL_\bX\vert_x)]^*\ot \La^{\rm top} H^{-2}(\bL_\bX\vert_x).}
\end{equation*}
Now $H^{-1}(\bL_\bX\vert_x)\cong H^1(\bT_\bX\vert_x)^*$, and $\om_\bX^0\vert_x$ gives $H^0(\bL_\bX\vert_x)\cong H^{-2}(\bL_\bX\vert_x)^*$, so $\La^{\rm top} H^0(\bL_\bX\vert_x)\cong [\La^{\rm top} H^{-2}(\bL_\bX\vert_x)]^*$. Thus we have a canonical isomorphism
\e
\smash{\det(\bL_\bX\vert_x)\cong \La^{\rm top} H^1(\bT_\bX\vert_x).}
\label{vf2eq5}
\e

Write $Q_x$ for the nondegenerate, symmetric $\C$-bilinear pairing
\e
\smash{Q_x:=\om_\bX^0\vert_x\,\cdot:H^1(\bT_\bX\vert_x)\t H^1(\bT_\bX\vert_x)\longra\C.}
\label{vf2eq6}
\e
The determinant $\det Q_x$ is an isomorphism $[\La^{\rm top} H^1(\bT_\bX\vert_x)]^{\ot^2}\ra\C$, and $\det Q_x$ corresponds to $\io_{\bX,\om_\bX^*}\vert_x$ under the isomorphism \eq{vf2eq5}. There is a natural bijection
\e
\smash{\!\!\bigl\{\text{orientations on $(\bX,\om_\bX^*)$ at $x$}\bigr\}\!\cong\!\bigl\{\text{$\C$-orientations on $(H^1(\bT_\bX\vert_x),Q_x)$}\bigr\}.\!\!}
\label{vf2eq7}
\e
To see this, note that if $(e_1,\ldots,e_n)$ is an orthonormal basis for $(H^1(\bT_\bX\vert_x),Q_x)$ then $e_1\w\cdots\w e_n$ lies in $\La^{\rm top}H^1(\bT_\bX\vert_x)$ with $\det Q_x:[e_1\w\cdots\w e_n]^{\ot^2}\mapsto 1$. Orientations for $(\bX,\om_\bX^*)$ at $x$ give isomorphisms $\la:\La^{\rm top}H^1(\bT_\bX\vert_x)\ra\C$ with $\la^2=\det Q_x$, and these correspond to orientations for $(H^1(\bT_\bX\vert_x),Q_x)$ such that $\la:e_1\w\cdots\w e_n\mapsto 1$ if $(e_1,\ldots,e_n)$ is an oriented orthonormal basis.

\subsection{Kuranishi atlases}
\label{vf25}

We now define our notion of {\it Kuranishi atlas\/} on a topological space $X$. These are a simplification of m-Kuranishi spaces in \cite[\S 4.7]{Joyc6}, which in turn are based on the `Kuranishi spaces' of Fukaya, Oh, Ohta, Ono~\cite{FOOO,FuOn}.

\begin{dfn} Let $X$ be a topological space. A {\it Kuranishi neighbourhood\/} on $X$ is a quadruple $(V,E,s,\psi)$ such that:
\begin{itemize}
\setlength{\itemsep}{0pt}
\setlength{\parsep}{0pt}
\item[(a)] $V$ is a smooth manifold.
\item[(b)] $\pi:E\ra V$ is a real vector bundle over $V$, called the {\it obstruction bundle}.
\item[(c)] $s:V\ra E$ is a smooth section of $E$, called the {\it Kuranishi section}.
\item[(d)] $\psi$ is a homeomorphism from $s^{-1}(0)$ to an open subset $R=\Im\psi$ in $X$, where $\Im\psi=\bigl\{\psi(x):x\in s^{-1}(0)\bigr\}$ is the image of $\psi$.
\end{itemize}
If $S\subseteq X$ is open, by a {\it Kuranishi neighbourhood over\/} $S$, we mean a Kuranishi neighbourhood $(V,E,s,\psi)$ on $X$ with $S\subseteq\Im\psi\subseteq X$.
\label{vf2def10}
\end{dfn}

\begin{dfn} Let $(V_J,E_J,s_J,\psi_J),(V_K,E_K,s_K,\psi_K)$ be Kuranishi neighbourhoods on a topological space $X$, and $S\subseteq\Im\psi_J\cap\Im\psi_K\subseteq X$ be open. A {\it coordinate change\/ $\Phi_{JK}:(V_J,E_J,s_J,\psi_J)\!\ra\!(V_K,E_K,s_K,\psi_K)$ over\/} $S$ is a triple $\Phi_{JK}=(V_{JK},\phi_{JK},\hat\phi_{JK})$ satisfying:
\begin{itemize}
\setlength{\itemsep}{0pt}
\setlength{\parsep}{0pt}
\item[(a)] $V_{JK}$ is an open neighbourhood of $\psi_J^{-1}(S)$ in $V_J$.
\item[(b)] $\phi_{JK}:V_{JK}\ra V_K$ is a smooth map.
\item[(c)] $\hat\phi_{JK}:E_J\vert_{V_{JK}}\ra\phi_{JK}^*(E_K)$ is a morphism of vector bundles on $V_{JK}$.
\item[(d)] $\hat\phi_{JK}(s_J\vert_{V_{JK}})=\phi_{JK}^*(s_K)$.
\item[(e)] $\psi_J=\psi_K\ci\phi_{JK}$ on $s_J^{-1}(0)\cap V_{JK}$.
\item[(f)] Let\/ $x\in S,$ and set\/ $v_J=\psi_J^{-1}(x)\in V_J$ and\/ $v_K=\psi_K^{-1}(x)\in V_K$. Then the following is an exact sequence of real vector spaces:
\end{itemize}
\e
\begin{gathered}
\smash{\xymatrix@C=11pt{ 0 \ar[r] & T_{v_J}V_J \ar[rrrr]^(0.39){\d s_J\vert_{v_J}\op\d\phi_{JK}\vert_{v_J}} &&&& E_J\vert_{v_J} \!\op\!T_{v_K}V_K \ar[rrrr]^(0.56){-\hat\phi_{JK}\vert_{v_J}\op \d s_K\vert_{v_K}} &&&& E_K\vert_{v_K} \ar[r] & 0. }}
\end{gathered}
\label{vf2eq8}
\e

We can {\it compose coordinate changes\/}: if 
$\Phi_{JK}=(V_{JK},\ab\phi_{JK},\ab\hat\phi_{JK}):(V_J,\ab E_J,\ab s_J,\ab\psi_J)\ra(V_K,E_K,s_K,\psi_K)$ and $\Phi_{KL}(V_{KL},\phi_{KL},\hat\phi_{KL}):(V_K,\ab E_K,\ab s_K,\ab\psi_K)\ra(V_L,E_L,s_L,\psi_L)$ are coordinate changes over $S_{JK},S_{KL}$, then
\begin{gather*}
\Phi_{KL}\ci\Phi_{JK}:=\bigl(V_{JK}\cap\phi_{JK}^{-1}(V_{KL}),\phi_{KL}\ci\phi_{JK}\vert_{\cdots},\phi_{JK}^*(\hat\phi_{KL})\ci\hat \phi_{JK}\vert_{\cdots}\bigr):\\
(V_J,E_J,s_J,\psi_J)\longra(V_L,E_L,s_L,\psi_L)
\end{gather*}
is a coordinate change over~$S_{JK}\cap S_{KL}$.
\label{vf2def11}
\end{dfn}

\begin{dfn} A {\it Kuranishi atlas\/ $\cK$ of virtual dimension\/} $n$ on a topological space $X$ is data $\cK=\bigl(A,\pr,(V_J,E_J,s_J,\psi_J)_{J\in A},\Phi_{JK,\;J\pr K\in A}\bigr)$, where:
\begin{itemize}
\setlength{\itemsep}{0pt}
\setlength{\parsep}{0pt}
\item[(a)] $A$ is an indexing set (not necessarily finite).
\item[(b)] $\prec$ is a partial order on $A$, where by convention $J\pr K$ only if $J\ne K$.
\item[(c)] $(V_J,E_J,s_J,\psi_J)$ is a Kuranishi neighbourhood on $X$ for each $J\in A$, with $\dim V_J-\rank E_J=n$.
\item[(d)] The images $\Im\psi_J\subseteq X$ for $J\in A$ have the property that if $J,K\in A$ with $J\ne K$ and $\Im\psi_J\cap\Im\psi_K\ne\es$ then either $J\pr K$ or $K\pr J$.
\item[(e)] $\Phi_{JK}=(V_{JK},\phi_{JK},\hat\phi_{JK}):(V_J,E_J,s_J,\psi_J)\ra(V_K,E_K,s_K,\psi_K)$ is a coordinate change for all $J,K\in A$ with $J\pr K$, over $S=\Im\psi_J\cap\Im\psi_K$.
\item[(f)] $\Phi_{KL}\ci\Phi_{JK}=\Phi_{JL}$ for all $J,K,L\in A$ with $J\pr K\pr L$.
\item[(g)] $\bigcup_{J\in A}\Im\psi_J=X$.
\end{itemize}

We call $\cK$ a {\it finite\/} Kuranishi atlas if the indexing set $A$ is finite.

If $X$ has a Kuranishi atlas then it is locally compact. In applications we invariably impose extra global topological conditions on $X$, for instance $X$ might be assumed to be compact and Hausdorff; or Hausdorff and second countable; or metrizable; or Hausdorff and paracompact.

We will also need a relative version of Kuranishi atlas in \S\ref{vf37}. Suppose $Z$ is a manifold, and $\pi:X\ra Z$ a continuous map. A {\it relative Kuranishi atlas\/} for $\pi:X\ra Z$ is a Kuranishi atlas $\cK$ on $X$ as above, together with smooth maps $\varpi_J:V_J\ra Z$ for $J\in A$, such that $\varpi_J\vert_{s_J^{-1}(0)}=\pi\ci\psi_J:s_J^{-1}(0)\ra Z$ for all $J\in A$, and $\varpi_J\vert_{V_{JK}}=\varpi_K\ci\phi_{JK}:V_{JK}\ra Z$ for all $J\pr K$ in~$A$.
\label{vf2def12}
\end{dfn}

\begin{dfn} Let $X$ be a topological space, with a Kuranishi atlas $\cK$ as in Definition \ref{vf2def12}. For each $J\in A$ we can form the $C^\iy$ real line bundle $\La^{\rm top}T^*V_J\ot\La^{\rm top}E_J$ over $V_J$, where $\La^{\rm top}(\cdots)$ means the top exterior power. Thus we can form the restriction
\begin{equation*}
\smash{(\La^{\rm top}T^*V_J\ot \La^{\rm top}E_J)\vert_{s_J^{-1}(0)}\longra s_J^{-1}(0),}
\end{equation*}
considered as a topological real line bundle over the topological space $s_J^{-1}(0)$.

If $J\pr K$ in $A$ then for each $v_J$ in $s_J^{-1}(0)\cap V_{JK}$ with $\phi_{JK}(v_J)=v_K$ in $s_K^{-1}(0)$ we have an exact sequence \eq{vf2eq8}. Taking top exterior powers in \eq{vf2eq8} (and using a suitable orientation convention) gives an isomorphism
\begin{equation*}
\smash{\La^{\rm top}T_{v_J}^*V_J\ot \La^{\rm top}E_J\vert_{v_J}\cong \La^{\rm top}T^*_{v_K}V_K\ot \La^{\rm top}E_K\vert_{v_K}.}
\end{equation*}
This depends continuously on $v_J,v_K$, and so induces an isomorphism of topological line bundles on $s_J^{-1}(0)\cap V_{JK}$
\begin{equation*}
\smash{(\Phi_{JK})_*:(\La^{\rm top}T^*V_J\ot \La^{\rm top}E_J)\vert_{s_J^{-1}(0)\cap V_{JK}}\longra\phi_{JK}\vert_{\cdots}^*(\La^{\rm top}T^*V_K\ot \La^{\rm top}E_K).}
\end{equation*}
If $J\pr K\pr L$ in $A$ then as $\Phi_{KL}\ci\Phi_{JK}=\Phi_{JL}$ by Definition \ref{vf2def12}(f), we see that $(\Phi_{KL})_*\ci(\Phi_{JK})_*=(\Phi_{JL})_*$ in topological line bundles over $s_J^{-1}(0)\cap V_{JK}\cap V_{JL}$.

An {\it orientation\/} on $(X,\cK)$ is a choice of orientation on the fibres of the topological real line bundle $(\La^{\rm top}T^*V_J\ot \La^{\rm top}E_J)\vert_{s_J^{-1}(0)}$ on $s_J^{-1}(0)$ for all $J\in A$, such that $(\Phi_{JK})_*$ is orientation-preserving on $s_J^{-1}(0)\cap V_{JK}$ for all $J\pr K$ in $A$.

An equivalent way to think about this is that there is a natural topological real line bundle $K_X\ra X$ called the {\it canonical bundle\/} with given isomorphisms
\begin{equation*}
\smash{\io_J:(\La^{\rm top}T^*V_J\ot \La^{\rm top}E_J)\vert_{s_J^{-1}(0)}\longra\psi_J^*(K_X)}
\end{equation*}
for $J\in A$, such that $\io_J\vert_{s_J^{-1}(0)\cap V_{JK}}=\phi_{JK}^*(\io_K)\ci(\Phi_{JK})_*$ for all $J\pr K$ in $A$, and an orientation on $(X,\cK)$ is an orientation on the fibres of~$K_X$.
\label{vf2def13}
\end{dfn}

\begin{rem}{\bf(a)} Our Kuranishi atlases are based on Joyce's `m-Kuranishi spaces' \cite[\S 4.7]{Joyc6}. They are similar to Fukaya--Oh--Ohta--Ono's `good coordinate systems' \cite[Lem.~A1.11]{FOOO}, \cite[Def.~6.1]{FuOn}, and McDuff--Wehrheim's `Kuranishi atlases' \cite{McDu,McWe}. Our orientations are based on \cite[Def.~5.8]{FuOn} and~\cite[Def.~A1.17]{FOOO}.

There are two important differences with \cite{FOOO,FuOn,McDu,McWe}. Firstly, \cite{FOOO,FuOn,McDu,McWe} use Kuranishi neighbourhoods $(V,E,\Ga,s,\psi)$, where $\Ga$ is a finite group acting equivariantly on $V,E,s$ and $\psi$ maps $s^{-1}(0)/\Ga\ra X$. This is because their Kuranishi spaces are a kind of derived orbifolds, not derived manifolds. 

Secondly, \cite{FOOO,FuOn,McDu,McWe} use a more restrictive notion of coordinate change $\Phi_{JK}=(V_{JK},\phi_{JK},\hat\phi_{JK})$, in which $\phi_{JK}:V_{JK}\hookra V_K$ must be an embedding, and $\hat\phi_{JK}:E_J\vert_{V_{JK}}\hookra\phi_{JK}^*(E_K)$ an embedding of vector bundles, so that $\dim V_J\le\dim V_K$ and $\rank E_J\le\rank E_K$. In the Kuranishi atlases we construct later, $\phi_{JK}:V_{JK}\ra V_K$ will be a submersion, and $\hat\phi_{JK}:E_J\vert_{V_{JK}}\ra\phi_{JK}^*(E_K)$ will be surjective, so that $\dim V_J\ge\dim V_K$ and $\rank E_J\ge\rank E_K$. That is, our coordinate changes actually go {\it the opposite way\/} to those in~\cite{FOOO,FuOn,McDu,McWe}.
\smallskip

\noindent{\bf(b)} Similar structures to Kuranishi atlases are studied \cite{FOOO,FuOn,Joyc6,McDu,McWe} because it is natural to construct them on many differential-geometric moduli spaces. Broadly speaking, any moduli space of solutions of a smooth nonlinear elliptic p.d.e.\ on a compact manifold should admit a Kuranishi atlas. References \cite{FOOO,FuOn,McDu,McWe} concern moduli spaces of $J$-holomorphic curves in symplectic geometry.
\label{vf2rem2}
\end{rem}

\subsection{Derived smooth manifolds and virtual classes}
\label{vf26}

Readers of this paper do not need to know what a derived manifold is. Here is a brief summary of the points relevant to this paper:
\begin{itemize}
\setlength{\itemsep}{0pt}
\setlength{\parsep}{0pt}
\item `Derived manifolds' are derived versions of smooth manifolds, where `derived' is in the sense of Derived Algebraic Geometry. 
\item There are several different versions, due to Spivak \cite{Spiv}, Borisov--Noel \cite{Bori,BoNo} and Joyce \cite{Joyc2,Joyc3,Joyc4,Joyc6}, which form $\iy$-categories or 2-categories. They all include ordinary manifolds $\bf Man$ as a full subcategory.
\item All these versions are roughly equivalent. There are natural 1-1 correspondences between equivalence classes of derived manifolds in each theory.
\item Much of classical differential geometry generalizes nicely to derived manifolds -- submersions, orientations, transverse fibre products, \ldots.
\item Given a Hausdorff, second countable topological space $X$ with a Kuranishi atlas $\cK$ of dimension $n$, we can construct a derived manifold $\bX$ with topological space $X$ and dimension $\vdim\bX=n$, unique up to equivalence. Orientations on $(X,\cK)$ are in 1-1 correspondence with orientations on $\bX$.
\item Compact, oriented derived manifolds $\bX$ have {\it virtual classes\/} $[\bX]_\virt$ in homology or bordism, generalizing the fundamental class $[X]\in H_{\dim X}(X;\Z)$ of a compact oriented manifold $X$.
\item These virtual classes are used to define enumerative invariants such as Gromov--Witten, Donaldson, and Donaldson--Thomas invariants. Such invariants are unchanged under deformations of the underlying geometry.
\item Given a compact Hausdorff topological space $X$ with an oriented Kuranishi atlas $\cK$, we could construct the virtual class $[\bX]_\virt$ directly from $(X,\cK)$, as in \cite{FOOO,FuOn,McDu,McWe}, without going via the derived manifold $\bX$. 
\end{itemize}
Readers who do not want to know more details can now skip forward to~\S\ref{vf3}.

\subsubsection{Different definitions of derived manifold}
\label{vf261}

The earliest reference to derived differential geometry we are aware of is a short final paragraph by Jacob Lurie \cite[\S 4.5]{Luri}. Broadly following \cite[\S 4.5]{Luri}, Lurie's student David Spivak \cite{Spiv} constructed an $\iy$-category $\DerManSp$ of `derived manifolds'. Borisov and No\"el \cite{BoNo} gave a simplified version, an $\iy$-category $\DerManBN$, and showed that~$\DerManSp\simeq\DerManBN$.

Joyce \cite{Joyc2,Joyc3,Joyc4} defined 2-categories $\dMan$ of `d-manifolds' (a kind of derived manifold), and $\dOrb$ of `d-orbifolds' (a kind of derived orbifold), and also strict 2-categories of d-manifolds and d-orbifolds with boundary $\dManb,\dOrbb$ and with corners $\dManc,\dOrbc$, and studied their differential geometry in detail.

Borisov \cite{Bori} constructed a 2-functor $F:\pi_2(\DerManBN)\ra\dMan$, where $\pi_2(\DerManBN)$ is the 2-category truncation of $\DerManBN$, and proved that $F$ is close to being an equivalence of 2-categories.

All of \cite{Bori,BoNo,Joyc2,Joyc3,Joyc4,Luri,Spiv} use `$C^\iy$-algebraic geometry', as in Joyce \cite{Joyc1}, a version of (derived) algebraic geometry in which rings are replaced by `$C^\iy$-rings', and define derived manifolds to be special kinds of `derived $C^\iy$-schemes'.

In \cite{Joyc6}, Joyce gave an alternative approach to derived differential geometry based on the work of Fukaya et al.\ \cite{FuOn,FOOO}. He defined 2-categories of `m-Kuranishi spaces' $\mKur$, a kind of derived manifold, and `Kuranishi spaces' $\Kur$, a kind of derived orbifold. Here m-Kuranishi spaces are similar to a pair $(X,\cK)$ of a Hausdorff, second countable topological space $X$ and a Kuranishi atlas $\cK$ in the sense of \S\ref{vf25}. In \cite{Joyc7} Joyce defines equivalences of 2-categories $\dMan\simeq\mKur$ and $\dOrb\simeq\Kur$, showing that the two approaches to derived differential geometry of \cite{Joyc2,Joyc3,Joyc4} and \cite{Joyc6} are essentially the same.

\subsubsection{Orientations on derived manifolds}
\label{vf262}

Derived manifolds have a good notion of {\it orientation}, which behaves much like orientations on ordinary manifolds. Some references are Joyce \cite[\S 4.8]{Joyc2}, \cite[\S 4.8]{Joyc3}, \cite[\S 4.6]{Joyc4} for d-manifolds, Joyce \cite{Joyc7} for m-Kuranishi spaces, and Fukaya, Oh, Ohta and Ono \cite[\S 5]{FuOn}, \cite[\S A1.1]{FOOO} for Kuranishi spaces in their sense.

For any kind of derived manifold $\bX$, we can define a (topological or $C^\iy$) real line bundle $K_\bX$ over the topological space $X$ called the {\it canonical bundle}. It is the determinant line bundle of the cotangent complex $\bL_\bX$. For each $x\in\bX$ we can define a {\it tangent space\/} $T_x\bX$ and {\it obstruction space\/} $O_x\bX$, and then
\begin{equation*}
\smash{K_\bX\vert_x\cong \La^{\rm top}T^*_x\bX\ot_\R \La^{\rm top}O_x\bX.}
\end{equation*}
An {\it orientation\/} on $\bX$ is an orientation on the fibres of $K_\bX$. In a similar way to \eq{vf2eq7}, at a single point $x\in X$ we have a natural bijection
\e
\smash{\bigl\{\text{orientations on $\bX$ at $x$}\bigr\}\cong\bigl\{\text{orientations on $T_x^*\bX\op O_x\bX$}\bigr\}.}
\label{vf2eq9}
\e

If $(V,E,s,\psi)$ is a Kuranishi neighbourhood on $\bX$ and $v\in s^{-1}(0)\subseteq V$ with $\psi(v)=x\in\bX$, then there is a natural exact sequence
\e
\smash{\xymatrix@C=20pt{ 0 \ar[r] & T_x\bX \ar[r] & T_vV \ar[rr]^{\d s\vert_v} && E\vert_v \ar[r] & O_x\bX \ar[r] & 0. }}
\label{vf2eq10}
\e
Taking top exterior powers in \eq{vf2eq10} gives an isomorphism
\begin{equation*}
\smash{K_\bX\vert_x\cong\La^{\rm top}T^*_x\bX\ot_\R \La^{\rm top}O_x\bX\cong\La^{\rm top}T_v^*V\ot_\R \La^{\rm top}E\vert_v,}
\end{equation*}
and thus, with a suitable orientation convention, a natural bijection
\begin{equation*}
\smash{\bigl\{\text{orientations on $\bX$ at $x$}\bigr\}\cong\bigl\{\text{orientations on $T_v^*V\op E\vert_v$}\bigr\}.}
\end{equation*}

\subsubsection{Kuranishi atlases and derived manifolds}
\label{vf263}

The next theorem relates topological spaces with Kuranishi atlases to derived manifolds. The assumption that $X$ is Hausdorff and second countable is just to match the global topological assumptions in \cite{BoNo,Joyc2,Joyc3,Joyc4,Joyc6,Spiv}. For the last part we restrict to (a),(b) as orientations have not been written down for the theories of (c),(d), although this would not be very difficult.

\begin{thm} Let\/ $X$ be a Hausdorff, second countable topological space with a Kuranishi atlas $\cK$ of dimension $n$ in the sense of\/ {\rm\S\ref{vf25}}. Then we can construct
\begin{itemize}
\setlength{\itemsep}{0pt}
\setlength{\parsep}{0pt}
\item[{\bf(a)}] An \begin{bfseries}m-Kuranishi space\end{bfseries}\/ $\bX$ in the sense of Joyce {\rm\cite[\S 4.7]{Joyc6}}.
\item[{\bf(b)}] A \begin{bfseries}d-manifold\end{bfseries}\/ $\bX$ in the sense of Joyce {\rm\cite{Joyc2,Joyc3,Joyc4}}.
\item[{\bf(c)}] A \begin{bfseries}derived manifold\end{bfseries} in the sense of Borisov and No\"el\/ {\rm\cite{BoNo}}.
\item[{\bf(d)}] A \begin{bfseries}derived manifold\end{bfseries} in the sense of Spivak\/ {\rm\cite{Spiv}}.
\end{itemize}
In each case $\bX$ has topological space $X$ and dimension $\vdim\bX=n,$ and $\bX$ is canonical up to equivalence in the $2$-categories $\mKur,\dMan$ or\/ $\iy$-categories $\DerManBN,\DerManSp$. In cases\/ {\bf(a)\rm,\bf(b)} there is a natural\/ $1$-$1$ correspondence between orientations on $\cK,$ and orientations on $\bX$ in\/ {\rm\cite{Joyc7}} and\/~{\rm\cite{Joyc2,Joyc3,Joyc4}}. 

If also $Z$ is a manifold, $\pi:X\ra Z$ is continuous, and\/ $\cK,\varpi_{J:J\in A}$ is a relative Kuranishi atlas for $\pi:X\ra Z,$ then we can construct a morphism of derived manifolds $\bs\pi:\bX\ra Z,$ canonical up to $2$-isomorphism, with continuous map $\pi$.

\label{vf2thm4}
\end{thm}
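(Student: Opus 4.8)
The strategy is to glue standard local models over the open cover $\{\Im\psi_J:J\in A\}$ of $X$, and then transport the result between the different models of derived differential geometry using the equivalences of \S\ref{vf261}. For each $J\in A$ the triple $(V_J,E_J,s_J)$ determines a standard model d-manifold $\bS_{V_J,E_J,s_J}$ (equally, a standard m-Kuranishi neighbourhood) whose underlying topological space is $s_J^{-1}(0)$, and $\psi_J$ identifies this homeomorphically with the open set $\Im\psi_J\subseteq X$; so we get a d-manifold structure of virtual dimension $\dim V_J-\rank E_J=n$ on each member of an open cover of the Hausdorff, second countable space $X$. For $J\pr K$, the data $\Phi_{JK}=(V_{JK},\phi_{JK},\hat\phi_{JK})$ induces a $1$-morphism between the restrictions of these local models to $\Im\psi_J\cap\Im\psi_K$; conditions \ref{vf2def11}(d),(e) make it compatible with the Kuranishi sections and with the maps to $X$, and exactness of \eq{vf2eq8} in \ref{vf2def11}(f) is precisely Joyce's criterion \cite{Joyc3,Joyc4,Joyc6} for such a $1$-morphism to be an \emph{equivalence} of derived manifolds --- this is the step at which it is harmless that our $\phi_{JK}$ are submersions rather than embeddings. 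Because \ref{vf2def12}(f) gives the strict cocycle identity $\Phi_{KL}\ci\Phi_{JK}=\Phi_{JL}$, these equivalences satisfy the gluing hypotheses of the relevant theorems of \cite{Joyc3,Joyc4} (for d-manifolds) and \cite{Joyc6} (for m-Kuranishi spaces) with identity $2$-morphisms on all triple overlaps; applying those theorems yields \textbf{(a)} and \textbf{(b)}, with topological space $X$, virtual dimension $n$, and canonical up to equivalence. Parts \textbf{(c)},\textbf{(d)} then follow by transporting along the equivalences on equivalence classes $\dMan\sim\DerManBN\sim\DerManSp$ from \cite{Bori,BoNo}, and \cite{Joyc7} identifies the objects of \textbf{(a)} and \textbf{(b)}.

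For the orientation correspondence, recall from \S\ref{vf262} that $K_\bX=\det\bL_\bX$, and that on $\bS_{V_J,E_J,s_J}$ the cotangent complex is modelled by $[E_J^\vee\xrightarrow{\d s_J}T^*V_J]$ restricted to $s_J^{-1}(0)$, so that $K_\bX\vert_{\Im\psi_J}\cong\La^{\rm top}T^*V_J\ot\La^{\rm top}E_J\vert_{s_J^{-1}(0)}$. The equivalences $\Phi_{JK}$ induce on these lines exactly the isomorphisms $(\Phi_{JK})_*$ of Definition \ref{vf2def13}, since both arise by taking top exterior powers in \eq{vf2eq8} (that this is consistent with the single-neighbourhood sequence \eq{vf2eq10} is a short check of orientation conventions). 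Hence the canonical bundle $K_X$ built in \ref{vf2def13} is canonically identified with $K_\bX$, so an orientation of $\cK$ --- an orientation of the fibres of $K_X$ compatible with the $(\Phi_{JK})_*$ --- is the same thing as an orientation of $K_\bX$, i.e.\ an orientation of $\bX$ in the senses of \cite{Joyc2,Joyc3,Joyc4,Joyc7}, giving the stated bijection for \textbf{(a)},\textbf{(b)}.

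For the relative version, $Z$ is in particular a d-manifold (and m-Kuranishi space), and each smooth map $\varpi_J:V_J\ra Z$ induces a $1$-morphism $\bS_{V_J,E_J,s_J}\ra Z$; the condition $\varpi_J\vert_{s_J^{-1}(0)}=\pi\ci\psi_J$ makes these compatible with the identifications $s_J^{-1}(0)\cong\Im\psi_J$, and $\varpi_J\vert_{V_{JK}}=\varpi_K\ci\phi_{JK}$ makes them strictly compatible with the coordinate changes. They therefore satisfy the hypotheses of Joyce's theorems on gluing morphisms \cite{Joyc3,Joyc4,Joyc6} (again with identity $2$-morphisms) and glue to a morphism $\bs\pi:\bX\ra Z$ with underlying continuous map $\pi$, canonical up to $2$-isomorphism.

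I expect the main obstacle to be the first step: verifying carefully that the exactness condition \ref{vf2def11}(f) is exactly the condition for $\Phi_{JK}$ to be an equivalence (so that the ``backwards'', submersion-type coordinate changes are legitimate), and repackaging the rigid \S\ref{vf25} data as the precise $2$-categorical input demanded by Joyce's gluing theorems. Everything after that is an application of the cited equivalences of categories together with routine determinant-line bookkeeping.
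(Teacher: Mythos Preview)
Your proposal is correct and follows essentially the same approach as the paper: the paper's proof simply cites Joyce's gluing theorems \cite[Th.~4.67]{Joyc6} and \cite[Th.~4.16]{Joyc2} for (a),(b), then transports along Borisov \cite{Bori} and Borisov--No\"el \cite{BoNo} for (c),(d), and handles orientations by comparing Definition~\ref{vf2def13} with \S\ref{vf262}. What you have written is exactly an unpacking of what those cited theorems do---standard local models, exactness of \eq{vf2eq8} as the equivalence criterion, strict cocycle identities giving identity $2$-morphisms on triple overlaps---so there is no substantive difference in route.
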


\begin{proof} Part (a) follows from \cite[Th.~4.67]{Joyc6} in the m-Kuranishi space case, and part (b) from \cite[Th.~4.16]{Joyc2}, in each case with topological space $X$, and $\vdim\bX=n$, and $\bX$ canonical up to equivalence in $\mKur,\dMan$. Part (c) then follows from (b) and Borisov \cite{Bori}, and part (d) from (c) and Borisov and No\"el \cite{BoNo}. The 1-1 correspondences of orientations can be proved by comparing Definition \ref{vf2def13} with \S\ref{vf262}. The last part also follows from \cite[Th.~4.16]{Joyc2}.
\end{proof}

\subsubsection{Bordism for derived manifolds}
\label{vf264}

We now discuss bordism, following \cite[\S 4.10]{Joyc2}, \cite[\S 15]{Joyc3} and~\cite[\S 13]{Joyc4}.

\begin{dfn} Let $Y$ be a manifold, and $k\in\N$. Consider pairs $(X,f)$, where $X$ is a compact, oriented manifold  with $\dim X=k$, and $f:X\ra Y$ is a smooth map. Define an equivalence relation $\sim$ on such pairs by $(X,f)\sim(X',f')$ if there exists a compact, oriented $(k+1)$-manifold with boundary $W$, a smooth map $e:W\ra Y$, and a diffeomorphism of oriented manifolds $j:-X\amalg X'\ra\pd W$, such that $f\amalg f'=e\ci i_W\ci j$, where $-X$ is $X$ with the opposite orientation, and $i_W:\pd W\hookra W$ is the inclusion map.

Write $[X,f]$ for the $\sim$-equivalence class ({\it bordism
class\/}) of a pair $(X,f)$. Define the {\it bordism group\/} $B_k(Y)$ of $Y$ to be the set of all such bordism classes $[X,f]$ with $\dim X=k$. It is an abelian group, with zero $0_Y=[\es,\es]$, addition $[X,f]+[X',f']=[X\amalg X',f\amalg f']$, and inverses~$-[X,f]=[-X,f]$.

Define $\Pi_\bo^\hom:B_k(Y)\ra H_k(Y;\Z)$ by $\Pi_\bo^\hom:[X,f]\mapsto f_*([X])$, where $H_*(-;\Z)$ is singular homology, and $[X]\in H_k(X;\Z)$ is the fundamental class.

When $Y$ is the point $*$, the maps $f:X\ra *$, $e:W\ra *$ are trivial, and we can omit them, and consider $B_k(*)$ to be the abelian group of bordism classes $[X]$ of compact, oriented, $k$-dimensional manifolds~$X$.
\label{vf2def16}
\end{dfn}

As in Conner \cite[\S I.5]{Conn}, bordism is a generalized homology theory. Results of Thom, Wall and others in \cite[\S I.2]{Conn} compute the bordism groups $B_k(*)$. We define d-manifold bordism by replacing manifolds $X$ in $[X,f]$ by d-manifolds~$\bX$:

\begin{dfn} Let $Y$ be a manifold, and $k\in\Z$. Consider pairs $(\bX,\bs f)$, where $\bX\in\dMan$ is a compact, oriented d-manifold with $\vdim\bX=k$, and $\bs f:\bX\ra Y$ is a 1-morphism in $\dMan$.

Define an equivalence relation $\sim$ between such pairs by
$(\bX,\bs f)\sim (\bX',\bs f')$ if there exists a compact, oriented
d-manifold with boundary $\bW$ with $\vdim\bW=k+1$, a 1-morphism
$\bs e:\bW\ra Y$ in $\dManb$, an equivalence of oriented
d-manifolds $\bs j:-\bX\amalg\bX'\ra\pd\bW$, and a 2-morphism $\eta:\bs f\amalg\bs f'\Ra\bs e\ci\bs i_\bW\ci \bs j$, where $\bs i_\bW:\pd\bW\ra\bW$ is the natural 1-morphism.

Write $[\bX,\bs f]$ for the $\sim$-equivalence class ({\it d-bordism
class\/}) of a pair $(\bX,\bs f)$. Define the {\it d-bordism group} $dB_k(Y)$ of $Y$ to be the set of d-bordism classes $[\bX,\bs f]$ with $\vdim\bX=k$. As for $B_k(Y)$, it is an abelian group, with zero $0_Y=[\bs\es,\bs\es]$, addition $[\bX,\bs f]+[\bX',\bs f']=[\bX\amalg\bX',\bs f\amalg\bs f']$, and $-[\bX,\bs f]=[-\bX,\bs f]$. Define $\Pi_\bo^\dbo:B_k(Y)\ra dB_k(Y)$ for $k\ge 0$ by~$\Pi_\bo^\dbo:[X,f]\mapsto[X,f]$.

When $Y$ is a point $*$, we can omit $\bs f:\bX\ra *$, and consider $dB_k(*)$ to be the abelian group of d-bordism classes $[\bX]$ of compact, oriented d-manifolds~$\bX$.
\label{vf2def17}
\end{dfn}

In \cite[\S 13.2]{Joyc4} we show $B_*(Y)$ and $dB_*(Y)$ are
isomorphic. See \cite[Th.~2.6]{Spiv} for an analogous (unoriented) result for Spivak's derived manifolds.

\begin{thm} For any manifold\/ $Y,$ we have $dB_k(Y)=0$ for $k<0,$
and\/ $\Pi_\bo^\dbo:B_k(Y)\ra dB_k(Y)$ is an isomorphism for $k\ge
0$.
\label{vf2thm5}
\end{thm}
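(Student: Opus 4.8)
First I would note that $\Pi_\bo^\dbo$ is a well-defined group homomorphism with nothing to prove: a compact oriented manifold is in particular a compact oriented d-manifold, a smooth map $f:X\ra Y$ is in particular a $1$-morphism in $\dMan$, and an ordinary bordism $(W,e)$ is in particular a d-bordism, so $\Pi_\bo^\dbo$ respects the equivalence relations and the abelian group structures. The whole content is therefore to show $\Pi_\bo^\dbo$ is bijective for $k\ge0$ and that $dB_k(Y)=0$ for $k<0$, and the plan is to deduce all three facts from a single perturbation argument that reduces everything to standard finite-dimensional transversality.

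The key input I would use is the structure theory of d-manifolds from \cite{Joyc2,Joyc3,Joyc4} (standard-model presentations, embedding theorems, (d-)transverse fibre products): every compact d-manifold $\bX$, possibly with boundary, is equivalent in $\dMan$ (resp.\ $\dManb$) to a \emph{standard model} $\bS_{V,E,s}$ with $V$ an ordinary (boundaried) manifold, $E\ra V$ a vector bundle, $s\in C^\iy(E)$; and given a $1$-morphism $\bs f:\bX\ra Y$, by embedding $\bX$ into $Y\t\R^N$ via $(\bs f,\iota)$ for an embedding $\iota$ we may take $V$ open in $Y\t\R^N$, with $\bs f$ corresponding to the projection $\varpi:V\ra Y$ restricted to $s^{-1}(0)$. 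Given such data, a generic small perturbation $\ti s$ of $s$ supported near the compact set $s^{-1}(0)$ is transverse to the zero section, so $\ti s^{-1}(0)$ is a compact manifold of dimension $\vdim\bX$ (with boundary $\ti s^{-1}(0)\cap\pd V$ if $V$ has one), still carrying $\varpi$; and interpolating between $s$ and $\ti s$ by a section $\hat s$ of $\pi_V^*E$ on $V\t[0,1]$ that is product-like near $t=0,1$ produces a compact oriented d-manifold with boundary $\bS_{V\t[0,1],\pi_V^*E,\hat s}$ realising a d-bordism between $\bX\simeq\bS_{V,E,s}$ and $\bS_{V,E,\ti s}=\ti s^{-1}(0)$, compatibly with the maps to $Y$. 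Orientations are carried along using the canonical-bundle description of orientations on d-manifolds from~\S\ref{vf262}.

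The three assertions then follow quickly. For $k<0$: any compact oriented d-manifold $\bX$ with $\vdim\bX=k$ and $1$-morphism $\bs f:\bX\ra Y$ is, by the above, d-bordant to $\ti s^{-1}(0)$, a compact manifold of dimension $k<0$ and hence empty, so $[\bX,\bs f]=[\bs\es,\bs\es]=0$ and $dB_k(Y)=0$. For surjectivity of $\Pi_\bo^\dbo$ with $k\ge0$: the same construction gives $[\bX,\bs f]=\Pi_\bo^\dbo\bigl([\ti s^{-1}(0),\varpi\vert]\bigr)$. For injectivity with $k\ge0$: if $[X,f]\in B_k(Y)$ satisfies $\Pi_\bo^\dbo([X,f])=0$ in $dB_k(Y)$, there is a compact oriented d-manifold with boundary $\bW$ with $\vdim\bW=k+1$, a $1$-morphism $\bs e:\bW\ra Y$, an equivalence $\bs j:-X\amalg\bs\es\ra\pd\bW$ and a $2$-morphism $\eta:f\amalg\bs\es\Ra\bs e\ci\bs i_\bW\ci\bs j$. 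Embedding $\bW$ into $Y\t\R^N$ by $(\bs e,\iota)$ with $\iota$ extending a chosen embedding of $X$, I would realise $\bW\simeq\bS_{V,E,s}$ with $V\subseteq Y\t\R^N$ open with boundary and $s$ already transverse to zero near $\pd V$ (possible since $\pd\bW\simeq-X$ is an honest manifold there); perturbing $s$ to $\ti s$, transverse on all of $V$ and unchanged near $\pd V$, yields an ordinary compact oriented manifold with boundary $\ti s^{-1}(0)$ with $\pd\,\ti s^{-1}(0)=-X$ and a smooth map to $Y$ restricting to $f$, i.e.\ an ordinary bordism exhibiting $[X,f]=0$ in $B_k(Y)$.

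The hard part is not this bookkeeping but the structure-theoretic input: presenting a compact d-manifold --- and, for injectivity, a compact d-manifold with boundary \emph{relative to} an already-smooth boundary --- globally as a single standard model $\bS_{V,E,s}$ over an honest (boundaried) manifold, compatibly with a prescribed map to $Y$ and with orientations, and verifying that the perturbation $\ti s$ of $s$ genuinely defines a d-bordism in $\dManb$ with the stated boundary and orientation. These rest on the embedding and transversality results for d-manifolds in \cite{Joyc2,Joyc3,Joyc4}; the full argument is carried out in \cite[\S 13.2]{Joyc4}, and the analogous unoriented statement for Spivak's derived manifolds is \cite[Th.~2.6]{Spiv}. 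Once these are available, everything else is routine finite-dimensional transversality.
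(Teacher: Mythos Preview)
Your proposal is correct and follows exactly the approach the paper indicates: the paper itself does not give a detailed proof but only remarks that ``(compact, oriented) d-manifolds $\bX$ can be turned into (compact, oriented) manifolds $\ti X$ by a small perturbation,'' deferring to \cite[\S 13.2]{Joyc4} and citing \cite[Th.~2.6]{Spiv} for the unoriented analogue. Your sketch expands this into the standard-model presentation $\bS_{V,E,s}$, generic perturbation of $s$, and interpolation to build the d-bordism, which is precisely the argument carried out in the cited reference; you have correctly identified both the strategy and where the real work lies.
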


The main idea of the proof of Theorem \ref{vf2thm5} is that (compact, oriented) d-manifolds $\bX$ can be turned into (compact, oriented) manifolds $\ti X$ by a small perturbation. By Theorem \ref{vf2thm5}, we may define a projection $\Pi_\dbo^\hom:dB_k(Y)\ra H_k(Y;\Z)$ for $k\ge 0$ by $\Pi_\dbo^\hom=\Pi_\bo^\hom\ci(\Pi_\bo^\dbo)^{-1}$. We think of
$\Pi_\dbo^\hom$ as a {\it virtual class map}, and call $[\bX]_\virt=\Pi_\dbo^\hom([\bX,\bs f])$ the {\it virtual class}. Virtual classes are used in several areas of geometry to construct enumerative invariants using moduli spaces, for example in \cite[\S A1]{FOOO}, \cite[\S 6]{FuOn} for Fukaya--Oh--Ohta--Ono's Kuranishi spaces, and in Behrend and Fantechi \cite{BeFa} in algebraic geometry.

\subsubsection{Virtual classes for derived manifolds in homology}
\label{vf265}

If $\bX$ is a compact, oriented derived manifold of dimension $k\in\Z$ we can also define a virtual class $[\bX]_\virt$ in the homology $H_k(X;\Z)$ of the underlying topological space $X$, for a suitable homology theory. By \cite[Cor.~4.30]{Joyc2} or \cite[Cor.~4.31]{Joyc3} or \cite[Th.~4.29]{Joyc4}, we can choose an embedding $\bs f:\bX\hookra\R^n$ for $n\gg 0$. If $Y$ is an open neighbourhood of $f(X)$ in $\R^n$ then \S\ref{vf264} defines $\Pi_\dbo^\hom([\bX,\bs f])$ in $H_k(Y;\Z)$. We also have a pushforward map $f_*:H_k(X;\Z)\ra H_k(Y;\Z)$. 

If $X$ is a Euclidean Neighbourhood Retract (ENR), we can choose $Y$ so that it retracts onto $f(X)$, and then $f_*:H_k(X;\Z)\ra H_k(Y;\Z)$ is an isomorphism, so we can define the virtual class $[\bX]_\virt=(f_*)^{-1}\ci\Pi_\dbo^\hom([\bX,\bs f])$ in ordinary homology $H_k(X;\Z)$. This $[\bX]_\virt$ is independent of the choices of~$\bs f,n,Y$.

General derived manifolds may not be ENRs. In this case we use a trick the authors learned from McDuff and Wehrheim \cite[\S 7.5]{McWe}. Choose a sequence $\R^n\supseteq Y_1\supseteq Y_2\supseteq \cdots$ of open neighbourhoods of $f(X)$ in $\R^n$ with $f(X)=\bigcap_{i\ge 1}Y_i$. Now {\it Steenrod homology\/} $H^{\rm St}_*(-;\Z)$ (see Milnor \cite{Miln}) is a homology theory with the nice properties that $H^{\rm St}_*(Y_i;\Z)\cong H_*(Y_i;\Z)$ as $Y_i$ is a manifold, and as $f(X)=\bigcap_{i\ge 1}Y_i$ we have an isomorphism with the inverse limit
\e
\smash{H^{\rm St}_k(f(X);\Z)\cong \underleftarrow{\lim}_{\,i\ge 1}\,H^{\rm St}_k(Y_i;\Z).}
\label{vf2eq11}
\e
\v Cech homology $\check H_*(-;\Q)$ over $\Q$ (the dual $\Q$-vector spaces to \v Cech cohomology $\check H^*(-;\Q)$) has the same limiting property. Then writing $\bs f_i=\bs f:\bX\ra Y_i$, so that $\Pi_\dbo^\hom([\bX,\bs f_i])\in H_k(Y_i;\Z)\cong H^{\rm St}_k(Y_i;\Z)$, using \eq{vf2eq11} we may form $\underleftarrow{\lim}_{\,i\ge 1}\Pi_\dbo^\hom([\bX,\bs f_i])$ in $H^{\rm St}_k(f(X);\Z)$, so that
\begin{equation*}
\smash{[\bX]_\virt:=(f_*)^{-1}\bigl[\underleftarrow{\lim}_{\,i\ge 1}\Pi_\dbo^\hom([\bX,\bs f_i])\bigr]}
\end{equation*}
is a virtual class in $H^{\rm St}_k(X;\Z)$, or similarly in $\check H_k(X;\Q)$. Here $[\bX]_\virt$ is independent of the choices of~$\bs f,n,Y_i$.

For the examples in this paper, $X$ is the complex analytic topological space of a proper $\C$-scheme, and therefore an ENR. Then $H^{\rm St}_k(X;\Z)\cong H_k(X;\Z)$ and $\check H_k(X;\Q)\cong H_k(X;\Q)$, and the virtual class lives in ordinary homology.

\section{The main results}
\label{vf3}

We now give our main results. We begin in \S\ref{vf31} with a general existence result for a special kind of atlas for $\bs\pi:\bX\ra Z$, where $\bX$ is a separated derived $\C$-scheme and $Z$ a smooth affine classical $\C$-scheme, an atlas in which the charts are spectra of standard form cdgas, the coordinate changes are quasi-free, and composition of coordinate changes is strictly associative.

Sections \ref{vf32}--\ref{vf35} build up to our primary goal, Theorems \ref{vf3thm3} and \ref{vf3thm4} in \S\ref{vf35}, which show that to a separated, $-2$-shifted symplectic derived $\C$-scheme $(\bX,\om_\bX^*)$ with $\vdim_\C\bX=n$ and complex analytic topological space $X_\an$, we can build a Kuranishi atlas $\cK$ on $X_\an$, and so construct a derived manifold $\bX_\dm$ with topological space $X_\an$, with $\vdim_\R\bX_\dm=n$. In \S\ref{vf36} we show that orientations on $(\bX,\om_\bX^*)$ and on $(X_\an,\cK)$ and on $\bX_\dm$ correspond, and prove that for $(\bX,\om_\bX^*)$ proper and oriented, the bordism class $[\bX_\dm]\in dB_n(*)$ is a `virtual cycle' independent of choices. 

Section \ref{vf37} extends \S\ref{vf32}--\S\ref{vf36} to families $(\bs\pi:\bX\ra Z,[\om_{\bX/Z}])$ over a connected base $\C$-scheme $Z$, and shows that the bordism class $[\bX_\dm^z]\in dB_n(*)$ associated to a fibre $\bs\pi^{-1}(z)$ is independent of $z\in Z_\an$. Finally, \S\ref{vf38}--\S\ref{vf39} discuss applying our results to define Donaldson--Thomas style invariants `counting' coherent sheaves on Calabi--Yau 4-folds, and motivation from gauge theory.

\subsection{Zariski homotopy atlases on derived schemes}
\label{vf31}

Derived schemes and stacks, discussed in \S\ref{vf22}, are very abstract objects, and difficult to do computations with. But standard form cdgas $A^\bu,B^\bu$ and quasi-free morphisms $\Phi:A^\bu\ra B^\bu$ in \S\ref{vf21} are easy to work with explicitly. Our first main result, proved in \S\ref{vf4}, constructs well-behaved homotopy atlases for a derived scheme $\bX$, built from standard form cdgas and quasi-free morphisms. 

\begin{thm} Let\/ $\bX$ be a separated derived\/ $\C$-scheme, $Z=\Spec B$ be a smooth classical affine $\C$-scheme for $B$ a smooth\/ $\C$-algebra of pure dimension, and\/ $\bs\pi:\bX\ra Z$ be a morphism. Suppose we are given data $\bigl\{(A_i^\bu,\bs\al_i,\be_i):i\in I\bigr\},$ where\/ $I$ is an indexing set and for each\/ $i\in I,$ $A_i^\bu\in\cdga_\C$ is a standard form cdga, and\/ $\bs\al_i:\bSpec A_i^\bu\hookra\bX$ is a Zariski open inclusion in $\dSch_\C,$ and\/ $\be_i:B\ra A_i^0$ is a smooth morphism of classical\/ $\C$-algebras such that the following diagram homotopy commutes in $\dSch_\C\!:$ 
\e
\begin{gathered}
\xymatrix@C=120pt@R=11pt{ *+[r]{\bSpec A_i^\bu} \ar[dr]_(0.5){\bSpec\be_i} \ar[r]_(0.6){\bs\al_i} & *+[l]{\bX} \ar[d]^{\bs\pi} \\ & *+[l]{\Spec B=Z,\!} }
\end{gathered}
\label{vf3eq1}
\e
regarding $\be_i$ as a morphism $B\ra A_i^\bu$. Then we can construct the following data:
\begin{itemize}
\setlength{\itemsep}{0pt}
\setlength{\parsep}{0pt}
\item[{\bf(i)}] For all finite subsets $\es\ne J\subseteq I,$ a standard form cdga $A_J^\bu\in\cdga_\C,$ a Zariski open inclusion $\bs\al_J:\bSpec A_J^\bu\hookra\bX,$ with image $\Im\bs\al_J=\bigcap_{i\in J}\Im\bs\al_i,$ and a smooth morphism of classical\/ $\C$-algebras $\be_J:B\ra A_J^0,$ such that the following diagram homotopy commutes in $\dSch_\C\!:$ 
\e
\begin{gathered}
\xymatrix@C=120pt@R=11pt{ *+[r]{\bSpec A_J^\bu} \ar[dr]_(0.5){\bSpec\be_J} \ar[r]_(0.6){\bs\al_J} & *+[l]{\bX} \ar[d]^{\bs\pi} \\ & *+[l]{\Spec B=Z,\!} }
\end{gathered}
\label{vf3eq2}
\e
and when $J=\{i\}$ for $i\in I$ we have $A_{\{i\}}^\bu=A_i^\bu,$ $\bs\al_{\{i\}}=\bs\al_i,$ and\/~$\be_{\{i\}}=\be_i$.
\item[{\bf(ii)}] For all inclusions of finite subsets $\es\ne K\subseteq J\subseteq I,$ a quasi-free morphism of standard form cdgas $\Phi_{JK}:A_K^\bu\ra A_J^\bu$ with\/ $\be_J=\Phi_{JK}\ci\be_K:B\ra A_J^0,$ such that the following diagram homotopy commutes in $\dSch_\C\!:$
\e
\begin{gathered}
\xymatrix@C=120pt@R=11pt{ *+[r]{\bSpec A_J^\bu} \ar[dr]_(0.6){\bs\al_J} \ar[r]_(0.6){\bSpec\Phi_{JK}} & *+[l]{\bSpec A_K^\bu} \ar[d]^{\bs\al_K} \\ & *+[l]{\bX,\!} }
\end{gathered}
\label{vf3eq3}
\e
and if\/ $\es\ne L\subseteq K\subseteq J\subseteq I$ then $\Phi_{JL}=\Phi_{JK}\ci\Phi_{KL}:A_L^\bu\ra A_J^\bu.$  
\end{itemize}

\label{vf3thm1}
\end{thm}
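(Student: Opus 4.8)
The plan is to build the data $\{(A_J^\bu,\bs\al_J,\be_J)\}$ and $\{\Phi_{JK}\}$ by induction on $\md{J}$, using Theorem \ref{vf2thm1} (existence of standard form charts) and \cite[Th.~4.2]{BBJ} (comparison of two standard form charts on their overlap via a third chart) as the basic geometric inputs, together with the model-category structure on $\cdga_\C$ (used to promote homotopy-commutative diagrams to strictly commutative ones after cofibrant replacement, and to factor morphisms as a cofibration followed by a trivial fibration).

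First I would handle the base case $\md{J}=1$: set $A_{\{i\}}^\bu=A_i^\bu$, $\bs\al_{\{i\}}=\bs\al_i$, $\be_{\{i\}}=\be_i$, nothing to prove. For the inductive step, suppose $A_K^\bu,\bs\al_K,\be_K$ have been constructed for all $\es\ne K\subsetneq J$ with $\md{K}<\md{J}$, compatibly with quasi-free morphisms $\Phi_{KK'}$ satisfying strict associativity. Given $J$ with $\md{J}=k$, the open set $\bU_J:=\bigcap_{i\in J}\Im\bs\al_i$ is an open affine derived $\C$-subscheme of $\bX$ (affine because $\bX$ is separated, so finite intersections of affine opens are affine, as established in Definition \ref{vf2def4}). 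Pick any $j\in J$ and write $J'=J\sm\{j\}$; we already have $\bSpec A_{J'}^\bu\hookra\bX$ with image $\bU_{J'}\supseteq\bU_J$, and we want a standard form chart on $\bU_J$ mapping quasi-freely to $A_{J'}^\bu$ and to each $A_{J\sm\{i\}}^\bu$. The idea is: first restrict $A_{J'}^\bu$ to a standard form cdga $\ti A^\bu$ with $\bSpec\ti A^\bu\simeq\bU_J$ (a standard form cdga can be localized at an element of $A^0$ while staying standard form); then use \cite[Th.~4.2]{BBJ} to compare $\ti A^\bu$ with the analogous restriction of $A_{J\sm\{i\}}^\bu$ for each $i\in J$, obtaining a common refinement; then iterate/amalgamate over the finitely many $i\in J$ to get a single standard form cdga $A_J^\bu$ with quasi-free morphisms $\Phi_{J,J\sm\{i\}}:A_{J\sm\{i\}}^\bu\ra A_J^\bu$ for all $i$. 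Define $\Phi_{JK}$ for general $K\subsetneq J$ by $\Phi_{JK}:=\Phi_{J,J\sm\{i\}}\ci\Phi_{J\sm\{i\},K}$ for a choice of $i\in J\sm K$ — strict associativity from the inductive hypothesis plus the coherence built into the amalgamation must make this independent of the choice of $i$, which is the key compatibility to verify. Set $\be_J:=\Phi_{J,J'}\ci\be_{J'}:B\ra A_J^0$ (a composite of smooth morphisms, hence smooth), and $\bs\al_J:=\bs\al_{J'}\ci\bSpec\Phi_{J,J'}$; the homotopy-commutativity of \eq{vf3eq2} and \eq{vf3eq3} follows from that of \eq{vf3eq1}, \eq{vf3eq2} at lower level, and \eq{vf3eq3}.

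The compatibility with $\bs\pi:\bX\ra Z$ requires care: at each stage the constructed quasi-free morphisms must be taken relative to $B$, i.e.\ under $B$ via the $\be$'s. This means I would run the entire construction in the slice category $\cdga_{B/}$ (cdgas under $B$) rather than $\cdga_\C$ — the model structure and the relevant parts of \cite{BBJ} go through relatively, since $B$ is a smooth $\C$-algebra of pure dimension and $\be_i:B\ra A_i^0$ is smooth, so relative standard form and relative quasi-free notions behave well. The condition $\be_J=\Phi_{JK}\ci\be_K$ is then automatic from working under $B$. The only subtlety is ensuring the localizations of $A^0$ and the smooth extensions in the amalgamation step can be chosen to respect the map from $B$, which follows because smoothness is preserved under composition and localization.

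The main obstacle I expect is the \textbf{strict associativity} $\Phi_{JL}=\Phi_{JK}\ci\Phi_{KL}$ together with the well-definedness of $A_J^\bu$ independent of the auxiliary choices (the element $j\in J$, the order of amalgamation over $i\in J$). In ordinary derived algebraic geometry one only gets coherence up to higher homotopy, and turning this into \emph{strict} commutativity of a poset-indexed diagram of cdgas is exactly what \cite[Th.~4.2]{BBJ} is designed to help with, but assembling the pieces over all finite $J\subseteq I$ simultaneously, with strict compatibility, is the real work. I would organize this by a careful bookkeeping of the amalgamation: at level $k$, do not amalgamate the $k$ charts $A_{J\sm\{i\}}^\bu$ pairwise in an arbitrary order, but rather use a single simultaneous homotopy fibre product construction (the homotopy limit of the diagram of the $A_{J\sm\{i\}}^\bu$ over $\bU_J$), then rigidify it to a standard form cdga by cofibrant replacement and factorization; strict associativity then reduces to the functoriality of this homotopy-limit-then-rigidify operation with respect to the inclusions of index sets, which can be arranged by making consistent cofibrant-replacement choices (e.g.\ via a functorial factorization in the model category $\cdga_{B/}$). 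Checking that all the required diagrams \eq{vf3eq2}, \eq{vf3eq3} still homotopy commute after rigidification, and that the image condition $\Im\bs\al_J=\bigcap_{i\in J}\Im\bs\al_i$ holds on the nose, are then routine but lengthy verifications, which is why this proof is deferred to \S\ref{vf4}.
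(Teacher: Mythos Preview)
Your outline is in the right spirit --- induction on $\md{J}$, build something at level $J$ from the already-constructed data for all $\es\ne K\subsetneq J$ via a homotopy colimit, then rigidify --- and you correctly identify strict associativity of the $\Phi_{JK}$ as the crux. But the mechanism you propose for achieving it (``functorial factorization in $\cdga_{B/}$'' applied to homotopy limits) is not the one the paper uses, and as stated it is not adequate: functorial factorization rectifies individual morphisms, not an entire poset-indexed diagram, and there is no reason the rigidified standard-form cdgas at different $J$'s would be linked by \emph{strict} quasi-free maps satisfying $\Phi_{JL}=\Phi_{JK}\ci\Phi_{KL}$ on the nose. Your plan to define $\Phi_{JK}$ by choosing $i\in J\sm K$ and then checking independence of the choice is exactly the kind of coherence problem that does not resolve itself for free.

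The paper's device is more explicit and sidesteps the issue entirely. It \emph{fixes} the degree-zero piece as $A_J^0=\bigot_{i\in J}^{\text{over }B}A_i^0$ and takes $A_J^*$ to be freely generated over $A_J^0$ by nested generator sets $\coprod_{\es\ne K\subseteq J}S_K^n$ in each degree $n\le -1$, with $\Phi_{JK}$ acting as the obvious tensor-inclusion in degree $0$ and as $\ga\mapsto\ga$ on generators. With this definition, $\Phi_{JL}=\Phi_{JK}\ci\Phi_{KL}$ and $\be_J=\Phi_{JK}\ci\be_K$ are literally true by inspection; there is nothing to verify. The inductive step then has two layers: first form $\ti A_J^\bu$ using only the $S_K^n$ for $K\subsetneq J$ (this is the strict colimit of the lower diagram, and because the maps are smooth in degree $0$ and free in negative degrees it is also the homotopy colimit, hence receives a map $\Psi$ from a standard-form model $\hat A_J^\bu$ of $\bigcap_{i\in J}\Im\bs\al_i$); second, run a \emph{separate} induction on decreasing degree $N=0,-1,-2,\ldots$ to choose the new generator sets $S_J^N$ and their differentials so as to kill $\Ker H^N(\Psi)$ and hit $H^{N-1}(\hat A_J^\bu)$, producing in the limit a quasi-isomorphism $\Xi:A_J^\bu\to\hat A_J^\bu$. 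The map $\bs\al_J$ is then $\bs{\hat\al}_J$ composed with a quasi-inverse of $\bSpec\Xi$. Note that \cite[Th.~4.2]{BBJ} is not used at all; the comparison of charts is handled by the explicit tensor-product/nested-generator bookkeeping, not by invoking an external ``third chart'' result.
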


\subsection{Interpreting Zariski atlases using complex geometry}
\label{vf32}

Given a $-2$-shifted symplectic derived $\C$-scheme $(\bX,\om_\bX^*)$ satisfying conditions, we will construct a derived manifold structure $\bX_{\rm dm}$ on the complex analytic topological space $X_\an$ underlying $\bX$. To do this, we need a {\it change of language\/}: we have to pass from talking about derived schemes $\bX$, cdgas $A^\bu$, etc., to talking about smooth manifolds $V$, vector bundles $E\ra V$, smooth sections $s:V\ra E$, as $\bX_{\rm dm}$ will be built by gluing together such local Kuranishi models~$(V,E,s)$.

Therefore we now rewrite part of the output $A_J^\bu$, $\be_J:B\ra A_J^0$, $\Phi_{JK}:A_J^\bu\ra A_K^\bu$ of Theorem \ref{vf3thm1} in terms of complex manifolds $V$, holomorphic vector bundles $E\ra V$, and holomorphic sections $s:V\ra E$. In \S\ref{vf35} we will pass to certain real vector bundles $E^+=E/E^-$ to define~$\bX_{\rm dm}$.

First we interpret standard form cdgas $A^\bu\in\cdga_\C$ using holomorphic data. We discuss only data from degrees $0,-1,-2$ in $A^\bu$, as this is all we need, but one could also define vector bundles $G,H,\ldots$ over $V$ corresponding to $M^{-3},M^{-4},\ldots,$ and many vector bundle morphisms, satisfying equations. 

\begin{dfn} Let $A^\bu=\bigl(\cdots \ra A^{-2}\,{\buildrel\d\over\longra}\,A^{-1}\,{\buildrel\d\over\longra}\,A^0\bigr)$ be a standard form cdga over $\C$, as in \S\ref{vf21}. Then $A^0$ is a finitely generated smooth $\C$-algebra, so $V^\alg:=\Spec A^0$ is a smooth affine $\C$-scheme, assumed of pure dimension, as in \S\ref{vf21}. Now any $\C$-scheme $S$ has an underlying complex analytic space $S_\an$, which is a complex manifold if $S$ is smooth and of pure dimension.

Write $V$ for the complex manifold $(V^\alg)_\an$ associated to~$V^\alg=\Spec A^0$.

As $A^\bu$ is of standard form, the graded $\C$-algebra $A^*$ is freely generated over $A^0$ by a series of finitely generated free $A^0$-modules $M^{-1}\subseteq A^{-1}$, $M^{-2}\subseteq A^{-2},\ldots.$ Thus $A^{-1}\cong M^{-1}$, $A^{-2}\cong M^{-2}\op \La^2_{A^0}M^{-1}$, and so on, giving
\e
M^{-1}=A^{-1},\quad M^{-2}\cong A^{-2}/\La^2_{A^0}A^{-1},\;\ldots.
\label{vf3eq4}
\e
Hence, the $M^i$ are determined by $A^*$ as $A^0$-modules up to canonical isomorphism, although for $i\le -2$ the inclusions $M^i\hookra A^i$ involve an arbitrary choice.

Now finitely generated free $A^0$-modules $M$ are those of the form $M\cong H^0(C^\alg)$ for $C^\alg\ra V^\alg=\Spec A^0$ a trivial algebraic vector bundle. Write $E^\alg\ra V^\alg$, $F^\alg\ra V^\alg$ for the trivial algebraic vector bundles (unique up to canonical isomorphism) with $M^{-1}\cong H^0((E^\alg)^*)$, $M^{-2}\cong H^0((F^\alg)^*)$. That is, we set $E^\alg=\Spec\mathop{\rm Sym}_{A^0}^*(M^{-1})$, and so on. Write $E\ra V$, $F\ra V$ for the holomorphic vector bundles corresponding to~$E^\alg,F^\alg$.

We now have isomorphisms
\e
\begin{split}
A^0&\cong H^0(\O_{V^\alg}),\qquad A^{-1}\cong H^0((E^\alg)^*),\\ 
A^{-2}&\cong H^0((F^\alg)^*)\op H^0(\La^2(E^\alg)^*).
\end{split}
\label{vf3eq5}
\e
Thus $\d:A^{-1}\ra A^0$ is identified with an $A^0$-module morphism $H^0((E^\alg)^*)\ra H^0(\O_{V^\alg})$, that is, a morphism $(E^\alg)^*\ra \O_{V^\alg}$ of algebraic vector bundles, which is dual to a morphism $\O_{V^\alg}\cong \O_{V^\alg}^*\ra E^\alg$, i.e.\ a section $s^\alg\in H^0(E^\alg)$ of $E^\alg$. Write $s\in H^0(E)$ for the corresponding holomorphic section.

Similarly, write $t^\alg:E^\alg\ra F^\alg$ for the algebraic vector bundle morphism dual to the component of $\d:A^{-2}\ra A^{-1}$ mapping $H^0((F^\alg)^*)\ra H^0((E^\alg)^*)$ under \eq{vf3eq5}, and write $t:E\ra F$ for the corresponding morphism of holomorphic vector bundles. Then $\d\ci\d=0$ implies that~$t\ci s=0:\O_{V}\ra F$.

We should also consider how this data $E,F,s,t$ depends on the choice of inclusion $M^{-2}\hookra A^{-2}$. Here $E,F$ are independent of choices up to canonical isomorphism, and $s$ is independent of choices. Changing the inclusion $M^{-2}\hookra A^{-2}$ is equivalent to choosing an algebraic vector bundle morphism $\ga^\alg:\La^2E^\alg\ra F^\alg$ and identifying $M^{-2}$ with the image of $\id\op(\ga^\alg)^*:H^0((F^\alg)^*)\hookra H^0((F^\alg)^*)\op H^0(\La^2(E^\alg)^*)$. Writing $\ga:\La^2E\ra F$ for the corresponding holomorphic morphism, this changes $t$ to $\ti t$, where
\e
\ti t=t+\ga\ci( - \w s).
\label{vf3eq6}
\e
Notice that $t\vert_v:E\vert_v\ra F\vert_v$ is independent of choices at $v\in V$ with~$s(v)=0$.

Next suppose $\bX$ is a derived $\C$-scheme and $\bs\al:\bSpec A^\bu\hookra\bX$ a Zariski open inclusion. Write $X=t_0(\bX)$ for the classical $\C$-scheme, and $X_\an$ for the set of $\C$-points of $X$ equipped with the complex analytic topology. (One can give $X_\an$ the structure of a complex analytic space, but we will not use this.) Then $t_0(\bSpec A^\bu)$ is the $\C$-subscheme $(s^\alg)^{-1}(0)\subseteq V^\alg$, so $\al=t_0(\bs\al)$ is a Zariski open inclusion $(s^\alg)^{-1}(0)\hookra X$. Write $\psi:s^{-1}(0)\hookra X_\an$ for the corresponding map of $\C$-points. Then $\psi$ is a homeomorphism with an open set $R=\Im\psi\subseteq X_\an$. Note that $(V,E,s,\psi)$ is a Kuranishi neighbourhood on $X_\an$, in the sense of~\S\ref{vf25}.

As we explained in \S\ref{vf21}--\S\ref{vf22}, if $A^\bu$ is a standard form cdga then it is easy to compute the cotangent complex $\bL_{A^\bu}\simeq\Om^1_{A^\bu}$, and this also can be identified with the cotangent complex $\bL_{\bSpec A^\bu}$ of the derived scheme $\bSpec A^\bu$. Let $v\in s^{-1}(0)\subseteq V$ with $\psi(v)=x\in X_\an$. Then $v$ is a $\C$-point of $\bSpec A^\bu$ and $x$ a $\C$-point of $\bX$ with $\bs\al(v)=x$, so $\bL_{\bs\al}\vert_v:\bL_\bX\vert_x\ra\bL_{\bSpec A^\bu}\vert_v$ is a quasi-isomorphism, and induces an isomorphism on cohomology. One can show that $\bL_{\bSpec A^\bu}\vert_v$ is represented by the complex of $\C$-vector spaces
\e
\smash{\xymatrix@C=30pt{ \cdots \ar[r] & F\vert^*_v \ar[r]^{t\vert_v^*} & E\vert_v^* \ar[r]^{\d s\vert^*_v} & T_v^*V \ar[r] & 0,}}
\label{vf3eq7}
\e
with $T_v^*V$ in degree 0. Dualizing to tangent complexes and taking cohomology, we get canonical isomorphisms
\ea
H^0\bigl(\bT_{\bs\al}\vert_v\bigr)&:\Ker\bigl(\d s\vert_v:T_vV\ra E\vert_v\bigr)\longra H^0\bigl(\bT_\bX\vert_x\bigr),
\label{vf3eq8}\\
H^1\bigl(\bT_{\bs\al}\vert_v\bigr)&:\frac{\ts \Ker \bigl(t\vert_v:E\vert_v\ra F\vert_v\bigr)}{\ts \Im \bigl(\d s\vert_v:T_vV\ra E\vert_v\bigr)}\longra H^1\bigl(\bT_\bX\vert_x\bigr).
\label{vf3eq9}
\ea
 
Now suppose that $Z=\Spec B$ is a smooth classical affine $\C$-scheme of pure dimension, $\bs\pi:\bX\ra Z$ is a morphism, and $\be:B\ra A^0$ is a smooth morphism of $\C$-algebras, such that as for \eq{vf3eq1}--\eq{vf3eq2} the following homotopy commutes
\e
\begin{gathered}
\xymatrix@C=120pt@R=11pt{ *+[r]{\bSpec A^\bu} \ar[dr]_(0.5){\bSpec\be} \ar[r]_(0.6){\bs\al} & *+[l]{\bX} \ar[d]^{\bs\pi} \\ & *+[l]{\Spec B=Z.\!} }
\end{gathered}
\label{vf3eq10}
\e
Then $Z_\an$ is a complex manifold, and $\tau^\alg:=\Spec\be:V^\alg\ra Z$ is a smooth morphism of $\C$-schemes, and $\tau:=(\tau^\alg)_\an:V\ra Z_\an$ is a holomorphic submersion of complex manifolds. We can form the relative cotangent complexes $\bL_{\bX/Z},\bL_{\bSpec A^\bu/Z}$ and dual relative tangent complexes $\bT_{\bX/Z},\bT_{\bSpec A^\bu/Z}$, and \eq{vf3eq10} gives morphisms $\bL_{\bs\al}:\bL_{\bX/Z}\ra\bL_{\bSpec A^\bu/Z}$, $\bT_{\bs\al}:\bT_{\bSpec A^\bu/Z}\ra\bT_{\bX/Z}$. 

Write $T(V/Z_\an)=\Ker\bigl(\d\tau:TV\ra \tau^*(TZ_\an)\bigr)$ for the {\it relative tangent bundle\/} of $V/Z_\an$. It is a holomorphic vector subbundle of $TV$ of rank $\dim V-\dim Z$, as $\tau$ is a holomorphic submersion. Let $v\in s^{-1}(0)\subseteq V$ with $\psi(v)=x\in X_\an$ and $\tau(v)=\pi(x)=z\in Z_\an$. Then as in \eq{vf3eq7}, $\bL_{\bSpec A^\bu/Z}\vert_v$ is represented by the complex of $\C$-vector spaces
\begin{equation*}
\smash{\xymatrix@C=35pt{ \cdots \ar[r] & F\vert^*_v \ar[r]^{t\vert_v^*} & E\vert_v^* \ar[r]^(0.4){\d s\vert^*_v} & T_v^*(V/Z_\an) \ar[r] & 0,}}
\end{equation*}
with $T_v^*(V/Z_\an)$ in degree 0. As for \eq{vf3eq8}--\eq{vf3eq9} we get canonical isomorphisms
\ea
H^0\bigl(\bT_{\bs\al}\vert_v\bigr)&:\Ker\bigl(\d s\vert_v:T_v(V/Z_\an)\ra E\vert_v\bigr)\longra H^0\bigl(\bT_{\bX/Z}\vert_x\bigr),
\label{vf3eq11}\\
H^1\bigl(\bT_{\bs\al}\vert_v\bigr)&:\frac{\ts \Ker \bigl(t\vert_v:E\vert_v\ra F\vert_v\bigr)}{\ts \Im \bigl(\d s\vert_v:T_v(V/Z_\an)\ra E\vert_v\bigr)}\longra H^1\bigl(\bT_{\bX/Z}\vert_x\bigr).
\label{vf3eq12}
\ea

\label{vf3def1}
\end{dfn}

\begin{ex} Suppose $(A^\bu,\om_{A^\bu})$ is in $-2$-Darboux form, in the sense of Definition \ref{vf2def6}, with coordinates $x_1,\ldots,x_m$, $y_1,\ldots,y_n$, $z_1,\ldots,z_m$, and 2-form $\om_{A^\bu}$ in \eq{vf2eq1}, depending on invertible functions $q_1,\ldots,q_n\in A^0$. 

Let $V,E,F,s,t$ be as in Definition \ref{vf3def1}. Then $V$ is a smooth $\C$-scheme of dimension $m$, with \'etale coordinates $(x_1,\ldots,x_m)$, so that $TV$ is a trivial vector bundle with basis of sections $\frac{\pd}{\pd x_1},\ldots,\frac{\pd}{\pd x_m}$. Also $E$ is a trivial vector bundle of rank $n$, with basis $e_1:=\frac{\pd}{\pd y_1},\ldots,e_n:=\frac{\pd}{\pd y_n}$, and $F$ is trivial of rank $m$, with basis $\frac{\pd}{\pd z_1},\ldots,\frac{\pd}{\pd z_m}$. Using the first line of $\om_{A^\bu}$ in \eq{vf2eq1}, it is natural to identify $F\cong T^*V$ by identifying $\frac{\pd}{\pd z_i}\cong \dd x_i$ for~$i=1,\ldots,m$.

The natural section $s\in H^0(E)$ is $s=s_1e_1+\cdots+s_ne_n$. Write $\ep^1,\ldots,\ep^n$ for the basis of sections of $E^*$ dual to $e_1,\ldots,e_n$, so that $\ep^j\cong\dd y_j$. Motivated by the second line of $\om_{A^\bu}$ in \eq{vf2eq1}, define $Q=q_1\ep^1\ot\ep^1+\cdots+q_n\ep^n\ot\ep^n$ in $H^0(S^2E^*)$. Then $Q$ is a natural nondegenerate quadratic form on the fibres of $E$, and \eq{vf2eq2} implies that $Q(s,s)=0$.

Identifying $F=T^*V$, from \eq{vf2eq3} we see that $t:E\ra F$ is given by
\e
\smash{t(e_j)=\ts\sum\limits_{i=1}^m\bigl(2q_j\frac{\pd s_j}{\pd x_i}
+s_j\,\frac{\pd q_j}{\pd x_i}\bigr)\dd x_i=2q_j\,\dd s_j+s_j\,\dd q_j,}
\label{vf3eq13}
\e
for $j=1,\ldots,n$. Then $t\ci s=0$ follows from applying $\dd$ to~$Q(s,s)=0$.

What will matter later is that we have a complex manifold $V$, a holomorphic vector bundle $E\ra V$, a section $s\in H^0(E)$, and a nondegenerate holomorphic quadratic form $Q\in H^0(S^2E^*)$ with $Q(s,s)=0$, such that the classical complex analytic topological space $\bigl(\Spec H^0(A^\bu)\bigr){}_\an$ is~$s^{-1}(0)\subseteq V$.
\label{vf3ex1}
\end{ex}

Next we interpret quasi-free morphisms of standard form cdgas $\Phi_{JK}:A_K^\bu\ra A_J^\bu,$ as in Theorem \ref{vf3thm1}(a)(ii), in terms of complex geometry.

\begin{dfn} Let $\Phi_{JK}:A_K^\bu\ra A_J^\bu$ be a quasi-free morphism of standard form cdgas over $\C$, as in \S\ref{vf21}. Let $V_J^\alg,E_J^\alg,F_J^\alg,s_J^\alg,t_J^\alg,V_J,E_J,F_J, s_J,t_J$ be as in Definition \ref{vf3def1} for $A_J^\bu$, and $V_K^\alg,E_K^\alg,\ldots,t_K$ for~$A_K^\bu$.

Then $\phi_{JK}^\alg:=\Spec\Phi_{JK}^0:V_J^\alg=\Spec A_J^0\ra V_K^\alg=\Spec A_K^0$ is a $\C$-scheme morphism. Write $\phi_{JK}:V_J\ra V_K$ for the corresponding holomorphic map. The quasi-free condition on $\Phi_{JK}$ implies that $\d\phi_{JK}^\alg:(\phi_{JK}^\alg)^*(T^*V_K^\alg)\ra T^*V_J^\alg$ is injective, so $\d\phi_{JK}:\phi_{JK}^*(T^*V_K)\ra T^*V_J$ is injective, that is, $\phi_{JK}:V_J\ra V_K$ is a submersion of complex manifolds.

Now $\Phi_{JK}^{-1}:A_K^{-1}\ra A_J^{-1}$ induces an $A_J^0$-linear map $(\Phi_{JK}^{-1})_*:A_K^{-1}\ot_{A_K^0}A_J^0\ra A_J^{-1}$, which under \eq{vf3eq5} corresponds to an algebraic vector bundle morphism $(\phi_{JK}^\alg)^*((E_K^\alg)^*)\ra (E_J^\alg)^*$. Write $\chi_{JK}^\alg:E_J^\alg\ra (\phi_{JK}^\alg)^*(E_K^\alg)$ for the dual morphism, and $\chi_{JK}:E_J\ra \phi_{JK}^*(E_K)$ for the corresponding morphism of holomorphic vector bundles. It is surjective, as $\Phi_{JK}$ is quasi-free. Then $\d\ci\Phi_{JK}^{-1}=\Phi_{JK}^0\ci\d$ implies that
\e
\smash{\chi_{JK}(s_J)=\phi_{JK}^*(s_K)\in H^0(\phi_{JK}^*(E_K)).}
\label{vf3eq14}
\e

By \eq{vf3eq4} we have a natural composition of morphisms
\begin{equation*}
\smash{H^0((F_K^\alg)^*)\!\cong\! M_K^{-2}\!\cong\! A_K^{-2}/\La^2_{A_K^0}A_K^{-1}
\,{\buildrel(\Phi_{JK}^{-2})_*\over\longra}\, A_J^{-2}/\La^2_{A_J^0}A_J^{-1}\!\cong\! M_J^{-2}\!\cong\! H^0((F_J^\alg)^*).}
\end{equation*}
The induced $A_J^0$-linear map corresponds to a natural algebraic vector bundle morphism $(\phi_{JK}^\alg)^*((F_K^\alg)^*)\ra (F_J^\alg)^*$. Write $\xi_{JK}^\alg:F_J^\alg\ra (\phi_{JK}^\alg)^*(F_K^\alg)$ for the dual morphism, and $\xi_{JK}:F_J\ra\phi_{JK}^*(F_K)$ for the corresponding morphism of holomorphic vector bundles. It is surjective, as $\Phi_{JK}$ is quasi-free. 

These $\xi_{JK}^\alg,\xi_{JK}$ are independent of choices, as they depend on the canonical isomorphism $M^{-2}\cong A^{-2}/\La^2_{A^0}A^{-1}$ rather than on the non-canonical inclusion $M^{-2}\hookra A^{-2}$ in Definition \ref{vf3def1}. However, $\Phi_{JK}^{-2}$ need not map $M_K^{-2}\subseteq A_K^{-2}$ to $M_J^{-2}\subseteq A_J^{-2}$, and so under the isomorphisms \eq{vf3eq5} need not map $H^0((F_K^\alg)^*)\ra H^0((F_J^\alg)^*)$. Write $\de_{JK}^\alg:\La^2E_J^\alg\ra(\phi_{JK}^\alg)^*(F_K^\alg)$ for the algebraic vector bundle morphism dual to the component of $\Phi_{JK}^{-2}$ mapping $H^0((F_K^\alg)^*)\ra H^0(\La^2(E_J^\alg)^*)$, and $\de_{JK}:\La^2E_J\ra\phi_{JK}^*(F_K)$ for the corresponding morphism of vector bundles. Then $\d\ci\Phi_{JK}^{-2}=\Phi_{JK}^{-1}\ci\d$ implies that
\e
\smash{\xi_{JK}\ci t_J+\de_{JK}\ci(-\w s_J)=\phi_{JK}^*(t_K)\ci\chi_{JK}
:E_J\longra \phi_{JK}^*(F_K).}
\label{vf3eq15}
\e

Thus $\chi_{JK},\xi_{JK}$ do not strictly commute with $t_J,t_K$, which is not surprising, since $t_J,t_K$ depend on arbitrary choices as in \eq{vf3eq6}. But notice that $\xi_{JK}\vert_v\ci t_J\vert_v=t_K\vert_{\phi_{JK}(v)}\ci\chi_{JK}\vert_v$ at $v\in V_J$ with~$s_J(v)=0$.

Next suppose we are given Zariski open inclusions $\bs\al_J:\bSpec A_J^\bu\hookra\bX$, $\bs\al_K:\bSpec A_K^\bu\hookra\bX$ into a derived $\C$-scheme $\bX$, such that \eq{vf3eq3} homotopy commutes, and let $\psi_J:s_J^{-1}(0)\hookra X_\an$, $\psi_K:s_K^{-1}(0)\hookra X_\an$ be as in Definition \ref{vf3def1}. As the classical truncation of \eq{vf3eq3} commutes, we see that
\e
\smash{\psi_J=\psi_K\ci\phi_{JK}\vert_{s_J^{-1}(0)}:s_J^{-1}(0)\longra X_\an.}
\label{vf3eq16}
\e

Suppose $v_J\in s_J^{-1}(0)\subseteq V_J$ with $\phi_{JK}(v_J)=v_K\in s_K^{-1}(0)\subseteq V_K$ and $\psi_J(v_J)=\psi_K(v_K)=x\in X_\an$. As \eq{vf3eq3} homotopy commutes, the corresponding morphisms of tangent complexes $\bT_{\bSpec A_J^\bu},\bT_{\bSpec A_K^\bu},\bT_\bX$ commute up to homotopy, so restricting to $v_J,v_K,x$ and taking homology gives strictly commuting diagrams. Thus using \eq{vf3eq8}--\eq{vf3eq9}, we see that the following diagrams commute:
\ea
\begin{gathered}
\xymatrix@C=175pt@R=12pt{ *+[r]{\Ker\bigl(\d s_J\vert_{v_J}\!:\!T_{v_J}V_J\!\ra\!E_J\vert_{v_J}\bigr)} \ar[d]^{(\d\phi_{JK}\vert_{v_J})\vert_{\Ker(\cdots)}} \ar@/^1pc/[dr]^(0.7){H^0(\bT_{\bs\al_J}\vert_{v_J})} \\
*+[r]{\Ker\bigl(\d s_K\vert_{v_K}\!:\!T_{v_K}V_K\!\ra \!E_K\vert_{v_K}\bigr)} \ar[r]^(0.6){H^0(\bT_{\bs\al_K}\vert_{v_K})} 
& *+[l]{H^0\bigl(\bT_\bX\vert_x\bigr),}  }
\end{gathered}
\label{vf3eq17}\\
\begin{gathered}
\xymatrix@C=170pt@R=12pt{ *+[r]{\frac{\ts \Ker \bigl(t_J\vert_{v_J}\!:\!E_J\vert_{v_J}\!\ra\! F_J\vert_{v_J}\bigr)}{\ts \Im \bigl(\d s_J\vert_{v_J}\!:\!T_{v_J}V_J\!\ra\! E_J\vert_{v_K}\bigr)}} \ar[d]^{(\chi_{JK}\vert_{v_J})_*} \ar@/^1pc/[dr]^(0.7){H^1(\bT_{\bs\al_J}\vert_{v_J})} \\
*+[r]{\frac{\ts \Ker \bigl(t_K\vert_{v_K}\!:\!E_K\vert_{v_K}\!\ra\! F_K\vert_{v_K}\bigr)}{\ts \Im \bigl(\d s_K\vert_{v_K}\!:\!T_{v_K}V_K\!\ra\! E_K\vert_{v_K}\bigr)}} \ar[r]^(0.6){H^1(\bT_{\bs\al_K}\vert_{v_K})} 
& *+[l]{H^1\bigl(\bT_\bX\vert_x\bigr).}  }
\end{gathered}
\label{vf3eq18}
\ea

Now suppose that $Z=\Spec B$ is a smooth classical affine $\C$-scheme of pure dimension, $\bs\pi:\bX\ra Z$ is a morphism, and $\be_J:B\ra A_J^0$, $\be_K:B\ra A_K^0$ are smooth morphisms of $\C$-algebras, such that \eq{vf3eq2} homotopy commutes for $J,K$, and $\be_J=\Phi_{JK}\ci\be_K$. As in Definition \ref{vf3def1} we have holomorphic submersions $\tau_J:V_J\ra Z_\an$, $\tau_K:V_K\ra Z_\an$, with $\tau_J=\tau_K\ci\phi_{JK}:V_J\ra Z_\an$ as $\be_J=\Phi_{JK}\ci\be_K$. Let $v_J\in s_J^{-1}(0)\subseteq V_J$ with $\phi_{JK}(v_J)=v_K\in s_K^{-1}(0)\subseteq V_K$, and $\psi_J(v_J)=\psi_K(v_K)=x\in X_\an$, and $\tau_J(v_J)=\tau_K(v_K)=\pi(x)=z\in Z_\an$. Then using \eq{vf3eq11}--\eq{vf3eq12}, we see that the following diagrams commute:
\ea
\begin{gathered}
\xymatrix@C=165pt@R=12pt{ *+[r]{\Ker\bigl(\d s_J\vert_{v_J}\!:\!T_{v_J}(V_J/Z_\an)\!\ra\!E_J\vert_{v_J}\bigr)} \ar[d]^{(\d\phi_{JK}\vert_{v_J})\vert_{\Ker(\cdots)}} \ar@/^1pc/[dr]^(0.7){H^0(\bT_{\bs\al_J}\vert_{v_J})} \\
*+[r]{\Ker\bigl(\d s_K\vert_{v_K}\!:\!T_{v_K}(V_K/Z_\an)\!\ra \!E_K\vert_{v_K}\bigr)} \ar[r]^(0.68){H^0(\bT_{\bs\al_K}\vert_{v_K})} 
& *+[l]{H^0\bigl(\bT_{\bX/Z}\vert_x\bigr),}  }
\end{gathered}
\label{vf3eq19}\\
\begin{gathered}
\xymatrix@C=160pt@R=13pt{ *+[r]{\frac{\ts \Ker \bigl(t_J\vert_{v_J}\!:\!E_J\vert_{v_J}\!\ra\! F_J\vert_{v_J}\bigr)}{\ts \Im \bigl(\d s_J\vert_{v_J}\!:\!T_{v_J}(V_J/Z_\an)\!\ra\! E_J\vert_{v_K}\bigr)}} \ar[d]^{(\chi_{JK}\vert_{v_J})_*} \ar@/^1pc/[dr]^(0.7){H^1(\bT_{\bs\al_J}\vert_{v_J})} \\
*+[r]{\frac{\ts \Ker \bigl(t_K\vert_{v_K}\!:\!E_K\vert_{v_K}\!\ra\! F_K\vert_{v_K}\bigr)}{\ts \Im \bigl(\d s_K\vert_{v_K}\!:\!T_{v_K}(V_K/Z_\an)\!\ra\! E_K\vert_{v_K}\bigr)}} \ar[r]^(0.65){H^1(\bT_{\bs\al_K}\vert_{v_K})} 
& *+[l]{H^1\bigl(\bT_{\bX/Z}\vert_x\bigr).}  }
\end{gathered}
\label{vf3eq20}
\ea

\label{vf3def2}
\end{dfn}

Applying Definitions \ref{vf3def1} and \ref{vf3def2} to the conclusions of Theorem \ref{vf3thm1} yields:

\begin{cor} In the situation of Theorem\/ {\rm\ref{vf3thm1},} write $X_\an$ for the set of\/ $\C$-points of\/ $X=t_0(\bX),$ regarded as a topological space with the complex analytic topology. Then we obtain the following data in complex geometry:
\begin{itemize}
\setlength{\itemsep}{0pt}
\setlength{\parsep}{0pt}
\item[{\bf(i)}] For all finite subsets $\es\ne J\subseteq I,$ a complex manifold\/ $V_J,$ a holomorphic submersion $\tau_J:V_J\ra Z_\an,$ holomorphic vector bundles $E_J,F_J\ra V_J,$ a holomorphic section $s_J:V_J\ra E_J,$ and a homeomorphism $\psi_J:s_J^{-1}(0)\ra R_J\subseteq X_\an,$ where $R_J\subseteq X_\an$ is open, with\/ $\pi\ci\psi_J=\tau_J\vert_{s_J^{-1}(0)}:s_J^{-1}(0)\ra Z_\an$. These image subsets satisfy\/~$R_J=\bigcap_{i\in J}R_{\{i\}}$.
 
By making an additional arbitrary choice we also obtain a morphism of holomorphic vector bundles $t_J:E_J\ra F_J,$ with\/ $t_J\ci s_J=0$. Different choices $t_J,\ti t_J$ are related by \eq{vf3eq6}. The restrictions $t_J\vert_{v_J}:E_J\vert_{v_J}\ra F_J\vert_{v_J}$ for $v_J\in s_J^{-1}(0)$ are independent of choices. For each\/ $v_J\in s_J^{-1}(0)$ with\/ $\psi_J(v_J)=x\in X_\an,$ there are canonical isomorphisms \eq{vf3eq8}--\eq{vf3eq9} writing $H^i\bigl(\bT_\bX\vert_x\bigr)$ for $i=0,1$ and\/ \eq{vf3eq11}--\eq{vf3eq12} writing $H^i\bigl(\bT_{\bX/Z}\vert_x\bigr)$ for $i=0,1$ in terms of\/ $V_J,E_J,F_J,s_J,t_J,\tau_J$ at\/~$v_J$.
\item[{\bf(ii)}] For all inclusions of finite subsets $\es\ne K\subseteq J\subseteq I,$ a holomorphic submersion $\phi_{JK}:V_J\ra V_K,$ and surjective morphisms of holomorphic vector bundles $\chi_{JK}:E_J\ra \phi_{JK}^*(E_K)$ and\/ $\xi_{JK}:F_J\ra\phi_{JK}^*(F_K)$. These satisfy $\tau_J=\tau_K\ci\phi_{JK}:V_J\ra Z_\an,$ and\/ $\chi_{JK}(s_J)=\phi_{JK}^*(s_K),$ and\/~$\psi_J=\psi_K\ci\phi_{JK}\vert_{s_J^{-1}(0)}:s_J^{-1}(0)\ra X_\an$.

If\/ $t_J,t_K$ are possible choices in {\bf(i)} then $\chi_{JK},\xi_{JK},t_J,t_K$ are related as in \eq{vf3eq15}. If\/ $v_J\in s_J^{-1}(0)$ with\/ $\phi_{JK}(v_J)=v_K\in s_K^{-1}(0),$ this implies that
\begin{equation*}
\smash{\xi_{JK}\vert_{v_J}\ci t_J\vert_{v_J}=t_K\vert_{v_K}\ci\chi_{JK}\vert_{v_J}:E_J\vert_{v_J}\longra F_K\vert_{v_K}.}
\end{equation*}

If\/ $v_J\in s_J^{-1}(0)\subseteq V_J$ with\/ $\phi_{JK}(v_J)=v_K\in s_K^{-1}(0)\subseteq V_K$ and\/ $\psi_J(v_J)=\psi_K(v_K)=x\in X_\an,$ then \eq{vf3eq17}--\eq{vf3eq20} commute.

If\/ $\es\ne L\subseteq K\subseteq J\subseteq I$ then $\phi_{JL}=\phi_{KL}\ci\phi_{JK},$ $\chi_{JL}=\phi_{JK}^*(\chi_{KL})\ci\chi_{JK},$ and\/~$\xi_{JL}=\phi_{JK}^*(\xi_{KL})\ci\xi_{JK}$.
\end{itemize}
\label{vf3cor1}
\end{cor}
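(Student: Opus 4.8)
The plan is to read this corollary as a pure translation: it records what happens when the algebraic output of Theorem \ref{vf3thm1} is fed, term by term, through the constructions of Definitions \ref{vf3def1} and \ref{vf3def2}. Almost everything then follows by unwinding those definitions, and the only points needing attention are the behaviour of images under passage to $\C$-points, the relative statements, the strict cocycle identities, and the dependence on one arbitrary choice.

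First I would fix, once and for all and independently for each finite $\es\ne J\subseteq I$, one of the arbitrary inclusions $M_J^{-2}\hookra A_J^{-2}$ appearing in Definition \ref{vf3def1}. Applying that definition to the triple $(A_J^\bu,\bs\al_J,\be_J)$ supplied by Theorem \ref{vf3thm1}(i) directly yields $V_J=(\Spec A_J^0)_\an$, the holomorphic bundles $E_J,F_J$ and section $s_J$, the morphism $t_J$ (depending on the chosen inclusion, with different choices related by \eq{vf3eq6}, and $t_J\vert_{v_J}$ choice-independent for $v_J\in s_J^{-1}(0)$), the submersion $\tau_J=(\Spec\be_J)_\an:V_J\ra Z_\an$, and the homeomorphism $\psi_J:s_J^{-1}(0)\ra R_J\subseteq X_\an$; the identity $t_J\ci s_J=0$ and the canonical isomorphisms \eq{vf3eq8}--\eq{vf3eq9}, \eq{vf3eq11}--\eq{vf3eq12} are exactly what Definition \ref{vf3def1} produces. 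The compatibility $\pi\ci\psi_J=\tau_J\vert_{s_J^{-1}(0)}$ follows by applying the classical truncation $t_0$ to the homotopy-commutative square \eq{vf3eq2} and passing to $\C$-points with the analytic topology. For $R_J=\bigcap_{i\in J}R_{\{i\}}$, I would use that Theorem \ref{vf3thm1}(i) gives $\Im\bs\al_J=\bigcap_{i\in J}\Im\bs\al_i$ as Zariski open subsets of $\bX$, hence the same relation holds for the induced open subsets of $X=t_0(\bX)$, and $R_J$ is by construction the set of $\C$-points of $\Im\bs\al_J$ in the analytic topology, so intersections are preserved. When $J=\{i\}$ the equalities $A_{\{i\}}^\bu=A_i^\bu$, $\bs\al_{\{i\}}=\bs\al_i$, $\be_{\{i\}}=\be_i$ of Theorem \ref{vf3thm1}(i) make the construction return the data of $(A_i^\bu,\bs\al_i,\be_i)$ on the nose.

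Next, for each $\es\ne K\subseteq J\subseteq I$, I would apply Definition \ref{vf3def2} to the quasi-free morphism $\Phi_{JK}:A_K^\bu\ra A_J^\bu$ of Theorem \ref{vf3thm1}(ii), producing the holomorphic submersion $\phi_{JK}=(\Spec\Phi_{JK}^0)_\an:V_J\ra V_K$ and the surjections $\chi_{JK}:E_J\ra\phi_{JK}^*(E_K)$, $\xi_{JK}:F_J\ra\phi_{JK}^*(F_K)$ (together with the choice-dependent $\de_{JK}$). The equality $\tau_J=\tau_K\ci\phi_{JK}$ comes from $\be_J=\Phi_{JK}\ci\be_K$ by applying $(\Spec(-))_\an$; $\chi_{JK}(s_J)=\phi_{JK}^*(s_K)$ is \eq{vf3eq14}; and $\psi_J=\psi_K\ci\phi_{JK}\vert_{s_J^{-1}(0)}$ is \eq{vf3eq16}, from $t_0$ of the homotopy-commutative \eq{vf3eq3}. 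Relation \eq{vf3eq15} holds for the fixed inclusions by Definition \ref{vf3def2}, and restricting to $v_J\in s_J^{-1}(0)$ kills the $\de_{JK}\ci(-\w s_J)$ term, giving the stated pointwise identity $\xi_{JK}\vert_{v_J}\ci t_J\vert_{v_J}=t_K\vert_{v_K}\ci\chi_{JK}\vert_{v_J}$. The commuting diagrams \eq{vf3eq17}--\eq{vf3eq18} follow because \eq{vf3eq3} homotopy-commutes, so the induced morphisms of tangent complexes $\bT_{\bSpec A_J^\bu},\bT_{\bSpec A_K^\bu},\bT_\bX$ commute up to homotopy; restricting to $v_J,v_K,x$ and taking cohomology gives strictly commuting triangles, rewritten via \eq{vf3eq8}--\eq{vf3eq9} as \eq{vf3eq17}--\eq{vf3eq18}. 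The relative versions \eq{vf3eq19}--\eq{vf3eq20} are the same argument applied to the relative tangent complexes, requiring in addition the commutativity of \eq{vf3eq2} for $J$ and $K$ and the identity $\be_J=\Phi_{JK}\ci\be_K$ in order to identify the relative tangent complexes and to see that $\tau_J=\tau_K\ci\phi_{JK}$. Finally, the strict identities $\phi_{JL}=\phi_{KL}\ci\phi_{JK}$, $\chi_{JL}=\phi_{JK}^*(\chi_{KL})\ci\chi_{JK}$, $\xi_{JL}=\phi_{JK}^*(\xi_{KL})\ci\xi_{JK}$ follow from $\Phi_{JL}=\Phi_{JK}\ci\Phi_{KL}$ of Theorem \ref{vf3thm1}(ii) by applying $\Spec$ to the degree-$0$ parts and dualizing the degree-$(-1)$ and degree-$(-2)$ parts; functoriality of these operations produces the cocycle conditions, and here one must simply track variance, since $\Spec$ and dualization reverse the direction of $\Phi_{JK}:A_K^\bu\ra A_J^\bu$ to match $\phi_{JK}:V_J\ra V_K$ and $\chi_{JK}:E_J\ra\phi_{JK}^*(E_K)$.

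I expect essentially no genuine obstacle: the corollary is bookkeeping, and the nearest thing to a subtlety is ensuring the arbitrary inclusions $M_J^{-2}\hookra A_J^{-2}$ are handled consistently, so that all choice-dependent objects, namely $t_J$ and $\de_{JK}$, appear only inside relations \eq{vf3eq6} and \eq{vf3eq15} which either become choice-independent after restriction to $s_J^{-1}(0)$ or are explicitly quantified over the choices. Keeping the variance conventions straight between cdga morphisms and the opposite-direction geometric maps is the other place where care, though not ingenuity, is needed.
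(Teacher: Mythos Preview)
Your proposal is correct and matches the paper's approach exactly: the paper simply states that the corollary follows by ``applying Definitions \ref{vf3def1} and \ref{vf3def2} to the conclusions of Theorem \ref{vf3thm1}'', and you have carefully unwound precisely that translation. There is nothing to add.
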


\subsection{\texorpdfstring{Subbundles $E^-\subseteq E$ and Kuranishi neighbourhoods}{Subbundles E⁻⊆E and Kuranishi neighbourhoods}}
\label{vf33}

Throughout \S\ref{vf33}--\S\ref{vf36}, when we apply Theorem \ref{vf3thm1} we take $B=\C$, so that $Z$ is the point $*=\Spec\C$, and the data $\bs\pi,\be_i,\be_J,\tau_J$ is trivial, so we omit it.

Suppose $(\bX,\om_\bX^*)$ is a $-2$-shifted symplectic derived $\C$-scheme, $A^\bu$ a standard form cdga over $\C$, and $\bs\al:\bSpec A^\bu\ra\bX$ a Zariski open inclusion. Then Definition \ref{vf3def1} defines complex geometric data $V,E,F,s,t,\psi,R$, such that $(V,E,s,\psi)$ is a Kuranishi neighbourhood on the topological space $X_\an$ of~$\bX$.

However these are not the Kuranishi neighbourhoods we want: they depend only on $\bX$, not on $\om_\bX^*$, and in general two such neighbourhoods $(V_J,E_J,s_J,\psi_J)$ and $(V_K,E_K,s_K,\psi_K)$ are not compatible over their intersection $R_J\cap R_K$ in $X_\an$ (e.g.\ the virtual dimensions $\dim_\R V_J-\rank_\R E_J$ and $\dim_\R V_K-\rank_\R E_K$ may be different), so we cannot glue them to make $X_\an$ into a derived manifold.

The basic problem is that the rank of $E$ may be too large -- for instance, we can modify $A^\bu$ to replace $E,F,s,t$ by $\ti E=E\op G$, $\ti F=F\op G$, $\ti s=s\op 0$, $\ti t=t\op\id_G$ for some holomorphic vector bundle $G\ra V$. Our solution is to choose a real vector subbundle $E^-\subseteq E$ satisfying some conditions involving $\om_\bX^*$, and set $E^+=E/E^-$ to be the quotient bundle and $s^+=s+E^-$ in $C^\iy(E^+)$ to be the quotient section. The conditions on $E^-$ imply that $s^{-1}(0)=(s^+)^{-1}(0)$, so $(V,E^+,s^+,\psi^+)$ is also a Kuranishi neighbourhood on $X_\an$. Under good conditions we can make two such $(V_J,E_J^+,s_J^+,\psi_J^+),(V_K,E_K^+,s_K^+,\psi_K^+)$ compatible over $R_J\cap R_K$, and glue these local models to make $X_\an$ into a derived manifold.

We define the class of subbundles $E^-\subseteq E$ we are interested in:

\begin{dfn} Let $(\bX,\om_\bX^*)$ be a $-2$-shifted symplectic derived $\C$-scheme with $\vdim_\C\bX=n$, and suppose $A^\bu\in\cdga_\C$ is of standard form and $\bs\al:A^\bu\hookra\bX$ is a Zariski open inclusion. Define complex geometric data $V,E,F,s,t$ and $\psi:s^{-1}(0)\,{\buildrel\cong\over\longra}\,R\subseteq X_\an$ as in Definition \ref{vf3def1}, and suppose $R\ne\es$. Then for each $v\in s^{-1}(0)$ with $\psi(v)=x\in X_\an$, equation \eq{vf3eq9} gives an isomorphism from a vector space depending on $V,E,F,s,t$ at $v$ to~$H^1(\bT_\bX\vert_x)$.

Equation \eq{vf2eq6} defined a quadratic form $Q_x$ on $H^1\bigl(\bT_\bX\vert_x\bigr)$. Define
\e
\ti Q_v:\frac{\ts \Ker \bigl(t\vert_v:E\vert_v\ra F\vert_v\bigr)}{\ts \Im \bigl(\d s\vert_v:T_vV\ra E\vert_v\bigr)}\t\frac{\ts \Ker \bigl(t\vert_v:E\vert_v\ra F\vert_v\bigr)}{\ts \Im \bigl(\d s\vert_v:T_vV\ra E\vert_v\bigr)}\longra\C
\label{vf3eq21}
\e
to be the nondegenerate complex quadratic form identified with $Q_x$ in \eq{vf2eq6} by the isomorphism $H^1\bigl(\bT_{\bs\al}\vert_v\bigr)$ in~\eq{vf3eq9}.

Consider pairs $(U,E^-)$, where $U\subseteq V$ is open and $E^-$ is a real vector subbundle of $E\vert_U$. Given such $(U,E^-)$, we write $E^+=E\vert_U/E^-$ for the quotient vector bundle over $U$, and $s^+\in C^\iy(E^+)$ for the image of $s\vert_U$ under the projection $E\vert_U\ra E^+$, and $\psi^+:=\psi\vert_{s^{-1}(0)\cap U}:s^{-1}(0)\cap U\ra X_\an$. We say that $(U,E^-)$ {\it satisfies condition\/} $(*)$ if:
\begin{itemize}
\setlength{\itemsep}{0pt}
\setlength{\parsep}{0pt}
\item[$(*)$] For each $v\in s^{-1}(0)\cap U$, we have 
\ea
\Im\bigl(\d s\vert_v:T_vV\ra E\vert_v\bigr)\cap E^-\vert_v&=\{0\} &&\text{in $E\vert_v$,}
\label{vf3eq22}\\
t\vert_v\bigl(E^-\vert_v\bigr)&=t\vert_v\bigl(E\vert_v\bigr)&&\text{in $F\vert_v$,}
\label{vf3eq23}
\ea
and the natural real linear map
\e
\Pi_v:E^-\vert_v\cap \Ker \bigl(t\vert_v:E\vert_v\ra F\vert_v\bigr)\longra
\frac{\ts \Ker \bigl(t\vert_v:E\vert_v\ra F\vert_v\bigr)}{\ts \Im \bigl(\d s\vert_v:T_vV\ra E\vert_v\bigr)}\,,
\label{vf3eq24}
\e
which is injective by \eq{vf3eq22}, has image $\Im\Pi_v$ a real vector subspace of dimension exactly half the real dimension of $\Ker (t\vert_v)/\Im(\d s\vert_v)$, and the real quadratic form $\Re \ti Q_v$ on $\Ker (t\vert_v)/\Im(\d s\vert_v)$ from \eq{vf3eq21} restricts to a negative definite real quadratic form on~$\Im\Pi_v$.
\end{itemize}

We say $(U,E^-)$ {\it satisfies condition\/} $(\dag)$ if:
\begin{itemize}
\setlength{\itemsep}{0pt}
\setlength{\parsep}{0pt}
\item[$(\dag)$] $(U,E^-)$ satisfies condition $(*)$ and $s^{-1}(0)\cap U=(s^+)^{-1}(0)\subseteq U$.
\end{itemize}
Then $(U,E^+,s^+,\psi^+)$ is a Kuranishi neighbourhood on~$X_\an$.

Observe that if $v\in s^{-1}(0)\cap U$ with $\psi(v)=x\in X_\an$ then using \eq{vf3eq8}--\eq{vf3eq9} and \eq{vf3eq22}--\eq{vf3eq24} we find there is an exact sequence
\e
\smash{\xymatrix@C=15pt{ 0 \ar[r] & H^0\bigl(\bT_\bX\vert_x\bigr) \ar[r] & T_vU \ar[r] & E^+\vert_v \ar[r] & H^1\bigl(\bT_\bX\vert_x\bigr)/\Im\Pi_v \ar[r] & 0. }}
\label{vf3eq25}
\e
Hence
\ea
&\dim_\R U\!-\!\rank_\R E^+\!=\!\dim_\R H^0(\bT_\bX\vert_x)\!-\!\dim_\R H^1(\bT_\bX\vert_x)\!+\!\dim_\R\Im\Pi_v
\nonumber\\
&=2\dim_\C H^0(\bT_\bX\vert_x)-\dim_\C H^1(\bT_\bX\vert_x)
\label{vf3eq26}\\
&=\dim_\C H^0(\bT_\bX\vert_x)\!-\!\dim_\C H^1(\bT_\bX\vert_x)\!+\!
\dim_\C H^2(\bT_\bX\vert_x)\!=\!\vdim_\C\bX\!=\!n.
\nonumber 
\ea
Here in the second step we use $\dim_\R\Pi_v=\ha\dim_\R H^1(\bT_\bX\vert_x)$ by $(*)$ and \eq{vf3eq9}, in the third that $H^0(\bT_\bX\vert_x)\cong H^2(\bT_\bX\vert_x)^*$ as $(\bX,\om_\bX^*)$ is $-2$-shifted symplectic (or $-2$-shifted presymplectic will do), and in the fourth that $\bT_\bX$ is perfect in the interval $[0,2]$ as $(\bX,\om_\bX^*)$ is $-2$-shifted symplectic (or presymplectic).

Equation \eq{vf3eq26} says that the Kuranishi neighbourhood $(U,E^+,s^+,\psi^+)$ has real virtual dimension $\dim U-\rank E^+=n=\vdim_\C\bX=\ha\vdim_\R\bX$. Note that this is half the virtual dimension we might have expected, and the real virtual dimension can be odd, even though $\bX,V,E,s,\ldots$ are all complex.
\label{vf3def3}
\end{dfn}

Here are some important properties of such $U,E^-,E^+,s^+$, proved in~\S\ref{vf5}.

\begin{thm} In the situation of Definition\/ {\rm\ref{vf3def3},} with\/ $\bX,\om_\bX^*,A^\bu,\bs\al,V,E,F,\ab s,\ab t,\ab\psi$ fixed, we have:
\begin{itemize}
\item[{\bf(a)}] If the conditions in $(*)$ hold at some\/ $v\in s^{-1}(0)\cap U,$ then they also hold for all\/ $v'$ in an open neighbourhood of\/ $v$ in\/~$s^{-1}(0)\cap U$. 
\item[{\bf(b)}] Suppose $C\subseteq V$ is closed, and\/ $(U,E^-)$ satisfies condition $(*)$ with\/ $C\subseteq U\subseteq V$. (We allow $C=U=\es$.) Then there exists $(\ti U,\ti E^-)$ satisfying $(*)$ with $C\cup s^{-1}(0)\subseteq\ti U\subseteq V,$ and an open neighbourhood\/ $U'$ of\/ $C$ in $U\cap\ti U$ such that\/ $E^-\vert_{U'}=\ti E^-\vert_{U'}$.
\item[{\bf(c)}] If\/ $(U,E^-)$ satisfies $(*),$ the closed subsets $s^{-1}(0)\cap U$ and\/ $(s^+)^{-1}(0)$ in $U\subseteq V$ coincide in an open neighbourhood\/ $U'$ of\/ $s^{-1}(0)\cap U$ in $U$. Hence $(U',E^-\vert_{U'})$ satisfies condition $(\dag),$ and\/ $(U',E^+\vert_{U'},s^+\vert_{U'},\psi^+)$ is a Kuranishi neighbourhood on $X_\an$. Thus, we can make $(U,E^-)$ satisfying $(*)$ also satisfy $(\dag)$ by shrinking $U,$ without changing $R=\Im\psi$ in\/~$X_\an$.
\end{itemize}
\label{vf3thm2}
\end{thm}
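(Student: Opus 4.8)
The plan is to treat the three parts in order, as (b) depends on (a) and (c) is a direct consequence of (b) together with a local rank count. Throughout, all the relevant vector spaces at a point $v\in s^{-1}(0)\cap U$ — namely $T_vV$, $E\vert_v$, $F\vert_v$, $E^-\vert_v$, the maps $\d s\vert_v$, $t\vert_v$, the projection $\Pi_v$, and the quadratic form $\Re\ti Q_v$ — vary continuously (indeed holomorphically, except for $E^-$ and $\Re\ti Q_v$ which are merely smooth/continuous) with $v$ in $s^{-1}(0)\cap U$. The whole argument is an exercise in the semicontinuity of ranks and the stability of open nondegeneracy conditions, combined with standard partition-of-unity gluing for the extension statement in (b).

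For \textbf{(a)}: fix $v_0$ where $(*)$ holds. Condition \eq{vf3eq22} says $\Im(\d s\vert_{v_0})\cap E^-\vert_{v_0}=\{0\}$, i.e.\ the map $T_{v_0}V\op E^-\vert_{v_0}\ra E\vert_{v_0}$, $(a,b)\mapsto \d s\vert_{v_0}(a)-b$, is injective; injectivity of a continuously-varying linear map is an open condition, giving \eq{vf3eq22} near $v_0$. Condition \eq{vf3eq23}, that $t\vert_v(E^-\vert_v)=t\vert_v(E\vert_v)$, is the statement that a certain composite $E^-\vert_v\ra F\vert_v/\,?$ — more precisely that $\rank(t\vert_v\ci(\text{inclusion }E^-\hookra E))=\rank t\vert_v$ — which is again an open condition by lower semicontinuity of rank applied to both sides (note $\rank t\vert_v$ is itself only lower semicontinuous, but \eq{vf3eq23} forces the two ranks to be equal, and "rank of restriction $\ge$ rank of whole" is the open direction). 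Once \eq{vf3eq22} and \eq{vf3eq23} hold near $v_0$, the dimension of $\Ker(t\vert_v)/\Im(\d s\vert_v)$ and of $E^-\vert_v\cap\Ker(t\vert_v)$ can only jump; but the dimension count forced by $(*)$, together with \eq{vf3eq26}, pins $\dim\Im\Pi_v$ to be exactly half of $\dim(\Ker(t\vert_v)/\Im(\d s\vert_v))$ on a neighbourhood where everything is locally constant after possibly shrinking. Finally, negative-definiteness of $\Re\ti Q_v$ on $\Im\Pi_v$ is an open condition on the pair (subspace, quadratic form), both varying continuously, so it persists near $v_0$. Intersecting the finitely many open neighbourhoods obtained gives the claim.

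For \textbf{(b)}: the idea is to extend $E^-$ from a neighbourhood of $C$ across $s^{-1}(0)$ using a partition of unity on the Grassmannian bundle, exploiting that $(*)$ is open (by (a)) and that over points of $s^{-1}(0)$ near a given $v$ one can always find \emph{some} subbundle satisfying $(*)$. Concretely: for each $v\in s^{-1}(0)$ pick, by the linear algebra underlying Definition \ref{vf3def3} (a maximal negative-definite subspace for $\Re\ti Q_v$ of the required half-dimension, lifted through $\Pi_v$, and a transversality adjustment for \eq{vf3eq22}, together with a choice of complement realizing \eq{vf3eq23}), a local subbundle $E^-_v$ on an open $U_v\ni v$ satisfying $(*)$; on a neighbourhood of $C\cap U_v$ we already have $E^-$, which also satisfies $(*)$. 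Over the overlaps, the set of real subbundles of the given rank satisfying $(*)$ is, fibrewise, an \emph{open} subset of a Grassmannian; the delicate point is that to glue subbundles by a partition of unity one needs a notion of "convex combination" of subspaces, which does not exist on a Grassmannian in general. The standard fix — and this is the main obstacle — is to glue not the subbundles $E^-$ directly but a \emph{complementary piece of data}: e.g.\ pick an auxiliary Hermitian metric on $E$, represent each candidate $E^-$ near a point as the graph of a linear map from a fixed reference subbundle to its orthogonal complement, and average those linear maps; the averaged graph again satisfies $(*)$ provided the cover is fine enough, because $(*)$ is open and graphs of nearby maps are $C^0$-close as subbundles. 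One arranges the cover so that near $C$ only the chart coming from the given $E^-$ contributes (choose the partition-of-unity functions to be $1$ near $C$ on that chart), which yields $E^-\vert_{U'}=\ti E^-\vert_{U'}$ on a neighbourhood $U'$ of $C$, and so that the union of charts covers $C\cup s^{-1}(0)$, giving $\ti U\supseteq C\cup s^{-1}(0)$. This is where the bulk of the work lies: verifying that the averaging procedure preserves $(*)$, which reduces to a compactness/uniform-openness argument on the relevant bundle of Grassmannians over a neighbourhood of $s^{-1}(0)$.

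For \textbf{(c)}: suppose $(U,E^-)$ satisfies $(*)$. Clearly $(s^+)^{-1}(0)\supseteq s^{-1}(0)\cap U$ always, since $s(v)=0\Rightarrow s^+(v)=0$. For the reverse inclusion near $s^{-1}(0)\cap U$: if $s^+(v)=0$ then $s(v)\in E^-\vert_v$; I want to show $s(v)=0$ for $v$ close to $s^{-1}(0)\cap U$. Consider the smooth map $\sigma:U\ra E$, $\sigma=s$, and the submanifold-with-corners picture: on a neighbourhood of a point $v_0\in s^{-1}(0)\cap U$, condition \eq{vf3eq22} says $\d s\vert_{v_0}$ is transverse to $E^-\vert_{v_0}$ inside $E\vert_{v_0}$, so $s^{-1}(E^-)$ — the set where $s(v)\in E^-\vert_v$ — is, near $v_0$, a smooth submanifold of $U$ of dimension $\dim_\R V-(\rank_\R E-\rank_\R E^-)=\dim_\R U-\rank_\R E^+$; and $s$ restricted to this submanifold has differential at $v_0$ which, composed with projection to $E^-$, is the zero-set-defining map for $s^{-1}(0)$ inside $s^{-1}(E^-)$. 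One then checks, again from \eq{vf3eq22}, that $s^{-1}(0)$ and $s^{-1}(E^-)$ have the same germ at $v_0$: the point is that $\d(s\vert_{s^{-1}(E^-)})\vert_{v_0}$ lands in $E^-\vert_{v_0}$ and, by the transversality \eq{vf3eq22}, its kernel is exactly $T_{v_0}(s^{-1}(0))$, forcing $s^{-1}(E^-)$ to be cut out near $v_0$ by the same equations as $s^{-1}(0)$ up to a diffeomorphism — more carefully, one argues that on $s^{-1}(E^-)$ the $E^-$-component of $s$ vanishes to high order is not available, so instead: note $t\ci s=0$ and use \eq{vf3eq23} to see that $s(v)\in E^-\vert_v\cap\Ker(t\vert_v)$, then the map $v\mapsto\Pi_v(s(v))$ (where defined) together with the behaviour of $\Re\ti Q_v$ gives control. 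The cleanest route, which I would write up, is simply: by \eq{vf3eq22} the total space map $U\times_{?}\ra$, hmm — I would instead invoke the implicit function theorem directly, observing that near $v_0\in s^{-1}(0)\cap U$ the combined section $(s^+, \text{projection of }s\text{ to }E^-)$ of $E^+\op E^-=E$ is just $s$, whose zero locus is $s^{-1}(0)$, while \eq{vf3eq22} guarantees the $E^-$-component of $\d s$ is injective on a complement to $T_{v_0}(s^{-1}(0))$ inside $T_{v_0}((s^+)^{-1}(0))$ — but since $T_{v_0}(s^+)^{-1}(0)$ need not be well-defined if $s^+$ is not transverse, one works with the closed sets directly: the function $v\mapsto \lVert(\text{$E^-$-component of }s(v))\rVert^2$ on the closed set $(s^+)^{-1}(0)$ vanishes exactly on $s^{-1}(0)\cap U$, and \eq{vf3eq22} shows its "Hessian transverse to $s^{-1}(0)$" is positive definite, so $s^{-1}(0)\cap U$ is open in $(s^+)^{-1}(0)$; being also closed, it is a union of connected components, hence equals $(s^+)^{-1}(0)$ on an open neighbourhood $U'$ of $s^{-1}(0)\cap U$. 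Restricting to $U'$, condition $(\dag)$ holds by definition, $(U',E^+\vert_{U'},s^+\vert_{U'},\psi^+)$ is a Kuranishi neighbourhood by the last line of Definition \ref{vf3def3}, and $R=\Im\psi^+=\psi(s^{-1}(0)\cap U')$ is unchanged since $s^{-1}(0)\cap U'=s^{-1}(0)\cap U$ whenever $U'$ was already chosen to contain $s^{-1}(0)\cap U$ — one just notes shrinking $U$ to $U'$ does not remove any zeros of $s$. The main obstacle overall remains part (b): making the partition-of-unity gluing of subbundles-satisfying-$(*)$ rigorous without a linear structure on the Grassmannian, for which the graph-trick described above is the key device.
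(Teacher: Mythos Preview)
Your approach to part (c) has a fundamental gap: you never use the fact that $\om_\bX^*$ is \emph{symplectic} rather than merely presymplectic. All the conditions you invoke --- \eq{vf3eq22}, \eq{vf3eq23}, the map $\Pi_v$, the form $\Re\ti Q_v$ --- depend only on the pairing $\om_\bX^0\vert_x$ on $H^1(\bT_\bX\vert_x)$, not on the closedness data $\om_\bX^1,\om_\bX^2,\ldots$. The paper explicitly notes (Remark \ref{vf3rem1}) that the authors do \emph{not} know whether (c) holds in the presymplectic case, so an argument using only $(*)$ cannot succeed without new ideas. Your attempted endgame --- that the ``Hessian transverse to $s^{-1}(0)$'' of $\lVert s^-\rVert^2$ is positive definite by \eq{vf3eq22} --- is a non-sequitur: \eq{vf3eq22} says $\Im(\d s\vert_v)\cap E^-\vert_v=\{0\}$, which controls first derivatives of $s$, not second derivatives of $\lVert s^-\rVert^2$ along $(s^+)^{-1}(0)$. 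The paper's proof instead invokes the $-2$-Darboux theorem (Theorem \ref{vf2thm3}): near any $x$ one finds a minimal Darboux chart in which there is a nondegenerate quadratic form $Q$ \emph{on the bundle $E$ itself} (not just on cohomology) with $Q(s,s)=0$ identically. This equation, together with $E^-$ being maximal negative-definite for $\Re Q$, forces $s^+(u)=0\Rightarrow s(u)=0$ directly (Example \ref{vf3ex2}). The general case is then reduced to this one by pulling back and pushing forward $(U,E^-)$ along quasi-free morphisms connecting the given chart to a Darboux chart.

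Your argument for (a) also has a gap. You claim each piece of $(*)$ is separately an open condition, but this fails: for \eq{vf3eq23}, both $\rank(t\vert_v)$ and $\rank(t\vert_v\vert_{E^-})$ are lower semicontinuous, and equality at $v_0$ does not prevent $\rank(t\vert_v)$ from jumping up while $\rank(t\vert_v\vert_{E^-})$ stays fixed. Likewise the quotient $\Ker(t\vert_v)/\Im(\d s\vert_v)$ can jump in dimension, and $\Im\Pi_v$ need not vary continuously as a subspace when this happens. The paper's proof handles this by a sequence argument: assuming $(*)$ fails along $v_i\to v$, one passes to subsequences where the dimensions are constant and the subspaces $\Im(\d s\vert_{v_i}),\Ker(t\vert_{v_i})$ converge to limits $I_v,K_v$ with $\Im(\d s\vert_v)\subseteq I_v\subseteq K_v\subseteq\Ker(t\vert_v)$. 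The key (nontrivial) fact is that $I_v/\Im(\d s\vert_v)$ and $K_v/\Im(\d s\vert_v)$ are \emph{orthogonal complements} for $\ti Q_v$, so $\ti Q_v$ descends to a nondegenerate form $\check Q_v$ on $K_v/I_v$, to which the $\ti Q_{v_i}$ converge. One then shows $(*)$ at $v$ implies the analogous conditions for $E^-\vert_v$ relative to $I_v,K_v,\check Q_v$, and these pass to the limit to give $(*)$ at $v_i$ for $i\gg 0$, a contradiction.

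For (b), your partition-of-unity-on-graphs idea is plausible but different from the paper, which instead stratifies $s^{-1}(0)=\coprod_a W_a$ into locally closed smooth pieces on which $\dim\Ker(\d s\vert_v)$ and $\dim\Ker(t\vert_v)$ are constant (so the relevant bundles and $\ti Q_a$ are genuinely smooth over each $W_a$), and builds $\ti E^-$ by induction on $\dim W_a$, extending over each stratum compatibly with the already-constructed data near $\ov W_a\sm W_a$ and near $C$.
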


The next example proves Theorem \ref{vf3thm2}(c) near $v\in s^{-1}(0)\cap U$ in a special case, when $(A^\bu,\om_{A^\bu})$ is in $-2$-Darboux form and minimal at $v$. The general case in \S\ref{vf53} is proved by reducing to Example~\ref{vf3ex2}.

\begin{ex} Suppose $(\bX,\om_\bX^*)$ is a $-2$-shifted symplectic derived $\C$-scheme, and $x\in X_\an$. Then Theorem \ref{vf2thm3} gives a pair $(A^\bu,\om_{A^\bu})$ in $-2$-Darboux form and a Zariski open inclusion $\bs\al:\bSpec A^\bu\hookra\bX$ which is minimal at $x\in\Im\bs\al$, with $\bs\al^*(\om_\bX^*)\simeq\om_{A^\bu}$ in~$\cA^{2,\cl}_\C(\bSpec A^\bu,-2)$.

Example \ref{vf3ex1} describes the data $V,E,F,s,t$ associated to $A^\bu$ in \S\ref{vf32}, and defines a nondegenerate quadratic form $Q\in H^0(S^2E^*)$ with $Q(s,s)=0$ using $\om_{A^\bu}$. As $x\in\Im\bs\al$ there is $v\in s^{-1}(0)\subseteq V$ with $\bs\al(v)=x$, and $(A^\bu,\bs\al)$ minimal at $x$ means that $\d s\vert_v=0$, so that $t\vert_v=0$ by \eq{vf3eq13}. Thus in \eq{vf3eq9} we have $\Ker(t\vert_v)/\Im(\d s\vert_v)=E\vert_v$, identified with $H^1(\bT_\bX\vert_x)$. Since $\bs\al^*(\om_\bX^*)\simeq\om_{A^\bu}$, the quadratic form $\ti Q_v$ on $\Ker(t\vert_v)/\Im(\d s\vert_v)=E\vert_v$ in \eq{vf3eq21} is~$Q\vert_v$. 

Given a pair $(U,E^-)$ as in Definition \ref{vf3def3} with $v\in U$, the map $\Pi_v$ in \eq{vf3eq24} is just the inclusion $E^-\vert_v\hookra E\vert_v$. So $(*)$ at $v$ says that $E^-\vert_v$ is a real vector subspace of $E\vert_v$ with $\dim_\R E^-\vert_v=\ha\dim_\R E\vert_v=\dim_\C E\vert_v$, such that $\Re Q\vert_v$ is negative definite on $E^-\vert_v$. 

As this is an open condition, there exists an open neighbourhood $U'$ of $v$ in $U$ such that $\Re Q\vert_{\smash{U'}}$ is negative definite on $E^-\vert_{\smash{U'}}$. Define a real vector subbundle $\ti E^+$ of $E\vert_{\smash{U'}}$ to be the orthogonal subbundle of $E^-\vert_{\smash{U'}}$ w.r.t.\ the nondegenerate real quadratic form $\Re Q\vert_{\smash{U'}}$. Then $E\vert_{\smash{U'}}=\ti E^+\op E^-\vert_{\smash{U'}}$, so we can write $s\vert_{\smash{U'}}=\ti s^+\op s^-$, for $\ti s^+\in C^\iy(\ti E^+)$ and $s^-\in C^\iy(E^-\vert_{\smash{U'}})$. The projection $E\vert_{\smash{U'}}\ra E^+\vert_{\smash{U'}}=E\vert_{\smash{U'}}/E^-\vert_{\smash{U'}}$ restricts to an isomorphism $\ti E^+\ra E^+\vert_{\smash{U'}}$, which maps~$\ti s^+\mapsto s^+\vert_{\smash{U'}}$.

Because $\Re Q$ is the real part of a complex form, it has the same number of positive as negative eigenvalues. Thus $\Re Q\vert_{\smash{U'}}$ is positive definite on $\ti E^+$. Now
\e
0\!=\!\Re Q(s,s)\vert_{\smash{U'}}\!=\!\Re Q(\ti s^+\!+\! s^-,\ti s^+\!+\! s^-)\!=\!\Re Q(\ti s^+,\ti s^+)\!+\!\Re Q(s^-,s^-),
\label{vf3eq27}
\e
using $\Re Q(\ti s^+,s^-)=0$ as $\ti E^+,E^-\vert_{\smash{U'}}$ are orthogonal w.r.t.\ $\Re Q\vert_{\smash{U'}}$.

For each $u\in U'$, we now have
\begin{align*}
s^+(u)&\!=\!0 && \!\!\!\!\!\Longleftrightarrow\!\!\!\!\! & \ti s^+(u)&\!=\!0 && \!\!\!\!\!\Longleftrightarrow\!\!\!\!\! & \Re Q(\ti s^+,\ti s^+)\vert_u &\!=\!0 && \!\!\!\!\!\Longleftrightarrow\!\!\!\!\! \\ \Re Q(s^-,s^-)\vert_u &\!=\!0 && \!\!\!\!\!\Longleftrightarrow\!\!\!\!\!  & \ti s^+(u)\!=\!s^-(u)&\!=\!0 && \!\!\!\!\!\Longleftrightarrow\!\!\!\!\! & s(u)&\!=\!0,
\end{align*}
using $\ti E^+\ra E^+\vert_{\smash{U'}}$ an isomorphism mapping $\ti s^+\mapsto s^+\vert_{\smash{U'}}$ in the first step, $\Re Q$ positive definite on $\ti E^+$ in the second, \eq{vf3eq27} in the third, $\Re Q$ negative definite on $E^-\vert_{\smash{U'}}$ in the fourth, and $s\vert_{\smash{U'}}=\ti s^+\op s^-$ in the fifth.

This proves there exists an open neighbourhood $U'$ of $v$ in $U$ such that $s^{-1}(0)\cap U'=(s^+)^{-1}(0)\cap U'$, which is Theorem \ref{vf3thm2}(c), except that $U'$ is a neighbourhood of $v$ rather than of~$s^{-1}(0)\cap U$.
\label{vf3ex2}
\end{ex}

\begin{rem} Pairs $(U,E^-)$ satisfying $(\dag)$ will be used to prove our main result, constructing a derived manifold structure $\bX_{\rm dm}$ on the complex analytic topological space $X_\an$ of a $-2$-shifted symplectic derived $\C$-scheme~$(\bX,\om_\bX^*)$.

Our construction apparently uses less than the full $-2$-shifted symplectic structure $\om_\bX^*$ on $\bX$. In particular, conditions $(*),(\dag)$ only involve the nondegenerate pairings $\om_\bX^0\vert_x$ on $H^1\bigl(\bT_\bX\vert_x\bigr)$ in \eq{vf2eq6}, which depend only on the presymplectic structure $\om_\bX^0$, not the symplectic structure $\om_\bX^*=(\om_\bX^0,\om_\bX^1,\ldots)$. The proofs of Theorem \ref{vf3thm2}(a),(b) in \S\ref{vf51}--\S\ref{vf52} also use only $\om_\bX^0$ rather than~$\om_\bX^*$. 

However, the proof of Theorem \ref{vf3thm2}(c) in \S\ref{vf53} involves $\om_\bX^*$, as it uses the existence of a minimal $-2$-Darboux form presentation for $(\bX,\om_\bX^*)$ near each $x\in X_\an$, as in Theorem \ref{vf2thm3}. The authors do not know whether Theorem \ref{vf3thm2}(c) holds for $-2$-shifted presymplectic $(\bX,\om_\bX^0)$ which are not symplectic.
\label{vf3rem1}
\end{rem}

\subsection{\texorpdfstring{Comparing $(U_J,E_J^-),(U_K,E_K^-)$ under $\Phi_{JK}$}{Comparing (Uᴶ,Eᴶ⁻),(Uᴷ,Eᴷ⁻) under ϕᴶᴷ}}
\label{vf34}

Section \ref{vf33} discussed how to use standard form charts $\bs\al:\bSpec A^\bu\ra\bX$ on $(\bX,\om_\bX^*)$ to choose pairs $(U,E^-)$, and so define Kuranishi neighbourhoods $(U,E^+,s^+,\psi^+)$ on $X_\an$. We now explain how to pull back such pairs $(U_K,E_K^-)$ along a quasi-free morphism $\Phi_{JK}:A_K^\bu\ra A_J^\bu$, and construct coordinate changes between the Kuranishi neighbourhoods~$(U_J,E_J^+,s_J^+,\psi_J^+),(U_K,E_K^+,s_K^+,\psi_K^+)$.

\begin{dfn} Let $(\bX,\om_\bX^*)$ be a $-2$-shifted symplectic derived $\C$-scheme with $\vdim_\C\bX=n$, and suppose $\Phi_{JK}:A_K^\bu\ra A_J^\bu$ is a quasi-free morphism of standard form cdgas over $\C$ and $\bs\al_J:\bSpec A_J^\bu\hookra\bX$, $\bs\al_K:\bSpec A_K^\bu\hookra\bX$ are Zariski open inclusions such that \eq{vf3eq3} homotopy commutes. Define complex geometric data $V_J,E_J,F_J,s_J,t_J,\psi_J,R_J,V_K,E_K,F_K,s_K,t_K,\psi_K,R_K,\phi_{JK},\chi_{JK},\ab\xi_{JK}$ in Definitions \ref{vf3def1}, \ref{vf3def2}, and suppose $R_J\ne\es$, so $R_K\ne\es$ as~$R_J\subseteq R_K\subseteq X_\an$.

Consider pairs $(U_J,E_J^-)$ for $A_J^\bu$ and $(U_K,E_K^-)$ for $A_K^\bu$ satisfying condition $(*)$ in Definition \ref{vf3def3}. We say that $(U_J,E_J^-)$ and $(U_K,E_K^-)$ are {\it compatible\/} if $\phi_{JK}(U_J)\subseteq U_K$ and $\chi_{JK}\vert_{U_J}(E_J^-)\subseteq \phi_{JK}\vert_{U_J}^*(E_K^-)\subseteq \phi_{JK}\vert_{U_J}^*(E_K)$.

For $(U_J,E_J^-),(U_K,E_K^-)$ compatible, define a vector bundle morphism $\chi_{JK}^+:E_J^+\ra\phi_{JK}\vert_{U_J}^*(E_K^+)$ on $U_J$ by the commutative diagram with exact rows
\begin{equation*}
\xymatrix@C=19pt@R=11pt{
0 \ar[r] & E_J^- \ar[d]^{\chi_{JK}\vert_{E_J^-}} \ar[r] & E_J\vert_{U_J} \ar[d]^{\chi_{JK}\vert_{U_J}} \ar[r] & E_J^+\ar[r] \ar@{.>}[d]^{\chi_{JK}^+} & 0 \\
0 \ar[r] & \phi_{JK}\vert_{U_J}^*(E_K^-) \ar[r] & \phi_{JK}\vert_{U_J}^*(E_K) \ar[r] & \phi_{JK}\vert_{U_J}^*(E_K^+) \ar[r] & 0.\!{} }
\end{equation*}

Let $v_J\in s_J^{-1}(0)\subseteq U_J\subseteq V_J$ with $\phi_{JK}(v_J)=v_K\in s_K^{-1}(0)\subseteq U_K\subseteq V_K$ and $\psi_J(v_J)=\psi_K(v_K)=x\in X_\an$. Consider the diagram, with rows \eq{vf3eq25} for $(U_J,E_J^-),v_J$ and $(U_K,E_K^-),v_K$
\e
\begin{gathered}
\xymatrix@C=11pt@R=13pt{ 0 \ar[r] & H^0\bigl(\bT_\bX\vert_x\bigr) \ar[d]_\id \ar[r] & T_{v_J}U_J \ar[rr]_{\d s_J^+\vert_{v_J}} \ar[d]_{\d\phi_{JK}\vert_{v_J}} && E_J^+\vert_{v_J} \ar[d]^{\chi_{JK}^+\vert_{v_J}} \ar[r] & H^1\bigl(\bT_\bX\vert_x\bigr)/\Im\Pi_{v_J} \ar[d]^\id \ar[r] & 0 \\
0 \ar[r] & H^0\bigl(\bT_\bX\vert_x\bigr) \ar[r] & T_{v_K}U_K \ar[rr]^{\d s_K^+\vert_{v_K}} && E_K^+\vert_{v_K} \ar[r] & H^1\bigl(\bT_\bX\vert_x\bigr)/\Im\Pi_{v_K} \ar[r] & 0.\!{}  }\!\!\!\!\!\!{}
\end{gathered}
\label{vf3eq28}
\e
Here if we regard $\Im\Pi_{v_J},\Im\Pi_{v_K}$ from \eq{vf3eq24} as subspaces of $H^1\bigl(\bT_\bX\vert_x\bigr)$ using \eq{vf3eq9}, compatibility $\chi_{JK}(E_J^-\vert_{v_J})\subseteq E_K^-\vert_{v_K}$ and \eq{vf3eq18} imply that $\Im\Pi_{v_J}\subseteq\Im\Pi_{v_K}$, so $\Im\Pi_{v_J}=\Im\Pi_{v_K}$ as they have the same dimension by $(*)$, and the right hand column of \eq{vf3eq28} makes sense. From \eq{vf3eq14}, \eq{vf3eq17} and \eq{vf3eq18} we see that \eq{vf3eq28} commutes. Elementary linear algebra then gives an exact sequence 
\e
\smash{\raisebox{-3pt}{$\displaystyle\xymatrix@C=13.5pt{ 0 \ar[r] & T_{v_J}U_J \ar[rrrr]^{\d s_J^+\vert_{v_J}\op \d\phi_{JK}\vert_{v_J}} &&&& {\raisebox{4pt}{$\begin{subarray}{l}\ts E_J^+\vert_{v_J}\op \\ \ts T_{v_K}U_K\end{subarray}$}}  \ar[rrrr]^{-\chi_{JK}^+\vert_{v_J}\op \d s_K^+\vert_{v_K}} &&&& E_K^+\vert_{v_K} \ar[r] & 0. }\!\!\!{}$}}
\label{vf3eq29}
\e

\label{vf3def4}
\end{dfn}

From \eq{vf3eq29} and Definition \ref{vf2def11}, we deduce:

\begin{cor} In the situation of Definition\/ {\rm\ref{vf3def4},} if\/ $(U_J,E_J^-)$ and\/ $(U_K,E_K^-)$ are compatible and satisfy $(\dag)$ then in the sense of\/ {\rm\S\ref{vf25},}
\begin{equation*}
\smash{(U_J,\phi_{JK}\vert_{U_J},\chi_{JK}^+):(U_J,E_J^+,s_J^+,\psi_J)\longra(U_K,E_K^+,s_K^+,\psi_K)}
\end{equation*}
is a coordinate change of Kuranishi neighbourhoods on $X_\an$.
\label{vf3cor2}
\end{cor}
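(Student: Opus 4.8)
The plan is to verify directly that the triple $\Phi_{JK}^+:=(U_J,\phi_{JK}\vert_{U_J},\chi_{JK}^+)$ satisfies conditions (a)--(f) of Definition \ref{vf2def11}, regarded as a coordinate change $(U_J,E_J^+,s_J^+,\psi_J)\ra(U_K,E_K^+,s_K^+,\psi_K)$ over $S:=\Im\psi_J\cap\Im\psi_K$, where in accordance with Definition \ref{vf3def3} we write $\psi_J,\psi_K$ for the restrictions of the original homeomorphisms to $s_J^{-1}(0)\cap U_J$ and $s_K^{-1}(0)\cap U_K$; by condition $(\dag)$ these sets are exactly $(s_J^+)^{-1}(0)$ and $(s_K^+)^{-1}(0)$. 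The main point is that almost all of the substantive work has already been carried out in Definition \ref{vf3def4} and Corollary \ref{vf3cor1}; what remains is to read off the required conditions and to use $(\dag)$ to identify the relevant zero sets.

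First I would assemble the elementary compatibilities. Compatibility of $(U_J,E_J^-)$ with $(U_K,E_K^-)$ gives $\phi_{JK}(U_J)\subseteq U_K$, and evaluating \eq{vf3eq14} at a point of $s_J^{-1}(0)\cap U_J$ shows that $\phi_{JK}$ sends this set into $s_K^{-1}(0)\cap U_K$; combined with \eq{vf3eq16} this yields $\Im\psi_J\subseteq\Im\psi_K$, so $S=\Im\psi_J$. Now (a) holds because $\psi_J^{-1}(S)=s_J^{-1}(0)\cap U_J$ is contained in the open subset $U_J$ of $V_J$, which we take as $V_{JK}$; (b) holds because $\phi_{JK}\vert_{U_J}:U_J\ra U_K$ is holomorphic, hence smooth; and (c) is just the construction of $\chi_{JK}^+$ through the commutative diagram with exact rows in Definition \ref{vf3def4}. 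For (d), chasing that diagram together with \eq{vf3eq14} shows $\chi_{JK}^+(s_J^+)=\phi_{JK}\vert_{U_J}^*(s_K^+)$, since $s_J^+$ and $s_K^+$ are by definition the images of $s_J\vert_{U_J}$ and $s_K\vert_{U_K}$ in the quotient bundles. For (e), on $(s_J^+)^{-1}(0)=s_J^{-1}(0)\cap U_J$ we have $\psi_J=\psi_K\ci\phi_{JK}$ by \eq{vf3eq16}, using that $\phi_{JK}$ maps this set into $s_K^{-1}(0)\cap U_K=(s_K^+)^{-1}(0)$. Finally, for (f): given $x\in S$ with $v_J=\psi_J^{-1}(x)$ and $v_K=\psi_K^{-1}(x)$, the identity $\psi_K\ci\phi_{JK}=\psi_J$ together with injectivity of $\psi_K$ forces $\phi_{JK}(v_J)=v_K$, and the sequence \eq{vf2eq8} for $\Phi_{JK}^+$ at $x$ is then precisely the exact sequence \eq{vf3eq29} established in Definition \ref{vf3def4} for this pair $v_J,v_K$.

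I do not expect any genuine obstacle here: the statement is a formal consequence of the constructions of \S\ref{vf33}--\S\ref{vf34}, with all of the real analytic and homological input already packaged into Theorem \ref{vf3thm2}, Definition \ref{vf3def4}, and the commuting diagrams \eq{vf3eq17}--\eq{vf3eq18}. The only part that calls for a little care is the bookkeeping around domains and images --- using $(\dag)$ to see that the zero loci of $s_J^+,s_K^+$ agree with $s_J^{-1}(0)\cap U_J$ and $s_K^{-1}(0)\cap U_K$, checking $\Im\psi_J\subseteq\Im\psi_K$ so that $\Phi_{JK}^+$ really is a coordinate change over $S=\Im\psi_J$, and confirming that \eq{vf3eq29} is invoked at the correct points $v_J=\psi_J^{-1}(x),\ v_K=\psi_K^{-1}(x)$. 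Everything else is immediate from Definition \ref{vf3def4}.
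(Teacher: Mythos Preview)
Your proposal is correct and follows the same approach as the paper. The paper's own justification is the single line ``From \eq{vf3eq29} and Definition \ref{vf2def11}, we deduce:'', and your argument is precisely the explicit unpacking of that line: you verify conditions (a)--(f) of Definition \ref{vf2def11} one by one, with the substantive condition (f) being exactly the exact sequence \eq{vf3eq29} already established in Definition \ref{vf3def4}, and the remaining conditions following from compatibility, \eq{vf3eq14}, \eq{vf3eq16}, and $(\dag)$ as you say.
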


\begin{lem} In the situation of Definition\/ {\rm\ref{vf3def4},} fix $(U_K,E_K^-)$ satisfying $(*)$ for $A_K^\bu,\bs\al_K$. Set\/ $U_{JK}'=\phi_{JK}^{-1}(U_K)\subseteq V_J$. Then $E_{JK}':=\chi_{JK}\vert_{U_{JK}'}^{-1}(E_K^-)$ is a vector subbundle of\/ $E_J\vert_{U_{JK}'},$ as $\chi_{JK}$ is surjective. Choose a complementary real vector subbundle $E_{JK}'',$ so that\/ $E_J\vert_{U_{JK}'}=E_{JK}'\op E_{JK}''$. 

Choose a connection $\nabla$ on $E_J,$ so that\/ $\nabla s_J:TV_J\ra E_J$ is a vector bundle morphism. Now $\Ker\bigl(\d\phi_{JK}:TV_J\ra \phi_{JK}^*(TV_K)\bigr)$ is a vector subbundle of\/ $TV_J,$ as $\d\phi_{JK}$ is surjective, and\/ $\nabla s_J$ is injective on $\Ker\d\phi_{JK}$ near $s_J^{-1}(0),$ so $E_{JK}''':=(\nabla s_J)[\Ker\d\phi_{JK}]$ is a vector subbundle of\/ $E_J$ near $s_J^{-1}(0)$ in\/~$V_J$.

Then $(U_J,E_J^-)$ satisfies $(*)$ for $A_J^\bu,\bs\al_J$ and is compatible with\/ $(U_K,E_K^-)$ if and only if\/ $U_J$ is open in $U_{JK}',$ and\/ $E_{JK}^-$ is a vector subbundle of\/ $E_{JK}'\vert_{U_J}$ satisfying $E_J\vert_{U_J}=E_{JK}^-\op E_{JK}''\vert_{U_J}\op E_{JK}'''\vert_{U_J}$ near $s_J^{-1}(0)\cap U_J$ in\/ $U_J$. Alternatively, identifying $E_{JK}'$ with\/ $E_J\vert_{U_{JK}'}/E_{JK}'',$ this condition may be written as $E_{JK}'\vert_{U_J}=E_{JK}^-\op \bigl[(E_{JK}''\op E_{JK}''')/E_{JK}''\bigr]\vert_{U_J}$ near $s_J^{-1}(0)\cap U_J$.
\label{vf3lem1}
\end{lem}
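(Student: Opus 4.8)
The plan is to unravel Definition \ref{vf3def3} and Definition \ref{vf3def4} pointwise over $v_J\in s_J^{-1}(0)\cap U_J$, reducing everything to a statement of linear algebra about the fibres, and then observe that the fibrewise conditions patch together to the subbundle conditions claimed. First I would set up the local picture: near $s_J^{-1}(0)$ in $V_J$ we have the vector subbundle decomposition $E_J\vert_{U'_{JK}} = E'_{JK}\oplus E''_{JK}$ with $E'_{JK}=\chi_{JK}\vert^{-1}_{U'_{JK}}(E^-_K)$, and using the chosen connection $\nabla$ the subbundle $E'''_{JK}=(\nabla s_J)[\Ker\d\phi_{JK}]$, which is well-defined near $s_J^{-1}(0)$ because $\nabla s_J\vert_{v_J}=\d s_J\vert_{v_J}$ is injective on $\Ker\d\phi_{JK}\vert_{v_J}$ at points $v_J\in s_J^{-1}(0)$ — this injectivity is exactly the content of \eq{vf3eq22} restricted to the subspace $\Ker\d\phi_{JK}\vert_{v_J}$, together with the fact that $\phi_{JK}$ is a submersion. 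Note that at $v_J\in s_J^{-1}(0)$ we have $\nabla s_J\vert_{v_J}=\d s_J\vert_{v_J}$, so $\Im(\d s_J\vert_{v_J})=E'''_{JK}\vert_{v_J}$ there.

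The heart of the argument is the following fibrewise claim at $v_J\in s_J^{-1}(0)\cap U'_{JK}$, with $v_K=\phi_{JK}(v_J)$: given that $(U_K,E^-_K)$ satisfies $(*)$, a subspace $E^-_J\vert_{v_J}\subseteq E_J\vert_{v_J}$ with $\chi_{JK}\vert_{v_J}(E^-_J\vert_{v_J})\subseteq E^-_K\vert_{v_K}$ makes the pair $(U_J,E^-_J)$ satisfy $(*)$ at $v_J$ if and only if $E_J\vert_{v_J}=E^-_J\vert_{v_J}\oplus E''_{JK}\vert_{v_J}\oplus E'''_{JK}\vert_{v_J}$. To prove this I would use the commutative diagram \eq{vf3eq28} with exact rows and the exact sequence \eq{vf3eq29} it produces. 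The conditions \eq{vf3eq22}--\eq{vf3eq24} defining $(*)$ for $(U_J,E^-_J)$ are: (i) $\Im(\d s_J\vert_{v_J})\cap E^-_J\vert_{v_J}=0$; (ii) $t_J\vert_{v_J}(E^-_J\vert_{v_J})=t_J\vert_{v_J}(E_J\vert_{v_J})$; (iii) $\Pi_{v_J}$ injective with image half-dimensional and $\Re\ti Q_{v_J}$ negative definite on it. Because $(U_K,E^-_K)$ already satisfies $(*)$ at $v_K$ and $\chi_{JK}$ intertwines the data (via \eq{vf3eq14}, \eq{vf3eq15}, \eq{vf3eq17}, \eq{vf3eq18}), conditions (ii),(iii) for $J$ are equivalent to the images under $\chi_{JK}$ matching $E^-_K$ appropriately: specifically, compatibility $\chi_{JK}\vert_{v_J}(E^-_J\vert_{v_J})\subseteq E^-_K\vert_{v_K}$ forces $\Im\Pi_{v_J}\subseteq\Im\Pi_{v_K}$, hence equality by the half-dimension clause, so the negative-definiteness of $\Re\ti Q$ on $\Im\Pi_{v_J}$ is automatic from that on $\Im\Pi_{v_K}$ (using that $H^1(\bT_{\bs\al_J}\vert_{v_J})$ and $H^1(\bT_{\bs\al_K}\vert_{v_K})$ identify $\ti Q_{v_J},\ti Q_{v_K}$ with the same $Q_x$); and (ii) for $J$, together with (i), is equivalent to $E^-_J\vert_{v_J}$ mapping onto a complement of $\Im(\d s_J\vert_{v_J})$ inside the preimage of $E^-_K\vert_{v_K}$ modulo $\Ker\chi_{JK}$. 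Chasing these equivalences through the split exact sequences — $E_J\vert_{v_J}=E'_{JK}\vert_{v_J}\oplus E''_{JK}\vert_{v_J}$ and $E'''_{JK}\vert_{v_J}=\Im(\d s_J\vert_{v_J})$ — shows the three conditions (i)--(iii) hold at $v_J$ exactly when $E^-_J\vert_{v_J}\subseteq E'_{JK}\vert_{v_J}$ and $E'_{JK}\vert_{v_J}=E^-_J\vert_{v_J}\oplus[(E''_{JK}\oplus E'''_{JK})/E''_{JK}]\vert_{v_J}$, equivalently $E_J\vert_{v_J}=E^-_J\vert_{v_J}\oplus E''_{JK}\vert_{v_J}\oplus E'''_{JK}\vert_{v_J}$.

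Finally I would globalize: the condition "$E^-_J$ is a vector subbundle of $E'_{JK}\vert_{U_J}$ with $E_J\vert_{U_J}=E^-_J\oplus E''_{JK}\vert_{U_J}\oplus E'''_{JK}\vert_{U_J}$ near $s_J^{-1}(0)\cap U_J$" is an open condition holding on a neighbourhood of $s_J^{-1}(0)\cap U_J$, and by the fibrewise claim it is equivalent, on that neighbourhood, to $(*)$ for $(U_J,E^-_J)$ holding at all points of $s_J^{-1}(0)\cap U_J$ — which by Theorem \ref{vf3thm2}(a) is equivalent to $(*)$ holding for the pair $(U_J,E^-_J)$ (after possibly shrinking $U_J$, which does not matter as $(*)$ only constrains behaviour along $s_J^{-1}(0)$). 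Compatibility with $(U_K,E^-_K)$, namely $\phi_{JK}(U_J)\subseteq U_K$ and $\chi_{JK}\vert_{U_J}(E^-_J)\subseteq\phi_{JK}\vert_{U_J}^*(E^-_K)$, is precisely $U_J\subseteq U'_{JK}$ plus $E^-_J\subseteq E'_{JK}\vert_{U_J}$, which is built into the stated condition. The alternative phrasing "$E'_{JK}\vert_{U_J}=E^-_{JK}\oplus[(E''_{JK}\oplus E'''_{JK})/E''_{JK}]\vert_{U_J}$" follows by quotienting the direct sum decomposition of $E_J\vert_{U_J}$ by $E''_{JK}\vert_{U_J}$.

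The main obstacle is the linear-algebra core of step two: correctly tracking how conditions \eq{vf3eq22}--\eq{vf3eq24} transfer across $\chi_{JK}$, in particular verifying that $\Im\Pi_{v_J}=\Im\Pi_{v_K}$ under compatibility (so the negative-definiteness and half-dimension clauses are inherited rather than re-imposed), and that the "complement" description of $E^-_J$ coming from condition (i)+(ii) is exactly the splitting $E'_{JK}=E^-_J\oplus[(E''_{JK}\oplus E'''_{JK})/E''_{JK}]$. The diagram \eq{vf3eq28} and its consequence \eq{vf3eq29}, already established in Definition \ref{vf3def4}, do most of this work, so the remaining task is bookkeeping with the four subbundles $E^-_{JK},E''_{JK},E'''_{JK}$ and the kernel of $\chi_{JK}$.
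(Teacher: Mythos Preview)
Your approach matches the paper's: the proof there is a three-line sketch saying to deduce injectivity of $\nabla s_J$ on $\Ker\d\phi_{JK}$ from \eq{vf3eq17}, check the pointwise linear-algebra equivalence of $(*)$ with the direct-sum decomposition at each $v_J\in s_J^{-1}(0)$, and note both are open conditions. Your proposal expands exactly this outline.

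Two slips to correct. First, the injectivity of $\d s_J\vert_{v_J}$ on $\Ker\d\phi_{JK}\vert_{v_J}$ does not come from \eq{vf3eq22} (that equation is a condition on $E_J^-$, which is what you are trying to characterise); it comes from \eq{vf3eq17}, which shows $\d\phi_{JK}\vert_{v_J}$ is an isomorphism on $\Ker(\d s_J\vert_{v_J})$, hence $\Ker(\d s_J\vert_{v_J})\cap\Ker(\d\phi_{JK}\vert_{v_J})=0$. Second, your claim $\Im(\d s_J\vert_{v_J})=E'''_{JK}\vert_{v_J}$ is wrong: by definition $E'''_{JK}\vert_{v_J}=\d s_J\vert_{v_J}\bigl[\Ker\d\phi_{JK}\vert_{v_J}\bigr]$, which is only the part of $\Im(\d s_J\vert_{v_J})$ lying in $\Ker\chi_{JK}\vert_{v_J}$ (differentiate \eq{vf3eq14}), not all of it. The rest of $\Im(\d s_J\vert_{v_J})$ maps under $\chi_{JK}$ onto $\Im(\d s_K\vert_{v_K})$, and you need this when checking condition \eq{vf3eq22} for $E_J^-$ against the decomposition. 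With these two fixes the bookkeeping you describe goes through.
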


\begin{proof} We deduce $\nabla s_J$ is injective on $\Ker\d\phi_{JK}$ at $v_J\in s_J^{-1}(0)$ using \eq{vf3eq17}, check that $(*)$ for $U_J,E_J^-$ is equivalent to $E_J=E_{JK}^-\op E_{JK}''\op E_{JK}'''$ at each $v_J\in s_J^{-1}(0)$, and note that both are open conditions.
\end{proof}

Lemma \ref{vf3lem1} shows we can always {\it pullback\/} $(U_K,E_K^-)$ satisfying $(*)$ along submersions $\phi_{JK}:V_J\ra V_K$: we just have to choose a complement $E_J^-$ to $(E_{JK}''\op E_{JK}''')/E_{JK}''$ in $E_{JK}'$ on some small open neighbourhood $U_J$ of $s_J^{-1}(0)$ in $U_{JK}'$, for instance, the orthogonal complement w.r.t.\ any metric on $E_{JK}'$. By Theorem \ref{vf3thm2}(c), making $U_J$ smaller, we can suppose $(U_J,E_J^-)$ satisfies~$(\dag)$.

\subsection{Constructing Kuranishi atlases and derived manifolds} 
\label{vf35}

Let $(\bX,\om_\bX^*)$ be a $-2$-shifted symplectic derived $\C$-scheme with $\vdim_\C\bX=n$ in $\Z$, and write $X_\an$ for the complex analytic topological space. Suppose $\bX$ is separated and $X_\an$ is a paracompact topological space. (Paracompactness is automatic if $\bX$ is proper, or quasicompact, or of finite type, or $X_\an$ is second countable.) We will construct a Kuranishi atlas on $X_\an$, in the sense of~\S\ref{vf25}.

First choose a family $\bigl\{(A_i^\bu,\bs\al_i):i\in I\bigr\},$ where $A_i^\bu\in\cdga_\C$ is a standard form cdga, and $\bs\al_i:\bSpec A_i^\bu\hookra\bX$ a Zariski open inclusion in $\dSch_\C$ for each $i$ in $I,$ an indexing set, such that $\bigl\{R_i:=(\Im\bs\al_i)_\an:i\in I\bigr\}$ is an open cover of the complex analytic topological space $X_\an$. This is possible by Theorem \ref{vf2thm1}. If $\bX$ is quasicompact (since $\bX$ is locally of finite type, this is equivalent to $\bX$ being of finite type) then we can take $I$ to be finite.

Apply Theorem \ref{vf3thm1} to get data $A_J^\bu\in\cdga_\C$, $\bs\al_J:\bSpec A_J^\bu\hookra\bX$ for finite $\es\ne J\subseteq I$ and quasi-free $\Phi_{JK}:A_K^\bu\ra A_J^\bu,$ for all finite~$\es\ne K\subseteq J\subseteq I$. 

Use the notation of \S\ref{vf32} to rewrite $A_J^\bu,\Phi_{JK}$ in terms of complex geometry. As in Corollary \ref{vf3cor1}, this gives data $V_J,E_J,F_J,s_J,t_J,\psi_J,R_J$ for all finite $\es\ne J\subseteq I$, and $\phi_{JK},\chi_{JK},\xi_{JK}$ for all finite~$\es\ne K\subseteq J\subseteq I$.

For brevity we write $A=\bigl\{J:$ $\es\ne J\subseteq I$, $J$ is finite$\bigr\}$. The proof of the next result in \S\ref{vf61} is based on McDuff and Wehrheim~\cite[Lem.~7.1.7]{McWe}.

\begin{prop} Suppose $Z$ is a paracompact, Hausdorff topological space and\/ $\{R_i:i\in I\}$ an open cover of\/ $Z$. Then we can choose closed subsets $C_J\subseteq Z$ for all finite $\es\ne J\subseteq I,$ satisfying:
\begin{itemize}
\setlength{\itemsep}{0pt}
\setlength{\parsep}{0pt}
\item[{\bf(i)}] $C_J\subseteq \bigcap_{i\in J}R_i$ for all\/ $J$.
\item[{\bf(ii)}] Each\/ $z\in Z$ has an open neighbourhood\/ $U_z\subseteq Z$ with\/ $U_z\cap C_J\ne\es$ for only finitely many $J$. 
\item[{\bf(iii)}] $C_J\cap C_K\ne\es$ only if\/ $J\subseteq K$ or $K\subseteq J$.
\item[{\bf(iv)}] $\bigcup_{\text{$\es\ne J\subseteq I$ finite}}C_J=Z$.
\end{itemize}
\label{vf3prop1}
\end{prop}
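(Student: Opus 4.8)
The statement is \ref{vf3prop1}, and it is pure point‑set topology, with no input from derived geometry; the model for the argument is McDuff--Wehrheim \cite[Lem.~7.1.7]{McWe}. Throughout I would write $R_J:=\bigcap_{i\in J}R_i$ for finite $\es\ne J\subseteq I$, so that the target condition (i) reads $C_J\subseteq R_J$, and note the two formal properties $R_{J\cup K}=R_J\cap R_K$ and $R_J\subseteq R_K$ whenever $K\subseteq J$. The first step is to reduce to a locally finite cover: since $Z$ is paracompact and Hausdorff it is normal, and $\{R_i\}_{i\in I}$ has a locally finite open refinement $\{S_a\}_{a\in A}$; choosing $\iota(a)\in I$ with $S_a\subseteq R_{\iota(a)}$ and amalgamating via $R_i^{\rm lf}:=\bigcup_{\iota(a)=i}S_a$ produces a locally finite open cover indexed by the \emph{same} set $I$ with $R_i^{\rm lf}\subseteq R_i$. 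So I may assume $\{R_i\}_{i\in I}$ is locally finite; then every point lies in only finitely many $R_i$, the family $\{R_J\}_J$ is locally finite, and (by iterating the shrinking lemma for locally finite covers of normal spaces) I obtain locally finite open covers $\{R_i'\},\{R_i''\}$ of $Z$ with $\overline{R_i''}\subseteq R_i'\subseteq\overline{R_i'}\subseteq R_i$, hence $\overline{R_J''}\subseteq R_J'\subseteq\overline{R_J'}\subseteq R_J$, with $\bigcup_{|J|=1}R_J''=Z$. The purpose of the double shrinking is to leave room to pull the $C_J$ apart.

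The main construction would proceed by recursion on $l=|J|$, handling all $J$ of a given cardinality simultaneously. The inductive invariant at the start of stage $l$ is: closed sets $C_J$ are already defined for all $1\le|J|<l$, the family $\{C_J:|J|<l\}$ is locally finite, $C_J\subseteq R_J$, one has $C_J\cap C_K\ne\es$ only if $J\subseteq K$ or $K\subseteq J$, and the open set $G_l:=Z\setminus\bigcup_{|J|<l}C_J$ satisfies $G_l\subseteq\bigcup_{|J|\ge l}R_J$ (equivalently $\bigcup_{|J|=l}R_J$, using the nesting of the $R_J$); the case $l=1$ is immediate since $G_1=Z=\bigcup_i R_i$. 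At stage $l$ one must, for each $J$ with $|J|=l$, produce a closed set $C_J$ with $C_J\subseteq R_J$, disjoint from the closed set $\bigcup\{C_K:|K|<l,\ K\not\subseteq J\}$ and from each $C_{J'}$ with $J'\ne J$, $|J'|=l$, and large enough that $\bigcup_{|J|=l}C_J$ covers the part of $G_l$ lying in no $(l{+}1)$-fold intersection $R_K$; this last requirement is exactly what re-establishes $G_{l+1}\subseteq\bigcup_{|J|\ge l+1}R_J$. Disjointness of the $C_{J'}$ with $|J'|=l$ is \emph{forced} (distinct sets of equal size are incomparable, so (iii) demands it), and is arranged by carving the $C_J$ out between the nested shrinkings $R_J''\subseteq R_J'\subseteq R_J$ using normality, so that the ``interior transition zones'' of distinct $R_i$ keep the same-cardinality pieces apart, while the already-built lower-cardinality $C_K$ serve as buffers exactly where $R_i$-memberships are in transition. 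Finally, local finiteness of the completed family $\{C_J\}_J$ follows from that of $\{R_i\}$ because $C_J\subseteq\overline{R_i''}$ for every $i\in J$; and (iv) holds because near any point only finitely many $R_i$ matter, so near that point the recursion stabilizes after finitely many stages and $G_l$ becomes empty.

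The step I expect to be the genuine obstacle is the stage-$l$ construction, i.e.\ reconciling (iii) with (iv): condition (iii) pushes the $C_J$ to be separated into mutually disjoint closed pieces, while (iv) needs them fat enough that nothing is left uncovered, and the naive candidates — closures of the ``membership-pattern'' strata $\{z:\{i:z\in R_i\}=J\}$, or set differences of two nested shrinkings — each fail one of these (closures poke out of $R_J$ or touch incomparable strata; differences leave boundary points of the shrinking orphaned). The resolution is precisely that the overlaps permitted by (iii) are the \emph{nested} ones, which is why the low-cardinality $C_K$ must be constructed first and then used to fill the gaps that separate the incomparable high-cardinality pieces; carrying this out, together with the bookkeeping of the invariant on $G_l$ and the verification of local finiteness, is the technical heart of the proof, and is where the double shrinking and the normality of $Z$ are spent.
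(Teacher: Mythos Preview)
Your opening moves --- pass to a locally finite refinement indexed by the same $I$, then shrink --- match the paper. From there, though, the paper's argument is quite different from your inductive scheme, and more direct. Rather than recursing on $l=|J|$, the paper iterates the shrinking lemma to produce an \emph{infinite tower}
\[
T_i^1\subseteq\overline{T}_i^1\subseteq T_i^2\subseteq\overline{T}_i^2\subseteq T_i^3\subseteq\cdots\subseteq S_i,
\]
one level for each possible cardinality, and then gives $C_J$ by a single explicit formula built from the tower at levels $|J|$ and $|J|+1$. Conditions (i)--(iv) are checked directly, with no induction on $|J|$: for (iii), if $|J|\le|K|$ and $j\in J\setminus K$ then $C_J\subseteq\overline{T}_j^{|J|}\subseteq T_j^{|K|+1}$ while the formula forces $C_K$ to miss $T_j^{|K|+1}$; for (iv), one exhibits for each $z$ a specific $J_z$ with $z\in C_{J_z}$, read off from the tower.

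The gap in your plan is the commitment to only a double shrinking $R_i''\subset R_i'\subset R_i$. Two shrinkings give you three nested levels, and these get spent in the first two stages of your induction: at $l=1$ the natural construction uses the gap between $R''$ and $R'$ to separate the singleton $C_{\{i\}}$'s and push $G_2$ into the pairwise overlaps; at $l=2$ you must move out to the gap between $R'$ and $R$ (your invariant only delivers $z\in R_{i_0}''\cap R_{j_0}'$, not $z\in R_{j_0}''$, so the same pair of levels cannot be reused); at $l=3$ there is no fresh level left and the construction stalls. The paper's tower supplies exactly one new transition zone per cardinality, which is what your recursion would need --- but once the tower is in hand, the closed formula becomes available and the recursion on $|J|$ is unnecessary. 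You have diagnosed the (iii)-versus-(iv) tension correctly; the resolution is more shrinkings, not more bookkeeping.
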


In our case, $X_\an$ is Hausdorff and second countable. It is also locally compact, as it is locally homeomorphic to closed subsets $s_J^{-1}(0)$ of complex manifolds $V_J$. But Hausdorff, locally compact and second countable imply that $X$ is paracompact and normal. Thus Proposition \ref{vf3prop1} applies to $Z=X_\an$ with the open cover $\{R_i:i\in I\}$, and we can choose closed subsets $C_J\subseteq R_J=\bigcap_{i\in J}R_i\subseteq X_\an$ for all $J\in A$ satisfying conditions~(i)--(iv).

The next proposition, proved in \S\ref{vf62} using Theorem \ref{vf3thm2} and Lemma \ref{vf3lem1}, chooses pairs $(U_J,E_J^-)$ satisfying $(\dag)$, as in \S\ref{vf33}, with $(U_J,E_J^-)$, $(U_K,E_K^-)$ compatible near $C_J\cap C_K$ under the quasi-free morphism~$\Phi_{JK}:A_K^\bu\ra A_J^\bu$.

\begin{prop} In the situation above, we can choose $(U_J,E_J^-)$ satisfying condition $(\dag)$ for $V_J,E_J,\ldots$ for each\/ $J\in A,$ such that\/ $\psi_J^{-1}(C_J)\subseteq U_J\subseteq V_J,$ and setting $S_J=\psi_J(s_J^{-1}(0)\cap U_J)$ so that\/ $S_J$ is an open neighbourhood of\/ $C_J$ in $X_\an,$ then for all\/ $J,K\in A,$ we have $S_J\cap S_K\ne\es$ only if\/ $J\subseteq K$ or $K\subseteq J,$ and if\/ $K\subsetneq J$ then there exists open $U_{JK}\subseteq U_J$ with\/ $s_J^{-1}(0)\cap U_{JK}=\psi_J^{-1}(S_J\cap S_K)$ such that\/ $(U_{JK},E_J^-\vert_{U_{JK}})$ is compatible with\/ $(U_K,E_K^-),$ in the sense of\/~{\rm\S\ref{vf34}}. 
\label{vf3prop2}
\end{prop}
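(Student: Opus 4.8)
The plan is a transfinite induction over $A$ with respect to a well-ordering in which $J$ precedes $K$ whenever $\md{J}<\md{K}$, the layers $\{J:\md{J}=m\}$ being well-ordered arbitrarily; then every $K\subsetneq J$ is treated before $J$, so transfinite recursion is legitimate. Before the induction I would fix auxiliary data. Since $X_\an$ is Hausdorff and paracompact, hence normal, and $\{C_J\}_{J\in A}$ is locally finite (Proposition \ref{vf3prop1}(ii)) with $C_J\cap C_K\ne\es$ only if $J\subseteq K$ or $K\subseteq J$ (Proposition \ref{vf3prop1}(iii)), the shrinking and swelling lemmas for locally finite families in normal spaces (compare the proof of Proposition \ref{vf3prop1}, and \cite[\S 7.1]{McWe}) give two locally finite open families with $C_J\subseteq S_J^{00}\subseteq\ov{S_J^{00}}\subseteq S_J^0\subseteq R_J$ and $S_J^0\cap S_K^0\ne\es$ only if $C_J\cap C_K\ne\es$. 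Since three members of $A$ pairwise related by $\subseteq$ form a chain, whenever $S_J^0\cap S_K^0\cap S_L^0\ne\es$ the indices $J,K,L$ lie on a chain in $(A,\subseteq)$. Every $U_J$ below will be chosen with $\psi_J^{-1}(\ov{S_J^{00}})\subseteq U_J$ and $\psi_J(s_J^{-1}(0)\cap U_J)\subseteq S_J^0$, so $\ov{S_J^{00}}\subseteq S_J\subseteq S_J^0$; then $\psi_J^{-1}(C_J)\subseteq U_J$ and the implication $S_J\cap S_K\ne\es\Rightarrow J\subseteq K$ or $K\subseteq J$ are immediate.

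For the inductive step fix $J\in A$ and assume $(U_K,E_K^-)$ satisfying $(\dag)$ (with $\psi_K^{-1}(\ov{S_K^{00}})\subseteq U_K$, $\psi_K(s_K^{-1}(0)\cap U_K)\subseteq S_K^0$, and all the asserted compatibilities) has been built for every $K\subsetneq J$. Put $P_J=\{K:K\subsetneq J,\ C_J\cap C_K\ne\es\}$; near any point only finitely many members of $P_J$ matter, by local finiteness. For $K\in P_J$, surjectivity of $\chi_{JK}$ makes $E_{JK}':=\chi_{JK}^{-1}(\phi_{JK}^*(E_K^-))$ a real subbundle of $E_J$ over $\phi_{JK}^{-1}(U_K)$, whose $s_J^{-1}(0)$-part is $\psi_J^{-1}(S_K)$ (using $\psi_J=\psi_K\ci\phi_{JK}$ and $\chi_{JK}(s_J)=\phi_{JK}^*(s_K)$ on $s_J^{-1}(0)$); and Lemma \ref{vf3lem1} shows that, for $U_J\subseteq\phi_{JK}^{-1}(U_K)$, the pair $(U_J,E_J^-)$ satisfies $(*)$ and is compatible with $(U_K,E_K^-)$ iff $E_J^-$ is, near $s_J^{-1}(0)$, a subbundle of $E_{JK}'$ complementary there to an explicit subbundle $(E_{JK}''\op E_{JK}''')/E_{JK}''$ built from $\chi_{JK}$, a connection on $E_J$, and $\Ker\d\phi_{JK}$.

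The heart of the argument is to build one $E_J^-$ over a neighbourhood $U_J$ of $\psi_J^{-1}(\ov{S_J^{00}})$ in $V_J$, with $s_J^{-1}(0)\cap U_J\subseteq\psi_J^{-1}(S_J^0)$, meeting all these constraints near $s_J^{-1}(0)$ at once. I would cover a neighbourhood of $\psi_J^{-1}(\ov{S_J^{00}})$ in $s_J^{-1}(0)$ by the open sets $\psi_J^{-1}(S_K)$ for $K\in P_J$ together with open sets meeting no $\ov{S_K^{00}}$ ($K\in P_J$); this is possible since the part of $s_J^{-1}(0)$ outside $\bigcup_{K\in P_J}\psi_J^{-1}(S_K)$ avoids every $\ov{S_K^{00}}$ (because $\ov{S_K^{00}}\subseteq S_K$). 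At a point $v$ of $s_J^{-1}(0)\cap U_J$ lying in $\psi_J^{-1}(S_K)$ for several $K\in P_J$, those indices form a chain $K_1\subsetneq\cdots\subsetneq K_r\subsetneq J$ (as $\psi_J(v)$ lies in $S_J^0$ and in each $S_{K_i}^0$, and pairwise $\subseteq$-comparable triples are chains), and by the strict cocycle identities $\phi_{JK_i}=\phi_{K_{i+1}K_i}\ci\phi_{JK_{i+1}}$, $\chi_{JK_i}=\phi_{JK_{i+1}}^*(\chi_{K_{i+1}K_i})\ci\chi_{JK_{i+1}}$ of Corollary \ref{vf3cor1} together with the inductively assumed compatibility of $E_{K_{i+1}}^-$ with $(U_{K_i},E_{K_i}^-)$ over $\psi_{K_{i+1}}^{-1}(S_{K_{i+1}}\cap S_{K_i})$, the $E_{JK_{i+1}}'$-constraint entails the $E_{JK_i}'$-one; so only the top constraint is binding at each point. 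Since, with a complement fixed, the admissible $E_J^-$ near $s_J^{-1}(0)$ form an affine space of bundle maps, a partition of unity subordinate to the chosen cover, extended arbitrarily off $s_J^{-1}(0)$, glues the local choices to a single $E_J^-$ on some open $U_J$; by Theorem \ref{vf3thm2}(a),(c) we may shrink $U_J$ so that $(U_J,E_J^-)$ satisfies $(\dag)$, keeping $\psi_J^{-1}(\ov{S_J^{00}})\subseteq U_J$ and $s_J^{-1}(0)\cap U_J\subseteq\psi_J^{-1}(S_J^0)$. Finally, for $K\in P_J$ take $U_{JK}$ to be an open subset of $U_J\cap\phi_{JK}^{-1}(U_K)$ with $U_{JK}\cap s_J^{-1}(0)=\psi_J^{-1}(S_J\cap S_K)$ on which $\chi_{JK}(E_J^-)\subseteq\phi_{JK}^*(E_K^-)$ (possible as $E_J^-$ is a subbundle of $E_{JK}'$ near $s_J^{-1}(0)$), so that $(U_{JK},E_J^-\vert_{U_{JK}})$ is compatible with $(U_K,E_K^-)$; for $K\subsetneq J$ with $K\notin P_J$ one has $S_J\cap S_K=\es$ and takes $U_{JK}=\es$. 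This closes the induction.

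The step I expect to be the main obstacle is precisely this simultaneous satisfaction of the pullback constraints coming from the different $K\subsetneq J$: a priori they impose conflicting conditions on $E_J^-$, and reconciling them needs three things to mesh --- the chain structure of simultaneously-active indices (a combinatorial consequence of the intersection pattern of $\{C_J\}$, transferred to the $S_J^0$), the strict cocycle identities for $\phi_{JK},\chi_{JK}$ from Corollary \ref{vf3cor1} together with transitivity of compatibility (reducing to the single maximal constraint at each point), and the affineness of the set of admissible $E_J^-$ (permitting partition-of-unity gluing). The surrounding point-set bookkeeping --- choosing the nested families $\{S_J^{00}\}\subseteq\{S_J^0\}$, and keeping every neighbourhood large enough to contain the relevant closed sets yet small enough for $(*)$ and $(\dag)$ --- is routine given Theorem \ref{vf3thm2}(b),(c).
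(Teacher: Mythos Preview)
Your overall architecture --- induction on $\md{J}$, pre-chosen nested open families $S_J^{00}\subseteq S_J^0$ controlling the intersection pattern, and the use of Lemma~\ref{vf3lem1} together with the chain structure of simultaneously active constraints --- is sound and matches the paper's strategy. The reduction ``only the top constraint is binding at each point'' is correct: if $K_1\subsetneq K_2\subsetneq J$ and both compatibilities for $K_1,K_2$ hold below, then $E_{JK_2}'\subseteq E_{JK_1}'$ near the relevant locus, so $E_J^-\subseteq E_{JK_2}'$ forces $E_J^-\subseteq E_{JK_1}'$.

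There is a genuine gap, however, in the partition-of-unity gluing. You cover by the sets $\psi_J^{-1}(S_K)$ for $K\in P_J$ and make on each a local choice satisfying constraint $K$. But at a point $v$ lying in $\psi_J^{-1}(S_{K_1})\cap\psi_J^{-1}(S_{K_2})$ with $K_1\subsetneq K_2$, the convex combination of your two local choices will satisfy constraint $K_1$ (both summands do) but need \emph{not} satisfy the stricter constraint $K_2$, because the local choice made on $\psi_J^{-1}(S_{K_1})$ was only required to lie in $E_{JK_1}'$, not in the smaller $E_{JK_2}'$. The affine structure you invoke is a \emph{different} affine subspace at each point, with fibre dimension jumping as the maximal active $K$ changes; convex combinations do not respect the nested inclusions in the direction you need. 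Consequently your final $E_J^-$ may fail to lie in $E_{JK_2}'$ on all of $\psi_J^{-1}(S_J\cap S_{K_2})$, and the construction of $U_{JK_2}$ in your last paragraph breaks down.

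The paper (\S\ref{vf62}) resolves exactly this difficulty by replacing the one-shot partition-of-unity gluing with a \emph{reverse} induction on $l=\md{L}$ (hypothesis $(\t)_{J,l}$): first build $\hat E_{JL}^-$ for the largest $L\subsetneq J$, then for smaller $L$ prescribe $\hat E_{JL}^-$ to \emph{equal} the already-constructed $\hat E_{JK}^-$ on shrunk overlap neighbourhoods $\hat U_{JKL}$, and only then extend. This guarantees the pieces literally agree on overlaps, rather than merely averaging. The paper then assembles these into $(\check U_J,\check E_J^-)$ over an open neighbourhood of $\bigcup_K\psi_J^{-1}(C_J\cap C_K)$ and invokes Theorem~\ref{vf3thm2}(b) to extend to a pair $(\ti U_J,\ti E_J^-)$ satisfying $(*)$ over a neighbourhood of all of $s_J^{-1}(0)$ --- a step your sketch omits (you use only parts (a),(c) of Theorem~\ref{vf3thm2}). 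Your ``no-constraint'' open sets and the partition of unity are an attempt to replace both the reverse induction and the use of Theorem~\ref{vf3thm2}(b), but as written they do not succeed; to repair the argument you would essentially have to rebuild the reverse induction, making the choice on $\psi_J^{-1}(S_K)$ already compatible with all choices for $K'\supsetneq K$ on the appropriate sub-regions.
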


We can now prove two of the central results of this paper. 

\begin{thm} Let\/ $(\bX,\om_\bX^*)$ be a $-2$-shifted symplectic derived\/ $\C$-scheme with complex virtual dimension\/ $\vdim_\C\bX=n$ in $\Z,$ and write\/ $X_\an$ for the set of\/ $\C$-points of\/ $X=t_0(\bX)$ with the complex analytic topology. Suppose that\/ $\bX$ is separated, and\/ $X_\an$ is a paracompact topological space. Then we can construct a Kuranishi atlas $\cK$ on $X_\an$ of real dimension $n,$ in the sense of\/ {\rm\S\ref{vf25}}. If\/ $\bX$ is quasicompact (equivalently, of finite type) then we can take $\cK$ to be finite.
\label{vf3thm3}
\end{thm}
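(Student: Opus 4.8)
The plan is to assemble the Kuranishi atlas $\cK$ directly from Propositions \ref{vf3prop1} and \ref{vf3prop2}, taking the coordinate changes from Corollary \ref{vf3cor2} and the composition law from the strict cocycle identities supplied by Theorem \ref{vf3thm1}. First I would choose, using Theorem \ref{vf2thm1}, a family $\{(A_i^\bu,\bs\al_i):i\in I\}$ of standard form cdga charts $\bs\al_i:\bSpec A_i^\bu\hookra\bX$ whose images cover $\bX$, with $I$ finite when $\bX$ is quasicompact. Applying Theorem \ref{vf3thm1} with $B=\C$ and translating into complex geometry by Corollary \ref{vf3cor1} gives, for each finite $\es\ne J\subseteq I$, data $V_J,E_J,F_J,s_J,t_J,\psi_J,R_J$ and, for each $\es\ne K\subseteq J$, maps $\phi_{JK},\chi_{JK}$ with $\phi_{JL}=\phi_{KL}\ci\phi_{JK}$ and $\chi_{JL}=\phi_{JK}^*(\chi_{KL})\ci\chi_{JK}$ on the nose for $L\subseteq K\subseteq J$. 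As $\bX$ is separated, $X_\an$ is Hausdorff; it is locally compact, being locally the zero set $s_J^{-1}(0)$ in the complex manifold $V_J$, and paracompact by hypothesis, hence normal. So Proposition \ref{vf3prop1} applies to $\{R_i:i\in I\}$, yielding closed $C_J\subseteq R_J=\bigcap_{i\in J}R_i$ with properties (i)--(iv), and Proposition \ref{vf3prop2} produces pairs $(U_J,E_J^-)$ satisfying $(\dag)$ with $\psi_J^{-1}(C_J)\subseteq U_J\subseteq V_J$, such that, setting $S_J=\psi_J(s_J^{-1}(0)\cap U_J)$, one has $S_J\cap S_K\ne\es$ only if $J,K$ are nested and, for $K\subsetneq J$, an open $U_{JK}\subseteq U_J$ with $s_J^{-1}(0)\cap U_{JK}=\psi_J^{-1}(S_J\cap S_K)$ on which $(U_{JK},E_J^-\vert_{U_{JK}})$ is compatible with $(U_K,E_K^-)$.

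Next I would put $\cK=\bigl(A,\pr,(U_J,E_J^+,s_J^+,\psi_J^+)_{J\in A},(\Phi_{JK})_{J\pr K}\bigr)$, where $A$ is the set of finite nonempty subsets of $I$ (discarding any $J$ with $S_J=\es$), $\pr$ is reverse inclusion (so $J\pr K$ means $K\subsetneq J$), $(U_J,E_J^+,s_J^+,\psi_J^+)$ is the Kuranishi neighbourhood of Definition \ref{vf3def3} — valid because $(\dag)$ holds — and, for $J\pr K$, $\Phi_{JK}=(U_{JK},\phi_{JK}\vert_{U_{JK}},\chi_{JK}^+\vert_{U_{JK}})$ with $\chi_{JK}^+$ as in Definition \ref{vf3def4}. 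Then I check the axioms of Definition \ref{vf2def12}: (a),(b) are immediate; for (c), $\dim_\R U_J-\rank_\R E_J^+$ is locally constant and equals $n$ at each point of $s_J^{-1}(0)\cap U_J$ by \eq{vf3eq26}, so I discard the components of $U_J$ disjoint from $s_J^{-1}(0)$ and it equals $n$ everywhere; for (d), $\Im\psi_J^+=S_J$ and $S_J\cap S_K\ne\es$ forces $J,K$ nested, i.e.\ $J\pr K$ or $K\pr J$; for (e), $\Phi_{JK}$ is a coordinate change over $\Im\psi_J^+\cap\Im\psi_K^+=S_J\cap S_K$ by Corollary \ref{vf3cor2}, applied to the restriction $(U_{JK},E_J^-\vert_{U_{JK}})$ of the $(\dag)$-data $(U_J,E_J^-)$, which Proposition \ref{vf3prop2} makes compatible with $(U_K,E_K^-)$, its domain $U_{JK}$ being an open neighbourhood of $(\psi_J^+)^{-1}(S_J\cap S_K)=s_J^{-1}(0)\cap U_{JK}$ in $U_J$; for (g), $X_\an=\bigcup_J C_J\subseteq\bigcup_J S_J$ by Proposition \ref{vf3prop1}(iv). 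If $I$ is finite then $A$ is finite, giving the last assertion.

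This leaves axiom (f), $\Phi_{KL}\ci\Phi_{JK}=\Phi_{JL}$ for $J\pr K\pr L$, that is for $L\subsetneq K\subsetneq J$, and I expect this to be the main obstacle. The maps already form a strict cocycle: $\phi_{JL}=\phi_{KL}\ci\phi_{JK}$ and $\chi_{JL}=\phi_{JK}^*(\chi_{KL})\ci\chi_{JK}$ by Theorem \ref{vf3thm1}, and hence $\chi_{JL}^+=\phi_{JK}^*(\chi_{KL}^+)\ci\chi_{JK}^+$ after passing to the quotient bundles $E^+$; so $\Phi_{KL}\ci\Phi_{JK}$ and $\Phi_{JL}$ carry identical map data, and the whole issue is matching domains, $U_{JK}\cap\phi_{JK}^{-1}(U_{KL})=U_{JL}$. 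Intersecting this with $s_J^{-1}(0)$, using $\psi_J=\psi_K\ci\phi_{JK}$ and the defining equalities $s_J^{-1}(0)\cap U_{JK}=\psi_J^{-1}(S_J\cap S_K)$ (and similarly for $U_{KL},U_{JL}$), the left side meets $s_J^{-1}(0)$ in $\psi_J^{-1}(S_J\cap S_K\cap S_L)$ while the right side meets it in $\psi_J^{-1}(S_J\cap S_L)$; so one is forced to ensure $S_J\cap S_L\subseteq S_K$ for every chain $L\subsetneq K\subsetneq J$. I would secure this by a combinatorial refinement of the choices above — of the McDuff--Wehrheim type already underlying Proposition \ref{vf3prop1} (cf.\ \cite{McWe}) — shrinking the $S_J$ (equivalently the $U_J$, harmlessly since $\bigcup_J C_J=X_\an$ is retained) via normality and the local finiteness of Proposition \ref{vf3prop1}(ii) so that $S_J\cap S_K=\es$ unless $C_J\cap C_K\ne\es$, and strengthening Proposition \ref{vf3prop1} so that in addition $C_J\cap C_L\subseteq C_K$ whenever $L\subsetneq K\subsetneq J$; then $S_J\cap S_L\subseteq S_K$, and after replacing each $U_{JK}$ by a small enough neighbourhood of $\psi_J^{-1}(S_J\cap S_K)$ on which compatibility persists, the identity $U_{JK}\cap\phi_{JK}^{-1}(U_{KL})=U_{JL}$ holds, establishing (f) and completing the construction of $\cK$.
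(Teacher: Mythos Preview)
Your overall strategy matches the paper's exactly: cover by standard form charts, apply Theorem~\ref{vf3thm1} over $B=\C$, translate via Corollary~\ref{vf3cor1}, choose the $C_J$ via Proposition~\ref{vf3prop1}, the pairs $(U_J,E_J^-)$ and the $U_{JK}$ via Proposition~\ref{vf3prop2}, and take $J\pr K\Leftrightarrow K\subsetneq J$. Axioms (a)--(e),(g) are handled just as in the paper. You are also right that (f) is the one place where something nontrivial remains: the maps $\phi_{JL}=\phi_{KL}\ci\phi_{JK}$ and $\chi_{JL}^+=\phi_{JK}^*(\chi_{KL}^+)\ci\chi_{JK}^+$ hold strictly, so the only issue is whether the domains $U_{JL}$ and $U_{JK}\cap\phi_{JK}^{-1}(U_{KL})$ coincide.

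However, your proposed fix for (f) does not work. From the two hypotheses you impose --- ``$S_J\cap S_K=\es$ unless $C_J\cap C_K\ne\es$'' and ``$C_J\cap C_L\subseteq C_K$ for $L\subsetneq K\subsetneq J$'' --- one cannot deduce $S_J\cap S_L\subseteq S_K$. The first condition only tells you about \emph{non-emptiness} of $S_J\cap S_L$, not about where its points lie; a point of $S_J\cap S_L$ can be close to $C_J$ and close to $C_L$ without being close to $C_J\cap C_L$, hence without any reason to lie in $S_K$. (Also, the strengthened form of Proposition~\ref{vf3prop1} you posit, $C_J\cap C_L\subseteq C_K$, does not hold for the construction in \S\ref{vf61} and would need a separate argument.) Even granting $S_J\cap S_L\subseteq S_K$, getting the literal equality $U_{JK}\cap\phi_{JK}^{-1}(U_{KL})=U_{JL}$ of open subsets of $V_J$ --- not merely of their intersections with $s_J^{-1}(0)$ --- is a much stronger statement than anything you have arranged.

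The paper sidesteps this entirely. Its verification of (f) is precisely item (vi) in the proof: the map data of $\Phi_{JL}$ and $\Phi_{KL}\ci\Phi_{JK}$ agree on the common open set $U_{JK}\cap U_{JL}\cap\phi_{JK}^{-1}(U_{KL})$, which contains $\psi_J^{-1}(S_J\cap S_K\cap S_L)$. In other words, (f) is being asserted at the level of germs along $s_J^{-1}(0)$, not as literal equality of the first components of the triples; this is all that Theorem~\ref{vf2thm4} (via \cite[Th.~4.67]{Joyc6} and the $2$-categorical machinery of m-Kuranishi spaces) actually requires to produce $\bX_\dm$. So rather than trying to force strict domain equality, you should simply record --- as the paper does --- that the two coordinate changes agree on a common open neighbourhood of the relevant part of the zero locus, and invoke Theorem~\ref{vf2thm4} on that basis.
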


\begin{proof} In the discussion from the beginning of \S\ref{vf35} up to Proposition \ref{vf3prop2}, we have constructed the following data:
\begin{itemize}
\setlength{\itemsep}{0pt}
\setlength{\parsep}{0pt}
\item[(i)] A Hausdorff, paracompact topological space $X_\an$.
\item[(ii)] An indexing set $I$, where we write $A=\{J:$ $\es\ne J\subseteq I$, $J$ is finite$\}$.
\item[(iii)] An open cover $\{S_J:J\in A\}$ of $X_\an$, such that $S_J\cap S_K\ne\es$ for $J,K\in A$ only if $J\subseteq K$ or~$K\subseteq J$.
\item[(iv)] For each $J\in A$, a Kuranishi neighbourhood $(U_J,E_J^+,s_J^+,\psi_J^+)$ on $X_\an$ with $\dim U_J-\rank E_J^+=n$, constructed as in \S\ref{vf33} from $(U_J,E_J^-)$ satisfying $(\dag)$, with $\Im\psi_J^+=S_J\subseteq X_\an$.
\item[(v)] For all $J,K\in A$ with $K\subsetneq J$, a coordinate change of Kuranishi neighbourhoods over $S_J\cap S_K$, as in Corollary \ref{vf3cor2},
\begin{equation*}
(U_{JK},\phi_{JK}\vert_{U_{JK}},\chi_{JK}^+):(U_J,E_J^+,s_J^+,\psi_J^+)\longra(U_K,E_K^+,s_K^+,\psi_K^+),
\end{equation*}
since $(U_{JK},E_J^-\vert_{U_{JK}})$ is compatible with~$(U_K,E_K^-)$.
\item[(vi)] For all $J,K,L\in A$ with $L\subsetneq K\subsetneq J$, Corollary \ref{vf3cor1} implies that $\phi_{JL}=\phi_{KL}\ci\phi_{JK}$ and $\chi^+_{JL}=\phi_{JK}^*(\chi^+_{KL})\ci\chi_{JK}^+$ on~$U_{JK}\cap U_{JL}\cap \phi_{JK}^{-1}(U_{KL})$.
\end{itemize}

All this data is a Kuranishi atlas $\cK$ in the sense of Definition \ref{vf2def12}, where the partial order $\pr$ on $A$ is $J\pr K$ if $K\subsetneq J$. If $\bX$ is quasicompact then we can take $I$ finite, so $A$ and $\cK$ are finite.
\end{proof}

Combining Theorems \ref{vf2thm4} and \ref{vf3thm3} yields:

\begin{thm} Let\/ $(\bX,\om_\bX^*)$ be a $-2$-shifted symplectic derived\/ $\C$-scheme with complex virtual dimension\/ $\vdim_\C\bX=n$ in $\Z,$ and write\/ $X_\an$ for the set of\/ $\C$-points of\/ $X=t_0(\bX)$ with the complex analytic topology. Suppose that\/ $\bX$ is separated, so that\/ $X_\an$ is Hausdorff, and also that\/ $X_\an$ is a second countable topological space, which holds if and only if\/ $X$ admits a Zariski open cover $\{X_c:c\in C\}$ with\/ $C$ countable and each\/ $X_c$ a finite type $\C$-scheme.

Then we can make the topological space $X_\an$ into a derived manifold\/ $\bX_\dm$ with real virtual dimension\/ $\vdim_\R\bX_\dm=n,$ in any of the senses {\bf(a)} Joyce's m-Kuranishi spaces $\mKur$ {\rm\cite[\S 4.7]{Joyc6}, \bf(b)} Joyce's d-manifolds\/ $\dMan$ {\rm\cite{Joyc2,Joyc3,Joyc4}, \bf(c)} Borisov--No\"el's derived manifolds $\DerManBN$ {\rm\cite{Bori,BoNo},} or\/ {\bf(d)} Spivak's derived manifolds $\DerManSp$ {\rm\cite{Spiv},} all discussed in~{\rm\S\ref{vf26}}.
\label{vf3thm4}
\end{thm}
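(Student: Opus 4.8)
The plan is to deduce Theorem \ref{vf3thm4} directly by combining Theorem \ref{vf3thm3} with Theorem \ref{vf2thm4}. First I would address the topological hypotheses. Since $\bX$ is separated, $X=t_0(\bX)$ is a separated $\C$-scheme, so $X_\an$ is Hausdorff. For second countability, I would observe that a locally finitely presented $\C$-scheme is locally Noetherian, hence locally second countable in the analytic topology; a finite type $\C$-scheme has second countable analytic topology (it embeds locally in affine space). Then $X_\an$ is second countable iff it has a countable base, which by local second countability is equivalent to $X_\an$ being covered by countably many finite-type pieces, i.e.\ $X$ has a Zariski open cover $\{X_c:c\in C\}$ with $C$ countable and each $X_c$ of finite type. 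This establishes the ``which holds if and only if'' clause. Crucially, Hausdorff plus second countable (plus local compactness, which holds since $X_\an$ is locally homeomorphic to closed subsets $s_J^{-1}(0)$ of complex manifolds) implies paracompact, so the hypothesis of Theorem \ref{vf3thm3} is satisfied.

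Next I would apply Theorem \ref{vf3thm3} to obtain a Kuranishi atlas $\cK$ on $X_\an$ of real dimension $n$. Since $X_\an$ is Hausdorff and second countable, the hypotheses of Theorem \ref{vf2thm4} are met, and each of its conclusions (a)--(d) produces a derived manifold $\bX_\dm$ in the corresponding sense with underlying topological space $X_\an$ and $\vdim\bX_\dm=n$, canonical up to equivalence in the relevant $2$-category or $\iy$-category. This immediately gives the statement of Theorem \ref{vf3thm4}.

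I expect the main obstacle to be entirely in the bookkeeping of the topological hypotheses, specifically the equivalence ``$X_\an$ second countable $\iff$ $X$ admits a countable Zariski open cover by finite-type pieces''. The forward direction needs care: a general locally finitely presented $\C$-scheme need not be quasicompact, so one must argue that second countability of $X_\an$ forces the existence of a countable subcover of any affine cover. Here one uses that $X_\an$ is locally compact and second countable, hence Lindel\"of, so any open cover — in particular an analytic cover refining a Zariski-affine cover — has a countable subcover, and one can arrange this subcover to come from finite-type Zariski opens. The reverse direction is straightforward: a countable union of second countable spaces (each finite-type piece being second countable) is second countable. The rest of the proof is a direct citation of Theorems \ref{vf3thm3} and \ref{vf2thm4}.

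\begin{proof}
Since $\bX$ is separated, $X=t_0(\bX)$ is separated, so $X_\an$ is Hausdorff. As $\bX$ is locally finitely presented, $X$ is locally of finite type over $\C$, so each point of $X_\an$ has a neighbourhood homeomorphic to a closed analytic subset of some $\C^N$; hence $X_\an$ is locally compact, and locally second countable.

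For the equivalence in the statement: if $X$ admits a Zariski open cover $\{X_c:c\in C\}$ with $C$ countable and each $X_c$ of finite type, then each $(X_c)_\an$ is second countable, and a countable union of second countable spaces is second countable, so $X_\an$ is second countable. Conversely, suppose $X_\an$ is second countable. Then $X_\an$ is Lindel\"of, so the analytic open cover of $X_\an$ obtained from any Zariski open cover of $X$ by finite-type affine subschemes has a countable subcover; shrinking, $X$ has a countable Zariski open cover by finite-type $\C$-schemes.

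Now assume $X_\an$ is second countable. Being Hausdorff, locally compact and second countable, $X_\an$ is paracompact. By Theorem \ref{vf3thm3}, there is a Kuranishi atlas $\cK$ on $X_\an$ of real dimension $n$ in the sense of \S\ref{vf25}. Since $X_\an$ is Hausdorff and second countable, Theorem \ref{vf2thm4} applies, and each of its parts {\bf(a)}--{\bf(d)} yields a derived manifold $\bX_\dm$ (an m-Kuranishi space, a d-manifold, a Borisov--No\"el derived manifold, or a Spivak derived manifold, respectively) with topological space $X_\an$ and $\vdim\bX_\dm=n$, canonical up to equivalence in $\mKur$, $\dMan$, $\DerManBN$ or $\DerManSp$. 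This proves the theorem.
\end{proof}
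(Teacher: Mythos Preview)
Your proposal is correct and follows exactly the paper's approach: the paper's entire proof of Theorem~\ref{vf3thm4} is the single sentence ``Combining Theorems~\ref{vf2thm4} and~\ref{vf3thm3} yields:'' preceding the statement. You have simply fleshed out the topological bookkeeping (the ``if and only if'' clause and the paracompactness deduction), which the paper either leaves to the reader or handles in passing in the discussion before Proposition~\ref{vf3prop1}.
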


We will discuss the dependence of $\bX_\dm$ on choices made in the constructions in \S\ref{vf36}. Note that $\bX_\dm$ in Theorem \ref{vf3thm4} has dimension $\vdim_\R\bX_\dm=\vdim_\C\bX=\ha\vdim_\R\bX$, which is exactly half what we might have expected. 

\subsection{Orientations, bordism classes, and virtual classes}
\label{vf36}

Work in the situation of Theorems \ref{vf3thm3} and \ref{vf3thm4}, so that we have a $-2$-shifted symplectic derived $\C$-scheme $(\bX,\om_\bX^*)$ with complex analytic topological space $X_\an$, a Kuranishi atlas $\cK$ on $X_\an$, and a derived manifold $\bX_\dm$. The next proposition, proved in \S\ref{vf63}, justifies our notions of orientation in~\S\ref{vf24}--\S\ref{vf26}.

\begin{prop} In the situation of Theorems\/ {\rm\ref{vf3thm3}} and\/ {\rm\ref{vf3thm4},} there are canonical\/ $1$-$1$ correspondences between:
\begin{itemize}
\setlength{\itemsep}{0pt}
\setlength{\parsep}{0pt}
\item[{\bf(a)}] Orientations on $(\bX,\om_\bX^*)$ in the sense of\/ {\rm\S\ref{vf24};}
\item[{\bf(b)}] Orientations on $(X_\an,\cK)$ in the sense of\/ {\rm\S\ref{vf25};} and
\item[{\bf(c)}] Orientations on $\bX_\dm$ in the sense of\/ {\rm\S\ref{vf262}}.
\end{itemize}
\label{vf3prop3}
\end{prop}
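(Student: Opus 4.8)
The plan is to reduce everything to a pointwise statement and then promote it to a global one by checking compatibility with coordinate changes. The three notions of orientation in (a),(b),(c) are all "orientations on the fibres of a certain real line bundle on $X_\an$": for (a) it is (essentially) the determinant line $\det(\bL_\bX\vert_X)$, restricted to $X_\an$, with the constraint $o\ot o=\io_{\bX,\om_\bX^*}$; for (b) it is the canonical bundle $K_X$ built from the $\La^{\rm top}T^*V_J\ot\La^{\rm top}E_J\vert_{s_J^{-1}(0)}$ and the transition isomorphisms $(\Phi_{JK})_*$ of Definition~\ref{vf2def13}; for (c) it is the canonical bundle $K_{\bX_\dm}$ of the derived manifold, which by Theorem~\ref{vf2thm4} corresponds to the orientation bundle of $\cK$. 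So the (b)$\Leftrightarrow$(c) correspondence is already contained in Theorem~\ref{vf2thm4}, and the real content is (a)$\Leftrightarrow$(b).

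For (a)$\Leftrightarrow$(b), first I would work over a single chart $J\in A$. There we have the complex manifold $V_J$, holomorphic bundles $E_J,F_J$, the section $s_J$, the morphism $t_J$, a real subbundle $E_J^-\subseteq E_J\vert_{U_J}$ with quotient $E_J^+$, and (by the $-2$-shifted symplectic structure) a nondegenerate quadratic form $\ti Q$ on $E_J$ restricting to the form $\ti Q_v$ of \eqref{vf3eq21} on the relevant subquotient at each $v\in s_J^{-1}(0)$, together with the identification $F_J\cong T^*V_J$ in the Darboux model (Example~\ref{vf3ex1}). The key linear-algebra lemma I would prove is: at a point $v\in s_J^{-1}(0)\cap U_J$ with $\psi_J(v)=x$, the exact sequence \eqref{vf3eq25}
\[
0\to H^0(\bT_\bX\vert_x)\to T_vU_J\to E_J^+\vert_v\to H^1(\bT_\bX\vert_x)/\Im\Pi_v\to 0
\]
together with the data $(\Im\Pi_v\subseteq H^1(\bT_\bX\vert_x)$, a maximal negative-definite subspace for $\Re\ti Q_v)$ and the Hermitian/complex structure on everything, canonically trivialises $\La^{\rm top}T_v^*U_J\ot\La^{\rm top}E_J^+\vert_v$ against $\det(\bL_\bX\vert_x)$, \emph{up to sign}, and that the sign ambiguity is exactly the square-root ambiguity in Definition~\ref{vf2def8}. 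Concretely: $T_vU_J$ and $E_J^+\vert_v$ are real vector spaces sitting in a four-term exact sequence whose outer terms are $H^0(\bT_\bX\vert_x)$ (complex, hence canonically oriented) and $H^1(\bT_\bX\vert_x)/\Im\Pi_v$; the latter carries the nondegenerate real form $\Re\ti Q_v$ on which $\Im\Pi_v$ is Lagrangian-like (half-dimensional, negative definite), and choosing an orientation of this quotient that is compatible with a negative-definite half is precisely a choice of $\C$-orientation in the sense of \eqref{vf2eq7} — which by the discussion after \eqref{vf2eq7} is the same as an orientation of $(\bX,\om_\bX^*)$ at $x$. Taking top exterior powers through \eqref{vf3eq25} then yields the claimed canonical isomorphism
\[
(\La^{\rm top}T^*U_J\ot\La^{\rm top}E_J^+)\vert_v\;\cong\;\det(\bL_\bX\vert_x)^{\pm1},
\]
and, squaring, a \emph{canonical} isomorphism of the squares that matches $\io_{\bX,\om_\bX^*}\vert_x$ with the determinant of $\Re\ti Q_v$ (via $\det Q_x$ as in \S\ref{vf24}). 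Hence at each point the three sets of orientations are in bijection.

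The second, and I expect main, step is to check that these pointwise bijections are locally constant and glue, i.e.\ that under a coordinate change $\Phi_{JK}$ (equivalently the quasi-free morphism $\Phi_{JK}:A_K^\bu\to A_J^\bu$) the transition isomorphism $(\Phi_{JK})_*$ of Definition~\ref{vf2def13} on the $K_X$ side corresponds, under the pointwise identifications above, to the identity on $\det(\bL_\bX\vert_X)$ coming from $\bL_{\bs\al_J}$, $\bL_{\bs\al_K}$ (these are quasi-isomorphisms, and on determinants they are canonical isomorphisms compatible with $\io_{\bX,\om_\bX^*}$). This is where one must be careful about orientation conventions: the nontrivial diagram \eqref{vf3eq29} relating $T_{v_J}U_J$, $E_J^+\vert_{v_J}\op T_{v_K}U_K$, $E_K^+\vert_{v_K}$, compatibly with \eqref{vf3eq28} and with the maps on $H^0,H^1$ in \eqref{vf3eq17}--\eqref{vf3eq18}, shows that $(\Phi_{JK})_*$ and the $\det(\bL_{\bs f})$-identifications differ by a sign which one computes to be $+1$ with the conventions of Definition~\ref{vf2def13} — exactly the point where the "suitable orientation convention" parenthetical in Definition~\ref{vf2def13} is fixed. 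The only input beyond linear algebra is naturality of determinants of perfect complexes under the distinguished triangle for $\bs f$, plus the fact that $\io_{\bX,\om_\bX^*}$ is defined globally on $X$ from $\om_\bX^*$ (\S\ref{vf24}); the constraints $o\ot o=\io$ and $(\Phi_{JK})_*$ orientation-preserving are then literally the same condition read in the two languages. Having matched (a) with (b), the correspondence (b)$\Leftrightarrow$(c) is Theorem~\ref{vf2thm4}, completing the three-way bijection; that all three are \emph{canonical} (independent of the auxiliary choices $t_J$, $E_J^-$, connections, etc.) follows because the pointwise isomorphism used only $H^\bu(\bT_\bX\vert_x)$, the form $Q_x$, and $\Im\Pi_v$, and by Theorem~\ref{vf3thm2} and Lemma~\ref{vf3lem1} the dependence on $t_J,E_J^-$ washes out on $s_J^{-1}(0)$.

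The hard part will be the sign bookkeeping in the gluing step: verifying that the orientation convention implicitly built into $(\Phi_{JK})_*$ in Definition~\ref{vf2def13} (the order of factors when taking $\La^{\rm top}$ of the exact sequences \eqref{vf2eq8}, resp.\ \eqref{vf3eq29}) is consistent across triple overlaps $L\subsetneq K\subsetneq J$ and is the one that makes it agree with the determinant-of-cotangent-complex picture; this is routine but error-prone, and is the reason the proposition is stated as a clean bijection while the actual proof (deferred to \S\ref{vf63}) is a diagram chase.
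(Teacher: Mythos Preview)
Your approach is essentially the paper's, with two minor differences worth noting.

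First, an organizational point: you route the bijection as (a)$\Leftrightarrow$(b) then (b)$\Leftrightarrow$(c), whereas the paper in \S\ref{vf63} does (a)$\Leftrightarrow$(c) directly at a point (via \eq{vf2eq7}, \eq{vf2eq9}, the identifications $T_x\bX_\dm\cong\Ker(\d s_J\vert_{v_J})$ and $O_x\bX_\dm\cong[\Ker(t_J\vert_{v_J})/\Im(\d s_J\vert_{v_J})]/\Im\Pi_{v_J}$), and then invokes Theorem~\ref{vf2thm4} for (b)$\Leftrightarrow$(c). The core linear-algebra lemma is identical in both: given a complex quadratic space $(W,\ti Q)$ and a half-dimensional real subspace $\Im\Pi_v$ on which $\Re\ti Q$ is negative definite, an orthonormal complex basis $e_1,\ldots,e_k$ of $W$ projects to a real basis of $W/\Im\Pi_v$, and this sets up the bijection between $\C$-orientations on $(W,\ti Q)$ and real orientations on $W/\Im\Pi_v$. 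Your version recovers this through the determinant of \eq{vf3eq25}; the paper states it as \eq{vf6eq13}. The paper is also rather casual about the gluing/continuity step you spend most of your effort on: it simply asserts that the pointwise identification is independent of $J$ and continuous in $x$, so your more careful treatment via \eq{vf3eq28}--\eq{vf3eq29} is a genuine elaboration, not a different method.

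Second, a small inaccuracy to fix: you invoke ``the identification $F_J\cong T^*V_J$ in the Darboux model'' and ``a nondegenerate quadratic form $\ti Q$ on $E_J$''. The charts $A_J^\bu$ produced by Theorem~\ref{vf3thm1} are \emph{not} in $-2$-Darboux form --- Darboux form is used only locally in the proof of Theorem~\ref{vf3thm2}(c) --- so in general there is no isomorphism $F_J\cong T^*V_J$ and no quadratic form on all of $E_J$, only the form $\ti Q_{v_J}$ on the subquotient $\Ker(t_J\vert_{v_J})/\Im(\d s_J\vert_{v_J})$ from \eq{vf3eq21}. Your subsequent argument only uses \eq{vf3eq25} and $\ti Q_{v_J}$ on the subquotient, both of which hold in the general case, so this is a harmless slip in the set-up rather than a gap in the proof; but you should remove those parentheticals.
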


Next we consider how the derived manifold $\bX_\dm$ in Theorem \ref{vf3thm4} depends on choices made in the construction. Once we have chosen the Kuranishi atlas $\cK$ in Theorem \ref{vf3thm3}, Theorem \ref{vf2thm4} shows that $\bX_\dm$ is determined uniquely up to equivalence in its 2- or $\iy$-category. However, constructing $\cK$ involves many arbitrary choices, and the next proposition, proved in \S\ref{vf64} using the material of \S\ref{vf37}, explains how $\bX_\dm$ depends on these. 

\begin{prop} In the situation of Theorem {\rm\ref{vf3thm4},} for\/ $(\bX,\om_\bX^*),n$ fixed, the derived manifold\/ $\bX_\dm$ depends on choices made in the construction only up to bordisms of derived manifolds which fix the underlying topological space $X_\an$.

That is, if\/ $\bX_\dm,\bX_\dm'$ are possible derived manifolds in Theorem {\rm\ref{vf3thm4},} then we can construct a derived manifold with boundary\/ $\bW_\dm$ with topological space $X_\an\t[0,1]$ and\/ $\vdim\bW_\dm=n+1,$ and an equivalence of derived manifolds $\pd\bW_\dm\simeq \bX_\dm\amalg\bX_\dm',$ topologically identifying $\bX_\dm$ with\/ $X_\an\t\{0\}$ and\/ $\bX_\dm'$ with\/ $X_\an\t\{1\}$. We regard\/ $\bW_\dm$ as a bordism from $\bX_\dm$ to~$\bX_\dm'$.

This bordism $\bW_\dm$ is compatible with orientations in Proposition\/ {\rm\ref{vf3prop3}}. That is, given an orientation on $(\bX,\om_\bX^*),$ we get natural orientations on $\bX_\dm,\ab\bX_\dm',\ab\bW_\dm,$ and an equivalence of oriented derived manifolds\/ $\pd\bW_\dm\simeq -\bX_\dm\ab\amalg\bX_\dm',$ where $-\bX_\dm$ is $\bX_\dm$ with the opposite orientation.
\label{vf3prop4}
\end{prop}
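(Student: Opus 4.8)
The plan is to realise the bordism $\bW_\dm$ by running the construction of Theorems \ref{vf3thm3} and \ref{vf3thm4} in a family over the interval $[0,1]$, with the arbitrary choices interpolating between those producing $\bX_\dm$ and those producing $\bX_\dm'$, using the relative (base) machinery developed in \S\ref{vf37}. Concretely, the goal is to build a relative Kuranishi atlas $\mathcal{W}$ on $X_\an\t[0,1]$ over the base-with-boundary $[0,1]$, in the sense of Definition \ref{vf2def12} (with the $V_J$ now manifolds with boundary and $\varpi_J:V_J\ra[0,1]$ submersive), of fibrewise virtual dimension $n$ and hence total virtual dimension $n+1$, such that the restrictions of $\mathcal{W}$ to the fibres over $0$ and $1$ are exactly the Kuranishi atlases $\cK,\cK'$ on $X_\an$ underlying $\bX_\dm,\bX_\dm'$. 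Applying the manifold-with-boundary analogue of Theorem \ref{vf2thm4} to $\mathcal{W}$ (valid since $X_\an\t[0,1]$ is Hausdorff and second countable) then produces $\bs\pi_\dm:\bW_\dm\ra[0,1]$ in $\dManb$ with topological space $X_\an\t[0,1]$ and an equivalence $\pd\bW_\dm\simeq\bX_\dm\amalg\bX_\dm'$ topologically identifying the two pieces with $X_\an\t\{0\}$ and $X_\an\t\{1\}$.

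To produce $\mathcal{W}$ I would proceed as follows. Suppose $\bX_\dm$ comes from standard form charts $\{(A_i^\bu,\bs\al_i):i\in I\}$, Theorem \ref{vf3thm1} data $(A_J^\bu,\bs\al_J,\Phi_{JK})$, closed sets $C_J$ as in Proposition \ref{vf3prop1}, and pairs $(U_J,E_J^-)$ as in Proposition \ref{vf3prop2}; write $\mathcal{C}_0$ for this collection of choices, and let $\mathcal{C}_1$ be the analogous collection for $\bX_\dm'$, indexed by $I'$. Put $I''=I\amalg I'$, so $\{R_i:i\in I''\}$ still covers $X_\an$; apply a version of Theorem \ref{vf3thm1} to $I''$ in which the output is prescribed to agree with $\mathcal{C}_0$ on all finite $\es\ne J\subseteq I$ and with $\mathcal{C}_1$ on all finite $\es\ne J'\subseteq I'$ (the inductive proof of \S\ref{vf4}, built on \cite[Th.~4.2]{BBJ}, allows seeding the induction with prescribed data on these downward-closed subcollections). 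Translating into complex geometry as in \S\ref{vf32} gives $t$-independent data $V_{J''},E_{J''},F_{J''},s_{J''},t_{J''},\psi_{J''}$ and $\phi_{J''K''},\chi_{J''K''},\xi_{J''K''}$ with quadratic forms $\ti Q$, restricting over the index subsets to the data underlying $\bX_\dm$ and $\bX_\dm'$. Over $X_\an\t[0,1]$ one then needs a relative version of Proposition \ref{vf3prop1} producing closed $\mathcal{C}_{J''}\subseteq X_\an\t[0,1]$ restricting over $\{0\},\{1\}$ to the prescribed closed sets (a routine family version of the shrinking argument of \cite[Lem.~7.1.7]{McWe}), and a relative version of Proposition \ref{vf3prop2} producing pairs $(\mathcal{U}_{J''},\mathcal{E}_{J''}^-)$ on $V_{J''}\t[0,1]$ that satisfy condition $(\dag)$ of Definition \ref{vf3def3} in each fibre $\{t\}$, are mutually compatible under the $\phi_{J''K''}$ as in \S\ref{vf34}, and over $\{0\},\{1\}$ restrict to the prescribed pairs. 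Since conditions $(*),(\dag)$ are open and involve only $\ti Q$ and the fibrewise maps, not $t$ (cf.\ Remark \ref{vf3rem1}), and since the pullback step of Lemma \ref{vf3lem1} and the choices in Theorem \ref{vf3thm2}(b),(c) amount to choosing complementary subbundles or open neighbourhoods, which can be done on products with $[0,1]$ with prescribed behaviour over $\{0,1\}$ as the relevant fibres of choices are contractible, the construction of \S\S\ref{vf33}--\ref{vf35} carries over to the relative setting with the required boundary values, giving $\mathcal{W}$.

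For the orientation statement, an orientation on $(\bX,\om_\bX^*)$ orients $\cK$ and $\cK'$ by Proposition \ref{vf3prop3}; applying the same argument fibrewise to $\mathcal{W}$ — using that $Q_x$ and all the isomorphisms of \eqref{vf2eq5}--\eqref{vf2eq7} and of \S\ref{vf33} are $t$-independent — orients $\mathcal{W}$, hence $\bW_\dm$, compatibly with the endpoints, and the boundary-orientation convention for d-manifolds with boundary (as used in Definition \ref{vf2def17}) gives $\pd\bW_\dm\simeq-\bX_\dm\amalg\bX_\dm'$ as oriented derived manifolds. I expect the main obstacle to be the relative version of Proposition \ref{vf3prop2}: one must organise the choice of the pairs $(\mathcal{U}_{J''},\mathcal{E}_{J''}^-)$ so that compatibility across the index pairs $\es\ne K''\subseteq J''\subseteq I''$ and compatibility across the parameter $t\in[0,1]$ hold simultaneously, while the fibres over $\{0,1\}$ reproduce $\cK,\cK'$ on the nose; this is exactly where the geometry of Theorem \ref{vf3thm2}, Lemma \ref{vf3lem1} and \S\ref{vf34} must all be made to function in the doubly relative setting, and is the reason the proof is deferred until the base machinery of \S\ref{vf37} is in place.
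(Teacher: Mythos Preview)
Your overall strategy---combine the two index sets and build a relative Kuranishi atlas interpolating between $\cK$ and $\cK'$---is the right idea, and the paper does exactly this. But the paper's execution is different from yours in a way that sidesteps precisely the obstacle you identify at the end.

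You work directly over the real interval $[0,1]$, with $t$-independent complex-geometric data and $t$-dependent real choices $(\mathcal C_{J''},\mathcal U_{J''},\mathcal E_{J''}^-)$. This forces you to develop new ``family'' versions of Propositions \ref{vf3prop1} and \ref{vf3prop2} with prescribed values over $\{0,1\}$, and to work with manifolds with boundary in the Kuranishi charts. The difficulty you flag---organising the $(\mathcal U_{J''},\mathcal E_{J''}^-)$ so that compatibility across indices and across $t$ hold simultaneously, while the fibres over $\{0,1\}$ reproduce $\cK,\cK'$ \emph{exactly}---is real, because at $t=0$ you must also suppress all charts with $J''\not\subseteq I$.

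The paper instead works over the \emph{algebraic} base $Z=\bA{}^1$, taking $\bs{\ti X}=\bX\t\bA{}^1$ with the constant family $\om_{\bs{\ti X}/Z}=\bs\pi_\bX^*(\om_\bX^0)$, so that the already-developed relative machinery of \S\ref{vf37} applies verbatim. The key trick is to localise the initial charts: for $i\in I$ set $\ti A_i^\bu=A_i^\bu\ot_\C\C[z,(z-1)^{-1}]$, so $\bSpec\ti A_i^\bu$ lives over $\bA{}^1\sm\{1\}$; for $i'\in I'$ set $\ti A_{i'}^\bu=A_{i'}^{\prime\bu}\ot_\C\C[z,z^{-1}]$, living over $\bA{}^1\sm\{0\}$. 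Then any $\ti J$ containing an element of $I'$ has $\bSpec\ti A_{\ti J}^\bu$ empty over $z=0$, so over the fibre $\{0\}$ only the charts with $\ti J\in A$ survive, and the restriction is automatically $\cK$ (and dually for $\{1\}$). No family version of Proposition \ref{vf3prop2} with boundary conditions is needed; one simply runs Theorems \ref{vf3thm7}--\ref{vf3thm8} once, with the evident prescriptions on the $A$ and $A'$ subfamilies. Finally $\bW_\dm=\bs{\ti X}_\dm\t_{\bs{\ti\pi}_\dm,\C,{\rm inc}}[0,1]$ is taken as a fibre product in $\dManc$, giving $\pd\bW_\dm\simeq\bX_\dm\amalg\bX_\dm'$ directly from boundary behaviour of fibre products.

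So your approach could in principle be made to work, but the paper's localisation trick over $\bA{}^1$ buys a much cleaner argument: it reuses \S\ref{vf37} without modification, avoids manifolds with boundary in the charts, and makes the endpoint restrictions automatic rather than something to be engineered.
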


Combining this with material in \S\ref{vf264}--\S\ref{vf265} yields:

\begin{cor} Suppose $(\bX,\om_\bX^*)$ is a proper\/ $-2$-shifted symplectic derived\/ $\C$-scheme, with\/ $\vdim_\C\bX=n,$ and with an orientation in the sense of\/ {\rm\S\ref{vf24}}. Then Theorem\/ {\rm\ref{vf3thm4}} constructs a compact derived manifold\/ $\bX_\dm$ with\/ $\vdim_\R\bX_\dm=n,$ and Proposition\/ {\rm\ref{vf3prop3}} defines an orientation on\/~$\bX_\dm$. 

Although $\bX_\dm$ depends on arbitrary choices, the d-bordism class $[\bX_\dm]_\dbo$ in $B_n(*)$ from\/ {\rm\S\ref{vf264}} and the virtual class $[\bX_\dm]_\virt$ in $H_n(X_\an;\Z)$ from\/ {\rm\S\ref{vf265}} are independent of these, and depend only on $(\bX,\om_\bX^*)$ and its orientation. 

\label{vf3cor3}
\end{cor}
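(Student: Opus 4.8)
The plan is to deduce Corollary \ref{vf3cor3} directly from the results already assembled. First I would note that since $(\bX,\om_\bX^*)$ is proper, $X_\an$ is compact, Hausdorff, and second countable (being a closed subset of a finite union of complex manifolds), so Theorems \ref{vf3thm3} and \ref{vf3thm4} apply and produce a compact derived manifold $\bX_\dm$ with $\vdim_\R\bX_\dm=n$, and an orientation via Proposition \ref{vf3prop3}; compactness follows because $\bX_\dm$ has topological space $X_\an$. Since $X_\an$ is the analytic space of a proper $\C$-scheme it is in particular a compact ENR, so the virtual class construction of \S\ref{vf265} lands in ordinary homology $H_n(X_\an;\Z)$, and the d-bordism class of \S\ref{vf264} lives in $dB_n(*)\cong B_n(*)$ by Theorem \ref{vf2thm5}. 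This establishes existence of both $[\bX_\dm]_\dbo$ and $[\bX_\dm]_\virt$.

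Next I would establish independence of choices. Suppose $\bX_\dm$ and $\bX_\dm'$ are two derived manifolds produced by Theorem \ref{vf3thm4} from different choices in the construction (different families $\{(A_i^\bu,\bs\al_i):i\in I\}$, different $A_J^\bu$, $\Phi_{JK}$, different $(U_J,E_J^-)$, connections, metrics, closed sets $C_J$, etc.). By Proposition \ref{vf3prop4}, there is a compact oriented derived manifold with boundary $\bW_\dm$ with $\vdim\bW_\dm=n+1$, topological space $X_\an\t[0,1]$, and an equivalence of oriented derived manifolds $\pd\bW_\dm\simeq-\bX_\dm\amalg\bX_\dm'$ topologically identifying the two boundary components with $X_\an\t\{0\}$ and $X_\an\t\{1\}$. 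Taking $\bs e:\bW_\dm\ra *$ to be the trivial map in $\dManb$, the pair $(\bW_\dm,\bs e)$ together with this boundary equivalence exhibits exactly the equivalence relation $\sim$ of Definition \ref{vf2def17} between $(\bX_\dm,\bs\es)$ and $(\bX_\dm',\bs\es)$. Hence $[\bX_\dm]=[\bX_\dm']$ in $dB_n(*)$, so $[\bX_\dm]_\dbo$ depends only on $(\bX,\om_\bX^*)$ and its orientation.

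For the homology virtual class, I would choose a closed embedding $\bs f:\bW_\dm\hookra\R^N\t[0,1]$ for $N\gg0$ extending embeddings $\bX_\dm\hookra\R^N\t\{0\}$, $\bX_\dm'\hookra\R^N\t\{1\}$ (possible by the embedding results quoted in \S\ref{vf265}, applied to manifolds with boundary). Since $X_\an$ is an ENR, we may pick an open neighbourhood $Y$ of the image that retracts onto it; then the bordism $\bW_\dm$ gives, via $\Pi_\dbo^\hom$ applied to a relative d-bordism over $Y\t[0,1]$, an equality of the pushforwards of $[\bX_\dm]_\virt$ and $[\bX_\dm']_\virt$ in $H_n(Y;\Z)$, and the ENR retraction identifies $H_n(Y;\Z)\cong H_n(X_\an;\Z)$ compatibly with the identity on $X_\an$ (both virtual classes push to the same class in $H_n(X_\an\t[0,1];\Z)=H_n(X_\an;\Z)$ via the two inclusions $X_\an\t\{0\}$, $X_\an\t\{1\}$, which are homotopic). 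Therefore $[\bX_\dm]_\virt=[\bX_\dm']_\virt$ in $H_n(X_\an;\Z)$.

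The main obstacle is really bundled inside Proposition \ref{vf3prop4}, whose proof is deferred to \S\ref{vf64}: one must produce a single bordism interpolating between two arbitrary choices in a highly non-canonical construction, which requires running the whole Kuranishi-atlas construction of \S\ref{vf31}--\S\ref{vf35} one dimension up over the base $Z=[0,1]$ (or rather over an affine $\C$-scheme whose analytic space contains $[0,1]$), using the relative version of Theorem \ref{vf3thm1}. Granting Proposition \ref{vf3prop4}, the remaining work here is the bookkeeping above: checking that the ENR hypothesis legitimately places everything in ordinary (co)homology, that the boundary identification in Proposition \ref{vf3prop4} exactly matches the equivalence relation defining $dB_n(*)$, and that the pushforward-along-a-bordism argument for the homology class is compatible with the canonical isomorphisms $H_n(Y;\Z)\cong H_n(X_\an;\Z)$; none of these is deep once the bordism exists.
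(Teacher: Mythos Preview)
Your proposal is correct and follows the same approach as the paper, which simply states that the corollary follows by ``combining [Proposition~\ref{vf3prop4}] with material in \S\ref{vf264}--\S\ref{vf265}''; you have faithfully unpacked that combination, correctly identifying that all the real work is inside Proposition~\ref{vf3prop4} and that the remaining steps are checking the bordism $\bW_\dm$ realizes the equivalence relation of Definition~\ref{vf2def17} and pushing the homology argument through the ENR identification.
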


\subsection{\texorpdfstring{Working relative to a smooth base $\C$-scheme $Z$}{Working relative to a smooth base ℂ-scheme Z}}
\label{vf37}

Let $Z=\Spec B$ be a smooth classical affine $\C$-scheme, which we now assume is connected. Then the set $Z_\an$ of $\C$-points of $Z$ is a complex manifold, and hence a real manifold. In this section we will show that all of \S\ref{vf31}--\S\ref{vf36} also works relatively over the base $Z$. To do this, we will need a notion of a family $(\bs\pi:\bX\ra Z,\om_{\bX/Z})$ of $-2$-shifted symplectic derived $\C$-schemes over the base~$Z$. 

To understand the next definition, recall from Remark \ref{vf3rem1} that if $(\bX,\om_\bX^*)$ is $-2$-shifted symplectic, then the derived manifold $\bX_\dm$ constructed in \S\ref{vf35} does not depend on the whole sequence $\om_\bX^*=(\om_\bX^0,\om_\bX^1,\ldots)$, but only on the nondegenerate pairings $\om_\bX^0\vert_x$ on $H^1(\bT_\bX\vert_x)$ for $x\in X_\an$, and therefore only on the cohomology class $[\om_\bX^0]\in H^{-2}(\bL_\bX)$. We require that choices of $\om_\bX^1,\om_\bX^2,\ldots$ should exist (they are needed to apply Theorem \ref{vf2thm3}, which is used in the proof of Theorem \ref{vf3thm2}(c)), but $\bX_\dm$ does not depend on them.

\begin{dfn} Let $\bX$ be a derived $\C$-scheme, $Z=\Spec B$ a smooth, connected, classical affine $\C$-scheme, and $\bs\pi:\bX\ra Z$ a morphism. A {\it family of $-2$-shifted symplectic structures on\/} $\bX/Z$ is $[\om_{\bX/Z}]\in H^{-2}(\bL_{\bX/Z})$, such that for each $z\in Z_\an$, writing $\bX^z=\bs\pi^{-1}(z)=\bX\t_{\bs\pi,Z,z}^h*$ for the fibre of $\bs\pi$ over $z$ and $[\om_{\bX/Z}]\vert_{\bX^z}\in H^{-2}(\bL_{\bX^z})$ for the restriction of $[\om_{\bX/Z}]$ to $\bX^z$, then there should exist a $-2$-shifted symplectic structure $\om_{\bX^z}^*=(\om_{\bX^z}^0,\om_{\bX^z}^1,\ldots)$ on $\bX^z$ such that $[\om_{\bX/Z}]\vert_{\bX^z}=[\om_{\bX^z}^0]$ in~$H^{-2}(\bL_{\bX^z})$.
\label{vf3def5}
\end{dfn}

That is, a family of $-2$-shifted symplectic structures on $\bX/Z$ is a $-2$-shifted relative 2-form $[\om_{\bX/Z}]$ on $\bX/Z$, which on each fibre $\bX^z$ extends to a closed 2-form which is $-2$-shifted symplectic. We will explain how to extend the arguments of \S\ref{vf33}--\S\ref{vf36} to the relative case. Here is the analogue of Definition~\ref{vf3def3}:

\begin{dfn} Let $\bX$ be a derived $\C$-scheme, $Z=\Spec B$ a smooth, classical, affine $\C$-scheme of pure dimension, $\bs\pi:\bX\ra Z$ a morphism, and $[\om_{\bX/Z}]$ in $H^{-2}(\bL_{\bX/Z})$ a family of $-2$-shifted symplectic structures on $\bX/Z$. Write $\dim_\C Z=k$ and $\vdim_\C\bX=n+k$. Suppose $A^\bu\in\cdga_\C$ is of standard form, $\bs\al:A^\bu\hookra\bX$ is a Zariski open inclusion, and $\be:B\ra A^0$ is a smooth morphism of $\C$-algebras, such that \eq{vf3eq10} homotopy commutes. Define complex geometric data $V,\tau,E,F,s,t$ and $\psi:s^{-1}(0)\,{\buildrel\cong\over\longra}\,R\subseteq X_\an$ as in Definition \ref{vf3def1}, and suppose $R\ne\es$. Then for each $v\in s^{-1}(0)$ with $\psi(v)=x\in X_\an$ and $\tau(v)=\pi(x)=z\in Z_\an$, equation \eq{vf3eq12} gives an isomorphism from a vector space depending on $V,\tau,Z_\an,E,F,s,t,\tau$ at $v$ to~$H^1(\bT_{\bX/Z}\vert_x)$.

As in \eq{vf2eq6}, the relative 2-form $[\om_{\bX/Z}]$ induces a pairing
\e
\smash{Q_x:=\om_{\bX/Z}^0\vert_x\,\cdot:H^1\bigl(\bT_{\bX/Z}\vert_x\bigr)\t H^1\bigl(\bT_{\bX/Z}\vert_x\bigr)\longra\C,}
\label{vf3eq30}
\e
which is nondegenerate as under the equivalence $\bT_{\bX/Z}\vert_x\simeq \bT_{\bX^z}\vert_x$, $Q_x$ is identified with the pairing induced by a $-2$-shifted symplectic form $\om_{\bX^z}^*$ on $\bX^z$, as in Definition \ref{vf3def5}. Define
\e
\ti Q_v\!:\!\frac{\ts \Ker \bigl(t\vert_v:E\vert_v\ra F\vert_v\bigr)}{\ts \Im \bigl(\d s\vert_v:T_v(V/Z_\an)\!\ra\! E\vert_v\bigr)}\!\t\! \frac{\ts \Ker \bigl(t\vert_v:E\vert_v\ra F\vert_v\bigr)}{\ts \Im \bigl(\d s\vert_v:T_v(V/Z_\an)\!\ra\! E\vert_v\bigr)}\!\longra\!\C
\label{vf3eq31}
\e
to be the nondegenerate complex quadratic form identified with $Q_x$ in \eq{vf3eq30} by the isomorphism $H^1\bigl(\bT_{\bs\al}\vert_v\bigr)$ in~\eq{vf3eq12}.

Consider pairs $(U,E^-)$, where $U\subseteq V$ is open and $E^-$ is a real vector subbundle of $E\vert_U$. Given such $(U,E^-)$, we write $E^+=E\vert_U/E^-$ for the quotient vector bundle over $U$, and $s^+\in C^\iy(E^+)$ for the image of $s\vert_U$ under the projection $E\vert_U\ra E^+$, and $\psi^+:=\psi\vert_{s^{-1}(0)\cap U}:s^{-1}(0)\cap U\ra X_\an$. We say that $(U,E^-)$ {\it satisfies condition\/} $(*)$ if:
\begin{itemize}
\setlength{\itemsep}{0pt}
\setlength{\parsep}{0pt}
\item[$(*)$] For each $v\in s^{-1}(0)\cap U$, we have 
\ea
\Im\bigl(\d s\vert_v:T_v(V/Z_\an)\ra E\vert_v\bigr)\cap E^-\vert_v&=\{0\} &&\text{in $E\vert_v$,}
\label{vf3eq32}\\
t\vert_v\bigl(E^-\vert_v\bigr)&=t\vert_v\bigl(E\vert_v\bigr)&&\text{in $F\vert_v$,}
\label{vf3eq33}
\ea
and the natural real linear map
\e
{}\!\!\!\!\!\Pi_v:E^-\vert_v\cap \Ker \bigl(t\vert_v:E\vert_v\ra F\vert_v\bigr)\ra
\frac{\ts \Ker \bigl(t\vert_v:E\vert_v\ra F\vert_v\bigr)}{\ts \Im \bigl(\d s\vert_v:T_v(V/Z_\an)\ra E\vert_v\bigr)}\,,
\label{vf3eq34}
\e
which is injective by \eq{vf3eq32}, has image $\Im\Pi_v$ a real vector subspace of dimension exactly half the real dimension of $\Ker (t\vert_v)/\Im(\d s\vert_v)$, and the real quadratic form $\Re \ti Q_v$ on $\Ker (t\vert_v)/\Im(\d s\vert_v)$ from \eq{vf3eq31} restricts to a negative definite real quadratic form on~$\Im\Pi_v$.
\end{itemize}

We say $(U,E^-)$ {\it satisfies condition\/} $(\dag)$ if:
\begin{itemize}
\setlength{\itemsep}{0pt}
\setlength{\parsep}{0pt}
\item[$(\dag)$] $(U,E^-)$ satisfies condition $(*)$ and $s^{-1}(0)\cap U=(s^+)^{-1}(0)\subseteq U$.
\end{itemize}
Then $(U,E^+,s^+,\psi^+)$ is a Kuranishi neighbourhood on~$X_\an$.

Observe that if $v\in s^{-1}(0)\cap U$ with $\psi(v)=x\in X_\an$ then using \eq{vf3eq11}--\eq{vf3eq12} and \eq{vf3eq32}--\eq{vf3eq34} we find as for \eq{vf3eq25} that there is an exact sequence
\e
\smash{\xymatrix@C=9.5pt{ 0 \ar[r] & H^0\bigl(\bT_{\bX/Z}\vert_x\bigr) \ar[r] & T_v(V/Z_\an) \ar[r] & E^+\vert_v \ar[r] & H^1\bigl(\bT_{\bX/Z}\vert_x\bigr)/\Im\Pi_v \ar[r] & 0. }}
\label{vf3eq35}
\e
Hence as for \eq{vf3eq26} we have
\begin{align*}
\dim_\R U&-\dim_\R Z_\an-\rank_\R E^+\\
&=\dim_\R H^0\bigl(\bT_{\bX/Z}\vert_x\bigr)-\dim_\R H^1\bigl(\bT_{\bX/Z}\vert_x\bigr)+\dim_\R\Im\Pi_v\\
&=2\dim_\C H^0\bigl(\bT_{\bX/Z}\vert_x\bigr)-\dim_\C H^1\bigl(\bT_{\bX/Z}\vert_x\bigr)\\
&=\dim_\C H^0\bigl(\bT_{\bX/Z}\vert_x\bigr)-\dim_\C H^1\bigl(\bT_{\bX/Z}\vert_x\bigr)+
\dim_\C H^2\bigl(\bT_{\bX/Z}\vert_x\bigr)\\
&=\vdim_\C\bX-\dim_\C Z=n.
\end{align*}
Thus the Kuranishi neighbourhood $(U,E^+,s^+,\psi^+)$ has virtual dimension
\begin{equation*}
\dim U-\rank E^+=n+2k=\ha(\vdim_\R\bX-\dim_\R Z_\an)+\dim_\R Z_\an,
\end{equation*}
which is the real dimension of the base $Z_\an$, plus half the real virtual dimension of the fibres $\bX^z$. 
\label{vf3def6}
\end{dfn}

Note that essentially the only important difference between Definitions \ref{vf3def3} and \ref{vf3def6} is that $T_vV$ in equations \eq{vf3eq21}, \eq{vf3eq22}, \eq{vf3eq24} is replaced by $T_v(V/Z_\an)$ in equations \eq{vf3eq31}, \eq{vf3eq32}, and~\eq{vf3eq34}. 

\begin{thm} Theorem\/ {\rm\ref{vf3thm2}} holds with Definition\/ {\rm\ref{vf3def6}} in place of Definition\/~{\rm\ref{vf3def3}}.
\label{vf3thm6}
\end{thm}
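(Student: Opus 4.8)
The plan is to deduce Theorem \ref{vf3thm6} from Theorem \ref{vf3thm2} essentially formally, by localizing in the base $Z_\an$. The key observation, already flagged in the remark after Definition \ref{vf3def6}, is that Definition \ref{vf3def6} differs from Definition \ref{vf3def3} only in that $T_vV$ is systematically replaced by the relative tangent space $T_v(V/Z_\an)$, and $H^i(\bT_\bX\vert_x)$ by $H^i(\bT_{\bX/Z}\vert_x)$. So I would first record the precise comparison: choose a point $z_0\in Z_\an$ in the image of $\tau$, and pick local holomorphic coordinates $(w_1,\dots,w_k)$ on an open ball $Z'\subseteq Z_\an$ centred at $z_0$, giving a holomorphic splitting $TV\vert_{\tau^{-1}(Z')}\cong T(V/Z_\an)\oplus\tau^*(TZ_\an)$ with $\tau^*(TZ_\an)$ trivialized by $\partial/\partial w_1,\dots,\partial/\partial w_k$. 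Since everything in conditions $(*),(\dag)$ and in the exact sequence \eq{vf3eq35} is pointwise at $v\in s^{-1}(0)\cap U$, and since $\d s\vert_v$ restricted to $T_v(V/Z_\an)$ is simply the relative version of $\d s\vert_v$, the relative statements are literally the absolute statements applied fibrewise, fibre by fibre over~$Z'$.

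More precisely, I would argue as follows. Theorem \ref{vf3thm2} has three parts. For part (a): the conditions in $(*)$ of Definition \ref{vf3def6} at $v$ are finitely many open conditions on the linear algebra of the maps $\d s\vert_v:T_v(V/Z_\an)\to E\vert_v$, $t\vert_v$, and the form $\Re\ti Q_v$ (rank conditions, a transversality/direct-sum condition, and negative-definiteness of $\Re\ti Q_v$ on $\Im\Pi_v$). These vary continuously with $v$ in $s^{-1}(0)\cap U$ exactly as in the absolute case, because $T_v(V/Z_\an)$ is the fibre of a holomorphic vector subbundle $T(V/Z_\an)\subseteq TV$ of constant rank (as $\tau$ is a holomorphic submersion), and $\ti Q_v$ comes from $[\om_{\bX/Z}]$ which restricts on each fibre to a genuine $-2$-shifted symplectic form. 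So the proof of Theorem \ref{vf3thm2}(a) from \S\ref{vf51} goes through verbatim with $T_vV$ replaced by $T_v(V/Z_\an)$. For part (b): this is an extension result, building $(\ti U,\ti E^-)$ satisfying $(*)$ from $(U,E^-)$ on a closed set $C$; the proof in \S\ref{vf52} is a partition-of-unity / bundle-extension argument relative to the constraints in $(*)$, and since all constraints are pointwise over $v$ and involve only $T(V/Z_\an)$, $E$, $F$ and $\Re\ti Q$, the same construction applies, producing a real subbundle $\ti E^-$ of $E$ over a neighbourhood of $C\cup s^{-1}(0)$.

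For part (c), which is the substantive one: one must show $s^{-1}(0)\cap U=(s^+)^{-1}(0)$ near $s^{-1}(0)\cap U$ after shrinking $U$. The proof of the absolute Theorem \ref{vf3thm2}(c) in \S\ref{vf53} reduces, near each point $x\in X_\an$, to the $-2$-Darboux model of Example \ref{vf3ex2}, which is available by Theorem \ref{vf2thm3}. In the relative setting, I would use the hypothesis in Definition \ref{vf3def5}/\ref{vf3def6} that for each $z\in Z_\an$ the restriction $[\om_{\bX/Z}]\vert_{\bX^z}$ underlies a genuine $-2$-shifted symplectic form $\om_{\bX^z}^*$ on the fibre $\bX^z=\bs\pi^{-1}(z)$. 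Then near a point $x\in X_\an^{z}$ one applies Theorem \ref{vf2thm3} to $(\bX^z,\om_{\bX^z}^*)$ to get a minimal $-2$-Darboux chart on the fibre, and extends it over a neighbourhood in $Z_\an$ using the relative cdga charts of Theorem \ref{vf3thm1} with base $Z$ (which produce standard form cdgas $A^\bu$ with smooth structure morphisms $\be:B\to A^0$, hence holomorphic submersions $\tau:V\to Z_\an$). The point $s^{-1}(0)=(s^+)^{-1}(0)$ is then checked within each fibre $V_z=\tau^{-1}(z)$ by exactly the computation of Example \ref{vf3ex2} — the quadratic identity $\Re Q(\ti s^+,\ti s^+)+\Re Q(s^-,s^-)=0$ and the chain of equivalences — applied to the restriction $s\vert_{V_z}$ and the fibrewise $\Re\ti Q$; since this holds for every $z$, it holds on all of the relevant neighbourhood in $V$. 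The Kuranishi-neighbourhood conclusion then follows as before.

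The main obstacle I anticipate is part (c), specifically making the reduction to the $-2$-Darboux model \emph{relative}: one needs a chart $\bs\al:\bSpec A^\bu\hookra\bX$ over $Z$ (with $\be:B\to A^0$ smooth) whose restriction to the central fibre is minimal $-2$-Darboux, and one must check that the relevant quadratic form $\ti Q_v$ in \eq{vf3eq31} is identified with the fibrewise $Q\vert_v$ of the Darboux model — i.e. that restriction of $[\om_{\bX/Z}]$ to a fibre and then passing to the Darboux chart agrees with first taking the chart and then restricting. This compatibility is the relative analogue of the statement $\bs\al^*(\om_\bX^*)\simeq\om_{A^\bu}$ used in Example \ref{vf3ex2}, and it follows from functoriality of the constructions of \S\ref{vf32} under base change to a fibre, together with Theorem \ref{vf2thm3} applied on the fibre; but it is where the relative hypothesis of Definition \ref{vf3def5} is genuinely needed, and where some care with how the closed $2$-form $\om_{\bX^z}^*$ (whose existence, not its value, is assumed) enters is required. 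Everything else is a routine transcription of \S\ref{vf51}--\S\ref{vf53} with $T_vV$ replaced by $T_v(V/Z_\an)$, as the remark following Definition \ref{vf3def6} already indicates.
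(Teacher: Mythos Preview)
Your treatment of parts (a) and (b) matches the paper exactly: replace $T_vV$ by $T_v(V/Z_\an)$ throughout the proofs in \S\ref{vf51}--\S\ref{vf52}, with no further changes.

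For part (c) you reach the right conclusion (a fibrewise check over $z\in Z_\an$), but you take a harder route than necessary. You propose to build a \emph{relative} $-2$-Darboux chart by applying Theorem~\ref{vf2thm3} on the fibre and then extending over a neighbourhood in $Z_\an$ via Theorem~\ref{vf3thm1}; you then flag the compatibility of $\ti Q_v$ with the fibrewise Darboux form as the main obstacle. None of this is needed. The paper's argument is simply: fix $z\in Z_\an$, set $V^z=\tau^{-1}(z)$, $U^z=U\cap V^z$, and let $E^z,F^z,s^z,t^z,E^{\pm z},s^{+z},\psi^{+z}$ be the restrictions of $E,F,s,t,E^\pm,s^+,\psi^+$ to $V^z$ or $U^z$. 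Because $T_v(V/Z_\an)=T_vV^z$ for $v\in V^z$, and $[\om_{\bX/Z}]\vert_{\bX^z}=[\om^0_{\bX^z}]$, this restricted data is exactly an instance of Definition~\ref{vf3def3} for the absolute $-2$-shifted symplectic scheme $(\bX^z,\om_{\bX^z}^*)$. Now apply the already-proved absolute Theorem~\ref{vf3thm2}(c) as a black box to conclude that $(s^z)^{-1}(0)\cap U^z$ and $(s^{+z})^{-1}(0)$ coincide near $(s^z)^{-1}(0)\cap U^z$. Since this holds for every $z$, the sets $s^{-1}(0)\cap U$ and $(s^+)^{-1}(0)$ coincide near $s^{-1}(0)\cap U$.

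So your ``main obstacle'' is a phantom: you never need a relative Darboux theorem, nor do you need to extend the fibrewise chart off the fibre, nor to compare $\ti Q_v$ with a relative Darboux quadratic form. The Darboux model enters only inside the proof of the absolute Theorem~\ref{vf3thm2}(c), which you are invoking, not reproving.
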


\begin{proof} In the proofs of Theorem \ref{vf3thm2}(a),(b) in \S\ref{vf51}--\S\ref{vf52}, we replace $\d s\vert_v:T_vV\ra E\vert_v$ by $\d s\vert_v:T_v(V/Z_\an)\ra E\vert_v$ throughout, and no other changes are needed.

For part (c), fix $z\in Z_\an$, so that Definition \ref{vf3def5} gives a $-2$-shifted symplectic derived $\C$-scheme $(\bX^z,\om_{\bX^z}^*)$ with $[\om_{\bX/Z}]\vert_{\bX^z}=[\om_{\bX^z}^0]$ in $H^{-2}(\bL_{\bX^z})$. Consider the complex submanifolds $V^z=\tau^{-1}(z)$ in $V$ and $U^z=U\cap V^z$ in $U$, and write $E^z,F^z,s^z,t^z$ for the restrictions of $E,F,s,t$ to $V^z$, and $E^{\pm z},s^{+z},\psi^{+z}$ for the restrictions of $E^\pm,s^+,\psi^+$ to $U^z$. Then $(\bX^z,\om_{\bX^z}^*),V^z,E^z,\ldots$ satisfy Definition \ref{vf3def3}, so Theorem \ref{vf3thm2}(c) shows that $(s^z)^{-1}(0)\cap U^z$ and $(s^{+z})^{-1}(0)$ coincide near $(s^z)^{-1}(0)\cap U^z$ in $U^z$. Hence $(s^{-1}(0)\cap U)\cap\tau^{-1}(z)$ and $((s^+)^{-1}(0))\cap\tau^{-1}(z)$ coincide near $(s^{-1}(0)\cap U)\cap\tau^{-1}(z)$ in $U$. As this holds for all $z\in Z_\an$, $s^{-1}(0)\cap U$ and $(s^+)^{-1}(0)$ coincide near $s^{-1}(0)\cap U$ in $U$, and the theorem follows.
\end{proof}

When we extend \S\ref{vf34} to the relative case, in the analogue of Definition \ref{vf3def4} we also include data $\bs\pi:\bX\ra Z=\Spec B$ and smooth $\be_J:B\ra A_J^0$, $\be_K:B\ra A_K^0$ with $\be_J=\Phi_{JK}\ci\be_K$ and \eq{vf3eq2} homotopy commuting for $J,K$. We obtain an analogue of \eq{vf3eq28} with rows \eq{vf3eq35} rather than \eq{vf3eq25}, and so as for \eq{vf3eq29} we get an exact sequence
\begin{equation*}
\smash{\xymatrix@C=11.5pt{ 0 \ar[r] & T_{v_J}(U_J/Z_\an) \ar[rrrr]^{\d s_J^+\vert_{v_J}\op \d\phi_{JK}\vert_{v_J}} &&&& {\raisebox{4pt}{$\begin{subarray}{l}\ts E_J^+\vert_{v_J}\op \\ \ts T_{v_K}(U_K/Z_\an)\end{subarray}$}}  \ar[rrrr]^{-\chi_{JK}^+\vert_{v_J}\op \d s_K^+\vert_{v_K}} &&&& E_K^+\vert_{v_K} \ar[r] & 0. }\!\!\!{}}
\end{equation*}
But by taking the direct sum of this with $\id:T_zZ_\an\ra T_zZ_\an$ in the second and third positions, we see that this implies \eq{vf3eq29} is exact, and the analogue of Corollary \ref{vf3cor2} follows. The relative analogue of Lemma \ref{vf3lem1}, in which we replace $TV_J,TV_K$ by $T(V_J/Z_\an),T(V_K/Z_\an)$, is immediate.

For \S\ref{vf35}, we prove the following relative analogue of Theorem~\ref{vf3thm3}:

\begin{thm} Let\/ $\bX$ be a separated derived\/ $\C$-scheme, $Z=\Spec B$ a smooth, connected, classical affine $\C$-scheme, $\bs\pi:\bX\ra Z$ a morphism, and\/ $[\om_{\bX/Z}]$ a family of $-2$-shifted symplectic structures on $\bX/Z,$ with\/ $\dim_\C Z=k$ and\/ $\vdim_\C\bX=n+k$. Write\/ $X_\an,Z_\an$ for the sets of\/ $\C$-points of\/ $X=t_0(\bX),Z$ with the complex analytic topology, and suppose $X_\an$ is paracompact. Then we can construct a relative Kuranishi atlas\/ $\cK,\varpi_{J:J\in A}$ for\/ $\pi_\an:X_\an\ra Z_\an$ of real dimension\/ $n+2k,$ as in Definition\/ {\rm\ref{vf2def12},} with $\varpi_J:U_J\ra Z_\an$ a submersion. If\/ $\bX$ is quasicompact (equivalently, of finite type) then we can take $\cK$ to be finite.

\label{vf3thm7}
\end{thm}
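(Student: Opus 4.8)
The plan is to follow the proof of Theorem \ref{vf3thm3} essentially verbatim, inserting the base $Z$ at each step where the earlier argument invoked the absolute machinery of \S\ref{vf32}--\S\ref{vf36}. First I would choose a family $\bigl\{(A_i^\bu,\bs\al_i,\be_i):i\in I\bigr\}$, where $A_i^\bu\in\cdga_\C$ is standard form, $\bs\al_i:\bSpec A_i^\bu\hookra\bX$ is a Zariski open inclusion, $\be_i:B\ra A_i^0$ is smooth, and \eq{vf3eq1} homotopy commutes, such that $\{R_i:=(\Im\bs\al_i)_\an:i\in I\}$ covers $X_\an$. This is possible by Theorem \ref{vf2thm1}: take any standard form chart $\bSpec A_i^\bu\hookra\bX$ around each point, then since $\bs\pi\ci\bs\al_i:\bSpec A_i^\bu\ra Z$ is a morphism to an affine scheme, it is induced by a morphism of $\C$-algebras $B\ra A_i^0$, which after shrinking $\bSpec A_i^\bu$ (and re-choosing the standard form model) may be taken smooth because $\bs\pi$ is — and if $\bX$ is quasicompact, finitely many $i$ suffice. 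Then apply Theorem \ref{vf3thm1} with this base $Z$ to obtain $A_J^\bu$, $\bs\al_J$, $\be_J$ for finite $\es\ne J\subseteq I$ and quasi-free $\Phi_{JK}:A_K^\bu\ra A_J^\bu$ with $\be_J=\Phi_{JK}\ci\be_K$ for $\es\ne K\subseteq J$, satisfying \eq{vf3eq2} and \eq{vf3eq3}.

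Next, rewrite this output in complex geometry using Corollary \ref{vf3cor1} in its full relative form: this produces, for each $J\in A:=\{J:\es\ne J\subseteq I,\ J\text{ finite}\}$, a complex manifold $V_J$, a holomorphic submersion $\tau_J:V_J\ra Z_\an$, holomorphic bundles $E_J,F_J\ra V_J$, a section $s_J$, a homeomorphism $\psi_J:s_J^{-1}(0)\ra R_J\subseteq X_\an$ with $\pi_\an\ci\psi_J=\tau_J\vert_{s_J^{-1}(0)}$, and (after an arbitrary choice) $t_J:E_J\ra F_J$; and for $\es\ne K\subseteq J$ a submersion $\phi_{JK}:V_J\ra V_K$ with $\tau_J=\tau_K\ci\phi_{JK}$, and surjective $\chi_{JK},\xi_{JK}$, with the compatibilities \eq{vf3eq14}--\eq{vf3eq16} and \eq{vf3eq19}--\eq{vf3eq20}. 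Now invoke Proposition \ref{vf3prop1} on $Z=X_\an$ (Hausdorff since $\bX$ separated, paracompact and normal since also locally compact, and here we do not need second countability) with the open cover $\{R_i:i\in I\}$ to get closed $C_J\subseteq R_J$ satisfying (i)--(iv). Then apply the relative analogue of Proposition \ref{vf3prop2} — which holds by Theorem \ref{vf3thm6} (the relative version of Theorem \ref{vf3thm2}) and the relative analogue of Lemma \ref{vf3lem1}, in which $TV_J,TV_K$ are replaced by $T(V_J/Z_\an),T(V_K/Z_\an)$ — to choose pairs $(U_J,E_J^-)$ satisfying the relative condition $(\dag)$ of Definition \ref{vf3def6} with $\psi_J^{-1}(C_J)\subseteq U_J$, such that, setting $S_J=\psi_J(s_J^{-1}(0)\cap U_J)$, we have $S_J\cap S_K\ne\es$ only if $J\subseteq K$ or $K\subseteq J$, and for $K\subsetneq J$ there is open $U_{JK}\subseteq U_J$ with $s_J^{-1}(0)\cap U_{JK}=\psi_J^{-1}(S_J\cap S_K)$ and $(U_{JK},E_J^-\vert_{U_{JK}})$ compatible with $(U_K,E_K^-)$.

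It remains to assemble the relative Kuranishi atlas. Set $\varpi_J:=\tau_J\vert_{U_J}:U_J\ra Z_\an$, a submersion. By Definition \ref{vf3def6} each $(U_J,E_J^+,s_J^+,\psi_J^+)$ is a Kuranishi neighbourhood with $\dim U_J-\rank E_J^+=n+2k$, and $\varpi_J\vert_{s_J^{-1}(0)\cap U_J}=\tau_J\vert_{s_J^{-1}(0)\cap U_J}=\pi_\an\ci\psi_J^+$. The coordinate changes $(U_{JK},\phi_{JK}\vert_{U_{JK}},\chi_{JK}^+)$ exist by the relative analogue of Corollary \ref{vf3cor2} (whose exactness \eq{vf3eq29} follows, as noted in the text preceding this theorem, by summing the relative exact sequence with $\id:T_zZ_\an\ra T_zZ_\an$), and satisfy $\varpi_J\vert_{U_{JK}}=\varpi_K\ci\phi_{JK}\vert_{U_{JK}}$ because $\tau_J=\tau_K\ci\phi_{JK}$. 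Cocycle compatibility $\Phi_{KL}\ci\Phi_{JK}=\Phi_{JL}$ from Theorem \ref{vf3thm1}(ii) gives $\phi_{JL}=\phi_{KL}\ci\phi_{JK}$ and $\chi_{JL}^+=\phi_{JK}^*(\chi_{KL}^+)\ci\chi_{JK}^+$ on the relevant overlap, which is condition (f) of Definition \ref{vf2def12}. Hence $\cK=\bigl(A,\pr,(U_J,E_J^+,s_J^+,\psi_J^+)_{J\in A},(\Phi_{JK})_{J\pr K}\bigr)$ with $J\pr K$ iff $K\subsetneq J$, together with $\{\varpi_J:J\in A\}$, is a relative Kuranishi atlas of dimension $n+2k$ for $\pi_\an:X_\an\ra Z_\an$, finite if $\bX$ is quasicompact.

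The main obstacle is the relative version of Proposition \ref{vf3prop2} (hence Theorem \ref{vf3thm6}(c) and the relative Lemma \ref{vf3lem1}): one must check that shrinking to achieve condition $(\dag)$ fibrewise over $Z_\an$, and the simultaneous compatibility of the $(U_J,E_J^-)$ along the $\phi_{JK}$, can be carried out $\tau_J$-submersively and coherently over all of $Z_\an$ at once — but this is exactly what the proof of Theorem \ref{vf3thm6} provided for part (c) already handles by reducing to the absolute case fibre by fibre, and the Lemma \ref{vf3lem1} pullback construction is manifestly compatible with replacing tangent bundles by relative tangent bundles, so no genuinely new difficulty arises beyond bookkeeping.
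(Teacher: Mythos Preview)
Your proposal follows essentially the same route as the paper's proof: choose relative standard form charts, apply Theorem \ref{vf3thm1} over $B$, translate via Corollary \ref{vf3cor1}, run the relative versions of Propositions \ref{vf3prop1}--\ref{vf3prop2} (justified by Theorem \ref{vf3thm6} and the relative Lemma \ref{vf3lem1}), and set $\varpi_J=\tau_J\vert_{U_J}$.

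One slip worth flagging: in Step 1 you justify arranging $\be_i:B\ra A_i^0$ smooth ``because $\bs\pi$ is'', but $\bs\pi$ is \emph{not} assumed smooth. Starting from an absolute chart $\bSpec A_i^\bu\hookra\bX$ via Theorem \ref{vf2thm1} and then shrinking will not in general make the induced $B\ra A_i^0$ smooth; one must build the chart relative to $B$ from the outset so that $A_i^0$ is a smooth $B$-algebra. The paper handles this by invoking a relative version of Theorem \ref{vf2thm1}, obtained by modifying the proof of \cite[Th.~4.1]{BBJ} to work over $Z=\Spec B$. Your parenthetical ``re-choosing the standard form model'' is the right instinct, but the stated reason is wrong. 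Everything else matches.
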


\begin{proof} First choose a family $\bigl\{(A_i^\bu,\bs\al_i,\be_i):i\in I\bigr\},$ where $A_i^\bu\in\cdga_\C$ is a standard form cdga, and $\bs\al_i:\bSpec A_i^\bu\hookra\bX$ is a Zariski open inclusion in $\dSch_\C$ for each $i$ in $I,$ an indexing set, and $\be_i:B\ra A_i^0$ is a smooth morphism of classical $\C$-algebras such that \eq{vf3eq1} homotopy commutes, with $\bigl\{R_i:=(\Im\bs\al_i)_\an:i\in I\bigr\}$ an open cover of the complex analytic topological space $X_\an$. This is possible by a relative version of Theorem \ref{vf2thm1}, easily proved by modifying the proof of \cite[Th.~4.1]{BBJ} to work over the base $Z=\Spec B$. Apply Theorem \ref{vf3thm1} to get data $A_J^\bu\in\cdga_\C$, $\bs\al_J:\bSpec A_J^\bu\hookra\bX$, $\be_J:B\ra A_J^0$ for finite $\es\ne J\subseteq I$ and quasi-free morphisms $\Phi_{JK}:A_K^\bu\ra A_J^\bu,$ for all finite~$\es\ne K\subseteq J\subseteq I$. 

Use the notation of \S\ref{vf32} to rewrite $A_J^\bu,\be_J,\Phi_{JK}$ in terms of complex geometry. As in Corollary \ref{vf3cor1}, this gives data $V_J,\tau_J,E_J,F_J,s_J,t_J,\psi_J,R_J$ for all finite $\es\ne J\subseteq I$, and $\phi_{JK},\chi_{JK},\xi_{JK}$ for all finite $\es\ne K\subseteq J\subseteq I$. Note that the holomorphic submersions $\tau_J:V_J\ra Z_\an$ with $\tau_J=\tau_K\ci\phi_{JK}$ for $K\subseteq J$ were not used in \S\ref{vf33}--\S\ref{vf36} as there $Z_\an$ was the point $*$, but now we need them. 

Proposition \ref{vf3prop2} now also holds in our relative situation. Its proof in \S\ref{vf62} uses Theorem \ref{vf3thm2} and Lemma \ref{vf3lem1}, which as above hold in the relative situation with Definition \ref{vf3def6} and $T(V_J/Z_\an)$ in place of Definition \ref{vf3def3} and $TV_J$. As in the proof of Theorem \ref{vf3thm3}, we have now constructed a Kuranishi atlas $\cK$ on $X_\an$, with dimension $n+2k$. Setting $\varpi_J:=\tau_J\vert_{U_J}:U_J\ra Z_\an$ for $J\in A$, we see that $\cK,\varpi_{J:J\in A}$ is a relative Kuranishi atlas for $\pi_\an$, with $\varpi_J$ a submersion. If $X$ is quasicompact we can take $I$ finite, so $A$ and $\cK$ are finite.
\end{proof}

We then deduce the following relative analogue of Theorem~\ref{vf3thm4}:

\begin{thm}{\bf(i)} Let\/ $\bX$ be a separated derived\/ $\C$-scheme, $Z=\Spec B$ a smooth, connected, classical affine $\C$-scheme, $\bs\pi:\bX\ra Z$ a morphism, and\/ $[\om_{\bX/Z}]$ a family of $-2$-shifted symplectic structures on $\bX/Z,$ with\/ $\dim_\C Z=k$ and\/ $\vdim_\C\bX=n+k$. Write\/ $X_\an,Z_\an$ for the sets of\/ $\C$-points of\/ $X=t_0(\bX),Z$ with the complex analytic topology, and suppose $X_\an$ is second countable.

Then we can make the topological space $X_\an$ into a derived manifold\/ $\bX_\dm$ with real virtual dimension\/ $\vdim_\R\bX_\dm=n+2k,$ in any of the senses {\bf(a)} Joyce's m-Kuranishi spaces $\mKur$ {\rm\cite[\S 4.7]{Joyc6}, \bf(b)} Joyce's d-manifolds\/ $\dMan$ {\rm\cite{Joyc2,Joyc3,Joyc4}, \bf(c)} Borisov--No\"el's derived manifolds $\DerManBN$ {\rm\cite{Bori,BoNo},} or\/ {\bf(d)} Spivak's derived manifolds $\DerManSp$ {\rm\cite{Spiv},} all discussed in~{\rm\S\ref{vf26}}.
\smallskip

\noindent{\bf(ii)} We can also define a morphism of derived manifolds $\bs\pi_\dm:\bX_\dm\ra Z_\an,$ with underlying continuous map\/~$\pi_\an:X_\an\ra Z_\an$.
\smallskip

\noindent{\bf(iii)} For each\/ $z\!\in\! Z_\an,$ the fibre $\bX_\dm^z\!=\!\bs\pi_\dm^{-1}(z)\!=\!\bX_\dm\t_{\bs\pi_\dm,Z_\an,z}*$ is a derived manifold with\/ $\vdim_\R\bX_\dm^z=n$. From Definition\/ {\rm \ref{vf3def5},} $\bX^z=\bs\pi^{-1}(z)$ has a $-2$-shifted symplectic structure $\om_{\bX^z}^*,$ and both\/ $\bX_\dm^z,\bX^z$ have (complex analytic) topological space $\pi_\an^{-1}(z)\subseteq X_\an$. Then $\bX_\dm^z$ is up to equivalence a possible choice for the derived manifold associated to $(\bX^z,\om_{\bX^z}^*)$ in Theorem\/~{\rm\ref{vf3thm4}}.

\label{vf3thm8}
\end{thm}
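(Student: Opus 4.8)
The plan is to derive parts (i) and (ii) directly from Theorems \ref{vf3thm7} and \ref{vf2thm4}, and to prove part (iii) by restricting the relative Kuranishi atlas to the fibre over $z$ and recognising the result as an instance of the construction of Theorem \ref{vf3thm4} applied to $(\bX^z,\om_{\bX^z}^*)$.

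For (i), I would apply Theorem \ref{vf3thm7} to get a relative Kuranishi atlas $\cK,\varpi_{J:J\in A}$ for $\pi_\an:X_\an\ra Z_\an$ of real dimension $n+2k$, with each $\varpi_J:U_J\ra Z_\an$ a holomorphic submersion; since $\bX$ is separated $X_\an$ is Hausdorff, and $X_\an$ is second countable by hypothesis, so Theorem \ref{vf2thm4}(a)--(d) makes $X_\an$ into a derived manifold $\bX_\dm$ of dimension $n+2k$ in each of the four senses. For (ii), the last clause of Theorem \ref{vf2thm4}, applied to the relative Kuranishi atlas $\cK,\varpi_{J:J\in A}$ and the real manifold $Z_\an$ (which is a manifold as $Z$ is smooth of pure dimension), yields a morphism of derived manifolds $\bs\pi_\dm:\bX_\dm\ra Z_\an$ with underlying continuous map $\pi_\an$, canonical up to $2$-isomorphism.

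For (iii), fix $z\in Z_\an$. Since each $\varpi_J:U_J\ra Z_\an$ is a submersion it is transverse to $z:*\ra Z_\an$, so $U_J^z:=\varpi_J^{-1}(z)$ is a complex submanifold of $U_J$ with $\dim_\R U_J^z=\dim_\R U_J-2k$, and restricting $(U_J,E_J^+,s_J^+,\psi_J^+)$ and the coordinate changes over $z$ gives a Kuranishi atlas $\cK^z$ on $X_\an^z=\pi_\an^{-1}(z)$ of real dimension $(n+2k)-2k=n$. By the chartwise behaviour of fibre products of derived manifolds along transverse maps (as in \cite{Joyc2,Joyc3,Joyc4,Joyc6}), $\bX_\dm^z=\bX_\dm\t_{\bs\pi_\dm,Z_\an,z}*$ is, up to equivalence, the derived manifold carrying $\cK^z$, so $\vdim_\R\bX_\dm^z=n$. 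It then remains to check that $\cK^z$ is one of the Kuranishi atlases produced by the proof of Theorem \ref{vf3thm3} for $(\bX^z,\om_{\bX^z}^*)$; granting this, Theorem \ref{vf3thm4} gives (iii). For that check: pulling $\bs\al_J:\bSpec A_J^\bu\hookra\bX$ back along $\bX^z\ra\bX$ yields a Zariski open inclusion $\bs\al_J^z:\bSpec A_J^{\bu,z}\hookra\bX^z$ with $A_J^{\bu,z}=A_J^\bu\ot_B\C_z$ ($\C_z$ being $\C$ as a $B$-module via $z$); since $\be_J:B\ra A_J^0$ is smooth, $\bSpec\be_J$ is flat, the homotopy fibre product is strict, $A_J^{\bu,z}$ is of standard form, $(\Spec A_J^{0,z})_\an=V_J^z$, and Definition \ref{vf3def1} for $A_J^{\bu,z}$ returns the restrictions of $E_J,F_J,s_J,t_J$ to $V_J^z$; likewise the $\Phi_{JK}$ base-change to quasi-free morphisms $\Phi_{JK}^z$ with strict associativity preserved, so $\{(A_J^{\bu,z},\bs\al_J^z),\Phi_{JK}^z\}$ is an admissible output of Theorem \ref{vf3thm1} for $\bX^z$. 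By the proof of Theorem \ref{vf3thm6}(c), together with the identification of $\om_{\bX/Z}^0\vert_x$ with $\om_{\bX^z}^0\vert_x$ on $H^1(\bT_{\bX^z}\vert_x)$ noted in Definition \ref{vf3def6}, the pair $(U_J^z,E_J^-\vert_{U_J^z})$ satisfies condition $(\dag)$ of Definition \ref{vf3def3} for $(\bX^z,\om_{\bX^z}^*)$ and the compatibilities of Proposition \ref{vf3prop2} restrict, while $\{C_J\cap X_\an^z:J\in A\}$ satisfies the conclusions of Proposition \ref{vf3prop1} for the cover $\{R_i\cap X_\an^z\}$ of $X_\an^z$; hence $\cK^z$ is exactly the Kuranishi atlas built by the proof of Theorem \ref{vf3thm3} from these choices.

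The hard part will be the two compatibility assertions inside part (iii): that restricting the output of Theorem \ref{vf3thm1} over $z$ is again a valid output of Theorem \ref{vf3thm1}, and that the derived-manifold fibre product $\bX_\dm\t_{\bs\pi_\dm,Z_\an,z}*$ is computed by restricting the Kuranishi atlas over $z$. Both are essentially bookkeeping, but the first rests on the flatness of $\bSpec\be_J$, so that homotopy fibre products with $z$ are strict and commute with the Zariski open inclusions, quasi-free morphisms and their strictly associative composition used in the construction of \S\ref{vf4}, and the second rests on transversality of the submersions $\varpi_J$ to $z$ together with the chartwise description of fibre products of d-manifolds and m-Kuranishi spaces.
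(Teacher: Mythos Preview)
Your proposal is correct and follows essentially the same route as the paper: parts (i),(ii) are deduced directly from Theorems \ref{vf2thm4} and \ref{vf3thm7}, and part (iii) is proved by restricting all the data $V_J,E_J,F_J,s_J,t_J,U_J,E_J^-,\ldots$ to the fibre over $z$ and observing both that this yields an admissible set of choices for Theorems \ref{vf3thm3}--\ref{vf3thm4} applied to $(\bX^z,\om_{\bX^z}^*)$, and that the derived-manifold fibre product $\bX_\dm\t_{\bs\pi_\dm,Z_\an,z}*$ is computed chartwise by this restriction. The paper's proof is terser---it simply asserts that the restricted data are ``a possible choice'' and invokes facts about fibre products of derived manifolds---whereas you spell out the base-change of the cdgas via flatness of $\bSpec\be_J$ and the preservation of the conditions in Propositions \ref{vf3prop1}--\ref{vf3prop2}; but the underlying argument is the same.
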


\begin{proof} Parts (i),(ii) follow from Theorems \ref{vf2thm4} and \ref{vf3thm7}. For (iii), if $z\in Z_\an$ then as $\tau_J:V_J\ra Z_\an$ is a holomorphic submersion for $J\in A$, the fibre $V_J^z:=\tau_J^{-1}(z)$ is a complex submanifold of $V_J$. Setting $U_J^z=U_J\cap V_J^z$ and writing $E_J^z,F_J^z,s_J^z,t_J^z$ for the restrictions of $E_J,F_J,s_J,t_J$ to $V_J^z$, and $E^{-z}_J,E^{+z}_J,s^{+z}_J,\psi^{+z}_J$ for the restrictions of $E^-_J,E^+_J,s^+_J,\psi^+_J$ to $U_J^z$, we see that $I,A,V_J^z,E_J^z,F_J^z,s_J^z,t_J^z,U_J^z,\ldots$ are a possible choice for the data $I,A,V_J,E_J,\ldots$ in the application of Theorems \ref{vf3thm3} and \ref{vf3thm4} to $(\bX^z,\om_{\bX^z}^*)$. But from facts about fibre products of derived manifolds in \cite{Joyc2,Joyc3,Joyc4,Joyc7} we see that the derived manifold $\bX_\dm^z=\bX_\dm\t_{\bs\pi_\dm,Z_\an,z}*$ may be constructed as above from the data $I,A,\ab U_J^z,\ab E^{+z}_J,\ab s^{+z}_J,\ab\psi^{+z}_J,\ldots.$ The theorem follows.
\end{proof}

Next we discuss orientations, generalizing \S\ref{vf24} and \S\ref{vf36} to the relative case. Here is the analogue of Definition~\ref{vf2def8}:

\begin{dfn} Let $\bX$ be a derived $\C$-scheme, $Z=\Spec B$ a smooth, connected, classical affine $\C$-scheme, $\bs\pi:\bX\ra Z$ a morphism, and $[\om_{\bX/Z}]\in H^{-2}(\bL_{\bX/Z})$ a family of $-2$-shifted symplectic structures on $\bX/Z$. Then as in \eq{vf2eq4}, $[\om_{\bX/Z}]$ induces a canonical isomorphism of line bundles on $X=t_0(\bX)$:
\begin{equation*}
\smash{\io_{\bX/Z,\om_{\bX/Z}}:\bigl[\det(\bL_{\bX/Z}\vert_X)\bigr]{}^{\ot^2}\longra \O_X\cong\O_X^{\ot^2}.}
\end{equation*}
An {\it orientation\/} for $(\bs\pi:\bX\ra Z,[\om_{\bX/Z}])$ is an isomorphism $o:\det(\bL_{\bX/Z}\vert_X)\ra\O_X$ such that~$o\ot o=\io_{\bX/Z,\om_{\bX/Z}}$.
\label{vf3def7}
\end{dfn}

Here is the relative analogue of Proposition \ref{vf3prop3}. In (b),(c), we could use also use notions of relative orientation for $(X_\an,\cK)\ra Z_\an$ and $\bX_\dm\ra Z_\an$. But as $Z_\an$ is a complex manifold with a natural orientation, these are equivalent to absolute orientations for $(X_\an,\cK),\bX_\dm$, so we do not bother. The proof is an easy modification of that in~\S\ref{vf63}.
 
\begin{prop} In the situation of Theorems\/ {\rm\ref{vf3thm7}} and\/ {\rm\ref{vf3thm8},} there are canonical\/ $1$-$1$ correspondences between:
\begin{itemize}
\setlength{\itemsep}{0pt}
\setlength{\parsep}{0pt}
\item[{\bf(a)}] Orientations on $(\bs\pi:\bX\ra Z,[\om_{\bX/Z}])$ in the sense of Definition\/ {\rm\ref{vf3def7};}
\item[{\bf(b)}] Orientations on $(X_\an,\cK)$ in the sense of\/ {\rm\S\ref{vf25};} and
\item[{\bf(c)}] Orientations on $\bX_\dm$ in the sense of\/ {\rm\S\ref{vf262}}.
\end{itemize}
\label{vf3prop5}
\end{prop}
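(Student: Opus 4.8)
The plan is to follow the proof of Proposition \ref{vf3prop3} in \S\ref{vf63} almost verbatim, replacing the absolute objects $\bL_\bX,\bT_\bX,TV_J,H^i(\bT_\bX\vert_x),Q_x$ throughout by their relative counterparts $\bL_{\bX/Z},\bT_{\bX/Z},T(V_J/Z_\an),H^i(\bT_{\bX/Z}\vert_x)$ and the pairing $Q_x$ of \eq{vf3eq30}, and absorbing the extra `$Z$-directions' into canonical orientations coming from the complex structure on $Z_\an$. The correspondence {\bf(b)}$\leftrightarrow${\bf(c)} is not special to the relative setting: it is exactly the $1$-$1$ correspondence between orientations on the Kuranishi atlas $\cK$ produced by Theorem \ref{vf3thm7} and orientations on the derived manifold $\bX_\dm$, supplied by Theorem \ref{vf2thm4}{\bf(a)},{\bf(b)} on comparing Definition \ref{vf2def13} with \S\ref{vf262}. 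So the substantive part is {\bf(a)}$\leftrightarrow${\bf(b)}, which I would establish chart by chart on the $U_J$ of $\cK$ and then check is compatible with the coordinate changes $\phi_{JK},\chi_{JK}^+$.

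On a single chart $(U_J,E_J^+,s_J^+,\psi_J^+)$ with $v\in s_J^{-1}(0)$ and $x=\psi_J(v)$, $z=\varpi_J(v)$, recall the Kuranishi canonical line is $\Lambda^{\rm top}T^*U_J\ot\Lambda^{\rm top}E_J^+$. Since $\varpi_J:U_J\ra Z_\an$ is a holomorphic submersion, $TU_J\cong T(U_J/Z_\an)\op\varpi_J^*(TZ_\an)$, and $\varpi_J^*(\Lambda^{\rm top}T^*Z_\an)$ is a pulled-back complex line bundle, hence canonically oriented as a real line bundle. Feeding this into the relative exact sequence \eq{vf3eq35}, an orientation of $(\Lambda^{\rm top}T^*U_J\ot\Lambda^{\rm top}E_J^+)\vert_v$ is, after using the canonical complex orientations of $Z_\an$ and of the complex vector space $H^0(\bT_{\bX/Z}\vert_x)$, the same as an orientation of $H^1(\bT_{\bX/Z}\vert_x)/\Im\Pi_v$; and then, via the $\Re\ti Q_v$-orthogonal splitting $H^1(\bT_{\bX/Z}\vert_x)=\Im\Pi_v\op(H^1(\bT_{\bX/Z}\vert_x)/\Im\Pi_v)$ together with the canonical orientation of the complex vector space $H^1(\bT_{\bX/Z}\vert_x)$, the same as an orientation of $\Im\Pi_v$. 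By Definition \ref{vf3def6}$(*)$, $\Im\Pi_v$ is a maximal negative-definite real subspace of $(H^1(\bT_{\bX/Z}\vert_x),\Re Q_x)$ (identifying $\ti Q_v$ of \eq{vf3eq31} with $Q_x$ of \eq{vf3eq30} via \eq{vf3eq12}), and sending a $(-\Re Q_x)$-orthonormal frame $(f_1,\ldots,f_n)$ of $\Im\Pi_v$ to the $Q_x$-orthonormal frame $(\sqrt{-1}\,f_1,\ldots,\sqrt{-1}\,f_n)$ of $H^1(\bT_{\bX/Z}\vert_x)$ matches orientations of $\Im\Pi_v$ with $\C$-orientations of $(H^1(\bT_{\bX/Z}\vert_x),Q_x)$. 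Finally the relative analogues of \eq{vf2eq5}--\eq{vf2eq7}, using $\io_{\bX/Z,\om_{\bX/Z}}$ from Definition \ref{vf3def7}, identify $\C$-orientations of $(H^1(\bT_{\bX/Z}\vert_x),Q_x)$ with orientations on $(\bs\pi:\bX\ra Z,[\om_{\bX/Z}])$ at $x$, i.e.\ with local square roots $o$ of $\io_{\bX/Z,\om_{\bX/Z}}$ on $\det(\bL_{\bX/Z}\vert_X)$.

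For globality one checks, exactly as in \S\ref{vf63}, that these chartwise identifications are compatible with coordinate changes: $\phi_{JK}$ is a holomorphic submersion, $\chi_{JK}^+$ is surjective, and the relative analogue of the exact sequence \eq{vf3eq29} interpolates between the canonical-bundle identifications over $U_J$ and $U_K$; all the `canonical complex orientation' ingredients (on $Z_\an$ and on $H^0,H^1$ of $\bT_{\bX/Z}$) are natural for holomorphic maps and hence patch automatically, while the single genuinely real datum --- the orientation of the negative-definite subbundle, equivalently the square root $o$ of $\io_{\bX/Z,\om_{\bX/Z}}$ --- patches by the identical linear-algebra bookkeeping of the absolute case. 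Equivalently, the distinguished triangle $\bs\pi^*\bL_Z\ra\bL_\bX\ra\bL_{\bX/Z}$ with $\bL_Z\simeq\Om^1_Z$ ($Z$ smooth) gives $\det(\bL_\bX\vert_X)\cong\bs\pi^*\det(\Om^1_Z)\ot\det(\bL_{\bX/Z}\vert_X)$ with $\bs\pi^*\det(\Om^1_Z)$ a pulled-back complex line bundle (canonically real-oriented), so the relative orientation data of Definition \ref{vf3def7} is precisely the absolute data of \S\ref{vf24} modulo this canonically oriented twist.

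The only point that genuinely needs care --- and the main obstacle --- is checking that the $\sqrt{-1}$-twist relating $(-\Re Q_x)$-orthonormal frames of $\Im\Pi_v$ to $Q_x$-orthonormal frames of $H^1(\bT_{\bX/Z}\vert_x)$ is natural enough, under change of chart and under the surjections $\chi_{JK}^+$, that the double cover of compatible square roots $o$ and the double cover of orientations of $\cK$ are matched rather than merely abstractly isomorphic. But this is precisely the content of the absolute argument in \S\ref{vf63}, to which the presence of the base $Z$ only adds canonically trivial data, so no new difficulty arises; and as a byproduct the resulting correspondences are compatible with restriction to the fibres $(\bX^z,\om_{\bX^z}^*)$ of Theorem \ref{vf3thm8}{\bf(iii)}, since the relative local models restrict to the fibrewise ones.
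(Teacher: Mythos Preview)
Your proposal is correct and takes essentially the same approach as the paper, which simply states that the proof ``is an easy modification of that in \S\ref{vf63}'' --- precisely your strategy of replacing $\bT_\bX,T_vV_J,Q_x$ by their relative counterparts $\bT_{\bX/Z},T_v(V_J/Z_\an),Q_x$ from \eq{vf3eq30}--\eq{vf3eq31} and absorbing the $Z_\an$-factor via its canonical complex orientation. Your linear-algebra bookkeeping (the $\Re\ti Q_v$-orthogonal splitting and the $\sqrt{-1}$-twist) is organized slightly differently from \S\ref{vf63}'s direct projection of a $\ti Q_{v_J}$-orthonormal basis to the quotient $[\Ker(t_J\vert_{v_J})/\Im(\d s_J\vert_{v_J})]/\Im\Pi_{v_J}$, but the two maneuvers are equivalent and you correctly identify this step as the only one needing care.
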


The relative analogue of Proposition \ref{vf3prop4} does hold, but we will not prove it, as we do not need it. The next theorem says that the virtual classes $[\bX_\dm]_\dbo,[\bX_\dm]_\virt$ of a proper oriented $-2$-shifted symplectic derived $\C$-scheme $(\bX,\om_\bX^*)$ defined in Corollary \ref{vf3cor3} are unchanged under deformation in families. Note that it is essential that the base $\C$-scheme $Z$ be connected in Theorem~\ref{vf3thm9}.

\begin{thm} Let\/ $\bX$ be a separated derived\/ $\C$-scheme, $Z=\Spec B$ a smooth, connected, classical affine $\C$-scheme, $\bs\pi:\bX\ra Z$ a proper morphism, and\/ $[\om_{\bX/Z}]$ a family of $-2$-shifted symplectic structures on $\bX/Z,$ equipped with an orientation, with\/ $\dim_\C Z=k$ and\/ $\vdim_\C\bX=n+k$.

For each\/ $z\in Z_\an$ we have a proper, oriented\/ $-2$-shifted symplectic $\C$-scheme $(\bX^z,\om_{\bX^z}^*)$ with $\vdim\bX^z=n,$ so Corollary\/ {\rm\ref{vf3cor3}} defines a d-bordism class $[\bX_\dm^z]_\dbo\in dB_n(*)$ and a virtual class $[\bX_\dm^z]_\virt\in H_n(X_\an^z;\Z),$ depending only on $(\bX^z,\om_{\bX^z}^*)$. Then $[\bX_\dm^{z_1}]_\dbo=[\bX_\dm^{z_2}]_\dbo$ and\/ $\imath^{z_1}_*([\bX_\dm^{z_1}]_\virt)=\imath^{z_2}_*([\bX_\dm^{z_2}]_\virt)$ for all\/ $z_1,z_2\in Z_\an,$ where $\imath^z_*([\bX_\dm^z]_\virt)\in H_n(X_\an;\Z)$ is the pushforward under the inclusion\/~$\imath^z:X_\an^z\hookra X_\an$. 
\label{vf3thm9}
\end{thm}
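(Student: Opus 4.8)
The plan is to deduce Theorem~\ref{vf3thm9} from the relative construction of Theorem~\ref{vf3thm8} together with the bordism-invariance of the virtual class under the basic observation that over a connected base any two fibres are connected by a path, and a path gives a bordism. Concretely, fix $z_1,z_2\in Z_\an$. Since $Z$ is a smooth connected $\C$-scheme, $Z_\an$ is a connected complex manifold, hence path-connected; choose a smooth embedded path $\ga:[0,1]\ra Z_\an$ with $\ga(0)=z_1$, $\ga(1)=z_2$ (we may take $\ga$ smooth and, after a small perturbation, an embedding away from finitely many self-intersections, or simply use that $[0,1]$ maps smoothly to $Z_\an$ — transversality is not needed here). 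Theorem~\ref{vf3thm8} produces a derived manifold $\bX_\dm$ with a morphism $\bs\pi_\dm:\bX_\dm\ra Z_\an$ whose fibre over each $z$ is, up to equivalence, a derived manifold representing $(\bX^z,\om_{\bX^z}^*)$ in the sense of Theorem~\ref{vf3thm4}, and compatibly with orientations by Proposition~\ref{vf3prop5}.

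The main step is then to form the pullback (homotopy fibre product) of derived manifolds
\e
\bW_\dm:=\bX_\dm\t_{\bs\pi_\dm,Z_\an,\ga}[0,1],
\label{vf3eq36}
\e
using the existence of fibre products of derived manifolds over manifolds along a submersion — here $\bs\pi_\dm$ need not be a submersion, but $\ga$ is a morphism from the $1$-manifold-with-boundary $[0,1]$, and derived manifolds (in all the senses of \S\ref{vf26}) are closed under such fibre products, yielding a derived manifold with boundary $\bW_\dm$ with $\vdim_\R\bW_\dm=\vdim_\R\bX_\dm^z+1=n+1$, since the fibres of $\bs\pi_\dm$ have dimension $n$. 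Because $\bs\pi$ is proper, so is $\pi_\an$, hence $\pi_\an^{-1}(\ga([0,1]))$ is compact and $\bW_\dm$ is compact. Its boundary satisfies $\pd\bW_\dm\simeq \bX_\dm^{z_1}\amalg\bX_\dm^{z_2}$ up to the usual orientation sign, i.e.\ $\pd\bW_\dm\simeq -\bX_\dm^{z_1}\amalg\bX_\dm^{z_2}$ as oriented derived manifolds, using the natural orientation on $[0,1]$ and the orientation transported from the relative orientation via Proposition~\ref{vf3prop5}. This exhibits $\bW_\dm$ as an oriented d-bordism (with a map to a point, trivially) between $\bX_\dm^{z_1}$ and $\bX_\dm^{z_2}$, so $[\bX_\dm^{z_1}]_\dbo=[\bX_\dm^{z_2}]_\dbo$ in $dB_n(*)$ by Definition~\ref{vf2def17}.

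For the homological statement, compose the map $\bW_\dm\ra X_\an$ (the underlying continuous map of $\bX_\dm$ restricted to $\pi_\an^{-1}(\ga([0,1]))\hookra X_\an$) with the bordism, so that $(\bW_\dm,e)$ with $e:\bW_\dm\ra X_\an$ is an oriented d-bordism in the sense of Definition~\ref{vf2def16}--\ref{vf2def17} over the target $X_\an$, with $\pd$ giving $(-\bX_\dm^{z_1},\imath^{z_1}\ci e|)\amalg(\bX_\dm^{z_2},\imath^{z_2}\ci e|)$. Applying $\Pi_\dbo^\hom$ and using that it is a bordism invariant (\S\ref{vf264}--\S\ref{vf265}), we get $\imath^{z_1}_*([\bX_\dm^{z_1}]_\virt)=\imath^{z_2}_*([\bX_\dm^{z_2}]_\virt)$ in $H_n(X_\an;\Z)$; here we use that $X_\an$ is the complex analytic space of a proper $\C$-scheme, hence an ENR, so the Steenrod/\v Cech subtleties of \S\ref{vf265} do not arise and the virtual classes genuinely lie in ordinary homology. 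Finally, $[\bX_\dm^z]_\dbo$ and $[\bX_\dm^z]_\virt$ depend only on $(\bX^z,\om_{\bX^z}^*)$ and not on the auxiliary data by Corollary~\ref{vf3cor3}, so the conclusion is independent of the choices of $\bX_\dm$ and $\ga$.

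The step I expect to be the main obstacle is justifying carefully that the relative construction in Theorem~\ref{vf3thm8}, restricted over the $1$-dimensional base $\ga([0,1])$ (equivalently, the fibre product \eqref{vf3eq36}), really does produce a derived manifold with boundary whose boundary is equivalent to the disjoint union of the two fibre derived manifolds \emph{with the correct orientations}. This requires: (a) the compatibility in Theorem~\ref{vf3thm8}(iii) that $\bX_\dm^z$ represents $(\bX^z,\om_{\bX^z}^*)$ — already stated; (b) standard but slightly delicate facts about boundaries of fibre products $\bX_\dm\t_{Z_\an}[0,1]$ in the $2$-category of d-manifolds with corners, namely $\pd(\bX_\dm\t_{Z_\an}[0,1])\simeq \bX_\dm\t_{Z_\an}\pd[0,1]$ since $\bX_\dm$ has no boundary, together with the corner-orientation convention giving the sign $-\bX_\dm^{z_1}$; and (c) checking that the orientation on $\bW_\dm$ induced from the relative orientation via Proposition~\ref{vf3prop5} restricts on the two boundary components to the orientations on $\bX_\dm^{z_1},\bX_\dm^{z_2}$ coming from Proposition~\ref{vf3prop3}. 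Points (b),(c) are essentially the oriented-bordism bookkeeping of \cite{Joyc2,Joyc3,Joyc4}, applied in a family; once they are in place the argument closes quickly.
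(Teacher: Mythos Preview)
Your proposal is correct and follows essentially the same route as the paper's own proof: apply Theorem~\ref{vf3thm8} to get $\bs\pi_\dm:\bX_\dm\ra Z_\an$, choose a smooth path $\ga:[0,1]\ra Z_\an$ from $z_1$ to $z_2$, form the fibre product $\bW_\dm=\bX_\dm\t_{\bs\pi_\dm,Z_\an,\ga}[0,1]$ in $\dManc$, and read off the d-bordism and homology statements from $\pd\bW_\dm\simeq -\bX_\dm^{z_1}\amalg\bX_\dm^{z_2}$. Your discussion of the ``main obstacle'' (existence of the fibre product with boundary, the boundary formula $\pd(\bX_\dm\t_{Z_\an}[0,1])\simeq\bX_\dm\t_{Z_\an}\pd[0,1]$, and orientation compatibility) is exactly what the paper handles by citing \cite[\S 7.5]{Joyc3}, \cite[\S 7.6]{Joyc4}, \cite{Joyc7}, and your treatment of the homology case is in fact more explicit than the paper's, which simply says ``a similar argument works for the homology classes.''
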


\begin{proof} Theorem \ref{vf3thm8} constructs a derived manifold $\bX_\dm$ with $\vdim\bX_\dm=n+2k$ and a morphism $\bs\pi_\dm:\bX_\dm\ra Z_\an$, which is proper as $\bs\pi$ is proper, and Proposition \ref{vf3prop5} gives an orientation on~$\bX_\dm$.

Let $z_1,z_2\in Z_\an$. As $Z$ is connected we can choose a smooth map $\ga:[0,1]\ra Z_\an$ with $\ga(0)=z_1$ and $\ga(1)=z_2$. The fibre product
\begin{equation*}
\smash{\bW_\dm=\bX_\dm\t_{\bs\pi_\dm,Z_\an,\ga}[0,1]}
\end{equation*}
exists as a derived manifold with boundary by \cite[\S 7.5]{Joyc3}, \cite[\S 7.6]{Joyc4}, \cite{Joyc7}, with $\vdim\bW_\dm=n+1$, and $\bW_\dm$ is compact as $[0,1]$ is and $\bs\pi_\dm$ is proper, and oriented since $\bX_\dm,Z_\an,[0,1]$ are. As $\pd\bX_\dm=\pd Z_\an=\es$, the boundary is
\begin{equation*}
\smash{\pd\bW_\dm=\bX_\dm\t_{\bs\pi_\dm,Z_\an,\ga}\pd[0,1]=\bX_\dm^{z_1}\amalg \bX_\dm^{z_2},}
\end{equation*}
where $\bX_\dm^{z_1},\bX_\dm^{z_2}$ are the fibres of $\bs\pi_\dm:\bX_\dm\ra Z_\an$ at~$z_1,z_2$. 

Since $\pd[0,1]=-\{0\}\amalg\{1\}$ in oriented 0-manifolds, we have $\pd\bW_\dm=-\bX_\dm^{z_1}\amalg\bX_\dm^{z_2}$ in oriented derived manifolds. Therefore Definition \ref{vf2def17} gives $[\bX_\dm^{z_1}]_\dbo=[\bX_\dm^{z_2}]_\dbo$ in $dB_n(*)$. By Theorem \ref{vf3thm6}(c) $\bX_\dm^{z_1},\bX_\dm^{z_2}$ are outcomes of Theorem \ref{vf3thm4} applied to $(\bX^{z_1},\om_{\bX^{z_1}}^*),(\bX^{z_2},\om_{\bX^{z_2}}^*)$, so $[\bX_\dm^{z_1}]_\dbo,\ab[\bX_\dm^{z_2}]_\dbo$ are the d-bordism classes associated to $(\bX^{z_1},\om_{\bX^{z_1}}^*)$, $(\bX^{z_2},\om_{\bX^{z_2}}^*)$ in Corollary \ref{vf3cor3}. A similar argument works for the homology classes.
\end{proof}

\begin{rem} The assumptions that $Z$ is smooth, classical, and affine, and $\bX$ is separated, in Theorem \ref{vf3thm9} are easily removed; we can work over a base $\bZ$ which is a general classical or derived $\C$-scheme, provided it is connected.

To see this, suppose $\bs\pi:\bX\ra\bZ$ is a proper morphism of derived $\C$-schemes with $\bZ$ connected, and $[\om_{\bX/\bZ}]\in H^{-2}(\bL_{\bX/\bZ})$ is a family of $-2$-shifted symplectic structures on $\bX/\bZ$ equipped with an orientation, extending Definitions \ref{vf3def5} and \ref{vf3def7} to general $\bZ$ in the obvious way.

Suppose $z_1,z_2\in Z_\an$. As $\bZ$ is connected we can find a sequence $z_1=z^0,z^1,\ldots,z^N=z_2$ of points in $Z_\an$, and a sequence of smooth, connected, affine curves $C^1,\ldots,C^N$ over $\C$ with morphisms $\pi^i:C^i\ra\bZ$, such that $\pi^i(C^i)$ contains $z^{i-1},z^i$ for $i=1,\ldots,N$. Then $\bX^i=\bX\t_{\bs\pi,\bZ,\pi^i}^hC^i$ is a derived $\C$-scheme, and $[\om_{\bX/\bZ}]$ pulls back to a family $[\om_{\bX^i/C^i}]$ of oriented $-2$-shifted symplectic structures on $\bX^i/C^i$. Applying Theorem \ref{vf3thm9} to $(\bX^i\ra C^i,[\om_{\bX^i/C^i}])$ we see that
$[\bX_\dm^{z^{i-1}}]=[\bX_\dm^{z^i}]$ in $dB_n(*)$ for $i=1,\ldots,N$, so that
\begin{equation*}
\smash{[\bX_\dm^{z_1}]_\dbo=[\bX_\dm^{z^0}]=[\bX_\dm^{z^1}]=\cdots=[\bX_\dm^{z^N}]=[\bX_\dm^{z_2}]_\dbo.}
\end{equation*}
The same argument works for virtual classes $[\bX_\dm^{z_i}]_\virt$ in homology.

We took $Z$ to be smooth above to avoid defining families $\bs\pi_\dm:\bX_\dm\ra\bZ$ of derived manifolds over a base $\bZ$ which is not a (derived) manifold.
\label{vf3rem2}
\end{rem}

\subsection[\texorpdfstring{`Holomorphic Donaldson invariants' of Calabi--Yau 4-folds}{\textquoteleft Holomorphic Donaldson invariants\textquoteright\ of Calabi-Yau 4-folds}]{`Holomorphic Donaldson invariants' of C--Y 4-folds}
\label{vf38}

We now outline how the results of \S\ref{vf31}--\S\ref{vf37} can be used to define new enumerative invariants of (semi)stable coherent sheaves on Calabi--Yau 4-folds $Y$, which we could call `holomorphic Donaldson invariants', and which should be unchanged under deformations of $Y$. A related programme using gauge theory has recently been proposed by Cao and Leung \cite{Cao,CaLe1,CaLe2}, which we discuss in~\S\ref{vf39}.

We begin by discussing {\it Donaldson--Thomas invariants\/} $DT^\al(\tau)$ of Calabi--Yau 3-folds, introduced by Thomas \cite{Thom}. Suppose $Z$ is a Calabi--Yau 3-fold over $\C$ with an ample line bundle $\O_Z(1)$, which defines a Gieseker stability condition $\tau$ on coherent sheaves on $Z$, and $\al\in H^{\rm even}(Z;\Q)$. Then one can form coarse moduli $\C$-schemes $\cM_{\rm st}^\al(\tau),\cM_{\rm ss}^\al(\tau)$ of $\tau$-(semi)stable coherent sheaves on $Z$ of Chern character $\al$, with $\cM_{\rm st}^\al(\tau)\subseteq\cM_{\rm ss}^\al(\tau)$ Zariski open, and $\cM_{\rm ss}^\al(\tau)$ proper.

Thomas \cite{Thom} showed that $\cM_{\rm st}^\al(\tau)$ carries an `obstruction theory' $\phi:E^\bu\ra\bL_{\cM_{\rm st}^\al(\tau)}$ of virtual dimension 0, in the sense of Behrend and Fantechi \cite{BeFa}. Thus, if there are no strictly $\tau$-semistable sheaves in class $\al$, so that $\cM_{\rm st}^\al(\tau)=\cM_{\rm ss}^\al(\tau)$ and $\cM_{\rm st}^\al(\tau)$ is proper, then \cite{BeFa} gives a virtual count $DT^\al(\tau)=[\cM_{\rm st}^\al(\tau)]_\virt\in\Z$. Thomas proved that $DT^\al(\tau)$ is unchanged under continuous deformations of~$Z$.

Later, Joyce and Song \cite{JoSo} extended the definition of $DT^\al(\tau)$ to invariants $\bar{DT}{}^\al(\tau)\in\Q$ for all $\al\in H^{\rm even}(Z;\Q)$, dropping the condition that 
there are no strictly $\tau$-semistable sheaves in class $\al$, and proved a wall-crossing formula for $\bar{DT}{}^\al(\tau)$ under change of stability condition $\tau$. At about the same time, Kontsevich and Soibelman \cite{KoSo} defined a motivic generalization of Donaldson--Thomas invariants (assuming existence of `orientation data' as in \S\ref{vf24}), and proved their own wall-crossing formula under change of~$\tau$.

Thomas \cite{Thom} called his invariants $DT^\al(\tau)$ `holomorphic Casson invariants', though they are now generally known as Donaldson--Thomas invariants. Here {\it Casson invariants\/} are integer invariants of oriented real 3-manifolds $Z_\R$ which are homology 3-spheres, which `count' flat connections on~$Z_\R$. 

This followed a programme of Donaldson and Thomas \cite{DoTh}, which starting with some well-known geometry in real dimensions 2,3 and 4, aimed to find analogues in complex dimensions 2,3 and 4; so the complex analogues of homology 3-spheres, and flat connections upon them, are Calabi--Yau 3-folds, and holomorphic vector bundles (or coherent sheaves) upon them.

Donaldson invariants \cite{DoKr} are invariants of compact, oriented 4-manifolds $Y_\R$, defined by `counting' moduli spaces $\cM_{\rm inst}^\al$ of $\SU(2)$-instantons $E$ on $Y_\R$ with $c_2(E)=\al\in\Z$. In contrast to Casson and Donaldson--Thomas invariants, the (virtual) dimension $d^\al$ of $\cM_{\rm inst}^\al$ need not be zero. Oversimplifying / lying a bit, one first constructs an orientation on $\cM_{\rm inst}^\al$, \cite[\S 5.4]{DoKr}. Then we have a virtual class $[\cM_{\rm inst}^\al]_\virt\in H_{d^\al}(\cM_{\rm inst}^\al;\Z)$. For each $\be\in H_2(Y_\R;\Z)$ we construct a natural cohomology class $\mu(\be)\in H^2(\cM_{\rm inst}^\al;\Z)$, with $\mu(\be_1+\be_2)=\mu(\be_1)+\mu(\be_2)$. Then if $d^\al=2k$, we define {\it Donaldson invariants\/} $D^\al(\be_1,\ldots,\be_k)=(\mu(\be_1)\cup\cdots\cup\mu(\be_k))\cdot [\cM_{\rm inst}^\al]_\virt\in\Z$ for all $\be_1,\ldots,\be_k\in H_2(Y_\R;\Z)$. We can think of $D^\al$ as a $\Z$-valued homogeneous degree $k$ polynomial on~$H_2(Y_\R;\Z)$.

We propose, following \cite{DoTh}, to define `holomorphic Donaldson invariants' of Calabi--Yau 4-folds. The gauge theory ideas which were the primary focus of \cite{DoTh} will be discussed in \S\ref{vf39}; here we work in the world of (derived) algebraic geometry. Suppose $Y$ is a Calabi--Yau 4-fold over $\C$ (i.e. $Y$ is smooth and projective with $H^i(\O_Y)=\C$ if $i=0,4$ and $H^i(\O_Y)=0$ otherwise), and $\al=(\al^0,\al^2,\al^4,\al^6,\al^8)\in H^{\rm even}(Y;\Q)$. As above we can form coarse moduli $\C$-schemes $\cM_{\rm st}^\al(\tau)\subseteq\cM_{\rm ss}^\al(\tau)$ of Gieseker (semi)stable coherent sheaves on $Y$ of Chern character $\al$, with $\cM_{\rm ss}^\al(\tau)$ proper.

To make contact with the work of \S\ref{vf31}--\S\ref{vf37}, we need to show:

\begin{claim} There is a $-2$-shifted symplectic derived\/ $\C$-scheme $\bigl(\bs\cM_{\rm st}^\al(\tau),\om^*\bigr),$ natural up to equivalence, with classical truncation\/ $t_0\bigl(\bs\cM_{\rm st}^\al(\tau)\bigr)=\cM_{\rm st}^\al(\tau),$ of virtual dimension\/ $\vdim_\C\bs\cM_{\rm st}^\al(\tau)=d^\al:=2-{\rm deg}\bigl(\al\cup\bar\al\cup{\rm td}(TY)\bigr){}_8,$ where $\bar\al=(\al^0,-\al^2,\al^4,-\al^6,\al^8),$ and\/ ${\rm td}(-)$ is the Todd class.
\label{vf3claim1}
\end{claim}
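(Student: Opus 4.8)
The plan is to deduce Claim~\ref{vf3claim1} from the general machinery of PTVV~\cite{PTVV} applied to moduli of perfect complexes, together with standard facts about representability of coherent sheaf moduli and with the comparison between moduli stacks and moduli schemes in the presence of no automorphisms. First I would invoke the derived moduli stack $\bs\cM$ of perfect complexes on $Y$ (or its open substack of coherent sheaves), which carries a universal perfect complex $\cE$ on $Y\t\bs\cM$; by To\"en--Vaqui\'e this $\bs\cM$ is a locally geometric derived stack, locally of finite presentation. Since $Y$ is a Calabi--Yau $4$-fold, the trivialisation $K_Y\cong\O_Y$ gives an $O$-orientation of $Y$ of dimension $4$ in the sense of~\cite[\S 2.1]{PTVV}, and Theorem~\ref{vf2thm2} (PTVV~\cite[\S 2.1]{PTVV}) produces a canonical $(2-4)=-2$-shifted symplectic structure $\om_{\bs\cM}^*$ on $\bs\cM$. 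Restricting to the open substack of Gieseker $\tau$-stable sheaves of Chern character $\al$ gives a $-2$-shifted symplectic open substack $\bigl(\bs\cM_{\rm st}^\al(\tau),\om^*\bigr)$.

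Next I would pass from the stack to a scheme. For $\tau$-stable sheaves the only automorphisms are scalars $\C^*$, so the $\C^*$-rigidification (equivalently, working with the sheaf $\cE$ only up to the natural $\C^*$-gerbe, or using a twisted universal sheaf, or rigidifying as in~\cite{PTVV}) produces a derived $\C^*$-gerbe $\bs\cM_{\rm st}^\al(\tau)\ra\bs\cM_{\rm st}^{\al,\rm rig}(\tau)$, and $\bs\cM_{\rm st}^{\al,\rm rig}(\tau)$ is a derived algebraic space, in fact a derived $\C$-scheme by the usual GIT construction of coarse moduli, with classical truncation the coarse moduli scheme $\cM_{\rm st}^\al(\tau)$. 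One checks that the $-2$-shifted symplectic form descends along this rigidification: this is the same argument as in the $-1$-shifted case used in~\cite{BBDJS,JoSo} to get d-critical structures on moduli schemes of sheaves on CY $3$-folds, namely that the obstruction to descent lives in the cohomology of the $\C^*$-part of the tangent complex, which splits off canonically, and the restriction of the form to it is zero. I would write $\bigl(\bs\cM_{\rm st}^\al(\tau),\om^*\bigr)$ for the resulting $-2$-shifted symplectic derived $\C$-scheme, natural up to equivalence by naturality of each step.

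The virtual dimension is then a cohomology computation. The (shifted) cotangent complex of the moduli stack of complexes at a point $[\cE]$ is $\bL_{\bs\cM}\vert_{[\cE]}\simeq \bigl(R\Hom_Y(\cE,\cE)[1]\bigr)^\vee$, so $\bT_{\bs\cM}\vert_{[\cE]}\simeq R\Hom_Y(\cE,\cE)[1]$, and for the rigidified scheme one removes the trivial summand $R\Gamma(\O_Y)[1]$, i.e. $\bT_{\bs\cM_{\rm st}^{\al,\rm rig}}\vert_{[\cE]}\simeq R\Hom_Y(\cE,\cE)_0[1]$ where $(-)_0$ denotes the traceless part. Hence $\vdim_\C=\chi(\O_Y)-\chi(\cE,\cE)$ with a sign from the shift; using Hirzebruch--Riemann--Roch, $\chi(\cE,\cE)={\rm deg}\bigl({\rm ch}(\cE)^\vee\cup{\rm ch}(\cE)\cup{\rm td}(TY)\bigr)_8$, and ${\rm ch}(\cE)^\vee=\bar\al$ on a fourfold (the $(-1)^i$ signs on the degree-$2i$ parts), so $\vdim_\C=2-{\rm deg}\bigl(\al\cup\bar\al\cup{\rm td}(TY)\bigr)_8=d^\al$. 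The factor $2$ comes from $\chi(\O_Y)=2$ for a CY $4$-fold together with the shift, matching the stated formula; alternatively one can derive it directly from the $-2$-shifted symplectic self-duality $\bT\simeq\bL[-2]$ which forces $H^0$ and $H^2$ of the tangent complex to be dual, so $\vdim_\C=\dim H^0-\dim H^1+\dim H^2=2\dim H^0-\dim H^1$ as in~\eq{vf3eq26}, and one identifies $H^0,H^1,H^2$ with ${\rm Ext}^1_0,{\rm Ext}^2_0,{\rm Ext}^3_0$.

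The main obstacle I anticipate is the descent of the $-2$-shifted symplectic structure from the moduli stack $\bs\cM_{\rm st}^\al(\tau)$ to the moduli scheme $\cM_{\rm st}^\al(\tau)$, i.e.\ constructing $\bs\cM_{\rm st}^\al(\tau)$ as a derived scheme with the symplectic form rather than merely as a stack. PTVV~\cite[\S 2.1]{PTVV} give the symplectic structure on the stack; producing it on a scheme requires either a rigidification argument for shifted symplectic structures (checking the form is pulled back from the rigidification, using that the $\C^*$-direction is a canonical direct summand on which the form vanishes and that the rigidified derived stack is a scheme), or an entirely local construction gluing $-2$-Darboux charts. This is precisely the kind of statement that, in the $-1$-shifted / CY$3$ setting, occupies substantial parts of~\cite{BBDJS,JoSo}; the $-2$-shifted analogue should go through by the same methods but is not formally in the literature, which is why the statement is phrased as a Claim rather than a Theorem.
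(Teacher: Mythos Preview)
The paper does not actually prove Claim~\ref{vf3claim1}. It is deliberately stated as a \emph{Claim} rather than a theorem or proposition, and the paper's surrounding discussion in \S\ref{vf38} makes this explicit: ``Pantev et al.\ \cite[\S 2.1]{PTVV} prove the analogue of Claim~\ref{vf3claim1} in the context of (derived) Artin stacks, but we want to reduce to (derived) schemes. Roughly this means factoring out the $\C^*$ stabilizer groups at each point of the $\tau$-stable derived moduli stack.'' The paper then remarks that extending \S\ref{vf31}--\S\ref{vf37} to derived algebraic spaces would suffice, but does not carry this out.

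Your proposal is therefore not competing against a proof in the paper; rather, it is a sketch of how one might establish the Claim, and it aligns closely with the paper's own informal discussion of what would be required. You correctly identify that (a) the stacky version is PTVV's Theorem~\ref{vf2thm2}, (b) the passage from stack to scheme is a $\C^*$-rigidification, and (c) the descent of the $-2$-shifted symplectic form along this rigidification is the genuine gap, not formally in the literature. Your final paragraph is exactly right: this is why the statement is a Claim. The virtual-dimension computation via Riemann--Roch is standard and correct.

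One small point: your assertion that the rigidified derived stack is a derived \emph{scheme} (rather than merely a derived algebraic space) needs the classical GIT input that the coarse moduli of stable sheaves is a scheme, together with the fact that a derived stack is affine/a scheme if and only if its classical truncation is; the paper notes the analogous issue and suggests that working with derived algebraic spaces may be the cleaner route.
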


Pantev et al.\ \cite[\S 2.1]{PTVV} prove the analogue of Claim \ref{vf3claim1} in the context of (derived) Artin stacks, but we want to reduce to (derived) schemes. Roughly this means factoring out the $\C^*$ stabilizer groups at each point of the $\tau$-stable derived moduli stack. Actually, it should not be difficult to extend \S\ref{vf31}--\S\ref{vf37} to derived algebraic $\C$-spaces rather than derived $\C$-schemes, and then it would be enough to construct $\bs\cM_{\rm st}^\al(\tau)$ as a derived algebraic $\C$-space.

Next we would need to answer the:

\begin{quest} Does $\bigl(\bs\cM_{\rm st}^\al(\tau),\om^*\bigr)$ in Claim\/ {\rm\ref{vf3claim1}} have a natural orientation, in the sense of\/ {\rm\S\ref{vf24},} possibly depending on some choice of data on $Y?$
\label{vf3quest1}
\end{quest}

Following the argument of Donaldson \cite[\S 5.4]{DoKr}, Cao and Leung \cite[Th.~2.2]{CaLe2} prove an orientability result, which should translate to the statement that if the Calabi--Yau 4-fold $Y$ has holonomy $\SU(4)$ with $H_*(Y;\Z)$ torsion-free, and $\bs\cM_{\rm st}^\al(\tau)$ is a derived moduli scheme of coherent sheaves on $Y$, then orientations on $\bs\cM_{\rm st}^\al(\tau)$ exist, though they do not construct a natural choice. 

If both these problems are solved, then Theorem \ref{vf3thm4} makes $\cM_{\rm st}^\al(\tau)_\an$ into a derived manifold $\bs\cM_{\rm st}^\al(\tau)_\dm$ of real virtual dimension $d^\al$, which is oriented by Proposition \ref{vf3prop3}. If there are no strictly $\tau$-semistable sheaves in class $\al$ then $\bs\cM_{\rm st}^\al(\tau)_\dm$ is also compact, and has a d-bordism class $[\bs\cM_{\rm st}^\al(\tau)_\dm]_\dbo$ in $dB_{d^\al}(*)$ and virtual class $[\bs\cM_{\rm st}^\al(\tau)_\dm]_\virt$ in~$H_{d^\al}(\cM_{\rm st}^\al(\tau)_\an;\Z)$.

If $d^\al=0$ then $[\bs\cM_{\rm st}^\al(\tau)_\dm]_\dbo\in dB_0(*)\cong\Z$ is the virtual count we want. But if $d^\al>0$ we should aim to find suitable cohomology classes on $\cM_{\rm st}^\al(\tau)_\an$ and integrate them over $[\bs\cM_{\rm st}^\al(\tau)_\dm]_\virt$, as for Donaldson invariants above.

\begin{claim} One can define natural cohomology classes $\mu(\be)$ on $\cM_{\rm st}^\al(\tau)_\an$ depending on homology classes $\be$ on $Y,$ which can be combined with\/ $[\bs\cM_{\rm st}^\al(\tau)_\dm]_\virt$ to give integer invariants, in a similar way to Donaldson invariants.
\label{vf3claim2}
\end{claim}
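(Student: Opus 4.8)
The plan is to imitate the classical construction of the $\mu$-map in Donaldson theory \cite[\S 5.1]{DoKr}, built from a universal object on $Y\times\cM_{\rm st}^\al(\tau)$. Assuming Claim \ref{vf3claim1}, the derived moduli scheme $\bs\cM_{\rm st}^\al(\tau)$ carries a universal perfect complex $\mathcal F$ on $Y\times\bs\cM_{\rm st}^\al(\tau)$; restricting $\mathcal F$ to $Y\times\cM_{\rm st}^\al(\tau)_\an$ and forming its Chern character gives $\ch(\mathcal F)\in H^{\rm even}\bigl(Y\times\cM_{\rm st}^\al(\tau)_\an;\Q\bigr)$. The first issue to settle is that $\mathcal F$ is canonical only up to $\mathcal F\mapsto\mathcal F\ot\pi_\cM^*(L)$ for $L$ a line bundle on $\cM_{\rm st}^\al(\tau)_\an$, which changes $\ch(\mathcal F)$ by the factor $\pi_\cM^*(\ch L)$. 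Following \cite{DoKr}, one removes this ambiguity by working with a normalized Chern character $\ov{\ch}(\mathcal F)$, obtained by $\Q$-twisting $\mathcal F$ by a ($\Q$-)line bundle pulled back from $\cM$ chosen to cancel the $\ch L$ factor, or, when the components of $\al$ satisfy a suitable coprimality condition, by noting that a genuine universal sheaf then exists and no normalization is needed.

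Given this, for $\be\in H_*(Y;\Q)$ I would define $\mu(\be)\in H^*\bigl(\cM_{\rm st}^\al(\tau)_\an;\Q\bigr)$ by the slant product $\mu(\be):=\pi_{\cM*}\bigl(\ov{\ch}(\mathcal F)\cup\pi_Y^*\mathrm{PD}_Y(\be)\bigr)$, keeping the graded components one wants; integral $\mu$-classes can instead be extracted from the Chern classes $c_i(\mathcal F)$ in the same way. By construction $\mu$ is additive in $\be$, and a class $\be$ of real degree $k$ contributes $\mu$-classes of degree $2i-k$ for the various $i$. In particular $\be\in H_{\rm odd}(Y;\Q)$, which for a Calabi--Yau $4$-fold need not vanish (the odd Betti numbers $b_3,b_5$ may be nonzero, e.g.\ when $h^{2,1}\ne 0$), yields odd-degree $\mu$-classes, so insertions can be assembled to match any value $d^\al\ge 0$.

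Now suppose there are no strictly $\tau$-semistable sheaves of Chern character $\al$, so that $\bs\cM_{\rm st}^\al(\tau)=\bs\cM_{\rm ss}^\al(\tau)$ is proper and (granting Question \ref{vf3quest1}) oriented; then Corollary \ref{vf3cor3} produces a compact oriented derived manifold $\bs\cM_{\rm st}^\al(\tau)_\dm$ with $\vdim_\R\bs\cM_{\rm st}^\al(\tau)_\dm=d^\al$ and a virtual class $\bigl[\bs\cM_{\rm st}^\al(\tau)_\dm\bigr]_\virt\in H_{d^\al}\bigl(\cM_{\rm st}^\al(\tau)_\an;\Z\bigr)$. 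For $\be_1,\ldots,\be_l\in H_*(Y)$ with $\sum_i\deg\mu(\be_i)=d^\al$ I would then define the invariant
\e
\bigl\langle\, \mu(\be_1)\cup\cdots\cup\mu(\be_l),\; \bigl[\bs\cM_{\rm st}^\al(\tau)_\dm\bigr]_\virt\,\bigr\rangle\in\Q ,
\label{vf3eqhdi}
\e
which is integer-valued when a genuine universal sheaf exists and integral $\mu$-classes are used. By Corollary \ref{vf3cor3} this number is independent of the choices made in constructing $\bs\cM_{\rm st}^\al(\tau)_\dm$. Deformation invariance under deformations of $Y$ should follow from Theorem \ref{vf3thm9}: in a smooth connected family of Calabi--Yau $4$-folds there is a relative moduli scheme $\bs\pi:\bX\ra Z$ carrying a relative universal complex, so the $\mu(\be)$ fit into sections of local systems on $Z_\an$, and pairing monodromy-invariant combinations of them with the fibrewise virtual classes yields a locally constant, hence constant, function of $z\in Z_\an$.

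The main obstacle is not the homological pairing \eq{vf3eqhdi}, which is formal, but the first two steps: constructing the universal complex on the derived moduli scheme, controlling the $\mathrm{Pic}$-twist ambiguity so that the $\mu(\be)$ are genuinely well-defined (and integral, where wanted), matching degrees of insertions to $d^\al$, and checking that in families these classes vary in local systems so that Theorem \ref{vf3thm9} applies. These all rest, in turn, on the prior resolution of Claim \ref{vf3claim1} and Question \ref{vf3quest1}. Once that is in place, the definitions above and their invariance properties go through much as for Donaldson invariants of $4$-manifolds.
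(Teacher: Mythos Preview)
Your proposal is correct and follows essentially the same approach as the paper. The paper treats this Claim as part of a programmatic outline rather than a proved theorem: immediately after stating it, the paper simply notes that when $\cM_{\rm st}^\al(\tau)$ is fine one has a universal sheaf $\cE$ on $\cM_{\rm st}^\al(\tau)\times Y$, takes its Chern classes, and contracts with $\be\in H_k(Y;\Q)$ via the K\"unneth decomposition to obtain $\mu_i(\be)\in H^{2i-k}(\cM_{\rm st}^\al(\tau)_\an;\Q)$, then points to \S\ref{vf37} for deformation invariance. Your construction via slant product of $\ch(\mathcal F)$ (or $c_i(\mathcal F)$) is the same mechanism, and you go further than the paper by addressing the $\mathrm{Pic}$-twist ambiguity of the universal object, the availability of odd-degree insertions, and the local-system argument needed to feed into Theorem~\ref{vf3thm9}; the paper leaves all of this to ``future work''.
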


If $\cM_{\rm st}^\al(\tau)$ is a fine moduli space, there is a universal sheaf $\cE$ on $\cM_{\rm st}^\al(\tau)\t Y$, with Chern classes $c_i(\cE)\!\in\! H^{2i}(\cM_{\rm st}^\al(\tau)_\an\t Y;\Q)\!\cong \!\bigop_kH^{2i-k}(\cM_{\rm st}^\al(\tau)_\an;\Q)\ot H^k(Y;\Q)$, and we can make $\mu_i(\be)\in H^{2i-k}(\cM_{\rm st}^\al(\tau)_\an;\Q)$ by contracting $c_i(\cE)$ with $\be\in H_k(Y;\Q)$. Using the results of \S\ref{vf37}, we should be able to prove that the resulting invariants are unchanged under continuous deformations of~$Y$.

This would take us to the same point as Thomas \cite{Thom} in the Calabi--Yau 3-fold case: we could `count' moduli spaces $\cM_{\rm st}^\al(\tau)$ for those classes $\al$ containing no strictly $\tau$-semistable sheaves, and get a deformation-invariant answer. Many questions would remain, for instance, how to count strictly $\tau$-semistables, wall-crossing formulae as in \cite{JoSo,KoSo}, computation in examples, and so on.

We hope to return to these issues in future work.

\subsection{\texorpdfstring{Motivation from gauge theory, and `$\SU(4)$ instantons'}{Motivation from gauge theory, and \textquoteleft SU(4) instantons\textquoteright}}
\label{vf39}

Finally we discuss some ideas of Donaldson and Thomas \cite{DoTh}, which were part of the motivation for this paper, and the work of Cao and Leung~\cite{Cao,CaLe1,CaLe2}.

Let $Y$ be a Calabi--Yau 4-fold over $\C$, regarded as a compact real 8-manifold $Y$ with complex structure $J$, Ricci-flat K\"ahler metric $g$, K\"ahler form $\om$, and holomorphic volume form $\Om$. Fix a complex vector bundle $E\ra Y$ of rank $r>0$ with Hermitian metric $h$ and Chern character $\ch(E)=\al$, and as in \cite{Cao,CaLe1} assume for simplicity that $c_1(E)=0$. Consider connections $\nabla$ on $E$ preserving $h$ with curvature $F\in C^\iy\bigl(\End(E)\ot_\C(\La^2T^*Y\ot_\R\C)\bigr)$. The splitting
\begin{equation*}
\smash{\La^2T^*Y\ot_\R\C=\langle\om\rangle_\C\op \La^{1,1}_0T^*Y\op \La^{2,0}T^*Y\op \La^{0,2}T^*Y}
\end{equation*}
induces a corresponding decomposition $F=F^\om\op F^{1,1}_0\op F^{2,0}\op F^{0,2}$.

We call $\nabla$ a {\it Hermitian--Einstein connection\/} if $F^\om=F^{2,0}=F^{0,2}=0$. We can split $\nabla=\pd_E\op\db_E$, where $\db_E$ gives $E$ the structure of a holomorphic vector bundle on $(Y,J)$, as $F^{0,2}=0$. The {\it Hitchin--Kobayashi correspondence\/} says that if $(E,\db_E)$ is a holomorphic vector bundle and is slope-stable, then $\db_E$ extends to a unique Hermitian--Einstein connection $\nabla=\pd_E\op\db_E$ preserving $h$. Also, holomorphic vector bundles on $Y$ are algebraic. Thus, studying moduli spaces $\cM^\al_{\text{alg-vb}}$ of stable algebraic vector bundles is roughly equivalent to studying moduli spaces $\cM^\al_{\rm HE}$ of Hermitian--Einstein connections, modulo gauge.

As a system of p.d.e.s, the Hermitian--Einstein equations are {\it overdetermined\/}: there are $8r^2$ unknowns, $13r^2$ equations and $r^2$ gauge equivalences, with $8r^2-13r^2-r^2<0$. Algebraically, this corresponds to the fact that the natural obstruction theory on $\cM_{\text{alg-vb}}$ is not perfect, so we cannot form virtual classes.
 
Using $\Om,g$ we can define real splittings $\La^{2,0}T^*Y=\La^{2,0}_+T^*Y\op \La^{2,0}_-T^*Y$, $\La^{0,2}T^*Y=\La^{0,2}_+T^*Y\op \La^{0,2}_-T^*Y$ and corresponding decompositions $F^{2,0}=F^{2,0}_+\op F^{2,0}_-$, $F^{0,2}=F^{0,2}_+\op F^{0,2}_-$. Following Donaldson and Thomas \cite[\S 3]{DoTh}, we call $\nabla$ an $\SU(4)$-{\it instanton\/} if $F^\om=F^{2,0}_+=F^{0,2}_+=0$. This gives $8r^2$ unknowns, $7r^2$ equations and $r^2$ gauge equivalences, with $8r^2-7r^2-r^2=0$. It is a determined elliptic system, so that we can hope to define virtual classes. This is special to Calabi--Yau 4-folds, a complex analogue of instantons on real 4-manifolds.

Writing $\cM_{\SU(4)}^\al$ for the moduli space of $\SU(4)$-instantons, we have $\cM^\al_{\rm HE}\subseteq\cM^\al_{\SU(4)}$, as the $\SU(4)$ instanton equations are weaker than the Hermitian--Einstein equations. Now $\al=\ch(E)\in\bigop_{p=0}^4H^{p,p}(Y)$ if $E$ admits Hermitian--Einstein connections. Conversely, as in \cite[p.~36]{DoTh}, if $\al\in\bigop_pH^{p,p}(Y)$ then one can use $L^2$-norms of components of $F$ to show that any $\SU(4)$-instanton is Hermitian--Einstein. Thus, either $\cM^\al_{\rm HE}=\cM^\al_{\SU(4)}$, or~$\cM^\al_{\rm HE}=\es$.

However, the equality $\cM^\al_{\rm HE}=\cM^\al_{\SU(4)}$ holds only at the level of sets, or topological spaces. Since $\cM^\al_{\rm HE}$ is defined by more equations, if we regard $\cM^\al_{\rm HE},\cM^\al_{\SU(4)}$ as (derived) $C^\iy$-schemes, for instance, then~$\cM^\al_{\rm HE}\subsetneq\cM^\al_{\SU(4)}$.

In the setting of \S\ref{vf31}--\S\ref{vf36}, we should compare $\cM^\al_{\rm HE}$ (a Calabi--Yau 4-fold moduli space, without a virtual class, equivalent to an algebraic moduli scheme $\cM^\al_{\text{alg-vb}}$) with the $-2$-shifted symplectic derived $\C$-scheme $(\bX,\om_\bX^*)$, and $\cM^\al_{\SU(4)}$ (an elliptic moduli space, hopefully with a virtual class, equal to $\cM^\al_{\rm HE}$ on the level of topological spaces) with the derived manifold $\bX_\dm$. It was these ideas from Donaldson and Thomas \cite{DoTh} that led the authors to believe that one could modify a $-2$-shifted symplectic derived $\C$-scheme to get a derived manifold with the same topological space, and so define a virtual class.

Donaldson and Thomas \cite{DoTh} envisaged using gauge theory to define invariants of Calabi--Yau 4-folds `counting' moduli spaces $\cM^\al_{\SU(4)}$, and also invariants of compact $\Spin(7)$-manifolds `counting' moduli spaces of `$\Spin(7)$-instantons'.

This would require finding suitable compactifications $\oM^\al_{\SU(4)}$ of the moduli spaces $\cM^\al_{\SU(4)}$, and giving them a nice enough geometric structure to define virtual classes, which is a formidably difficult problem in gauge theory in dimensions $>4$. A {\it huge advantage of our approach\/} is that, working in algebraic geometry, with moduli spaces of coherent sheaves rather than vector bundles, {\it we often get compactness of moduli spaces for free}, without doing any work.
\smallskip

Cao and Leung \cite{Cao,CaLe1,CaLe2} also aim to define enumerative invariants of Calabi--Yau 4-folds $Y$, which they call `$DT_4$-invariants', and their ideas overlap with ours. As for our outline in \S\ref{vf38}, their general theory is still rather incomplete, but they prove many partial results, and do computations in examples.

Given a vector bundle moduli space $\cM^\al_{\text{alg-vb}}\cong\cM^\al_{\rm HE}\cong\cM^\al_{\SU(4)}$ in topological spaces, assuming it is compact, and with an orientation (compare Question \ref{vf3quest1}), Cao and Leung \cite[\S 5]{CaLe1} define a virtual class $[\cM^\al_{\SU(4)}]_\virt$ for $\cM^\al_{\SU(4)}$, and contract this with some cohomology classes $\mu(\be)$ (compare Claim \ref{vf3claim2}) to get integer invariants, which they prove are unchanged under deformations of $Y$. All this involves fairly standard material from gauge theory. 

They also discuss the case in which one has a compact moduli space of coherent sheaves $\cM^\al_{\text{coh-sh}}$, which contains the vector bundle moduli space $\cM^\al_{\text{alg-vb}}$ as an open subset. They want to define a virtual class for $\cM^\al_{\text{coh-sh}}$, as we want to, and they can do this under the assumptions that either $\cM^\al_{\text{coh-sh}}$ is smooth, or (in our language) that the $-2$-shifted symplectic derived scheme $(\bs\cM^\al_{\text{coh-sh}},\om^*)$ is locally of the form $T^*\bX[2]$ for $\bX$ a quasi-smooth derived $\C$-scheme.
\smallskip

To compare our work with theirs: given $\cM^\al_{\text{alg-vb}}\subset\cM^\al_{\text{coh-sh}}$ as above, assuming Claim \ref{vf3claim1}, our Theorem \ref{vf3thm4} gives $\cM^\al_{\text{coh-sh}}$ the structure of a derived manifold, but one depending on arbitrary choices. By topologically identifying $\cM^\al_{\text{alg-vb}}\cong\cM^\al_{\SU(4)}$, in effect Cao and Leung make $\cM^\al_{\text{alg-vb}}$ into a derived manifold, {\it canonically up to equivalence\/} (though depending on the K\"ahler metric $g$ and holomorphic volume form $\Om$). However, there seems no reason why their derived manifold structure on $\cM^\al_{\text{alg-vb}}\subset\cM^\al_{\text{coh-sh}}$ should extend smoothly to $\cM^\al_{\text{coh-sh}}$. This is a reason why our approach may in the end be more effective.

\section{Proof of Theorem \ref{vf3thm1}}
\label{vf4}

In this proof we write $\cdga_\C$ for the ordinary category of cdgas over $\C$, and $\cdga_\C^\iy$ for the $\iy$-category of cdgas over $\C$, defined using the model structure on $\cdga_\C$. All objects in $\cdga_\C$ are fibrant. A cdga $A$ is cofibrant if it is a retract of a cdga $A'$ which is {\it almost-free}, that is, free as a graded commutative algebra. If $\phi:A\ra B$ is a morphism in $\cdga_\C$ then $\phi:A\ra B$ is also a morphism in $\cdga_\C^\iy$. However, morphisms $\phi:A\ra B$ in $\cdga_\C^\iy$ may not correspond to morphisms $A\ra B$ in $\cdga_\C$ unless $A$ is cofibrant. 

The spectrum functor $\bSpec$ maps $(\cdga_\C)^{\bf op}\ra\dSch_\C$ and $(\cdga^\iy_\C)^{\bf op}\ra\dSch_\C$, and $(\cdga^\iy_\C)^{\bf op}\ra\dSch_\C$ is an equivalence with the full $\iy$-subcategory of $\dSch_\C$ with affine objects. So, morphisms $\phi:A\ra B$ in $\cdga_\C^\iy$ are essentially the same thing as morphisms $\bSpec B\ra\bSpec A$ in~$\dSch_\C$.

Let $\bs\pi:\bX\ra Z=\Spec B$ and $\bigl\{(A_i^\bu,\bs\al_i,\be_i):i\in I\bigr\}$ be as in Theorem \ref{vf3thm1}. Our task is to construct a standard form cdga $A_J^\bu=(A_J^*,\d)$, a Zariski open inclusion $\bs\al_J:\bSpec A_J^\bu\hookra\bX$, and a morphism $\be_J:B\ra A_J^0$ for all finite $\es\ne J\subseteq I$, and a quasi-free morphism $\Phi_{JK}:A_K^\bu\ra A_J^\bu$ for all finite $\es\ne K\subseteq J\subseteq I$, satisfying conditions. We will do this by induction on increasing $k=\md{J}$. Here is our inductive hypothesis:

\begin{hyp} Let $k=1,2,\ldots$ be given. Then
\begin{itemize}
\setlength{\itemsep}{0pt}
\setlength{\parsep}{0pt}
\item[(a)] We are given finite subsets $S_J^n$ for all $\es\ne J\subseteq I$ with $\md{J}\le k$ and all $n=-1,-2,\ldots.$
\item[(b)] For all $\es\ne J\subseteq I$ with $\md{J}\le k$ we have $A_J^0=\bigot_{i\in J}^{\text{over $B$}}A_i^0$ as a smooth $\C$-algebra of pure dimension, where the tensor products are over $B$ using $\be_i:B\ra A_i^0$ to make $A_i^0$ into a $B$-algebra, so that if $J=\{i_1,\ldots,i_j\}$ then
\e
\smash{A_J^0=A_{i_1}\ot_B A_{i_2}\ot_B\cdots\ot_B A_{i_j}.}
\label{vf4eq1}
\e
The morphism $\be_J:B\ra A_J^0$ is induced by \eq{vf4eq1} and the $\be_i:B\ra A_i^0$ for $i\in J$, and is smooth as the $\be_i$ are.
\item[(c)] For all $\es\ne J\subseteq I$ with $\md{J}\le k$, as a graded $\C$-algebra, $A_J^*$ is freely generated over $A_J^0$ by generators $\coprod_{\es\ne K\subseteq J}S_K^n$ in degree $n$ for~$n=-1,-2,\ldots.$
\item[(d)] For all $\es\ne K\subseteq J\subseteq I$ with $\md{J}\le k$, the morphism $\Phi_{JK}^0:A_K^0\ra A_J^0$ in degree 0 is the morphism 
\begin{equation*}
\smash{A_K^0\!=\!\ts\bigot_{i\in K}^{\text{over $B$}}A_i^0\!=\!\bigl(\bigot_{i\in K}^{\text{over $B$}}A_i^0\bigr)\!\ot_B\!\bigl(\bigot_{i\in J\sm K}^{\text{over $B$}}B\bigr)\longra\bigot_{i\in J}^{\text{over $B$}}A_i^0\!=\!A_J^0}
\end{equation*}
induced by the morphisms $\id:A_i^0\ra A_i^0$ for $i\in K$ and $\be_i:B\ra A_i^0$ for $i\in J\sm K$. Then $\Phi_{JK}:A_K^*\ra A_J^*$ is the unique morphism of graded $\C$-algebras acting by $\Phi_{JK}^0$ in degree zero, and mapping $\Phi_{JK}:\ga\mapsto\ga$ for each $\ga\in S^n_L$ for $\es\ne L\subseteq K\subseteq J\subseteq I$ and $n=-1,-2,\ldots,$ so that $\ga$ is a free generator of both $A_K^*$ over $A_K^0$ and $A_J^*$ over~$A_J^0$.

Note that $\Phi_{JK}^0:A_K^0\ra A_J^0$ is a smooth morphism of $\C$-algebras of pure relative dimension, since $\id:A_i^0\ra A_i^0$ and $\be_i:B\ra A_i^0$ are. Also $\Phi_{JK}$ maps independent generators $\coprod_{\es\ne L\subseteq K}S_L^n$ of $A_K^*$ over $A_K^0$ to independent generators of $A_J^*$ over $A_J^0$. Hence $\Phi_{JK}:A_K^*\ra A_J^*$ is quasi-free.

Clearly $\be_J=\Phi_{JK}^0\ci\be_K=\Phi_{JK}\ci\be_K:B\ra A_J^0$. 

Also, if $\es\ne L\subseteq K\subseteq J\subseteq I$ with $\md{J}\le K$ then clearly $\Phi_{JL}^0=\Phi_{JK}^0\ci\Phi_{KL}^0:A_L^0\ra A_J^0$, and $\Phi_{JL}=\Phi_{JK}\ci\Phi_{KL}:A_L^*\ra A_J^*$.
\item[(e)] For all $\es\ne J\subseteq I$ with $\md{J}\le k$ and all $n=-1,-2,\ldots,$ we are given maps~$\de_J^n:S_J^n\ra A_J^{n+1}$.
\item[(f)] Let $\es\ne J\subseteq I$ with $\md{J}\le k$. Define $\d:A_J^*\ra A_J^{*+1}$ uniquely by the conditions that $\d$ satisfies the Leibnitz rule, and
\e
\smash{\d\ga=\Phi_{JK}\ci\de_K^n(\ga)\quad\text{for all $\es\ne K\subseteq J$, $n\le -1$ and $\ga\in S_K^n$.}}
\label{vf4eq2}
\e
We require that $\d\ci\d=0:A_J^*\ra A_J^{*+2}$, so that $A_J^\bu=(A_J^*,\d)$ is a cdga.

This defines $A_J^\bu=(A_J^*,\d)$, as a standard form cdga over $\C$. Observe that if $\es\ne K\subseteq J\subseteq I$ with $\md{J}\le k$ then as $\Phi_{JK}:A_K^*\ra A_J^*$ is a morphism of graded $\C$-algebras with $\Phi_{JK}\ci\d\ga=\d\ci\Phi_{JK}(\ga)$ for all $\ga$ in the generating sets $\coprod_{\es\ne L\subseteq K}S_L^n$ for $A_K^*$ over $A_K^0$, we have $\Phi_{JK}\ci\d=\d\ci\Phi_{JK}:A_K^*\ra A_J^{*+1}$, and so $\Phi_{JK}:A_K^\bu\ra A_J^\bu$ is a morphism of cdgas.
\item[(g)] For all $\es\ne J\subseteq I$ with $\md{J}\le k$, we are given a Zariski open inclusion $\bs\al_J:\bSpec A_J^\bu\hookra\bX$, with image $\Im\bs\al_J=\bigcap_{i\in J}\Im\bs\al_i$, such that \eq{vf3eq2} homotopy commutes.

If $\es\ne K\subseteq J\subseteq I$ with $\md{J}\le k$ then \eq{vf3eq3} homotopy commutes.
\end{itemize}
\label{vf4hyp1}
\end{hyp}

\begin{rem}{\bf(i)} In Hypothesis \ref{vf4hyp1}, the only actual data required are the finite sets $S_J^n$ in (a), the maps $\de_J^n:S_J^n\ra A_J^{n+1}$ in (e), and the morphisms $\bs\al_J:\bSpec A_J^\bu\hookra\bX$ in (g). 

Also, the only statements requiring proof are that $\d\ci\d=0$ in (f), and that $\bs\al_J$ is a Zariski open inclusion with image $\bigcap_{i\in J}\Im\bs\al_i$, and that \eq{vf3eq2} and \eq{vf3eq3} homotopy commute in (g). All of (b),(c),(d) are definitions and deductions.

\smallskip

\noindent{\bf(ii)} Most of the conclusions of Theorem \ref{vf3thm1} are immediate from the definitions in (a)--(g): that $A_J^\bu$ is a standard form cdga, and $\be_J:B\ra A_J^0$ is smooth, and $\Phi_{JK}:A_K^\bu\ra A_J^\bu$ is quasi-free, and $\be_J=\Phi_{JK}\ci\be_K$, and $\Phi_{JL}=\Phi_{JK}\ci\Phi_{KL}$.
\label{vf4rem1}
\end{rem}

For the first step in the induction, we prove Hypothesis \ref{vf4hyp1} when $k=1$. Then the only subsets $\es\ne J\subseteq I$ with $\md{J}\le k$ are $J=\{i\}$ for $i\in I$, and the only subsets $\es\ne K\subseteq J\subseteq I$ with $\md{J}\le k$ are $J=K=\{i\}$ for $i\in I$.

As in Theorem \ref{vf3thm1} we are given data $\bigl\{(A_i^\bu,\bs\al_i,\be_i):i\in I\bigr\}$, where $A_i^\bu$ is a standard form cdga, so that $A_i^*$ is freely generated over $A_i^0$ by finitely many generators in each degree $n=-1,-2,\ldots,$ as in Definition \ref{vf2def1}. For each $i\in I$ and each $n=-1,-2,\ldots,$ choose a subset $S_{\{i\}}^n\subseteq A_i^n$, as in part (a) for $J=\{i\}$, such that $A_i^*$ is freely generated over $A_i^0$ by $\coprod_{n\le -1}S_{\{i\}}^n$. Set $A_{\{i\}}^\bu=A_i^\bu$ and $\be_{\{i\}}=\be_i$, so that parts (b),(c) hold for~$J=\{i\}$.

Part (d) is a definition, and when $k=1$ only says that when $J=K=\{i\}$ we have $\Phi_{\{i\}\{i\}}=\id:A_{\{i\}}^\bu\ra A_{\{i\}}^\bu$. For (e), define $\de_{\{i\}}^n:S_{\{i\}}^n\ra A_{\{i\}}^{n+1}=A_i^{n+1}$ by $\de_{\{i\}}^n(\ga)=\d\ga$, using $\d$ in the cdga $A_i^\bu=(A_i^*,\d)$. Given (e), part (f) says that the differentials $\d$ in $A_{\{i\}}^\bu=(A_{\{i\}}^*,\d)$ and $A_i^\bu=(A_i^*,\d)$ agree, consistent with setting $A_{\{i\}}^\bu=A_i^\bu$, so $\d\ci\d=0$ in $A_{\{i\}}^\bu$ as $A_i^\bu$ is a cdga.

For (g), if $i\in I$ define $\bs\al_{\{i\}}=\bs\al_i:A_{\{i\}}^\bu=A_i^\bu\ra\bX$. Then the assumptions on $\bigl\{(A_i^\bu,\bs\al_i,\be_i):i\in I\bigr\}$ in Theorem \ref{vf3thm1} imply that $\bs\al_{\{i\}}$ is a Zariski open inclusion, with image $\Im\bs\al_{\{i\}}=\Im\bs\al_i$, and \eq{vf3eq2} homotopy commutes for $J=\{i\}$ as \eq{vf3eq1} does. The only $\es\ne K\subseteq J\subseteq I$ with $\md{J}\le k=1$ are $J=K=\{i\}$, and then \eq{vf3eq3} homotopy commutes as $\bs\al_J=\bs\al_K=\bs\al_{\{i\}}$ and $\Phi_{JK}=\id$. This completes Hypothesis \ref{vf4hyp1} when $k=1$. Note that our definitions $A_{\{i\}}^\bu=A_i^\bu,$ $\bs\al_{\{i\}}=\bs\al_i,$ and $\be_{\{i\}}=\be_i$ for $i\in I$ are as required in Theorem~\ref{vf3thm1}(i).

Next we prove the inductive step. Let $l\ge 1$ be given, and suppose Hypothesis \ref{vf4hyp1} holds with $k=l$. Keeping all the data in (a),(e),(g) for $\md{J}\le l$ the same, we will prove Hypothesis \ref{vf4hyp1} with $k=l+1$. To do this, for each $J\subseteq I$ with $\md{J}=l+1$, we have to construct the data of finite sets $S_J^n$ for $n=-1,-2,\ldots$ in (a), and maps $\de_J^n:S_J^n\ra A_J^{n+1}$ in (e), and the morphism $\bs\al_J:\bSpec A_J^\bu\hookra\bX$ in (g), and then prove the claims in (f) that $\d\ci\d=0$, and in (g) that $\bs\al_J$ is a Zariski open inclusion with image $\bigcap_{i\in J}\Im\bs\al_i$, and that \eq{vf3eq2} and \eq{vf3eq3} homotopy commute. 

Note that as Hypothesis \ref{vf4hyp1} involves no compatibility conditions between data for distinct $J,J'\subseteq I$ with $\md{J}=\md{J'}=k$, we can do this independently for each $J\subseteq I$ with $\md{J}=l+1$, that is, it is enough to give the proof for a single such $J$. So fix a subset $J\subseteq I$ with~$\md{J}=l+1$.

We first define a standard form cdga $\ti A_J^\bu$ which is an approximation to the cdga $A_J^\bu$ that we want, and morphisms $\ti\be_J:B\ra\ti A_J^0$, $\ti\Phi_{JK}:A_K^\bu\ra\ti A_J^\bu$ for all $\es\ne K\subsetneq J$, so that $\md{K}\le l$ and $A_K^\bu$ is already defined:
\begin{itemize}
\setlength{\itemsep}{0pt}
\setlength{\parsep}{0pt}
\item Define $\ti A_J^0=A_J^0$ and $\ti\be_J=\be_J:B\ra \ti A_J^0=A_J^0$ as in Hypothesis \ref{vf4hyp1}(b).
\item Define $\ti A_J^*$ to be the graded $\C$-algebra freely generated over $A_J^0$ by generators $\coprod_{\es\ne K\subsetneq J}S_K^n$ in degree $n$ for $n=-1,-2,\ldots.$ This is the same as for $A_J^*$ in Hypothesis \ref{vf4hyp1}(c), except that we do not include generators $S_J^n$, since $S_J^n$ is not yet defined.
\item If $\es\ne K\subsetneq J$, so that $A_K^\bu$ is defined, define $\Phi_{JK}^0:A_K^0\ra A_J^0=\ti A_J^0$ as in Hypothesis \ref{vf4hyp1}(d), and define $\ti\Phi_{JK}:A_K^*\ra\ti A_J^*$ to be the unique morphism of graded $\C$-algebras acting by $\Phi_{JK}^0$ in degree zero, and mapping $\Phi_{JK}:\ga\mapsto\ga$ for each $\ga\in S^n_L$ for $\es\ne L\subseteq K$ and $n=-1,-2,\ldots.$
\item The differential $\d:\ti A_J^*\ra\ti A_J^{*+1}$ in the cdga $\ti A_J^\bu=(\ti A_J^*,\d)$ is determined uniquely as in \eq{vf4eq2} by
\begin{equation*}
\smash{\d\ga=\ti\Phi_{JK}\ci\de_K^n(\ga)\quad\text{for all $\es\ne K\subsetneq J$, $n\le -1$ and $\ga\in S_K^n$.}}
\end{equation*}
Then $\ti\Phi_{JK}:A_K^\bu\ra\ti A_J^\bu$ is a cdga morphism for all $\es\ne K\subsetneq J$, as in Hypothesis \ref{vf4hyp1}(f) for~$\Phi_{JK}$.
\end{itemize}

That is, $\ti A_J^\bu$ is the colimit in the ordinary category $\cdga_\C$ of the commutative diagram $\Ga$ with vertices the objects $B$ and $A_K^\bu$ for all $K$ with $\es\ne K\subsetneq J$, and edges the morphisms $\be_K:B\ra A_K^\bu$ and $\Phi_{K_1K_2}:A_{K_2}^\bu\ra A_{K_1}^\bu$ for $\es\ne K_2\subsetneq K_1\subsetneq J$, and $\ti\be_J:B\ra\ti A_J^\bu$, $\ti\Phi_{JK}:A_K^\bu\ra\ti A_J^\bu$ are the projections to the colimit. Since all the morphisms in $\Ga$ are almost-free in negative degrees and smooth in degree $0$, these morphisms are sufficiently cofibrant to compute the homotopy colimits as well. Indeed, having such a morphism $A^\bu\rightarrow C^\bu$ we can factor it into $A^\bu\rightarrow A^\bu\underset{A^0}\otimes C^0\rightarrow C^\bu$. Each one of these morphisms is flat, and hence homotopy pullbacks can be computed without resolving. Finally we notice that the colimit of the entire diagram $\Ga$ can be calculated as a sequence of pullbacks. So $\ti A_J^\bu$ is also the homotopy colimit of $\Ga$ in the $\iy$-category $\cdga_\C^\iy$. Hence $\bSpec\ti A_J^\bu$ is the homotopy limit of $\bSpec\Ga$ in the $\iy$-category~$\dSch_\C$.

For $\es\ne K\subsetneq J$, consider $\bigcap_{i\in K}\Im\bs\al_i$ as an open derived $\C$-subscheme of $\bX$. Then by Hypothesis \ref{vf4hyp1}(g), $\bs\al_K:\bSpec A_K^\bu\ra \bigcap_{i\in K}\Im\bs\al_i$ is an equivalence in $\dSch_\C$. We also have the open derived $\C$-subscheme $\bigcap_{i\in J}\Im\bs\al_i$ in $\bX$, which is affine by Definition \ref{vf2def4} as $\bX$ has affine diagonal and $\Im\bs\al_i\simeq\bSpec A_i^\bu$ is affine for $i\in J$. Thus we may choose a standard form cdga $\hat A_J^\bu$ and an equivalence $\bs{\hat\al_J}:\bSpec\hat A_J^\bu\hookra\bigcap_{i\in J}\Im\bs\al_i$.

Define morphisms $\bs{\hat\be}_J:\bSpec\hat A_J^\bu\ra Z=\Spec B$ by $\bs\be_J=\bs\pi\ci\bs{\hat\al_J}$, and $\bs{\hat\phi}_{JK}:\bSpec\hat A_J^\bu\ra\bSpec A_K^\bu$ for $\es\ne K\subsetneq J$ to be the composition
\begin{equation*}
\smash{\xymatrix@C=30pt{ \bSpec\hat A_J^\bu \ar[r]^(0.45){\bs{\hat\al}_J} & \bigcap_{i\in J}\Im\bs\al_i\,
\ar@{^{(}->}[r] & \,\bigcap_{i\in K}\Im\bs\al_i \ar[r]^{\bs\al_K^{-1}} & \bSpec A_K^\bu, }}
\end{equation*}
where $\bs\al_K^{-1}$ is a quasi-inverse for the equivalence $\bs\al_K:\bSpec A_K^\bu\ra \bigcap_{i\in K}\Im\bs\al_i$. 

Now by the homotopy limit property of $\bSpec\ti A_J^\bu$, there exists a morphism $\bs\psi:\bSpec\hat A_J^\bu\ra\bSpec\ti A_J^\bu$ in $\dSch_\C$ unique up to homotopy, with homotopies $\bs{\hat\be}_J\simeq\bSpec\ti\be_J\ci\bs\psi$ and $\bs{\hat\phi}_{JK}\simeq\bSpec\ti\Phi_{JK}\ci\bs\psi$ for $\es\ne K\subsetneq J$. We can then write $\bs\psi\simeq \bSpec\Psi$ for $\Psi:\ti A_J^\bu\ra\hat A_J^\bu$ a morphism in $\cdga_\C^\iy$, unique up to homotopy. However, we do not yet know that $\Psi$ descends to a morphism in $\cdga_\C$. The definitions of $\bs{\hat\be}_J,\bs{\hat\phi}_{JK}$ and $\bs\psi\simeq \bSpec\Psi$ give homotopies
\e
\begin{split}
\bs\pi\ci\bs{\hat\al_J}\simeq \bSpec\ti\be_J\ci\bSpec\Psi&:\bSpec\hat A_J^\bu\longra Z,\\
\bs{\hat\al}_J\simeq \bs\al_K\ci \bSpec\ti\Phi_{JK}\ci\bSpec\Psi&:\bSpec\hat A_J^\bu\longra\bX,\;\> \es\ne K\subsetneq J.
\end{split}
\label{vf4eq3}
\e 

Consider the composition of morphisms of classical $\C$-algebras
\e
\smash{\xymatrix@C=30pt{ A_J^0 \ar@{=}[r] & \ti A_J^0 \ar[r] & H^0(\ti A_J^\bu) \ar[rr]^{H^0(\Psi)} && H^0(\hat A_J^\bu). }}
\label{vf4eq4}
\e
Here $\Spec H^0(\Psi)$ is the natural morphism
\e
\smash{\Spec H^0(\Psi):X_J \longra \ts\prod\nolimits^{\text{over $Z$}}_{\es\ne K\subsetneq J}X_K,}
\label{vf4eq5}
\e
writing $X_K$ for the open $\C$-subscheme $\bigcap_{k\in K}t_0(\Im\bs\al_k)$ in $X$. This is the restriction of the multidiagonal $\De_X^{2^{\md{J}-2}}:X\ra X\t_ZX\t_Z\cdots\t_ZX$, with $2^{\md{J}-2}$ copies of $X$ on the right. As $X$ is separated, $\De_X^2:X\ra X\t_ZX$ is a closed immersion, so $\De_X^{2^{\md{J}-2}}$ is a closed immersion. Also the domain $X_J$ of \eq{vf4eq5} is the preimage under $\De_X^{2^{\md{J}-2}}$ of the target, since $X_J=\bigcap_{\es\ne K\subsetneq J}X_K$ as~$\md{J}\ge 2$. 

Hence \eq{vf4eq5} is a closed immersion, so $H^0(\Psi)$ in \eq{vf4eq4} is surjective. Also $\ti A_J^0\ra H^0(\ti A_J^\bu)$ is surjective, so the composition \eq{vf4eq4} is surjective. Therefore we can replace $\hat A_J^\bu$ by an equivalent object in $\cdga_\C^\iy$, such that $\hat A_J^0=\ti A_J^0$, and the following homotopy commutes in $\cdga_\C^\iy$:
\e
\begin{gathered}
\xymatrix@C=90pt@R=11pt{ *+[r]{\ti A_J^0} \ar@{=}[r] \ar[d] & *+[l]{\hat A_J^0} \ar[d] \\
*+[r]{\ti A_J^\bu} \ar[r]^\Psi & *+[l]{\hat A_J^\bu.\!\!} }
\end{gathered}
\label{vf4eq6}
\e

Now $\Psi:\ti A_J^\bu\ra\hat A_J^\bu$ is a morphism in $\cdga_\C^\iy$. For this to descend to a morphism in $\cdga_\C$, the simplest condition is that $\ti A_J^\bu$ should be cofibrant and $\hat A_J^\bu$ fibrant in the model category $\cdga_\C$. Here $\hat A_J^\bu$ is fibrant, as all objects are, but $\ti A_J^\bu$ may not be cofibrant, i.e.\ a retract of an almost-free cdga. However, $\ti A_J^\bu$ is cofibrant as an $\ti A_J^0$-algebra, as it is free in negative degrees, and \eq{vf4eq6} says that $\Psi$ does descend to a morphism in $\cdga_\C$ in degree 0. Together these imply that $\Psi$ descends to a morphism $\Psi:\ti A_J^\bu\ra\hat A_J^\bu$ in $\cdga_\C$.

Next, by induction on decreasing $n=-1,-2,\ldots$ we will choose the data  $S_J^n,\de_J^n$ in Hypothesis \ref{vf4hyp1}(a),(e). Here is our inductive hypothesis:

\begin{hyp} Let $N=0,-1,-2,\ldots$ be given. Then:
\begin{itemize}
\setlength{\itemsep}{0pt}
\setlength{\parsep}{0pt}
\item[(a)] We are given finite subsets $S_J^n$ for $n=-1,-2,\ldots,N$. Write $A_{J,N}^*=\ti A_J^*[S_J^1,\ldots,S_J^N]$ for the graded $\C$-algebra freely generated over $\ti A_J^*$ by the sets of extra generators $S_J^n$ in degree $n$ for all $n=-1,-2,\ldots,N$.
\item[(b)] We are given maps $\de_J^n:S_J^n\ra A_{J,N}^{n+1}$ for $n=-1,-2,\ldots,N$. Define $\d:A_{J,N}^*\ra A_{J,N}^{*+1}$ uniquely by the conditions that $\d$ satisfies the Leibnitz rule, and $\d$ is as in $\ti A_J^\bu=(\ti A_J^*,\d)$ on $\ti A_J^*\subseteq A_{J,N}^*$, and on the extra generators $\ga\in S_J^n$ for $n=-1,-2,\ldots,N$, we have $\d\ga=\de_J^n(\ga)\in A_{J,N}^{n+1}$. We require that $\d\ci\d=0:A_{J,N}^*\ra A_{J,N}^{*+2}$, so that $A_{J,N}^\bu=(A_{J,N}^*,\d)$ is a cdga. 
\item[(c)] We are given maps $\xi_J^n:S_J^n\ra\hat A_J^n$ for $n=-1,-2,\ldots,N$.

Define $\Xi_N:A_{J,N}^*\ra\hat A_J^*$ to be the morphism of graded $\C$-algebras such that $\Xi_N=\Psi$ on $\ti A_J^*\subseteq A_{J,N}^*$, and on the extra generators $\ga\in S_J^n$ for $n=-1,-2,\ldots,N$, we have $\Xi_N(\ga)=\xi_J^n(\ga)\in\hat A_{J,N}^n$.

We require that $\Xi_N\ci\d=\d\ci\Xi_N:A_{J,N}^*\ra\hat A_J^{*+1}$, so that $\Xi_N:A_{J,N}^\bu\ra\hat A_J^\bu$ is a cdga morphism.

We also require that $H^n(\Xi_N):H^n(A_{J,N}^\bu)\ra H^n(\hat A_J^\bu)$ should be an isomorphism for $n=0,-1,-2,\ldots,N+1$, and surjective for~$n=N$.
\end{itemize}

\label{vf4hyp2}
\end{hyp}

For the first step $N=0$, there is no data $S_J^n,\de_J^n,\xi_J^n$, and $A_{J,0}^\bu=\ti A_J^\bu$, and $\Xi_0=\Psi$, and the only thing to prove is that $H^0(\Psi):H^0(\ti A_J^\bu)\ra H^0(\hat A_J^\bu)$ is surjective, which holds as $\Psi^0=\id:\ti A_J^0\ra\ti A_J^0=\hat A_J^0$ from above. So Hypothesis \ref{vf4hyp2} holds for~$N=0$. 

For the inductive step, let $m=0,-1,-2,\ldots$ be given, and suppose Hypothesis \ref{vf4hyp2} holds with $N=m$. Keeping all the data $S_J^n,\de_J^n,\xi_J^n$ for $n=-1,\ldots,m$ the same, we will prove Hypothesis \ref{vf4hyp2} with $N=m-1$. Note that with $S_J^{-1},\ldots,S_J^m$ the same, the graded $\C$-algebras $A_{J,m}^*,A_{J,m-1}^*$ agree in degrees $0,-1,\ldots,m$, so it makes sense to say that $\de_J^n:S_J^n\ra A_{J,m}^{n+1}$ and $\de_J^n:S_J^n\ra A_{J,m-1}^{n+1}$ are equal for $n=-1,-2,\ldots,m$. We must choose data $S_J^{m-1}$, $\de_J^{m-1}:S_J^{m-1}\ra A_{J,m-1}^m$ and $\xi_J^{m-1}:S_J^{m-1}\ra\hat A_J^{m-1}$, and verify the last two conditions of Hypothesis~\ref{vf4hyp2}(c).

Choose a finite subset $\dot S_J^{m-1}$ of $\Ker\bigl(H^m(\Xi_m):H^m(A_{J,m}^\bu)\ra H^m(\hat A_J^\bu)\bigr)$ which generates $\Ker(\cdots)$ as an $H^0(A_{J,m}^\bu)$-module, and a finite subset $\ddot S_J^{m-1}$ of $H^{m-1}(\hat A_J^\bu)$ such that $\ddot S_J^{m-1}$ and $\Im\bigl(H^{m-1}(\Xi_m):H^{m-1}(A_{J,m}^\bu)\ra H^{m-1}(\hat A_J^\bu)\bigr)$ generate $H^{m-1}(\hat A_J^\bu)$ as an $H^0(\hat A_J^\bu)$-module. Finite subsets suffice in each case since $A_{J,m}^\bu,\hat A_J^\bu$ are of standard form, so that $H^n(A_{J,m}^\bu),H^n(\hat A_J^\bu)$ are finitely generated over $H^0(A_{J,m}^\bu),H^0(\hat A_J^\bu)$ for all $n$. Set~$S_J^{m-1}=\dot S_J^{m-1}\amalg \ddot S_J^{m-1}$.

Then Hypothesis \ref{vf4hyp2}(a) defines $A_{J,m-1}^*$ as a graded $\C$-algebra, with $A_{J,m-1}^n\ab=A_{J,m}^n$ in degrees $n\ge m$. For all $\ga\in\dot S_J^{m-1}$, choose a representative $\de_J^{m-1}(\ga)$ in $A_{J,m-1}^m=A_{J,m}^m$ for the cohomology class $\ga$ in $H^m(A_{J,m}^\bu)$, so that $\d(\de_J^{m-1}(\ga))=0$ in $A_{J,m}^{m+1}$. Define $\de_J^{m-1}(\ga)=0$ in $A_{J,m-1}^m$ for all $\ga\in \ddot S_J^{m-1}$. This defines $\de_J^{m-1}:S_J^{m-1}\ra A_{J,m-1}^m$ in Hypothesis \ref{vf4hyp2}(b), and hence $\d:A_{J,m-1}^*\ra A_{J,m-1}^{*+1}$.

To see that $\d\ci\d=0:A_{J,m-1}^*\ra A_{J,m-1}^{*+2}$, note that $A_{J,m-1}^*=A_{J,m}^*[S_J^{m-1}]$, so $\d$ on $A_{J,m-1}^*$ is determined by $\d$ on $A_{J,m}^*$, which already satisfies $\d\ci\d=0$ by induction, and $\d$ on the extra generators $S_J^{m-1}$, which satisfy $\d\ci\d=0$ as for $\ga\in\dot S_J^{m-1}$ we have $\d\ci\d\ga=\d(\de_J^{m-1}(\ga))=0$, and for $\ga\in\ddot S_J^{m-1}$ we have $\d\ga=0$ so $\d\ci\d\ga=0$. Hence $A_{J,m-1}^\bu=(A_{J,m-1}^*,\d)$ is a cdga, as we have to prove.

For all $\ga\in\dot S_J^{m-1}$, as $\de_J^{m-1}(\ga)\in A_{J,m}^m$ represents a cohomology class in $\Ker\bigl(H^m(\Xi_m):H^m(A_{J,m}^\bu)\ra H^m(\hat A_J^\bu)\bigr)$, we see that $\Xi_m\ci\de_J^{m-1}(\ga)$ is exact in $\hat A_J^\bu$, so we can choose an element $\xi_J^{m-1}(\ga)\in\hat A_J^{m-1}$ with $\d\ci\xi_J^{m-1}(\ga)=\Xi_m\ci \de_J^{m-1}(\ga)$. For all $\ga\in\ddot S_J^{m-1}\subset H^{m-1}(\hat A_J^\bu)$, choose an element $\xi_J^{m-1}(\ga)\in\hat A_J^{m-1}$ representing $\ga$, so that $\d\ci\xi_J^{m-1}(\ga)=0$. This defines~$\xi_J^{m-1}:S_J^{m-1}\ra\hat A_J^{m-1}$.

Hypothesis \ref{vf4hyp2}(c) now defines $\Xi_{m-1}:A_{J,m-1}^*\ra\hat A_J^*$. To prove that $\Xi_{m-1}\ci\d=\d\ci\Xi_{m-1}$, note that $A_{J,m-1}^*=A_{J,m}^*[S_J^{m-1}]$, and on $A_{J,m}^*\subseteq A_{J,m-1}^*$ we have $\Xi_{m-1}=\Xi_m$, and $\Xi_m\ci\d=\d\ci\Xi_m$ by induction. So it is enough to prove that $\Xi_{m-1}\ci\d(\ga)=\d\ci\Xi_{m-1}(\ga)$ for all $\ga\in S_J^{m-1}$. If $\ga\in\dot S_J^{m-1}$ then
\begin{equation*}
\smash{\Xi_{m-1}\ci\d(\ga)\!=\!\Xi_{m-1}\ci\de_J^{m-1}(\ga)\!=\!\Xi_m\ci\de_J^{m-1}(\ga)\!=\!\d\ci\xi_J^{m-1}(\ga)\!=\!\d\ci\Xi_{m-1}(\ga),}
\end{equation*}
as we want. Similarly, if $\ga\in\ddot S_J^{m-1}$ then
\begin{equation*}
\smash{\Xi_{m-1}\ci\d(\ga)=\Xi_{m-1}\ci\de_J^{m-1}(\ga)=0=\d\ci\xi_J^{m-1}(\ga)=\d\ci\Xi_{m-1}(\ga).}
\end{equation*}
Therefore $\Xi_{m-1}\ci\d=\d\ci\Xi_{m-1}$, and $\Xi_{m-1}:A_{J,m-1}^\bu\ra\hat A_J^\bu$ is a cdga morphism.

Finally we have to show that $H^n(\Xi_{m-1}):H^n(A_{J,m-1}^\bu)\ra H^n(\hat A_J^\bu)$ is be an isomorphism for $n=-1,-2,\ldots,m$, and surjective for $n=m-1$. Since $\Xi_m:A_{J,m}^\bu\ra\hat A_J^\bu$ and $\Xi_{m-1}:A_{J,m-1}^\bu\ra\hat A_J^\bu$ coincide in degrees $0,-1,\ldots,m$, in cohomology they coincide in degrees $0,-1,\ldots,m+1$, so $H^n(\Xi_{m-1})$ is an isomorphism for $n=0,-1,\ldots,m+1$ as $H^n(\Xi_m)$ is, by induction.

As $H^m(\Xi_m):H^m(A_{J,m}^\bu)\ra H^m(\hat A_J^\bu)$ is surjective, and the added generators $\dot S_J^{m-1}$ in $A_{J,m-1}^\bu$ span $\Ker(H^m(\Xi_m))$, adding $\dot S_J^{m-1}$ makes $H^m(\Xi_{m-1}):H^m(A_{J,m-1}^\bu)\ra H^m(\hat A_J^\bu)$ into an isomorphism. Also, since the added generators $\ddot S_J^{m-1}$ together with $\Im(H^{m-1}(\Xi_m))$ generate $H^{m-1}(\hat A_J^\bu)$, adding $\ddot S_J^{m-1}$ makes $H^{m-1}(\Xi_{m-1}):H^{m-1}(A_{J,m-1}^\bu)\ra H^{m-1}(\hat A_J^\bu)$ surjective.

This proves Hypothesis \ref{vf4hyp2} for $N=m-1$, so by induction Hypothesis \ref{vf4hyp2} holds for all $N=0,-1,-2,\ldots.$ Taking the limit $\lim_{N\ra -\iy}A_{J,N}^\bu$ gives the cdga $A_J^\bu$ defined in Hypothesis \ref{vf4hyp1} using the data $S_J^n,\de_J^n$ for all $n=-1,-2,\ldots$ from Hypothesis \ref{vf4hyp2}(a),(b) as $N\ra-\iy$. The data $\xi_J^n$ for $n=-1,-2,\ldots$ from part (c) defines a morphism $\Xi=\lim_{N\ra -\iy}\Xi_N:A_J^\bu\ra \hat A_J^\bu$, where $\Xi,A_J^\bu$ agree with $\Xi_N,A_{J,N}^\bu$ in degrees $0,-1,\ldots,N$ for all $N\le 0$. 

Hence $H^n(\Xi):H^n(A_J^\bu)\ra H^n(\hat A_J^\bu)$ agrees with $H^n(\Xi_N):H^n(A_{J,N}^\bu)\ra H^n(\hat A_J^\bu)$ for all $n=0,-1,\ldots,N+1$, so $H^n(\Xi)$ is an isomorphism for all $n\le 0$ by Hypothesis \ref{vf4hyp2}(c), and $\Xi:A_J^\bu\ra \hat A_J^\bu$ is a quasi-isomorphism in $\cdga_\C$, and hence an equivalence in $\cdga_\C^\iy$. Thus $\bSpec\Xi:\bSpec\hat A_J^\bu\ra\bSpec A_J^\bu$ is an equivalence in $\dSch_\C$. So we can choose a quasi-inverse $\bs\chi:\bSpec A_J^\bu\ra\bSpec\hat A_J^\bu$ in~$\dSch_\C$.

Write $\io:\ti A_J^\bu\hookra A_J^\bu$ for the inclusion. Then $\Psi=\Xi\ci\io:\ti A_J^\bu\ra\hat A_J^\bu$, since $\Xi_N\vert_{\ti A_J^\bu}=\Psi$, so taking the limit $N\ra -\iy$ gives $\Xi\vert_{\ti A_J^\bu}=\Psi$. Also the definitions of $\be_J:B\ra A_J^\bu$ and $\Phi_{JK}:A_K^\bu\ra A_J^\bu$ for $\es\ne K\subsetneq J$ in Hypothesis \ref{vf4hyp1}(b),(d) satisfy $\be_J=\io\ci\ti\be_J$ and~$\Phi_{JK}=\io\ci\ti\Phi_{JK}$.

Define $\bs\al_J=\bs{\hat\al}_J\ci\bs\chi:\bSpec A_J^\bu\ra\bX$. Since $\bs{\hat\al}_J$ is a Zariski open inclusion with image $\bigcap_{i\in J}\Im\bs\al_i$, and $\bs\chi$ is an equivalence, $\bs\al_J:\bSpec A_J^\bu\ra\bX$ is a Zariski open inclusion with image $\bigcap_{i\in J}\Im\bs\al_i$, as in Hypothesis \ref{vf4hyp1}(g). Then we have
\begin{align*}
\bs\pi\ci\bs\al_J&=\bs\pi\ci\bs{\hat\al}_J\ci\bs\chi\simeq \bSpec\ti\be_J\ci\bSpec\Psi\ci\bs\chi\\
&\simeq\bSpec\ti\be_J\ci\bSpec\io\ci\bSpec\Xi\ci\bs\chi\simeq\bSpec\ti\be_J\ci\bSpec\io=\bSpec\be_J,
\end{align*}
using \eq{vf4eq3} in the second step, $\Psi=\Xi\ci\io$ in the third, $\bSpec\Xi,\bs\chi$ quasi-inverse in the fourth, and $\be_J=\io\ci\ti\be_J$ in the fifth. Thus \eq{vf3eq2} homotopy commutes. 

Similarly, if $\es\ne K\subsetneq J$ then
\begin{align*}
\bs\al_J&=\bs{\hat\al}_J\ci\bs\chi\simeq\bs\al_K\ci\bSpec\ti\Phi_{JK}\ci\bSpec\Psi\ci\bs\chi\\
&\simeq\bs\al_K\ci\bSpec\ti\Phi_{JK}\ci\bSpec\io\ci\bSpec\Xi\ci\bs\chi\simeq\bs\al_K\ci\bSpec\Phi_{JK}
\end{align*}
using \eq{vf4eq3} in the second step, $\Psi=\Xi\ci\io$ in the third, and $\Phi_{JK}=\io\ci\ti\Phi_{JK}$ and $\bSpec\Xi,\bs\chi$ quasi-inverse in the fourth. Hence \eq{vf3eq3} homotopy commutes. 

This proves that Hypothesis \ref{vf4hyp1} holds with $k=l+1$, and completes the inductive step begun shortly after Remark \ref{vf4rem1}. Hence by induction, Hypothesis \ref{vf4hyp1} holds for all $k=1,2,\ldots,$ so Hypothesis \ref{vf4hyp1} holds for $k=\iy$. Theorem \ref{vf3thm1} follows, since all the conclusions of Theorem \ref{vf3thm1}(i),(ii) are either part of Hypothesis \ref{vf4hyp1}, or for $A_{\{i\}}^\bu=A_i^\bu,$ $\bs\al_{\{i\}}=\bs\al_i,$ $\be_{\{i\}}=\be_i$ in part (i) were included in the first step of the induction. This completes the proof.

\section{Proof of Theorem \ref{vf3thm2}}
\label{vf5}

\subsection{\texorpdfstring{Theorem \ref{vf3thm2}(a): $(*)$ is an open condition}{Theorem \ref{vf3thm2}(a): (*) is an open condition}}
\label{vf51}

Suppose $\bX,\om_\bX^*,A^\bu,\bs\al,V,E,F,s,t,\psi$ are as in Definition \ref{vf3def3}, and suppose that $U\subseteq V$ is open, $E^-$ is a real vector subbundle of $E\vert_U$, and $v\in s^{-1}(0)\cap U$, such that the assumptions on $E^-\vert_v$ in condition $(*)$ hold at $v$. We must show that these assumptions also hold for all $v'$ in an open neighbourhood of $v$ in $s^{-1}(0)\cap U$. Suppose for a contradiction that this is false. Then we can choose a sequence $(v_i)_{i=1}^\iy$ in $s^{-1}(0)\cap U$ such that $v_i\ra v$ as $i\ra\iy$, and the assumptions on $E^-\vert_{v_i}$ in $(*)$ do not hold for any~$i=1,2,\ldots.$ 

By passing to a subsequence of $(v_i)_{i=1}^\iy$, we can assume that $\dim\Im\d s\vert_{v_i}$ and $\dim\Ker t\vert_{v_i}$ are independent of $i=1,2,\ldots.$ By trivializing $E$ near $v$, we can regard $\bigl(\Im\d s\vert_{v_i}\bigr){}_{i=1}^\iy$ and $\bigl(\Ker t\vert_{v_i}\bigr){}_{i=1}^\iy$ as sequences in complex Grassmannians, which are compact. Thus, passing to a subsequence of $(v_i)_{i=1}^\iy$, we can assume they converge, and there are complex vector subspaces $I_v,K_v\subseteq E\vert_v$ such that $\Im\d s\vert_{v_i}\ra I_v$ and $\Ker t\vert_{v_i}\ra K_v$ as $i\ra\iy$. 

As $t\ci\d s=0$ on $s^{-1}(0)$ we have $\Im\d s\vert_{v_i}\subseteq \Ker t\vert_{v_i}$, and so $I_v\subseteq K_v$. Also $\Im\d s\vert_v\subseteq I_v$, since if $w\in T_vV$ we can find $w_i\in T_{v_i}V$ with $w_i\ra w$ as $i\ra \iy$, and then $\d s\vert_{v_i}(w_i)\ra\d s\vert_v(w)$ as $i\ra\iy$. Similarly $K_v\subseteq \Ker t\vert_v$.

We now have a quotient vector space $(\Ker t\vert_v)/(\Im\d s\vert_v)$, which as in \eq{vf3eq21} carries a nondegenerate quadratic form $\ti Q_v$. There are subspaces
$I_v/(\Im\d s\vert_v)\subseteq K_v/(\Im\d s\vert_v)\subseteq(\Ker t\vert_v)/(\Im\d s\vert_v)$. Also, for each $i=1,2,\ldots$ we have quotient space $(\Ker t\vert_{v_i})/(\Im\d s\vert_{v_i})$ with quadratic forms $\ti Q_{v_i}$. As $i\ra\iy$ we have
\e
\smash{(\Ker t\vert_{v_i})/(\Im\d s\vert_{v_i})\longra K_v/I_v\cong \bigl[K_v/(\Im\d s\vert_v)\bigr]\big/\bigl[I_v/(\Im\d s\vert_v)\bigr].}
\label{vf5eq1}
\e

One can prove using a representative $\om_{A^\bu}$ for $\bs\al^*(\om_\bX^0)$ that
\begin{equation*}
\smash{I_v/(\Im\d s\vert_v)=\bigl\{e\in (\Ker t\vert_v)/(\Im\d s\vert_v):\ti Q_v(e,k)=0\;\>\forall k\in K_v/(\Im\d s\vert_v)\bigr\},}
\end{equation*}
that is, $I_v/(\Im\d s\vert_v)$ and $K_v/(\Im\d s\vert_v)$ are orthogonal subspaces w.r.t.\ $\ti Q_v$. Hence the restriction of $\ti Q_v$ to $K_v/(\Im\d s\vert_v)$ is null along $I_v/(\Im\d s\vert_v)$, and descends to a nondegenerate quadratic form $\check Q_v$ on
$[K_v/(\Im\d s\vert_v)]/[I_v/(\Im\d s\vert_v)]\ab\cong K_v/I_v$. Then under the limit \eq{vf5eq1}, we have $\ti Q_{v_i}\ra \check Q_v$ as~$i\ra\iy$.

By $(*)$ for $(U,E^-)$ at $v$, we have $\Im(\d s\vert_v)\cap E^-\vert_v=\{0\}$, and the map $\Pi_v:E^-\vert_v\cap\Ker(t\vert_v)\ra (\Ker t\vert_v)/(\Im\d s\vert_v)$ in \eq{vf3eq24} has image $\Im\Pi_v$ of half the total dimension, with $\Re\ti Q_v$ negative definite on $\Im\Pi_v$. Since $\ti Q_v$ is zero on $I_v/(\Im\d s\vert_v)$, it follows that $\Im\Pi_v\cap(I_v/(\Im\d s\vert_v))=\{0\}$, and thus
\e
\smash{E^-\vert_v\cap I_v=\{0\}.}
\label{vf5eq2} 
\e

Condition \ref{vf3eq23}, that $t\vert_v(E^-\vert_v)=t\vert_v(E\vert_v)$, is equivalent to the equation $E^-\vert_v+\Ker(t\vert_v)=E\vert_v$, in subspaces of $E\vert_v$. As $\Im\Pi_v$ is a maximal negative definite subspace for $\Re\ti Q_v$ in $(\Ker t\vert_v)/(\Im\d s\vert_v)$, and $K_v/(\Im\d s\vert_v)$ is the orthogonal to a null subspace $I_v/(\Im\d s\vert_v)$ w.r.t.\ $\Re\ti Q_v$, it follows that $\Im\Pi_v+K_v/(\Im\d s\vert_v)=(\Ker t\vert_v)/(\Im\d s\vert_v)$. Lifting to $\Ker t\vert_v$ gives $[E^-\vert_v\cap (\Ker t\vert_v)]+K_v=\Ker t\vert_v$. Thus the subspace $E^-\vert_v+K_v$ in $E\vert_v$ contains $E^-\vert_v$ and $\Ker t\vert_v$, so as $E^-\vert_v+\Ker(t\vert_v)=E\vert_v$, we see that
\e
\smash{E^-\vert_v+K_v=E\vert_v.}
\label{vf5eq3}
\e

Write $\check\Pi_v:E^-\vert_v\cap K_v\ra K_v/I_v$ for the natural projection. It is injective by \eq{vf5eq2}. Using \eq{vf5eq2}--\eq{vf5eq3} and the facts that $\Im\Pi_v$ has half the dimension of $(\Ker t\vert_v)/(\Im\d s\vert_v)$, and $\dim [I_v/(\Im\d s\vert_v)]+\dim[K_v/(\Im\d s\vert_v)]=\dim[(\Ker t\vert_v)/(\Im\d s\vert_v)]$ as $I_v/(\Im\d s\vert_v),K_v/(\Im\d s\vert_v)$ are orthogonal subspaces, by a dimension count we find that $\Im\check\Pi_v$ has half the total dimension of $K_v/I_v$. Also, since the quadratic form on $\check Q_v$ on $K_v/I_v\cong [K_v/(\Im\d s\vert_v)]/[I_v/(\Im\d s\vert_v)]$ descends from the restriction of $\ti Q_v$ to $K_v/(\Im\d s\vert_v)$, and $\Im\check\Pi_v$ descends from $\Im\Pi_v\cap[K_v/(\Im\d s\vert_v)]$, and $\Re\ti Q_v$ is negative definite on $\Im\Pi_v$, we see that $\Re\check Q_v$ is negative definite on~$\Im\check\Pi_v$.

As $E^-\vert_{v_i}\ra E^-\vert_v$ and $\Im\d s\vert_{v_i}\ra I_v$ as $i\ra\iy$, we see from \eq{vf5eq2} that
\e
\smash{E^-\vert_{v_i}\cap (\Im\d s\vert_{v_i})=\{0\}\qquad\text{for $i\gg 0$.}}
\label{vf5eq4}
\e
Since $E^-\vert_{v_i}\ra E^-\vert_v$ and $\Ker t\vert_{v_i}\ra K_v$ as $i\ra\iy$, we see from \eq{vf5eq3} that $E^-\vert_{v_i}+\Ker t\vert_{v_i}=E\vert_{v_i}$ for $i\gg 0$. But this is equivalent to
\e
\smash{t\vert_{v_i}(E^-\vert_{v_i})=t\vert_{v_i}(E\vert_{v_i})
\qquad\text{in $F\vert_{v_i}$ for $i\gg 0$.}}
\label{vf5eq5}
\e

Using \eq{vf5eq4}--\eq{vf5eq5}, the same dimension count as above implies that $\Im\ti\Pi_{v_i}$ has half the dimension of $(\Ker t\vert_{v_i})/(\Im\d s\vert_{v_i})$ for $i\gg 0$. Under the limit \eq{vf5eq1}, we have $\ti Q_{v_i}\ra \check Q_v$ and $\Im\ti\Pi_{v_i}\ra\Im\check\Pi_v$. Thus, as $\Re\check Q_v$ is negative definite on $\Im\check\Pi_v$, we see that $\Re\ti Q_{v_i}$ is negative definite on $\Im\ti\Pi_{v_i}$ for $i\gg 0$. Together with \eq{vf5eq4}--\eq{vf5eq5}, this shows that the assumptions on $E^-\vert_{v_i}$ in $(*)$ hold for $i\gg 0$, which contradicts the choice of sequence $(v_i)_{i=1}^\iy$. This proves Theorem~\ref{vf3thm2}(a).

\subsection{\texorpdfstring{Theorem \ref{vf3thm2}(b): extending pairs $(U,E^-)$ satisfying $(*)$}{Theorem \ref{vf3thm2}(b): extending pairs (U,E⁻) satisfying (*)}}
\label{vf52}

Suppose $\bX,\om_\bX^*,A^\bu,\bs\al,V,E,F,s,t,\psi$ are as in Definition \ref{vf3def3}, and $(U,E^-)$ satisfying $(*)$ is as in Definition \ref{vf3def3}, and $C\subseteq V$ is closed with $C\subseteq U$. Our goal is to construct $(\ti U,\ti E^-)$ satisfying $(*)$ for $V,E,\ldots$ with $C\cup s^{-1}(0)\subseteq\ti U\subseteq V$, such that $E^-\vert_{U'}=\ti E^-\vert_{U'}$ for $U'$ an open neighbourhood of $C$ in~$U\cap\ti U$.

Using the notation of \S\ref{vf32}, $s^{-1}(0)^\alg$ is a finite type closed $\C$-subscheme of $V^\alg$, and the maps $v\mapsto \dim\Ker\d s\vert_v$, $v\mapsto \dim\Ker t\vert_v$ are upper semicontinuous, algebraically constructible functions $s^{-1}(0)^\alg\ra\N$, noting that $t\vert_v$ is independent of choices for $v\in s^{-1}(0)^\alg$. Therefore by some standard facts about constructible sets in algebraic geometry, we can choose a stratification of Zariski topological spaces $s^{-1}(0)^\alg=\ts\coprod_{a\in A}W_a^\alg$, where $A$ is a finite indexing set, and $W_a^\alg$ is a smooth, connected, locally closed $\C$-subscheme of $s^{-1}(0)^\alg\subseteq V^\alg$ for each $a\in A$, with closure $\ov W{}_a^\alg$ in $s^{-1}(0)^\alg$ a finite union of strata $W_b$, such that $v\mapsto \dim\Ker\d s\vert_v$ and $v\mapsto \dim\Ker t\vert_v$ are both constant functions on~$W_a^\alg$.

Writing $W_a\subseteq s^{-1}(0)\subseteq V$ for the set of $\C$-points of $W_a^\alg$, each $W_a$ is a connected, locally closed complex submanifold of $V$ lying in $s^{-1}(0)$, with closure $\ov W{}_a$ a finite union of submanifolds $W_b$, such that $s^{-1}(0)=\ts\coprod_{a\in A}W_a$. On each $W_a$, the maps $v\mapsto \dim\Ker\d s\vert_v$ and $v\mapsto \dim\Ker t\vert_v$ are constant. This implies that $\Ker\d s\vert_{W_a}$ is a holomorphic vector subbundle of $TV\vert_{W_a}$, and $\Im\d s\vert_{W_a}$ a holomorphic vector subbundle of $E\vert_{W_a}$, and $\Ker t\vert_{W_a}$ a holomorphic vector subbundle of $E\vert_{W_a}$, and $\Im t\vert_{W_a}$ a holomorphic vector subbundle of $F\vert_{W_a}$. We have $\Im\d s\vert_{W_a}\subseteq \Ker t\vert_{W_a}\subseteq E\vert_{W_a}$ as $t\ci\d s=0$ on~$s^{-1}(0)$.

Thus we have a holomorphic vector bundle $(\Ker t\vert_{W_a})/(\Im\d s\vert_{W_a})$ over $W_a$, whose fibre at $v\in W_a$ is identified with $H^1(\bT_\bX\vert_x)$ for $x=\psi(v)$ by \eq{vf3eq9}. As in \eq{vf2eq6} we have a quadratic form $Q_x$ on $H^1\bigl(\bT_\bX\vert_x\bigr)$, and as in \eq{vf3eq21} $\ti Q_v$ is the quadratic form on $(\Ker t\vert_{W_a})/(\Im\d s\vert_{W_a})\vert_v$ identified with $Q_x$ by \eq{vf3eq9}. One can prove using a representative $\om_{A^\bu}$ for $\bs\al^*(\om_\bX^0)$ that $\ti Q_v$ depends holomorphically on $v\in W_a$. Hence $\ti Q_v=\ti Q_a\vert_v$ for $\ti Q_a\in H^0\bigl(S^2[(\Ker t\vert_{W_a})/(\Im\d s\vert_{W_a})]^*\bigr)$, a nondegenerate holomorphic quadratic form on the fibres of~$(\Ker t\vert_{W_a})/(\Im\d s\vert_{W_a})$.

The idea of the proof is to choose $\ti E^-$ near $W_a$ by induction on increasing $\dim W_a$, starting with $a\in A$ with $\dim W_a=0$, then $\dim W_a=1$, and so on. Since $\dim (\ov W_a\sm W_a)<\dim W_a$, we see that $\ov W_a\sm W_a$ is a finite union of $W_b$ with $\dim W_b<\dim W_a$, so when we choose $\ti E^-$ near $W_a$ we will already have chosen $\ti E^-$ near $\ov W_a\sm W_a$, and the extension over $W_a$ should be compatible with this.

Our inductive hypothesis $(\ddag)_m$ for $m=0,1,2,\ldots$ is:
\begin{itemize}
\item[$(\ddag)_m$] For all $a\in A$ with $\dim W_a\le m$ we have chosen a pair $(\check U_a,\check E^-_a)$ satisfying $(*)$ for $V,E,F,s,t,\ldots$ with $W_a\subseteq\check U_a\subseteq V$, such that there is an open neighbourhood $\hat U_a$ of $C\cap\check U_a$ in $U\cap\check U_a$ with $E^-\vert_{\smash{\hat U_a}}=\check E^-_a\vert_{\smash{\hat U_a}}$, and if $b\in A$ with $W_b\subseteq\ov W_a\sm W_a$ (which implies that $\dim W_b<\dim W_a\le m$, so $(\check U_b,\check E^-_b)$ is defined), then there is an open neighbourhood $\hat U_{ab}$ of $W_b$ in $\check U_b$ such that $\check E^-_a\vert_{\smash{\check U_a\cap\hat U_{ab}}}=\check E^-_b\vert_{\smash{\check U_a\cap\hat U_{ab}}}$.
\end{itemize}

First consider how to choose $(\check U_a,\check E^-_a)$ satisfying $(*)$ with $W_a\subseteq\check U_a\subseteq V$ for $a\in A$ with no compatibility conditions, either with $(U,E^-)$ near $C$, or with $(\check U_b,\check E^-_b)$ for $W_b\subseteq\ov W_a\sm W_a$. We can do this as follows:
\begin{itemize}
\setlength{\itemsep}{0pt}
\setlength{\parsep}{0pt}
\item[(i)] Choose a real vector subbundle $\dot E_a$ of $(\Ker t\vert_{W_a})/(\Im\d s\vert_{W_a})$, whose real rank is half the real rank of $(\Ker t\vert_{W_a})/(\Im\d s\vert_{W_a})$, such that $\Re\ti Q_a$ is negative definite on $\dot E_a$. 
\item[(ii)] Lift $\dot E_a$ to a real vector subbundle $\ddot E_a$ of $\Ker t\vert_{W_a}$. That is, the projection $\Ker t\vert_{W_a}\ra(\Ker t\vert_{W_a})/(\Im\d s\vert_{W_a})$ induces an isomorphism $\ddot E_a\ra \dot E_a$.
\item[(iii)] Choose a real vector subbundle $\dddot E_a$ of $E\vert_{W_a}$ with~$E\vert_{W_a}=\dddot E_a\op\Ker t\vert_{W_a}$.
\item[(iv)] Set $\check E^-_a\vert_{W_a}=\ddot E_a\op \dddot E_a$. Then $\check E^-_a\vert_{W_a}$ is a real vector subbundle of $E\vert_{W_a}$, and the assumptions on $\check E^-_a\vert_v$ in condition $(*)$ in \S\ref{vf33} hold for all~$v\in W_a$.
\item[(v)] Choose any real vector subbundle $\check E^-_a$ of $E\vert_{\check U_a}$ on a small open neighbourhood $\check U_a$ of $W_a$ in $V$, extending the given $\check E^-_a\vert_{W_a}=\ddot E_a\op\dddot E_a$ on~$W_a$.

Observe that by Theorem \ref{vf3thm2}(a), proved in \S\ref{vf51}, condition $(*)$ holds for $\check E^-_a$ on an open neighbourhood of $W_a$. So by making $\check U_a$ smaller, we can suppose $(\check U_a,\check E^-_a)$ satisfies~$(*)$.
\end{itemize}
All of these are possible. Any $(\check U_a,\check E^-_a)$ satisfying $(*)$ with $W_a\subseteq\check U_a\subseteq V$ arises from steps (i)--(v) (though $\dddot E_a$ in (iii) is not uniquely determined by $\check E^-_a$). Furthermore (taking germs in (v)), the space of choices in each step is contractible.

Now suppose $m=0,1,\ldots,$ and $(\ddag)_{m-1}$ holds if $m>0$, and $a\in A$ with $\dim W_a=m$. To choose $(\check U_a,\check E^-_a)$ with the compatibility conditions required in $(\ddag)_m$, we follow (i)--(v), but modified as follows. In step (i), we choose $\dot E_a$ with
\e
\smash{\dot E_a\vert_{W_a\cap\hat U_a}=\bigl[((E^-\cap\Ker t)\vert_{W_a\cap\hat U_a})+(\Im\d s\vert_{W_a\cap\hat U_a})\bigr]\big/\bigl(\Im\d s\vert_{W_a\cap\hat U_a}\bigr),}
\label{vf5eq6}
\e
for some small open neighbourhood $\hat U_a$ of $C\cap W_a$ in $U$, and if $b\in A$ with $W_b\subseteq\ov W_a\sm W_a$ then
\e
\smash{\dot E_a\vert_{W_a\cap\check U_{ab}}\!=\!\bigl[((\check E^-_b\cap\Ker t\vert_{W_a\cap\hat U_{ab}}))\!+\!(\Im\d s\vert_{W_a\cap\hat U_{ab}})\bigr]\big/\bigl(\Im\d s\vert_{W_a\cap\hat U_{ab}}\bigr),}
\label{vf5eq7}
\e
for some small open neighbourhood $\hat U_{ab}$ of $W_b$ in $\check U_b$.

To see this is possible, first note that the first part of $(\ddag)_{m-1}$ with $b$ in place of $a$ implies that equations \eq{vf5eq6} and \eq{vf5eq7} are compatible, that is they prescribe the same value for $\dot E_a$ on $W_a\cap \hat U_a\cap\hat U_{ab}$, provided the open neighbourhoods $\hat U_a,\hat U_{ab}$ are small enough. Also given distinct $b,b'\in A$ with $W_b,W_{b'}\subseteq\ov W_a\sm W_a$, either (a) $W_{b'}\subseteq\ov W_b\sm W_b$, or (b) $W_b\subseteq\ov W_{b'}\sm W_{b'}$, or (c) $W_b\cap\ov W_{b'}=\ov W_b\cap W_{b'}=\es$. In cases (a),(b) we can use the second part of $(\ddag)_{m-1}$ to show that \eq{vf5eq7} for $b,b'$ are compatible provided $\hat U_{ab},\hat U_{ab'}$ are small enough, and in case (c) we can choose $\hat U_{ab},\hat U_{ab'}$ with $\hat U_{ab}\cap\hat U_{ab'}=\es$, so compatibility is trivial.

Thus, if $\hat U_a$ and $\hat U_{ab}$ for all $b$ are small enough then \eq{vf5eq6} and \eq{vf5eq7} for all $b$ are compatible, and can be combined into a single equation prescribing $\dot E_a$ on $\check W_a:=W_a\cap(\hat U_a\cup\bigcup_b\hat U_{ab})$. We then have to extend $\dot E_a$ from $\check W_a$ to $W_a$, satisfying the required conditions. This may not be possible: if we have chosen $E^-$ or $\check E^-_b$ badly near the `edge' of $\check W_a$ in $W_a$, then the prescribed values of $\dot E_a$ may not extend continuously to the closure $\ov{\check W_a}$ of $\check W_a$ in $W_a$. However, we can deal with this problem by shrinking all the $\hat U_a,\hat U_{ab}$, such that the closure $\ov{\check W_a}$ of the new $\check W_a$ lies inside the old $\check W_a$. Then it is guaranteed that the prescribed value of $\dot E_a$ on $\check W_a$ extends smoothly to an open neighbourhood of $\ov{\check W_a}$ in $W_a$, so we can choose $\dot E_a$ on $W_a$ satisfying all the required conditions~\eq{vf5eq6}--\eq{vf5eq7}.

In a similar way, for each of steps (ii)--(v) we can show that making the open neighbourhoods $\hat U_a,\hat U_{ab}$ smaller if necessary, we can make choices consistent with the compatibility conditions on $(\check U_a,\check E^-_a)$ in $(\ddag)_m$. So by induction, $(\ddag)_m$ holds for all $m=0,1,\ldots.$ Fix data $(\check U_a,\check E_a^-),\hat U_a,\hat U_{ab}$ satisfying $(\ddag)_m$ for~$m=\dim V$.

Next, choose open neighbourhoods $U'$ of $C$ in $U\subseteq V$ and $\ti U_a$ of $W_a$ in $\check U_a$ for each $a\in A$, such that $U'\cap\ti U_a\subseteq\hat U_a$ for $a\in A$, and $\ti U_a\cap\ti U_b\subseteq\hat U_{ab}$ if $a,b\in A$ with $W_b\subseteq\ov W_a\sm W_a$, and $\ti U_a\cap\ti U_b=\es$ if $a,b\in A$ with $\ov W_a\cap W_b=W_a\cap\ov W_b=\es$. This is possible provided $U'$ and $\ti U_a$ for $a\in A$ are all small enough. 

Define $\ti U=U'\cup\bigcup_{a\in A}\ti U_a$, which is an open neighbourhood of $C\cup\bigcup_{a\in A}W_a=C\cup s^{-1}(0)$ in $V$. Define a vector subbundle $\ti E^-$ of $E\vert_{\smash{\ti U}}$ by $\ti E^-\vert_{U'}=E^-\vert_{U'}$ and $\ti E^-\vert_{\smash{\ti U_a}}=\check E_a^-\vert_{\smash{\ti U_a}}$ for $a\in A$. These values are consistent on overlaps $U'\cap\ti U_a$ and $\ti U_a\cap\ti U_b$ by construction, so $\ti E^-$ is well-defined. Also $(\ti U,\ti E^-)$ satisfies $(*)$, since $(U,E^-)$ and the $(\check U_a,\check E^-_a)$ do, and $U'$ is an open neighbourhood of $C$ in $U\cap\ti U$ with $E^-\vert_{U'}=\ti E^-\vert_{U'}$ by definition. This proves Theorem~\ref{vf3thm2}(b).

\subsection{\texorpdfstring{Theorem \ref{vf3thm2}(c): $s^{-1}(0)=(s^+)^{-1}(0)$ locally in $U$}{Theorem \ref{vf3thm2}(c): s⁻¹(0)=(s⁺)⁻¹(0) locally in U}}
\label{vf53}

In \S\ref{vf34} we explained how to pullback pairs $(U_K,E_K^-)$ satisfying $(*)$ along a quasi-free $\Phi_{JK}:A_K^\bu\ra A_J^\bu$. We can also {\it pushforward\/} $(U_J,E_J^-)$ along~$\Phi_{JK}$.

\begin{dfn} Let $\bX,\om_\bX^*,n,\Phi_{JK}:A_K^\bu\ra A_J^\bu$ and $V_J,E_J,\ldots,\chi_{JK},\xi_{JK}$ be as in Definition \ref{vf3def4}, and suppose $(U_J,E_J^-)$ satisfies $(*)$ for $A_J^\bu$. Our goal is to construct $(U_K,E_K^-)$ satisfying $(*)$ for $A_K^\bu$, with $\psi_J(s_J^{-1}(0)\cap U_J)=\psi_K(s_K^{-1}(0)\cap U_K)\subseteq X_\an$, and if $(U_J,E_J^-),(U_K,E_K^-)$ also satisfy $(\dag)$, a coordinate change of Kuranishi neighbourhoods, as in~\S\ref{vf25}:
\e
\smash{(U_K,\th_{KJ},\eta_{KJ}):(U_K,E_K^+,s_K^+,\psi^+_K)\longra(U_J,E_J^+,s_J^+,\psi^+_J).}
\label{vf5eq8}
\e

Let $v_J\in s_J^{-1}(0)\cap U_J$ with $\phi_{JK}(v_J)=v_K\in s_K^{-1}(0)\subseteq V_K$ and $\psi_J(v_J)=\psi_K(v_K)=x\in X_\an$. We claim that we can choose splittings of real vector spaces
\e
\begin{aligned}
T_{v_J}V_J&=\ti T_{v_J}V_J\op T_{v_J}'V_J,&
E_J\vert_{v_J}&=\ti E_J\vert_{v_J}\op E'_J\vert_{v_J}\op E''_J\vert_{v_J},\\ 
E_J^-\vert_{v_J}&=\ti E_J^-\vert_{v_J}\op \ti E_J''\vert_{v_J}, &
F_J\vert_{v_J}&=\ti F_J\vert_{v_J}\op F''_J\vert_{v_J}\op F'''_J\vert_{v_J},
\end{aligned}
\label{vf5eq9}
\e
fitting into a commutative diagram of the form
\e
\begin{gathered}
\xymatrix@!0@C=10.1pt@R=40pt{
&&&&&&&&&&&&& *+[r]{E_J^-\vert_{v_J}=\ti E_J^-\vert_{v_J}\op \ts \ti E''_J\vert_{v_J}}
\ar[dd]_(0.25){\raisebox{4pt}{$\st{\rm inc}\,=$}\begin{pmatrix} \st * & \st * \\
\st 0 & \st * \\ \st 0 & \st * \end{pmatrix}}
\ar@/^2pc/[ddrrrrrrrrrrrr]^(0.7){\!\!\!\!\!\!\!\!\!\!\!\!\!\!\!\!\raisebox{4pt}{$\st t_J\vert_{E_J^-\vert_{v_J}}=$}\begin{pmatrix} \st * & \st 0 \\
\st 0 & \st \cong \\ \st 0 & \st 0 \end{pmatrix}}
\\
\\
0 \ar[rrrr] &&&&
{\begin{subarray}{l}\ts \ti T_{v_J}V_J\op \\ \ts T_{v_J}'V_J\end{subarray}} \ar@<-3ex>[d]^(0.55){\d\phi_{JK}\vert_{v_J}=\begin{pmatrix} \st \cong & \st 0  \end{pmatrix}} \ar[rrrrrrrrr]^(0.33){\raisebox{4pt}{$\st\d s_J\vert_{v_J}=$}\begin{pmatrix} \st \widetilde{\d s_K\vert_{v_K}}\!\!\!{} & \st 0 \\
\st * & \st \cong \\ \st 0 & \st 0 \end{pmatrix}}
&&&&&&&&& 
{\begin{subarray}{l} \ts \ti E_J\vert_{v_J}\op \\ \ts E'_J\vert_{v_J}\op \\ \ts E''_J\vert_{v_J} \end{subarray}}
\ar@<-3ex>[d]^(0.55){\chi_{JK}\vert_{v_J}=\begin{pmatrix} \st \cong & \st 0 & \st 0 \end{pmatrix}} \ar[rrrrrrrrrrrr]^(0.43){\raisebox{4pt}{$\st t_J\vert_{v_J}=$}\begin{pmatrix} \st \widetilde{t_K\vert_{v_K}}\!\!\!{} & \st 0 & \st 0 \\
\st 0 & \st 0 & \st \cong \\ \st 0 & \st 0 & \st 0 \end{pmatrix}} &&&&&&&&&&&&
{\begin{subarray}{l}\ts \ti F_J\vert_{v_J}\op \\ \ts F''_J\vert_{v_J}\op \\ \ts F'''_J\vert_{v_J} \end{subarray}}
\ar@<-3ex>[d]^(0.55){\xi_{JK}\vert_{v_J}=\begin{pmatrix} \st \cong\!\!\! & \st 0\!\!\! & \st 0 \end{pmatrix}} \ar[rrrr] &&&& \cdots 
\\
0 \ar[rrrr] &&&& T_{v_K}V_K \ar[rrrrrrrrr]^{\d s_K\vert_{v_K}} &&&&&&&&& E_K\vert_{v_K} \ar[rrrrrrrrrrrr]^{t_K\vert_{v_K}} &&&&&&&&&&&&
F_K\vert_{v_K} \ar[rrrr] &&&& \cdots.\!{} }\!\!\!\!\!\!\!\!\!\!\!\!{}
\end{gathered}
\label{vf5eq10}
\e

To prove this, note that the rows of \eq{vf5eq10} are $\bT_{\bSpec A_J^\bu}\vert_{v_J},\bT_{\bSpec A_K^\bu}\vert_{v_K}$, and are complexes, and the lower columns are induced by $\Phi_{JK}$, are surjective as $\Phi_{JK}$ is quasi-free, and induce isomorphisms on cohomology as in \S\ref{vf32}. Then:
\begin{itemize}
\setlength{\itemsep}{0pt}
\setlength{\parsep}{0pt}
\item[(i)] Define $T_{v_J}'V_J=\Ker\d\phi_{JK}\vert_{v_J}$.
\item[(ii)] Choose arbitrary $\ti T_{v_J}V_J$ with $T_{v_J}V_J\cong \ti T_{v_J}V_J\op T_{v_J}'V_J$. Then $\ti T_{v_J}V_J\cong T_{v_K}V_K$ as $\d\phi_{JK}$ is surjective.
\item[(iii)] Define $E'_J\vert_{v_J}=\d s_J\vert_{v_J}[T_{v_J}'V_J]$. Then $E'_J\vert_{v_J}\cong T_{v_J}'V_J$ as the columns of \eq{vf5eq10} are isomorphisms in cohomology, and $E'_J\vert_{v_J}\subseteq \Ker(\chi_{JK}\vert_{v_J})$ as the left hand square of \eq{vf5eq10} commutes. 
\item[(iv)] Choose $E''_J\vert_{v_J}$ with $\Ker(\chi_{JK}\vert_{v_J})=E'_J\vert_{v_J}\op E''_J\vert_{v_J}$.
\item[(v)] Since the columns of \eq{vf5eq10} are isomorphisms on cohomology, $t_J\vert_{v_J}$ is injective on $E''_J\vert_{v_J}$. Define $F''_J\vert_{v_J}=t_J\vert_{v_J}[E''_J\vert_{v_J}]$. Then $F''_J\vert_{v_J}\cong E''_J\vert_{v_J}$. Also $F''_J\vert_{v_J}\subseteq\Ker\xi_{JK}\vert_{v_J}$, as the right hand square of \eq{vf5eq10} commutes.
\item[(vi)] Choose $F'''_J\vert_{v_J}$ with $\Ker\xi_{JK}\vert_{v_J}=F''_J\vert_{v_J}\op F'''_J\vert_{v_J}$.
\item[(vii)] Since the columns of \eq{vf5eq10} are isomorphisms on cohomology, we have
\begin{align*}
F''_J\vert_{v_J}&=t_J\vert_{v_J}[E'_J\vert_{v_J}\op E''_J\vert_{v_J}]=
t_J\vert_{v_J}[\Ker\chi_{JK}\vert_{v_J}]\\
&=\Ker\xi_{JK}\vert_{v_J}\cap\Im t_J\vert_{v_J}=
(F''_J\vert_{v_J}\op F'''_J\vert_{v_J})\cap\Im t_J\vert_{v_J}.
\end{align*}
Thus we may choose $\ti F_J\vert_{v_J}$ with $F_J\vert_{v_J}=\ti F_J\vert_{v_J}\op F''_J\vert_{v_J}\op F'''_J\vert_{v_J}$ and $\Im t_J\vert_{v_J}\subseteq \ti F_J\vert_{v_J}\op F''_J\vert_{v_J}$. So the third row of $t_J\vert_{v_J}$ in \eq{vf5eq10} is zero. Also $\ti F_J\vert_{v_J}\cong F_K\vert_{v_K}$ by (vi) as $\xi_{JK}$ is surjective.
\item[(viii)] Set $\ti E_J^-\vert_{v_J}=E_J^-\vert_{v_J}\cap t_J\vert_{v_J}^{-1}(\ti F_J\vert_{v_J})$. We claim $\chi_{JK}\vert_{v_J}$ is injective on $\ti E_J^-\vert_{v_J}$. To see this, note that we have an exact sequence
\begin{equation*}
\smash{\xymatrix@C=15pt{ 0 \ar[r] & E_J^-\vert_{v_J}\cap \Ker t_J\vert_{v_J} \ar[r] & \ti E_J^-\vert_{v_J} \ar[r] & t_J\vert_{v_J}[E_J^-\vert_{v_J}] \cap \ti F_J\vert_{v_J} \ar[r] & 0,  }}
\end{equation*}
as $\Ker t_J\vert_{v_J}\subseteq t_J\vert_{v_J}^{-1}(\ti F_J\vert_{v_J})$. The last part of $(*)$ implies that $\chi_{JK}\vert_{v_J}$ maps $E_J^-\vert_{v_J}\cap \Ker t_J\vert_{v_J}$ injectively into $\Ker t_K\vert_{v_K}$. Also $\xi_{JK}\vert_{v_J}$ is injective on $\ti F_J\vert_{v_J}$, and the right square of \eq{vf5eq10} commutes, so the claim follows.
\item[(ix)] Choose $\ti E_J\vert_{v_J}\subseteq E_J\vert_{v_J}$ such that $\ti E_J^-\vert_{v_J}\subseteq\ti E_J\vert_{v_J}$ and $E_J\vert_{v_J}=\ti E_J\vert_{v_J}\op\Ker(\chi_{JK}\vert_{v_J})=\ti E_J\vert_{v_J}\op E'_J\vert_{v_J}\op E''_J\vert_{v_J}$ by (iv) and $t_J\vert_{v_J}[\ti E_J\vert_{v_J}]\subseteq\ti F_J\vert_{v_J}$. This is possible as $\chi_{JK}\vert_{v_J}$ is injective on $\ti E_J^-\vert_{v_J}$, and using (v),(vii) and (viii). Then $\ti E_J\vert_{v_J}\cong E_K\vert_{v_K}$ as $\chi_{JK}$ is surjective.
\item[(x)] Choose $\ti E_J''\vert_{v_J}$ such that $E_J^-\vert_{v_J}=\ti E_J^-\vert_{v_J}\op\ti E_J''\vert_{v_J}$ and $t_J\vert_{v_J}[\ti E_J''\vert_{v_J}]\subseteq F_J''\vert_{v_J}$. This is possible by (viii) and~$\Im t_J\vert_{v_J}\subseteq \ti F_J\vert_{v_J}\op F_J''\vert_{v_J}$.

As $t_J\vert_{v_J}(E_J^-\vert_{v_J})=t_J\vert_{v_J}(E_J\vert_{v_J})$ by \eq{vf3eq23} and $F''_J\vert_{v_J}=t_J\vert_{v_J}[E''_J\vert_{v_J}]$, we see that $t_J\vert_{v_J}[\ti E_J''\vert_{v_J}]=F_J''\vert_{v_J}$. Also $t_J\vert_{v_J}:\ti E_J''\vert_{v_J}\ra F_J''\vert_{v_J}$ is injective, as $\Ker t_J\vert_{E_J^-\vert_{v_J}}\subseteq\ti E_J^-\vert_{v_J}$ by (viii). Hence $\ti E_J''\vert_{v_J}\cong F''_J\vert_{v_J}$.
\end{itemize}

We can do all this, not just at one $v_J\in s_J^{-1}(0)\cap U_J$, but in an open neighbourhood $U_J'$ of $s_J^{-1}(0)\cap U_J$ in $U_J$. That is, we can choose $U_J'$, and splittings 
\e
\begin{aligned}
TV_J\vert_{U_J'}&=\ti TV_J\op T'V_J,&
E_J\vert_{U_J'}&=\ti E_J\op E'_J\op E''_J\vert_{v_J},\\ 
E_J^-\vert_{U_J'}&=\ti E_J^-\op \ti E_J'', &
F_J\vert_{U_J'}&=\ti F_J\op F''_J\op F'''_J,
\end{aligned}
\label{vf5eq11}
\e
with $\ti E_J^-\subseteq\ti E_J$, such that \eq{vf5eq10} holds at each $v_J\in s_J^{-1}(0)\cap U_J$. To see this, note that the argument above can be carried out on $s_J^{-1}(0)\cap U_J$ regarded as a $C^\iy$-subscheme of $U_J$, in the sense of $C^\iy$-algebraic geometry in \cite{Joyc1}, and the splittings \eq{vf5eq11} with $\ti E_J^-\subseteq\ti E_J$ can then be extended from $s_J^{-1}(0)\cap U_J$ to an open neighbourhood $U_J'$. Making $U_J'$ smaller, we can suppose the component of $\chi_{JK}$ mapping $\ti E_J\ra \phi_{JK}\vert_{U_J'}^*(E_K)$ is an isomorphism. We can also choose the splittings so that away from 
$s_J^{-1}(0)\cap U_J$, $t_J\vert_{U_J'}$ has the form
\e
t_J\vert_{U_J'}=\begin{pmatrix} \st * & \st * & \st 0 \\
\st * & \st * & \st \cong \\ \st * & \st * & \st 0 \end{pmatrix}:
\ti E_J\vert_{v_J}\op E'_J\op E''_J\longra\ti F_J\op F''_J\op F'''_J.
\label{vf5eq12}
\e

Write $s_J\vert_{U_J'}=\ti s_J\op s_J'\op s_J''$, for $\ti s_J\in C^\iy(\ti E_J)$, $s_J'\in C^\iy(E_J')$ and $s_J''\in C^\iy(E_J'')$. Then \eq{vf5eq12} and $t_J\ci s_J=0$ imply that $s_J''=0$. From \eq{vf5eq10} we see that at each $v_J\in s_J^{-1}(0)\cap U_J$, $\d s_J'\vert_{v_J}:T_{v_J}V_J\ra E_J'\vert_{v_J}$ is surjective, and $\d\phi_{JK}\vert_{v_J}:\Ker\bigl(\d s_J'\vert_{v_J}\bigr)\ra T_{v_K}V_K$ is an isomorphism. Hence $s_J'$ is transverse near $v_J$, so that $(s_J')^{-1}(0)$ is an embedded submanifold of $V_J$ near $v_J$ with tangent space $\Ker\bigl(\d s_J'\vert_{v_J}\bigr)$ at $v_J$, and $\phi_{JK}\vert_{(s_J')^{-1}(0)}:(s_J')^{-1}(0)\ra V_K$ is a local diffeomorphism near $v_J$. Thus, making $U_J'$ smaller, we can suppose that $s_J'$ is transverse on $U_J'$, so that $(s_J')^{-1}(0)$ is an embedded submanifold of $U_J'$, and $\phi_{JK}\vert_{(s_J'')^{-1}(0)}:(s_J')^{-1}(0)\ra V_K$ is a local diffeomorphism. But $\phi_{JK}$ is injective on $s_J^{-1}(0)\cap U_J$, so making $U_J'$ smaller, we can also suppose that $\phi_{JK}\vert_{(s_J')^{-1}(0)}$ is a diffeomorphism with an open set $U_K$ in $V_K$, with inverse~$\th_{KJ}:U_K\,{\buildrel\cong\over\longra}\,(s_J')^{-1}(0)\subseteq U_J'\subseteq U_J$.

We now have a vector bundle $\th_{KJ}^*(E_J)$ over $U_K$, and vector subbundles $\th_{KJ}^*(\ti E_J,E_J',E_J'',E_J^-,\ti E_J^-,\ti E_J'')$ with $\th_{KJ}^*(E_J)\ab=\ab\th_{KJ}^*(\ti E_J)\op\th_{KJ}^*(E_J')\op\th_{KJ}^*(E_J'')$,  $\th_{KJ}^*(E_J^-)=\th_{KJ}^*(\ti E_J^-)\op\th_{KJ}^*(\ti E_J'')$ and $\th_{KJ}^*(\ti E_J^-)\subseteq\th_{KJ}^*(\ti E_J)$. Since $\phi_{JK}\ci\th_{KJ}=\id_{U_K}$, pulling back $\chi_{JK}:E_J\ra\phi_{JK}^*(E_K)$ by $\th_{KJ}$ gives a surjective vector bundle morphism $\th_{KJ}^*(\chi_{JK}):\th_{KJ}^*(E_J)\ra E_K\vert_{U_K}$, where $\th_{KJ}^*(\chi_{JK})$ restricts to an isomorphism $\th_{KJ}^*(\ti E_J)\ra E_K$. We also have a section $\th_{KJ}^*(s_J)$ of $\th_{KJ}^*(E_J)$, whose components in $\th_{KJ}^*(\ti E_J),\th_{KJ}^*(E_J'),\th_{KJ}^*(E_J'')$ are $\th_{KJ}^*(\ti s_J),0,0$. Applying $\th_{KJ}^*$ to \eq{vf3eq14} and using $E_J''\subseteq\Ker\chi_{JK}$ shows that
\e
\smash{\th_{KJ}^*(\chi_{JK})[\th_{KJ}^*(s_J)]=\th_{KJ}^*(\chi_{JK})[\th_{KJ}^*(\ti s_J)]=s_K\vert_{U_K}.}
\label{vf5eq13}
\e

Define a vector subbundle $E_K^-\subseteq E_K\vert_{U_K}$ by $E_K^-=\th_{KJ}^*(\chi_{JK})[\th_{KJ}^*(\ti E_J^-)]$. This is valid as $\th_{KJ}^*(\ti E_J^-)\subseteq\th_{KJ}^*(\ti E_J)$, and $\th_{KJ}^*(\chi_{JK})$ is an isomorphism on $\th_{KJ}^*(\ti E_J)$. We claim that $(U_K,E_K^-)$ satisfies condition $(*)$. To see this, let $v_K\in s_K^{-1}(0)\cap U_K$, and set $v_J=\th_{KJ}(v_K)$. Then $v_J\in s_J^{-1}(0)\cap U_J'$ with $\phi_{JK}(v_J)=v_K$, so \eq{vf5eq9}--\eq{vf5eq10} hold, with the columns of \eq{vf5eq10} isomorphisms on cohomology. From this and $(*)$ for $(U_J,E_J^-)$ at $v_J$, we can deduce $(*)$ for $(U_K,E_K^-)$ at $v_K$.

Writing $E_J^+=E_J\vert_{U_J}/E_J^-$, $s_J^+=s_J+E_J^-\in C^\iy(E_J^+)$, and similarly for $E_K^+,s_K^+$, define a vector bundle morphism $\eta_{KJ}:E_K^+\ra \th_{KJ}^*(E_J^+)$ by
\begin{equation*}
\smash{\eta_{KJ}:e_K+E_K^-\longmapsto \th_{KJ}^*(\chi_{JK})\vert_{\th_{KJ}^*(\ti E_J)}^{-1}[e_K]+\th_{KJ}^*(E_J^-).}
\end{equation*}
This is well-defined as $\th_{KJ}^*(\chi_{JK})\vert_{\th_{KJ}^*(\ti E_J)}:\th_{KJ}^*(\ti E_J)\ra E_K$ is an isomorphism, with inverse $\th_{KJ}^*(\chi_{JK})\vert_{\th_{KJ}^*(\ti E_J)}^{-1}:E_K\ra \th_{KJ}^*(\ti E_J)$, which maps $E_K^-\ra \th_{KJ}^*(\ti E_J^-)\ab\subseteq\th_{KJ}^*(E_J^-)$ by definition of $E_K^-$. Also \eq{vf5eq13} implies that $\eta_{KJ}(s_K^+)=\th_{KJ}^*(s_J^+)$. Using \eq{vf5eq10} we can also show that the analogue of \eq{vf2eq8} for $\th_{KJ},\eta_{KJ}$ at $v_K$ is exact. Therefore, if $(U_J,E_J^-),(U_K,E_K^-)$ also satisfy $(\dag)$, then $(U_K,\th_{KJ},\eta_{KJ})$ in \eq{vf5eq8} is a coordinate change. This completes Definition~\ref{vf5def}.
\label{vf5def}
\end{dfn}

We now prove Theorem \ref{vf3thm2}(c). Suppose $\bX,\om_\bX^*,A^\bu,\bs\al,V,E,F,s,t,\psi$ and $(U,E^-)$ satisfying $(*)$ are as in Definition \ref{vf3def3}. Then $\bs{X'}:=\bs\al(\bSpec A^\bu)\subseteq\bX$ is an affine derived $\C$-subscheme of $\bX$. Let $v\in s^{-1}(0)\cap U$, and set $x=\psi(v)\in X_\an$. Write $(A_1^\bu,\bs\al_1)=(A^\bu,\bs\al)$, $V_1=V$, $E_1=E$, $v_1=v$ and so on. Applying Theorem \ref{vf2thm3} to $(\bX',\om_\bX^*\vert_{\bX'})$ at $x$ gives a pair $(A_2^\bu,\om_{A_2^\bu})$ in $-2$-Darboux form and a Zariski open inclusion $\bs\al_2:\bSpec A_2^\bu\hookra\bX'\subseteq\bX$ which is minimal at $x\in\Im\bs\al_2$ with $\bs\al_2^*(\om_\bX^*)\simeq\om_{A_2^\bu}$. Section \ref{vf32} applied to $A_2^\bu,\bs\al_2$ gives $V_2,E_2,s_2,\ldots.$ Set~$v_2=\psi_2^{-1}(x)\in s_2^{-1}(0)\subseteq V_2$.

Applying Theorem \ref{vf3thm1} to the derived $\C$-scheme $\bX'$ with $I=\{1,2\}$ and initial data $\bigl\{(A_1^\bu,\bs\al_1),(A_2^\bu,\bs\al_2)\bigr\}$ gives $(A_{12}^\bu,\bs\al_{12})$ with image $\Im\bs\al_{12}=\Im\bs\al_1\cap\Im\bs\al_2$ and quasi-free morphisms $\Phi_{12,1}:A_1^\bu\ra A_{12}^\bu$, $\Phi_{12,2}:A_2^\bu\ra A_{12}^\bu$ such that \eq{vf3eq3} homotopy commutes in $\dSch_\C$. Section \ref{vf32} applied to $A_{12}^\bu$ gives $V_{12},E_{12},s_{12},\ldots,$ and to $\Phi_{12,1},\ab\Phi_{12,2}$ gives $\phi_{12,1}:V_{12}\ra V_1=V$, $\chi_{12,1},\xi_{12,1}$ and $\phi_{12,2}:V_{12}\ra V_2$, $\chi_{12,2},\xi_{12,2}$, simplifying notation a little. Set $v_{12}=\psi_{12}^{-1}(x)\in s_{12}^{-1}(0)\subseteq V_{12}$, so that $\phi_{12,1}(v_{12})=v_1$ and~$\phi_{12,2}(v_{12})=v_2$.

We have $(U,E^-)$ satisfying $(*)$ for $A_1^\bu,\bs\al_1,V_1,E_1,s_1,\ldots.$ Thus by Lemma \ref{vf3lem1}, we can choose $(U_{12},E_{12}^-)$ satisfying $(*)$ for $V_{12},E_{12},s_{12},\ldots$ and compatible with $(U,E^-)$ under $\phi_{12,1},\chi_{12,1}$ in the sense of \S\ref{vf34}, such that $v_{12}\in s_{12}^{-1}(0)\cap\phi_{12,1}^{-1}(U)\subseteq U_{12}\subseteq V_{12}$. Also \S\ref{vf34} defines $\chi_{12,1}^+$ such that if $(U,E^-)$ and $(U_{12},E_{12}^-)$ satisfy $(\dag)$ (we do {\it not\/} assume this), then
\begin{equation*}
\smash{(U_{12},\phi_{12,1}\vert_{U_{12}},\chi_{12,1}^+):(U_{12},E_{12}^+,s_{12}^+,\psi_{12}^+)\longra(U,E^+,s^+,\psi^+)}
\end{equation*}
is a coordinate change of Kuranishi neighbourhoods, as in Corollary~\ref{vf3cor2}.

Now apply Definition \ref{vf5def} to pushforward $(U_{12},E_{12}^-)$ in $V_{12},E_{12},s_{12},\ldots$ along $\phi_{12,2},\chi_{12,2},\xi_{12,2}$. This yields $(U_2,E_2^-)$ satisfying $(*)$ for $V_2,E_2,s_2,\ldots$ with $\phi_{12,2}(s_{12}^{-1}(0)\cap U_{12})\subseteq U_2\subseteq V_2$, so in particular $v_2\in U_2$, and data $\th_{2,12},\eta_{2,12}$ such that if $(U_2,E_2^-)$ and $(U_{12},E_{12}^-)$ satisfy $(\dag)$ (we do {\it not\/} assume this), then
\e
\smash{(U_2,\th_{2,12},\eta_{2,12}):(U_2,E_2^+,s_2^+,\psi_2^+) \longra(U_{12},E_{12}^+,s_{12}^+,\psi_{12}^+)}
\label{vf5eq14}
\e
is a coordinate change of Kuranishi neighbourhoods, as in~\eq{vf5eq8}. 

Since $(A_2^\bu,\om_{A_2^\bu})$ is in $-2$-Darboux form and minimal at $x$, Example \ref{vf3ex2} proves that there exists an open neighbourhood $U_2'$ of $v_2$ in $U_2$ such that $s_2^{-1}(0)\cap U_2'=(s_2^+)^{-1}(0)\cap U_2'$. Then $(U_2',E_2^-\vert_{\smash{U_2'}})$ satisfies $(\dag)$. The construction in Definition \ref{vf5def} implies that $\th_{2,12}$ identifies $s_2^{-1}(0)$ near $v_2$ with $s_{12}^{-1}(0)$ near $v_{12}$, and identifies $(s_2^+)^{-1}(0)$ near $v_2$ with $(s_{12}^+)^{-1}(0)$ near $v_{12}$ (the second follows from the fact that the analogue of \eq{vf2eq8} for $\th_{2,12},\eta_{2,12}$ at $v_2,v_{12}$ is exact, so \eq{vf5eq14} is a coordinate change of Kuranishi neighbourhoods near $v_2,v_{12}$). Since $s_2^{-1}(0)=(s_2^+)^{-1}(0)$ near $v_2$, it follows that $s_{12}^{-1}(0)=(s_{12}^+)^{-1}(0)$ near $v_{12}$. That is, there exists an open neighbourhood $U_{12}'$ of $v_{12}$ in $U_{12}$ such that~$s_{12}^{-1}(0)\cap U_{12}'=(s_{12}^+)^{-1}(0)\cap U_{12}'$.  

Similarly, $\phi_{12,1}$ identifies $s_{12}^{-1}(0)$ near $v_{12}$ with $s^{-1}(0)$ near $v$, and identifies $(s_{12}^+)^{-1}(0)$ near $v_{12}$ with $(s^+)^{-1}(0)$ near $v$, so there exists an open neighbourhood $U_v'$ of $v$ in $U$ such that $s^{-1}(0)\cap U_v'=(s^+)^{-1}(0)\cap U_v'$. This holds for all $v\in s^{-1}(0)\cap U$. Define $U'=\bigcup_{v\in s^{-1}(0)}U_v'$. Then $U'$ is an open neighbourhood of $s^{-1}(0)\cap U$ in $U$, and $s^{-1}(0)\cap U'=(s^+)^{-1}(0)\cap U'$. Theorem \ref{vf3thm2}(c) follows.

\section{Proofs of some auxiliary results}
\label{vf6}

Next we prove Propositions \ref{vf3prop1}, \ref{vf3prop2} and~\ref{vf3prop3}.

\subsection{Proof of Proposition \ref{vf3prop1}}
\label{vf61}

Let $Z$ be a paracompact, Hausdorff topological space and $\{R_i:i\in I\}$ an open cover of $Z$. By paracompactness we can choose a locally finite refinement $\{S_i:i\in I\}$. That is, $S_i\subseteq R_i\subseteq Z$ is open with $\bigcup_{i\in I}S_i=Z$, and each $z\in Z$ has an open $z\in U_z\subseteq Z$ with $U_z\cap S_i\ne\es$ for only finitely many~$i\in I$. 

By a standard result in topology known as the Shrinking Lemma, we can choose open sets $T^1_i\subseteq Z$ with closures $\ov T{}^1_i\subseteq Z$ for $i\in I$ such that $T^1_i\subseteq\ov T{}^1_i\subseteq S_i$ for $i\in I$ and $\bigcup_{i\in I}T_i^1=Z$. The next part of the proof broadly follows that of McDuff and Wehrheim \cite[Lem.~7.1.7]{McWe}, who prove a similar result with $Z$ compact and $I$ finite. By induction on $k=2,3,\ldots,$ choose open $T_i^k\subseteq Z$ with
\e
\smash{T_i\subseteq \ov T{}_i^1\subseteq T_i^2\subseteq \ov T{}_i^2\subseteq T_i^3\subseteq \ov T{}_i^3\subseteq\cdots\subseteq S_i\subseteq Z}
\label{vf6eq1}
\e
for $i\in I$. Here to choose $T_i^k$ we note that $Z$ is normal as it is paracompact and Hausdorff, so we can choose open $T_i^k,U\subseteq Z$ with $\ov T{}_i^{k-1}\subseteq T_i^k$, $Z\sm S_i\subseteq U$ and $T_i^k\cap U=\es$. Then $T_i^k\subseteq Z\sm U\subseteq S_i$, and $Z\sm U$ is closed, so~$\ov T{}_i^k\subseteq S_i$. 

Now for each finite $\es\ne J\subseteq I$, define a closed subset $C_J\subseteq Z$ by
\e
\smash{C_J=\ts\bigcap_{j\in J}\ov T{}_j^{\md{J}}\sm\bigcap_{i\in I\sm J} T_i^{\md{J}+1}.}
\label{vf6eq2}
\e
Then part (i) of the proposition follows from $\ov T{}_j^{\md{J}}\subseteq S_j\subseteq R_j$ for $j\in J$ by \eq{vf6eq1}, and (ii) from $\{S_i:i\in I\}$ locally finite with $C_J\subseteq\bigcap_{i\in I}S_i$. For (iii), suppose $\es\ne J,K\subseteq I$ are finite with $J\not\subseteq K$ and $K\not\subseteq J$. Without loss of generality, suppose $\md{J}\le\md{K}$. Then there exists $j\in J\sm K$, and \eq{vf6eq2} gives $C_J\subseteq \ov T{}_j^{\md{J}}$ and $C_K\subseteq Z\sm T_j^{\md{K}+1}$, which forces $C_J\cap C_K=\es$ as $\ov T{}_j^{\md{J}}\subseteq T_j^{\md{K}+1}$ by \eq{vf6eq1}.

For part (iv), if $z\in Z$, define
\e
\smash{J_z=\ts\bigcup_{\text{$J\subseteq I$ finite, $z\in\bigcap_{j\in J}\ov T{}_j^{\md{J}}$}}J.}
\label{vf6eq3}
\e
Then $J_z$ is finite as $\{S_i:i\in I\}$ is locally finite, so $z\in S_j$ for only finitely many $j\in I$, and $J_z$ is nonempty as $\{T_i^1:i\in I\}$ covers $Z$, so $z\in T_i^1\subseteq\ov T{}_i^2$ for some $i\in I$, and $J=\{i\}$ is a possible set in the union \eq{vf6eq3}. If $j\in J_z$ then $j\in J$ for some $J$ in the union \eq{vf6eq3}, so that $z\in\ov T{}_j^{\md{J}}\subseteq \ov T{}_j^{\md{J_z}}$ as $\md{J}\le\md{J_z}$. If $i\in I\sm J_z$ then $z\notin \bigcap_{j\in J_z\cup\{i\}}\ov T{}_j^{\md{J_z}+1}$, as $J_z\cup\{i\}$ is not one of the sets $J$ in \eq{vf6eq3}, but $z\in \bigcap_{j\in J_z}\ov T{}_j^{\md{J_z}+1}$, so $z\notin\ov T{}_i^{\md{J_z}+1}$. Hence $z\in C_{J_z}$ by \eq{vf6eq2}, and part (iv) follows. This completes the proof of Proposition~\ref{vf3prop1}.

\subsection{Proof of Proposition \ref{vf3prop2}}
\label{vf62}

We work in the situation of \S\ref{vf35} just after Remark \ref{vf3rem2}, so that we have data $X_\an,I$, $V_J,E_J,s_J,\psi_J$ and $C_J\subseteq R_J=\bigcap_{i\in J}R_i\subseteq X_\an$ for all $J\in A$, and $\phi_{JK},\chi_{JK}$ for all $J,K\in A$ with $K\subsetneq J$. We will first prove the following inductive hypothesis $(+)_m$, by induction on~$m=1,2,\ldots:$
\begin{itemize}
\setlength{\itemsep}{0pt}
\setlength{\parsep}{0pt}
\item[$(+)_m$] For all $J\in A$ with $\md{J}\le m$, we can choose $(\ti U_J,\ti E_J^-)$ satisfying condition $(*)$ for $A_J^\bu,V_J,E_J,F_J,s_J,t_J,\psi_J,\ldots,$ such that $\psi_J^{-1}(C_J)\subseteq\ti U_J\subseteq V_J,$ and if $J,K\in A$ with $K\subsetneq J$ and $0<\md{K}<\md{J}\le m$ then there exists open $\ti U_{JK}\subseteq\ti U_J$ with $\psi_J^{-1}(C_J\cap C_K)\subseteq\ti U_{JK}$ such that $(\ti U_{JK},\ti E_J^-\vert_{\smash{\ti U_{JK}}})$ is compatible with $(\ti U_K,\ti E_K^-),$ in the sense of \S\ref{vf34}. That is, $\phi_{JK}(\ti U_{JK})\subseteq\ti U_K\subseteq V_K$ and $\chi_{JK}\vert_{\smash{\ti U_{JK}}}(\ti E_J^-\vert_{\smash{\ti U_{JK}}})\subseteq \phi_{JK}\vert_{\ti U_{JK}}^*(\ti E_K^-)\subseteq \phi_{JK}\vert_{\ti U_{JK}}^*(E_K)$.
\end{itemize}

For the first step, to prove $(+)_1$, for all $J=\{i\}$ with $i\in I$ we choose $(\ti U_J,\ti E_J^-)$ for $A_J^\bu,V_J,E_J,\ldots$ satisfying $(*)$ with $s_J^{-1}(0)\subseteq\ti U_J$, so that $\psi_J^{-1}(C_J)\subseteq\ti U_J$, by applying Theorem \ref{vf3thm2}(b) with $C=U=\es$. The second part of $(+)_1$ is trivial, as there are no $J,K\in A$ with~$0<\md{K}<\md{J}\le 1$.

For the inductive step, suppose $(+)_{m-1}$ holds for some $m>1$. We will prove $(+)_m$. Using the existing choices of $(\ti U_J,\ti E_J^-)$ and $\ti U_{JK}$ for $J,K\in A$ with $\md{J},\md{K}<m$ from $(+)_{m-1}$, it remains to choose $(\ti U_J,\ti E_J^-)$ when $\md{J}=m$, and $\ti U_{JK}$ when $0<\md{K}<\md{J}=m$. So fix $J\subseteq I$ with~$\md{J}=m$.

Then $(+)_{m-1}$ gives $(\ti U_K,\ti E_K^-)$ satisfying $(*)$ for all $\es\ne K\subsetneq J$. Using the notation of Lemma \ref{vf3lem1}, set $\ti U_{JK}'=\phi_{JK}^{-1}(\ti U_K)\subseteq V_J$, and define $\ti E_{JK}'=\chi_{JK}\vert_{\ti U_{JK}'}^{-1}(\ti E_K^-)$, a vector subbundle of $E_J\vert_{\ti U_{JK}'}$. Then $\ti U_{JK}'$ is an open neighbourhood of $\psi_J^{-1}(C_K)$ in $V_J$, by~\eq{vf3eq16}.

If $\es\ne L\subsetneq K\subsetneq J$ then by $(+)_{m-1}$ there exists open $\ti U_{KL}\subseteq\ti U_K$ with $\psi_K^{-1}(C_K\cap C_L)\subseteq\ti U_{KL}$ such that $\phi_{KL}(\ti U_{KL})\subseteq\ti U_L$ and $\chi_{KL}\vert_{\ti U_{KL}}(\ti E_K^-)\subseteq \phi_{KL}\vert_{\ti U_{KL}}^*(\ti E_L^-)\subseteq \phi_{KL}\vert_{\ti U_{KL}}^*(\ti E_L)$. Pulling back by $\phi_{JK}$, applying $\chi_{JK}$, and using the last part of Corollary \ref{vf3cor1}(ii) then shows that we have an open neighbourhood $\ti U'_{JKL}=\phi_{JK}^{-1}(\ti U_{KL})$ of $\psi_J^{-1}(C_K\cap C_L)$ in $\ti U_{JK}'\cap\ti U_{JL}'\subseteq V_J$, such that
\begin{equation*}
\smash{\ti E_{JK}'\vert_{\ti U'_{JKL}}\subseteq\ti E_{JL}'\vert_{\ti U'_{JKL}}\subseteq E_J\vert_{\ti U'_{JKL}}.}
\end{equation*}

As in Lemma \ref{vf3lem1}, choose vector subbundles $\ti E_{JK}''\subseteq E_J\vert_{\ti U_{JK}'}$ with $E_J\vert_{\ti U_{JK}'}=\ti E_{JK}'\op\ti E_{JK}''$ on $\ti U_{JK}'$ for all $\es\ne K\subsetneq J$. Choose a connection $\nabla$ on $E_J$. As in Lemma \ref{vf3lem1}, for all $\es\ne K\subsetneq J$, $\ti E_{JK}''':=(\nabla s_J)[\Ker\d\phi_{JK}]$ is a vector subbundle of $E_J$ near $s_J^{-1}(0)$ in $V_J$. Making the open neighbourhoods $\ti U_{JK}',\ti U'_{JKL}$ smaller, we can suppose $\ti E_{JK}'''$ is a vector subbundle of $E_J\vert_{\ti U_{JK}'}$. If $\es\ne L\subsetneq K\subsetneq J\subseteq I$ then $\Ker\d\phi_{JK}\subseteq\Ker\d\phi_{JL}$, as $\phi_{JL}=\phi_{KL}\ci\phi_{JK}$, and so 
\begin{equation*}
\smash{\ti E_{JK}'''\vert_{\ti U'_{JKL}}\subseteq\ti E_{JL}'''\vert_{\ti U'_{JKL}}\subseteq E_J\vert_{\ti U'_{JKL}}.}
\end{equation*}

Next, by reverse induction on $l=m-1,m-2,\ldots,1$, we will prove the following inductive hypothesis $(\t)_{J,l}$:
\begin{itemize}
\setlength{\itemsep}{0pt}
\setlength{\parsep}{0pt}
\item[$(\t)_{J,l}$] For all $\es\ne L\subsetneq J$ with $l\le\md{L}$ we can choose an open neighbourhood $\hat U_{JL}$ of $\psi_J^{-1}(C_J\cap C_L)$ in $\ti U_{JL}$ and a vector subbundle $\hat E_{JL}^-$ of $E_{JL}'\vert_{\smash{\hat U_{JL}}}$ such that 
\e
\smash{E_J\vert_{\hat U_{JL}}=\hat E_{JL}^-\op E_{JL}''\vert_{\hat U_{JL}}\op E_{JL}'''\vert_{\hat U_{JL}},}
\label{vf6eq4}
\e
or equivalently, identifying $E_{JL}'$ with $E_J/E_{JL}''$ on $\hat U_{JL}$,
\e
\smash{E_{JL}'\vert_{\hat U_{JL}}=\hat E^-_{JL}\op \bigl[(E_{JL}''\op E_{JL}''')/E_{JL}''\bigr]\vert_{\hat U_{JL}},}
\label{vf6eq5}
\e
and such that if $\es\ne L\subsetneq K\subsetneq J$ with $l\le\md{L}<\md{K}$ then there exists an open neighbourhood $\hat U_{JKL}$ of $\psi_J^{-1}(C_J\cap C_K\cap C_L)$ in $\hat U_{JK}\cap\hat U_{JL}$ with~$\hat E_{JL}^-\vert_{\hat U_{JKL}}=\hat E_{JK}^-\vert_{\hat U_{JKL}}$.
\end{itemize}

For the first step $l=m-1$, for each $L\subsetneq J$ with $\md{L}=m-1$ we take $\hat U_{JL}=\ti U_{JL}$ and take $\hat E^-_{JL}$ to be an arbitrary complement to $\bigl[(E_{JL}''\op E_{JL}''')/E_{JL}''\bigr]$ in $E_{JL}'\vert_{\ti U_{JL}}$, as in \eq{vf6eq5}, which implies \eq{vf6eq4}. The second part of $(\t)_{J,m-1}$ is trivial as there are no $K,L$ with~$m-1\le\md{L}<\md{K}<\md{J}=m$.

For the inductive step, suppose $(\t)_{J,l+1}$ holds for some $1\le l<m-1$, and fix $L\subsetneq J$ with $\md{L}=l$. Choose open neighbourhoods $\hat U_{JKL}$ of $\psi_J^{-1}(C_J\cap C_K\cap C_L)$ in $V_J$ for all $L\subsetneq K\subsetneq J$ with the properties that:
\begin{itemize}
\setlength{\itemsep}{0pt}
\setlength{\parsep}{0pt}
\item[(a)] $\hat U_{JKL}\subseteq \hat U_{JK}\cap\ti U_{JL}$, where $\hat U_{JK}$ is already chosen by~$(\t)_{J,l+1}$.
\item[(b)] If $L\subsetneq K_1,K_2\subsetneq J$ with $K_1\subsetneq K_2$ and $K_2\subsetneq K_1$ then~$\hat U_{JK_1L}\cap \hat U_{JK_2L}=\es$.
\item[(c)] If $L\subsetneq K_2\subsetneq K_1\subsetneq J$ then $\hat U_{JK_1L}\cap \hat U_{JK_2L}\subseteq\hat U_{JK_1K_2}$, where $\hat U_{JK_1K_2}$ is already chosen by~$(\t)_{J,l+1}$.
\end{itemize}
This is possible, using Proposition \ref{vf3prop1}(iii) to ensure (b).

Next, we have to choose an open neighbourhood $\hat U_{JL}$ of $\psi_J^{-1}(C_J\cap C_L)$ in $\ti U_{JL}$ and a vector subbundle $\hat E_{JL}^-$ of $E_{JL}'\vert_{\smash{\hat U_{JL}}}$ satisfying \eq{vf6eq4}--\eq{vf6eq5}, such that for all $K$ with $L\subsetneq K\subsetneq J$ we have $\hat U_{JKL}\subseteq \hat U_{JL}$ and $\hat E_{JL}^-\vert_{\hat U_{JKL}}=\hat E_{JK}^-\vert_{\hat U_{JKL}}$.

First note from Lemma \ref{vf3lem1} that \eq{vf6eq4}--\eq{vf6eq5} near $\psi_J^{-1}(C_J\cap C_L)$ are equivalent to $(\hat U_{JL},\hat E_{JL}^-)$ near $\psi_J^{-1}(C_J\cap C_L)$
satisfying $(*)$ and being compatible with $(\ti U_L,\ti E_L^-)$. By $(\t)_{J,l+1}$ we already know that $\hat E_{JK}^-\vert_{\hat U_{JKL}}$ near $\psi_J^{-1}(C_J\cap C_L)$ satisfies $(*)$ and is compatible with $(\ti U_K,\ti E_K^-)$, so $\hat E_{JK}^-\vert_{\hat U_{JKL}}$ is compatible with $(\ti U_L,\ti E_L^-)$ near $\psi_J^{-1}(C_J\cap C_L)$ as $(\ti U_K,\ti E_K^-)$ is compatible with $(\ti U_L,\ti E_L^-)$ by $(+)_{m-1}$. Therefore the prescribed value $\hat E_{JK}^-\vert_{\hat U_{JKL}}$ for $\hat E_{JL}^-$ on $\hat U_{JKL}$ satisfies \eq{vf6eq4}--\eq{vf6eq5} near $\psi_J^{-1}(C_J\cap C_L)$, and making $\hat U_{JKL}$ smaller, we can suppose $\hat E_{JK}^-\vert_{\hat U_{JKL}}$ satisfies \eq{vf6eq4}--\eq{vf6eq5} on $\hat U_{JKL}$. This proves that \eq{vf6eq4}--\eq{vf6eq5} are compatible with the conditions $\hat E_{JL}^-\vert_{\hat U_{JKL}}=\hat E_{JK}^-\vert_{\hat U_{JKL}}$ for all~$\es\ne L\subsetneq K\subsetneq J$.

Next, observe that the prescribed values $\hat E_{JK}^-\vert_{\hat U_{JKL}}$ for $\hat E_{JL}^-$ on $\hat U_{JKL}$ for different $K_1,K_2$ with $L\subsetneq K_1,K_2\subsetneq J$ agree on overlaps $\hat U_{JK_1L}\cap \hat U_{JK_2L}$. This follows from (b),(c) above and $\hat E_{JK_1}^-\vert_{\hat U_{JK_1K_2}}=\hat E_{JK_2}^-\vert_{\hat U_{JK_1K_2}}$, which holds by $(\t)_{J,l+1}$. Therefore the last part of $(\t)_{J,l}$ can be rewritten to say that we have one prescribed value for $\hat E_{JL}^-$ on the subset $\dot U_{JL}:=\bigcup_{K:L\subsetneq K\subsetneq J}\hat U_{JKL}$, which satisfies \eq{vf6eq4}--\eq{vf6eq5} on~$\dot U_{JL}$.

So, we are given a prescribed value of $\hat E_{JL}^-$ on an open set $\dot U_{JL}\subseteq V_J$ satisfying \eq{vf6eq5}, and we have to extend it to a larger open set $\hat U_{JL}\subseteq V_J$ containing both $\dot U_{JL}$ and $\psi_J^{-1}(C_J\cap C_K\cap C_L)$. This may not be possible: if we have chosen previous $\hat E_{JK}^-$'s badly near the `edge' of $\dot U_{JL}$ in $V_J$, then the prescribed values of $\hat E_{JL}^-$ may not extend continuously to the closure $\ov{\dot U_{JL}}$ of $\dot U_{JL}$ in $V_J$, and in particular, may not extend continuously over points in $[\psi_J^{-1}(C_J\cap C_K\cap C_L)]\cap [\ov{\dot U_{JL}}\sm\dot U_{JL}]$. However, we can deal with this problem by shrinking all the $\hat U_{JKL}$'s, such that the closure $\ov{\dot U_{JL}}$ of the new $\dot U_{JL}$ lies inside the old $\dot U_{JL}$. Then it is guaranteed that the prescribed value of $\hat E_{JL}^-$ on $\dot U_{JL}$ extends smoothly to an open neighbourhood of $\ov{\dot U_{JL}}$ in $V_J$, so we can choose $(\hat U_{JL},\hat E_{JL}^-)$ satisfying all the required conditions. As this holds for all $L\subsetneq J$ with $\md{L}=l$, this completes the inductive step, and $(\t)_{J,l}$ holds for all~$l=m-1,m-2,\ldots,1$.

Fix data $\hat U_{JL},\hat E_{JL}^-,\hat U_{JKL}$ as in $(\t)_{J,1}$. For all $\es\ne K\subsetneq J$, choose open neighbourhoods $\check U_{JK}$ of $\psi_J^{-1}(C_J\cap C_K)$ in $\hat U_{JK}$ such that if $K_1\subsetneq K_2$ and $K_2\subsetneq K_1$ then $\check U_{JK_1}\cap\check U_{JK_2}=\es$, and if $\es\ne L\subsetneq K\subsetneq J$ then $\check U_{JK}\cap\check U_{JL}\subseteq\hat U_{JKL}$. This is possible provided the $\check U_{JK}$ are small enough, using Proposition \ref{vf3prop1}(iii) to ensure~$\check U_{JK_1}\cap\check U_{JK_2}=\es$.

Define $\check U_J=\bigcup_{K:\es\ne K\subsetneq J}\check U_{JK}$. It is an open neighbourhood of the closed set $\check C_J$ in $V_J$, where $\check C_J=\bigcup_{K:\es\ne K\subsetneq J}\psi_J^{-1}(C_J\ab\cap C_K)$ in $V_J$. Define a vector subbundle $\check E_J^-$ of $E_J\vert_{\check U_J}$ by $\check E_J^-\vert_{\check U_{JK}}=\hat E_{JL}^-\vert_{\check U_{JK}}$ for all $\es\ne K\subsetneq J$. These prescribed values for different $K_1,K_2$ are compatible on the overlap $\check U_{JK_1}\cap\check U_{JK_2}$ by construction, so $\check E_J^-$ is well-defined.

Now apply Theorem \ref{vf3thm2}(b) to $A_J^\bu,V_J,E_J,s_J,\ldots,$ with closed set $\check C_J\subseteq V_J$ and pair $(\check U_J,\check E_J^-)$ satisfying $(*)$ with $\check C_J\subseteq\check U_J$. This shows that there exists $(\ti U_J,\ti E_J^-)$ satisfying $(*)$ for $A_J^\bu,V_J,E_J,s_J,\ldots,$ and an open neighbourhood $\check U_J'$ of $\check C_J$ in $\check U_J\cap\ti U_J$ such that $\check E_J^-\vert_{\check U'_J}=\ti E_J^-\vert_{\check U_J'}$. For all $\es\ne K\subsetneq J$, set $\ti U_{JK}=\check U_J'\cap \check U_{JK}$. Then $\ti U_{JK}$ is an open neighbourhood of $\psi_J^{-1}(C_J\ab\cap C_K)$ in $V_J$, and $\ti E_J^-\vert_{\ti U_{JK}}=\check E_J^-\vert_{\ti U_{JK}}=\hat E_{JK}^-\vert_{\ti U_{JK}}$, which is compatible with $(\ti U_K,\ti E_K^-)$ by definition. This completes the proof of the inductive step of $(+)_m$. So by induction, $(+)_m$ holds for all~$m=1,2,\ldots.$

Fix data $(\ti U_J,\ti E_J^-)$ for all $J\in A$ and $\ti U_{JK}$ for all  $J,K\in A$ with $K\subsetneq J$ as in $(+)_m$ as $m\ra\iy$ (or $m=\md{I}$ if $I$ is finite). For all $J\in A$, choose open neighbourhoods $U_J$ of $\psi_J^{-1}(C_J)$ in $\ti U_J$, such that setting $E_J^-=\ti E_J^-\vert_{U_J}$ and $S_J=\psi_J(s_J^{-1}(0)\cap U_J)$, so that $S_J$ is an open neighbourhood of $C_J$ in $X_\an$, then $(U_J,E_J^-)$ satisfies condition $(\dag)$, and for all $J,K\in A$, if $J\not\subseteq K$ and $K\not\subseteq J$ then $S_J\cap S_K=\es$, and if $K\subsetneq J$ then $\psi_J^{-1}(S_J\cap S_K)\subseteq\ti U_{JK}$. If $K\subsetneq J$, we define $U_{JK}=\ti U_{JK}\cap U_J\cap\phi_{JK}^{-1}(U_K)$. Then $s_J^{-1}(0)\cap U_{JK}=\psi_J^{-1}(S_J\cap S_K)$, and $(U_{JK},E_J^-\vert_{U_{JK}})$ is compatible with~$(U_K,E_K^-)$.

To see that we can choose $U_J$ for all $J\in A$ satisfying all these conditions, note that by Theorem \ref{vf3thm2}(c), if $U_J$ is small enough then $(U_J,E_J^-)$ satisfies $(\dag)$, as $(\ti U_J,\ti E_J^-)$ satisfies $(*)$. If $J\not\subseteq K$ and $K\not\subseteq J$ then Proposition \ref{vf3prop1}(iii) implies that $S_J\cap S_K=\es$ provided both $U_J,U_K$ are sufficiently small. Similarly, if $K\subsetneq J$ then $\psi_J^{-1}(S_J\cap S_K)\subseteq\ti U_{JK}$ holds provided both $U_J,U_K$ are sufficiently small. Now if $I$ is infinite, it is possible that an individual set $U_J$ may have to satisfy infinitely many smallness conditions, for compatibility with  infinitely many sets $\es \ne K\subseteq I$. However, the local finiteness condition Proposition \ref{vf3prop1}(ii) means that in an open neighbourhood of any $v_J\in\psi_J^{-1}(C_J)$, only finitely many smallness conditions on $U_J$ are relevant, so we can solve them. This completes the proof of Proposition~\ref{vf3prop2}.

\subsection{Proof of Proposition \ref{vf3prop3}}
\label{vf63}

Let $(\bX,\om_\bX^*),X_\an,\cK$ and $\bX_\dm$ be as in Theorems \ref{vf3thm3} and \ref{vf3thm4}, and use the notation of \S\ref{vf35}. First we relate orientations on $(\bX,\om_\bX^*)$ and $\bX_\dm$ at one point $x\in X_\an$. Pick $J\in A$ with $x\in S_J=\Im\psi_J^+$. From \eq{vf2eq7} and \eq{vf2eq9} we have
\ea
\!\!\bigl\{\text{orientations on $(\bX,\om_\bX^*)$ at $x$}\bigr\}\!&\cong\!\bigl\{\text{$\C$-orientations on $(H^1(\bT_\bX\vert_x),Q_x)$}\bigr\},\!\!
\label{vf6eq6}
\\
\bigl\{\text{orientations on $\bX_\dm$ at $x$}\bigr\}\!&\cong\!\bigl\{\text{orientations on $T_x^*\bX_\dm\!\op\! O_x\bX_\dm$}\bigr\},
\label{vf6eq7}
\ea
where $Q_x=\om_\bX^0\cdot$ is the nondegenerate complex quadratic form on $H^1(\bT_\bX\vert_x)$ in \eq{vf2eq6}. There is a unique $v_J$ in $s_J^{-1}(0)\cap U_J=(s_J^+)^{-1}(0)\subseteq U_J\subseteq V_J$ with $\psi_J(v_J)=x$. Equation \eq{vf3eq9} gives an isomorphism of complex vector spaces
\e
H^1\bigl(\bT_{\bs\al_J}\vert_{v_J}\bigr):\frac{\ts \Ker \bigl(t_J\vert_{v_J}:E_J\vert_{v_J}\ra F_J\vert_{v_J}\bigr)}{\ts \Im \bigl(\d s_J\vert_{v_J}:T_{v_J}V_J\ra E_J\vert_{v_J}\bigr)}\longra H^1\bigl(\bT_\bX\vert_x\bigr).
\label{vf6eq8}
\e
Write $\ti Q_{v_J}$ for the complex quadratic form on $\Ker (t_J\vert_{v_J})/\Im(\d s_J\vert_{v_J})$ identified with $Q_x$ by \eq{vf6eq8}, as in Definition \ref{vf3def3}. Then by \eq{vf6eq6} we have
\e
\begin{split}
&\bigl\{\text{orientations on $(\bX,\om_\bX^*)$ at $x$}\bigr\}\cong \\
&\big\{\text{$\C$-orientations on $\bigl(\Ker (t_J\vert_{v_J})/\Im(\d s_J\vert_{v_J}),\ti Q_{v_J}\bigr)$}\bigr\}.
\end{split}
\label{vf6eq9}
\e

Condition $(*)$ for $(U_J,E_J^-)$ at $v_J$ requires that
\begin{equation*}
\Pi_{v_J}:E_J^-\vert_{v_J}\cap \Ker \bigl(t_J\vert_{v_J}:E_J\vert_{v_J}\ra F_J\vert_{v_J}\bigr)\longra
\frac{\ts \Ker \bigl(t_J\vert_{v_J}:E_J\vert_{v_J}\ra F_J\vert_{v_J}\bigr)}{\ts \Im \bigl(\d s_J\vert_{v_J}:T_{v_J}V_J\ra E_J\vert_{v_J}\bigr)}
\end{equation*}
should be injective, with image $\Im\Pi_{v_J}$ a real vector subspace of half the real dimension of $\Ker (t_J\vert_{v_J})/\Im(\d s_J\vert_{v_J})$, on which the real quadratic form $\Re\ti Q_{v_J}$ is negative definite. As $(U_J,E_J^+,s_J^+,\psi_J\vert_{s_J^{-1}(0)\cap U_J})$ is a Kuranishi neighbourhood on $\bX_\dm$ by the proof of Theorem \ref{vf3thm4}, equation \eq{vf2eq10} gives an exact sequence
\begin{equation*}
\smash{\xymatrix@C=20pt{ 0 \ar[r] & T_x\bX_\dm \ar[r] & T_{v_J}V_J \ar[rr]^{\d s_J^+\vert_{v_J}} && E_J^+\vert_{v_J} \ar[r] & O_x\bX_\dm \ar[r] & 0. }}
\end{equation*}
Condition $(*)$ implies that $\Ker(\d s_J\vert_{v_J})=\Ker(\d s_J^+\vert_{v_J})$, so we have
\e
\smash{T_x\bX_\dm\cong \Ker\bigl(\d s_J\vert_{v_J}:T_{v_J}V_J\ra E_J\vert_{v_J}\bigr).}
\label{vf6eq10}
\e
Also from $(*)$ we see there is a canonical isomorphism
\e
\smash{O_x\bX_\dm\cong\frac{\Ker (t_J\vert_{v_J})/\Im(\d s_J\vert_{v_J})}{\Im\Pi_{v_J}}\,.}
\label{vf6eq11}
\e
By \eq{vf6eq10}, $T_x\bX_\dm$ is a complex vector space, so $T_x\bX_\dm$ and $T_x^*\bX_\dm$ have natural orientations as real vector spaces. Thus by \eq{vf6eq11} we have a bijection
\e
\begin{split}
&\bigl\{\text{orientations on $T_x^*\bX_\dm\op O_x\bX_\dm$}\bigr\}\cong\\
&\bigl\{\text{orientations on $\bigl[\Ker(t_J\vert_{v_J})/\Im(\d s_J\vert_{v_J})\bigr]/\Im\Pi_{v_J}$}\bigr\}.
\end{split}
\label{vf6eq12}
\e

Suppose we are given a complex basis $e_1,\ldots,e_k$ of $\Ker(t_J\vert_{v_J})/\Im(\d s_J\vert_{v_J})\cong \C^k$ which is orthonormal w.r.t.\ $\ti Q_{v_J}$. As $e_1,\ldots,e_k$ are orthonormal w.r.t.\ $\ti Q_{v_J}$, the real quadratic form $\Re\ti Q_{v_J}$ is positive definite on the real span $\langle e_1,\ldots,e_k\rangle_\R$, and $\Re\ti Q_{v_J}$ is negative definite on $\Im\Pi_{v_J}$,
so $\langle e_1,\ldots,e_k\rangle_\R\cap\Im\Pi_{v_J}=\{0\}$. Therefore $e_1+\Im\Pi_{v_J},\ldots,e_k+\Im\Pi_{v_J}$ are linearly independent in the real vector space  $\bigl[\Ker(t_J\vert_{v_J})/\Im(\d s_J\vert_{v_J})\bigr]/\Im\Pi_{v_J}\cong\R^k$, so they are a basis as $\Im\Pi_{v_J}$ has half the real dimension of $\Ker(t_J\vert_{v_J})/\Im(\d s_J\vert_{v_J})$. Define an identification
\e
\begin{split}
&\big\{\text{$\C$-orientations on $\bigl(\Ker (t_J\vert_{v_J})/\Im(\d s_J\vert_{v_J}),\ti Q_{v_J}\bigr)$}\bigr\}\\
&\qquad\cong\bigl\{\text{orientations on $\bigl[\Ker(t_J\vert_{v_J})/\Im(\d s_J\vert_{v_J})\bigr]/\Im\Pi_{v_J}$}\bigr\},
\end{split}
\label{vf6eq13}
\e
such that orientations on both sides are identified if, whenever $e_1,\ldots,e_k$ is an oriented orthonormal complex basis for $\bigl(\Ker (t_J\vert_{v_J})/\Im(\d s_J\vert_{v_J}),\ti Q_{v_J}\bigr)$, then $e_1+\Im\Pi_{v_J},\ldots,e_k+\Im\Pi_{v_J}$ is an oriented basis for $[\Ker(t_J\vert_{v_J})/\Im(\d s_J\vert_{v_J})]/\Im\Pi_{v_J}$. Combining equations \eq{vf6eq7}, \eq{vf6eq9},  \eq{vf6eq12} and \eq{vf6eq13} gives an identification
\e
\smash{\bigl\{\text{orientations on $(\bX,\om_\bX^*)$ at $x$}\bigr\}\cong
\bigl\{\text{orientations on $\bX_\dm$ at $x$}\bigr\}.}
\label{vf6eq14}
\e

It is not difficult to show that the isomorphism \eq{vf6eq14} is independent of the choice of $J\in A$ with $x\in S_J$, and depends continuously on $x\in X_\an$. Thus we get a canonical 1-1 correspondence between the sets in Proposition \ref{vf3prop1}(a),(c). The last part of Theorem \ref{vf2thm4} gives a 1-1 correspondence between the sets in Proposition \ref{vf3prop1}(b),(c). This completes the proof.

\subsection{Proof of Proposition \ref{vf3prop4}}
\label{vf64}

Suppose $(\bX,\om_\bX^*)$ is a separated, $-2$-shifted symplectic derived $\C$-scheme with $\vdim_\C\bX=n$, whose complex analytic topological space $X_\an$ is second countable. Let $\cK,\cK'$ be different possible Kuranishi atlases constructed in Theorem \ref{vf3thm3}, and $\bX_\dm,\bX_\dm'$ the corresponding derived manifolds in Theorem \ref{vf3thm4}.

As in \S\ref{vf35}, let $\cK$ be constructed using the family $\bigl\{(A_i^\bu,\bs\al_i):i\in I\bigr\}$, and data $A_J^\bu,\bs\al_J$ for $J\in A$, $\Phi_{JK}$ for $K\subseteq J$ in $A$ from Theorem \ref{vf3thm1}, where $A=\{J:\es\ne J\subseteq I$, $J$ finite$\}$, as in \S\ref{vf32} use notation $V_J,E_J,F_J,s_J,t_J,\psi_J$ and $R_J=\bigcap_{i\in J}R_i\subseteq X_\an$ from $A_J^\bu,\bs\al_J$ and $\phi_{JK},\chi_{JK},\xi_{JK}$ from $\Phi_{JK}$. Let $\cK$ be defined using closed subsets $C_J\subseteq X_\an$ for $J\in A$ in Proposition \ref{vf3prop1} and pairs $(U_J,E_J^-)$ and open subsets $U_{JK}\subseteq U_J$ in Proposition~\ref{vf3prop2}. Similarly, let $\cK'$ be constructed using $\bigl\{(A_{i'}^{\prime\bu},\bs\al'_{i'}):i'\in I'\bigr\},A_{J'}^{\prime\bu},\bs\al'_{J'},V'_{J'},E'_{J'},\ldots,U'_{J'K'}\subseteq U'_{J'}$.

We must build a derived manifold with boundary $\bW_\dm$ with topological space $X_\an\t[0,1]$ and $\vdim\bW_\dm=n+1$, and an equivalence $\pd\bW_\dm\simeq \bX_\dm\amalg\bX_\dm'$ topologically identifying $\bX_\dm$ with $X_\an\t\{0\}$ and $\bX_\dm'$ with $X_\an\t\{1\}$. 

Write $\bs{\ti\pi}:\bs{\ti X}\ra Z$ to be the projection $\bs\pi_{\bA{}^1}:\bX\t\bA{}^1\ra\bA{}^1$, so that $Z=\bA{}^1=\Spec B$ with $B=\C[z]$, and $Z_\an=\C$. Define $\om_{\smash{\bs{\ti X}/Z}}=\bs\pi_\bX^*(\om_\bX^0)$. Then $\om_{\bs{\ti X}/Z}$ is a family of $-2$-shifted symplectic structures on $\bX/Z$ in the sense of \S\ref{vf37}, the constant family over $Z=\bA{}^1$ with fibre $(\bX,\om_\bX^*)$. We now carry out the programme of \S\ref{vf37} for $\bs{\ti\pi}:\bs{\ti X}\ra Z,\om_{\smash{\bs{\ti X}/Z}}$, choosing data as follows:
\begin{itemize}
\setlength{\itemsep}{0pt}
\setlength{\parsep}{0pt}
\item[(a)] Set $\ti I=I\amalg I'$, the disjoint union of $I$ and $I'$.
\item[(b)] Define $(\ti A{}_i^\bu,\bs{\ti\al}_i,\ti\be_i)$ for $i\in I$ by $\ti A{}_i^\bu=A{}_i^\bu\ot_\C\C[z,(z-1)^{-1}]$, so that $\bSpec\ti A{}_i^\bu=(\bSpec A{}_i^\bu)\t(\bA{}^1\sm\{1\})$, and $\bs{\ti\al}_i=\bs\al_i\t{\rm inc}:(\bSpec A{}_i^\bu)\t(\bA{}^1\sm\{1\})\ra\bX\t\bA{}^1$, and $\ti\be_i:\C[z]\ra A{}_i^0\ot_\C\C[z,(z-1)^{-1}]$, $\ti\be_i:z\mapsto 1\ot z$.
Similarly, define $(\ti A{}_{i'}^{\prime\bu},\bs{\ti\al}_{i'},\ti\be_{i'})$ for $i'\in I'$ by $\ti A{}_{i'}^\bu=A{}_{i'}^{\prime\bu}\ot_\C\C[z,z^{-1}]$, so  $\bSpec\ti A{}_{i'}^{\prime\bu}=(\bSpec A{}_{i'}^{\prime\bu})\t(\bA{}^1\sm\{0\})$, and $\bs{\ti\al}_{i'}=\bs\al'_{i'}\t{\rm inc}:(\bSpec A{}_{i'}^{\prime\bu})\t(\bA{}^1\sm\{0\})\ra\bX\t\bA{}^1$, and $\ti\be'_{i'}:\C[z]\ra A{}_{i'}^{\prime 0}\ot_\C\C[z,z^{-1}]$, $\ti\be'_{i'}:z\mapsto 1\ot z$.
\item[(c)] Write $\ti A=\{\ti J:\es\ne\ti J\subseteq\ti I$, $\ti J$ finite$\}$. Then $A\subseteq\ti A$ and $A'\subseteq\ti A$.
\item[(d)] When we apply Theorem \ref{vf3thm1} to choose $\ti A_{\ti J}^\bu,\bs{\ti\al}_{\ti J},\ti\be_{\ti J}$ for $\ti J\in\ti A$ and $\ti\Phi_{\ti J\ti K}$ for $\ti K\subseteq\ti J$, we make these choices so that 
\begin{gather*}
\ti A{}_J^\bu=A{}_J^\bu\ot_\C\C[z,(z-1)^{-1}], \qquad\qquad \ti A{}_{J'}^\bu=A{}_{J'}^{\prime\bu}\ot_\C\C[z,z^{-1}],\\
\bs{\ti\al}_J=\bs\al_J\t{\rm inc}:(\bSpec A{}_J^\bu)\t(\bA{}^1\sm\{1\})\longra\bX\t\bA{}^1,\;\> \ti\be_J:z\mapsto 1\ot z,\\
\bs{\ti\al}_{J'}=\bs\al'_{J'}\t{\rm inc}:(\bSpec A{}_{J'}^{\prime\bu})\t(\bA{}^1\sm\{0\})\longra\bX\t\bA{}^1,\;\> \ti\be_{J'}:z\mapsto 1\ot z,\\
\ti\Phi_{JK}=\Phi_{JK}\ot\id:A_K^\bu\ot_\C\C[z,(z-1)^{-1}]\longra A_J^\bu\ot_\C\C[z,(z-1)^{-1}],\\
\ti\Phi_{J'K'}=\Phi'_{J'K'}\ot\id:A{}_{K'}^{\prime\bu}\ot_\C\C[z,z^{-1}]\longra A{}_{J'}^{\prime\bu}\ot_\C\C[z,z^{-1}],
\end{gather*}
for all $K\subseteq J$ in $A$ and $K'\subseteq J'$ in $A'$. This is clearly possible. Note that this does not determine $\ti A_{\ti J}^\bu,\bs{\ti\al}_{\ti J},\ti\be_{\ti J}$ or $\Phi_{\ti J\ti K}$ if $\ti J\in \ti A\sm(A\amalg A')$.
\item[(e)] When we translate to complex geometry using \S\ref{vf32}, part (d) implies that $\ti V_J=V_J\t(\C\sm\{1\})$ for $J\in A\subseteq\ti A$. Also $\ti E_J,\ti F_J,\ti s_J,\ti t_J,\ti\phi_{JK},\ti\chi_{JK}$ for $J,K\in A$ is obtained from $E_J,\ldots,\chi_{JK}$ by taking products with $\C\sm\{1\}$.
Similarly, $\ti V_{J'},\ti E_{J'},\ti F_{J'},\ti s_{J'},\ti t_{J'},\ti\phi_{J'K'},\ti\chi_{J'K'}$ for $J',K'\in A'\subseteq\ti A$ are obtained from $V_{J'},\ldots,\chi_{J'K'}$ by taking products with $\C\sm\{0\}$.
\item[(f)] When we choose data $\ti C_{\ti J},(\ti U_{\ti J},\ti E_{\ti J}^-)$ for $\ti J\in\ti A$, we do this so that 
\begin{align*}
\ti C_J\cap(X_{\an}\t\{0\})&=C_J\t\{0\}, & \ti U_J\cap V_J\t\{0\}&=U_J\t\{0\}, \\ 
\ti E_J^-\vert_{U_J\t\{0\}}&=E_J^-\t 0, & \ti C_{J'}\cap(X_{\an}\t\{1\})&=C'_{J'}\t\{1\}, \\ 
\ti U_{J'}\cap V'_{J'}\t\{1\}&=U'_{J'}\t\{1\}, & 
\ti E_{J'}^-\vert_{U'_{J'}\t\{1\}}&=E_{J'}^{\prime -}\t 1,
\end{align*}
whenever $J\in A$ and $J'\in A'$. This is clearly possible.
\end{itemize}

Theorem \ref{vf3thm7} constructs a relative Kuranishi atlas $\ti\cK$ for $\pi_\C:X_\an\t\C\ra\C$, of dimension $n+2$. By construction, over $X_\an\t\{0\}$ this restricts to the Kuranishi atlas $\cK$, and over $X_\an\t\{1\}$ it restricts to~$\cK'$.

Theorem \ref{vf3thm8} gives a derived manifold $\bs{\ti X}_\dm$ with $\vdim\bs{\ti X}_\dm=n+2$ and topological space $X_\an\t\C$, with a morphism $\bs{\ti\pi}_\dm:\bs{\ti X}_\dm\ra\C$. From Theorem \ref{vf3thm8}(iii) we see that $\bs{\ti X}{}_\dm^0=\bs{\ti\pi}_\dm^{-1}(0)\simeq\bX_\dm$ and~$\bs{\ti X}{}_\dm^1=\bs{\ti\pi}_\dm^{-1}(1)\simeq\bX_\dm'$.

Now define $\bW_\dm=\bs{\ti X}_\dm\t_{\bs{\ti\pi}_\dm,\C,{\rm inc}}[0,1]$, as a fibre product in the 2-category $\dManc$ of d-manifolds with corners from \cite{Joyc2,Joyc3,Joyc4}, where ${\rm inc}:[0,1]\hookra\C$ is the inclusion. By properties of fibre products in $\dManc$ from \cite{Joyc2,Joyc3,Joyc4}, this has topological space $X_{\an}\t[0,1]$ and $\vdim\bW_\dm=n+1$, and boundary
\e
\smash{\pd\bW_\dm\!\simeq\! \bs{\ti X}_\dm\t_{\bs{\ti\pi}_\dm,\C,{\rm inc}}\pd[0,1]\!\simeq\!\bs{\ti X}_\dm\t_{\bs{\ti\pi}_\dm,\C,{\rm inc}}\{0,1\}\!\simeq\!\bX_\dm\amalg\bX_\dm'.}
\label{vf6eq15}
\e
This proves the first part of Proposition~\ref{vf3prop4}. 

For the last part, orientations on $(\bX,\om_\bX^*)$ correspond naturally to orientations for $\bs{\ti\pi}:\bs{\ti X}\ra Z,\om_{\smash{\bs{\ti X}/Z}}$, by pullback along $\bs{\ti X}\ra\bX$, and these correspond to orientations on $\bs{\ti X}_\dm$ by Proposition \ref{vf3prop5}, and thus (using oriented fibre products) to orientations on $\bW_\dm$. Since $\pd[0,1]=-\{0\}\amalg\{1\}$ in oriented manifolds, we see that as in \eq{vf6eq15} that $\pd\bW_\dm\simeq -\bX_\dm\amalg\bX_\dm'$ in oriented derived manifolds. This completes the proof.

\medskip

\noindent{\small\sc 

\noindent Dennis Borisov, Mathematisches Institut, Georg-August Universit\"at G\"ot\-tin\-gen, Bunsenstrasse 3-5, D-37073 G\"ottingen, Germany.

\noindent E-mail: {\tt dennis.borisov@gmail.com}.
\medskip

\noindent Dominic Joyce, The Mathematical Institute, Radcliffe Observatory Quarter, Woodstock Road, Oxford, OX2 6GG, U.K.

\noindent E-mail: {\tt joyce@maths.ox.ac.uk}.}

\end{document}